\begin{document}


\title[Thermodynamic formalism for subsystems of expanding Thurston maps II]
{
    Thermodynamic formalism for subsystems of \\expanding Thurston maps II
}

\author{Zhiqiang~Li \and Xianghui~Shi}

\thanks{Z.~Li and X.~Shi were partially supported by NSFC Nos.~12101017, 12090010, 12090015, and BJNSF No.~1214021.}
    
\address{Zhiqiang~Li, School of Mathematical Sciences \& Beijing International Center for Mathematical Research, Peking University, Beijing 100871, China}
\email{zli@math.pku.edu.cn}

\address{Xianghui~Shi, School of Mathematical Sciences \& Beijing International Center for Mathematical Research, Peking University, Beijing 100871, China}
\email{sxh1999@pku.edu.cn}


\subjclass[2020]{Primary: 37F10; Secondary: 37D35, 37F20, 37F15, 37B99, 57M12}

\keywords{expanding Thurston map, postcritically-finite map, rational map, subsystems, thermodynamic formalism, Ruelle operator, transfer operator, equilibrium state, large deviation.} 

\begin{abstract}
Expanding Thurston maps were introduced by M.~Bonk and D.~Meyer with motivation from complex dynamics and Cannon's conjecture from geometric group theory via Sullivan's dictionary. 
In this paper, we study subsystems of expanding Thurston maps motivated via Sullivan's dictionary as analogs of some subgroups of Kleinian groups.
We prove the uniqueness and various ergodic properties of the equilibrium states for strongly primitive subsystems and real-valued H\"older continuous potentials, and establish the equidistribution of preimages of subsystems with respect to the equilibrium states.
Here, the sphere $S^{2}$ is equipped with a natural metric, called a visual metric, introduced by M.~Bonk and D.~Meyer.
As a result, for strongly primitive subsystems of expanding Thurston maps without periodic critical point, we obtain a level-2 large deviation principle for Birkhoff averages and iterated preimages.
\end{abstract} 

\maketitle
\tableofcontents


\section{Introduction}
\label{sec:Introduction}



A Thurston map is a (non-homeomorphic) branched covering map on a topological $2$-sphere $S^{2}$ such that each of its critical points has a finite orbit (postcritically-finite).
The most important examples are given by postcritically-finite rational maps on the Riemann sphere $\ccx$.
While Thurston maps are purely topological objects, a deep theorem due to W.~P.~Thurston characterizes Thurston maps that are, in a suitable sense, described in the language of topology and combinatorics, equivalent to postcritically-finite rational maps (see \cite{douady1993proof}). 
This suggests that for the relevant rational maps, an explicit analytic expression is not so important, but rather a geometric-combinatorial description. 
This viewpoint is natural and fruitful for considering more general dynamics that are not necessarily conformal.

In the early 1980s, D.~P.~Sullivan introduced a “dictionary" that is now known as \emph{Sullivan's dictionary}, which connects two branches of conformal dynamics, iterations of rational maps and actions of Kleinian groups.
Under Sullivan’s dictionary, the counterpart to Thurston’s theorem in geometric group theory is Cannon’s Conjecture \cite{cannon1994combinatorial}. 
An equivalent formulation of Cannon's Conjecture, viewed from a quasisymmetric uniformization perspective (\cite[Conjecture~5.2]{bonk2006quasiconformal}), predicts that if the boundary at infinity $\partial_{\infty} G$ of a Gromov hyperbolic group $G$ is homeomorphic to $S^2$, then $\partial_{\infty} G$ equipped with a visual metric is quasisymmetrically equivalent to $\ccx$. 

Inspired by Sullivan’s dictionary and their interest in Cannon’s Conjecture, M.~Bonk and D.~Meyer \cite{bonk2010expanding,bonk2017expanding}, as well as P.~Ha{\"i}ssinsky and K.~M.~Pilgrim \cite{haissinsky2009coarse}, studied a subclass of Thurston maps, called \emph{expanding Thurston maps}, by imposing some additional condition of expansion.
These maps are characterized by a contraction property for inverse images (see Subsection~\ref{sub:Thurston_maps} for the precise definition). 
In particular, a postcritically-finite rational map on $\ccx$ is expanding if and only if its Julia set is equal to $\ccx$.
For an expanding Thurston map on $S^{2}$, we can equip $S^2$ with a natural class of metrics $d$, called \emph{visual metrics}, that are quasisymmetrically equivalent to each other and are constructed in a similar way as the visual metrics on the boundary $\partial_{\infty} G$ of a Gromov hyperbolic group $G$ (see \cite[Chapter~8]{bonk2017expanding} for details, and see \cite{haissinsky2009coarse} for a related construction). 
In the language above, the following theorem was obtained in \cite{bonk2010expanding,bonk2017expanding,haissinsky2009coarse}, which can be seen as an analog of Cannon’s conjecture for expanding Thurston maps.

\begin{theorem*}[M.~Bonk \& D.~Meyer \cite{bonk2010expanding,bonk2017expanding}; P.~Ha{\"i}ssinky \& K.~M.~Pilgrim \cite{haissinsky2009coarse}]
    Let $f \colon S^2 \mapping S^2$ be an expanding Thurston map with no periodic critical points and $d$ be a visual metric for $f$. 
    Then $f$ is topologically conjugate to a rational map if and only $(S^2, d)$ is quasisymmetrically equivalent to $\ccx$. 
\end{theorem*}

The dynamical systems that we study in this paper are called \emph{subsystems} of expanding Thurston maps (see Subsections~\ref{sub:Subsystems of expanding Thurston maps} for a precise definition), inspired by a translation of the notion of subgroups from geometric group theory via Sullivan’s dictionary.
To reveal the connections between subsystems and subgroups, we first quickly review some backgrounds in Gromov hyperbolic groups and recall the notion of \emph{tile graphs} for expanding Thurston maps (see \cite[Chapters~4 and~10]{bonk2017expanding} for details).

Let $G$ be a Gromov hyperbolic group and $S$ a finite generating set of $G$. 
Then the \emph{Cayley graph} $\mathcal{G}(G, S)$ of $G$ is Gromov hyperbolic with respect to the word-metric.
The boundary at infinity of $G$ is defined as $\partial_{\infty} G \define \partial_{\infty} \mathcal{G}(G, S)$, which is well-defined since a change of the generating set induces a quasi-isometry of the Cayley graphs.

Let $f \colon S^2 \mapping S^2$ be an expanding Thurston map and $\mathcal{C} \subseteq S^2$ a Jordan curve with $\post{f} \subseteq S^2$.
An associated \emph{tile graph} $\mathcal{G}(f, \mathcal{C})$ is defined as follows.
Its vertices are given by the tiles in the cell decompositions $\mathcal{D}^{n}(f, \mathcal{C})$ on all levels $n \in \n_{0}$.
We consider $X^{-1} \define S^2$ as a tile of level $-1$ and add it as a vertex.
One joins two vertices by an edge if the corresponding tiles intersect and have levels differing by at most $1$ (see \cite[Chapter~10]{bonk2017expanding} for details).  
Then the graph $\mathcal{G}(f, \mathcal{C})$ is Gromov hyperbolic and its boundary at infinity $\partial_{\infty}\mathcal{G} \define \partial_{\infty}\mathcal{G}(f, \mathcal{C})$ is well-defined and can be naturally identified with $S^2$.
Moreover, under this identification, a metric on $\partial_{\infty}\mathcal{G} \cong S^2$ is visual in the sense of Gromov hyperbolic spaces if and only if it is visual in the sense of expanding Thurston maps.

From the point of view of tile graphs and Cayley graphs, roughly speaking, for expanding Thurston maps, $1$-tiles together with the maps restricted to those tiles play the role of generators for Gromov hyperbolic groups. 
For example, one can construct the original expanding Thurston map $f$ from all its $1$-tiles and the maps restricted to those tiles. 
If we start with all $n$-tiles for some $n \in \n$, then we get an iteration $f^{n}$ of $f$, which corresponds to a finite index subgroup of the original group in the group setting.
Inspired by this similarity, it is natural to investigate more general cases, for example, a map generated by some $1$-tiles, which leads to our study of subsystems.
Moreover, the notion of tile graphs can be easily generalized to subsystems, and the similar identifications hold with $S^2$ generalized to the tile maximal invariant sets associated with subsystems.

Under Sullivan's dictionary, an expanding Thurston map corresponds to a Gromov hyperbolic group whose boundary at infinity is $S^2$.
In this sense, a subsystem corresponds to a Gromov hyperbolic group whose boundary at infinity is a subset of $S^{2}$.
In particular, for Gromov hyperbolic groups whose boundary at infinity is a \sierpinski carpet, there is an analog of Cannon's conjecture---the Kapovich--Kleiner conjecture. 
It predicts that these groups arise from some standard situation in hyperbolic geometry.
Similar to Cannon's conjecture, one can reformulate the Kapovich--Kleiner conjecture in an equivalent way as a question related to quasisymmetric uniformization.
For subsystems, it is easy to find examples where the tile maximal invariant set is homeomorphic to the standard \sierpinski carpet (see Subsection~\ref{sub:Subsystems of expanding Thurston maps} for examples of subsystems).
In this case, an analog of the Kapovich--Kleiner conjecture for subsystems is established in \cite{bonk2024dynamical}. 

In this paper, we study the dynamics of subsystems of expanding Thurston maps from the point of view of ergodic theory.
Ergodic theory has been an essential tool in the study of dynamical systems. 
The investigation of the existence and uniqueness of invariant measures and their properties has been a central part of ergodic theory. 
However, a dynamical system may possess a large class of invariant measures, some of which may be more interesting than others. 
It is, therefore, crucial to examine the relevant invariant measures.

The \emph{thermodynamic formalism} serves as a viable mechanism for generating invariant measures endowed with desirable properties.
More precisely, for a continuous transformation on a compact metric space, we can consider the \emph{topological pressure} as a weighted version of the \emph{topological entropy}, with the weight induced by a real-valued continuous function, called \emph{potential}. The Variational Principle identifies the topological pressure with the supremum of its measure-theoretic counterpart, the \emph{measure-theoretic pressure}, over all invariant Borel probability measures \cite{bowen1975equilibrium, walters1982introduction}. Under additional regularity assumptions on the transformation and the potential, one gets the existence and uniqueness of an invariant Borel probability measure maximizing the measure-theoretic pressure, called the \emph{equilibrium state} for the given transformation and the potential. 
The study of the existence and uniqueness of the equilibrium states and their various other properties, such as ergodic properties, equidistribution, fractal dimensions, etc., has been the primary motivation for much research in the area.


The ergodic theory for expanding Thurston maps has been investigated in \cite{li2017ergodic} by the first-named author of the current paper.
In \cite{li2018equilibrium}, the first-named author of the current paper works out the thermodynamic formalism and investigates the existence, uniqueness, and other properties of equilibrium states for expanding Thurston maps.
In particular, for each expanding Thurston map without periodic critical points, by using a general framework devised by Y.~Kifer \cite{kifer1990large}, the first-named author of the current paper establishes level-$2$ large deviation principles for iterated preimages and periodic points in \cite{li2015weak}.

The current paper is the second in a series of two papers (together with \cite{shi2024thermodynamic}) investigating the ergodic theory of subsystems of expanding Thurston maps.
In the previous paper \cite{shi2024thermodynamic}, we investigated the thermodynamic formalism and demonstrated the existence of equilibrium states for subsystems and real-valued \holder continuous potentials.
In the present paper, we study the uniqueness and ergodic properties of equilibrium states for subsystems and real-valued \holder continuous potentials.
Based on the existence and uniqueness of equilibrium states, for subsystems of expanding Thurston maps without periodic critical point, we establish level-2 large deviation principles for iterated preimages.

\subsection{Main results}%
\label{sub:Main results}


In order to state our results more precisely, we quickly review some key concepts.
We refer the reader to Section~\ref{sec:Preliminaries} for more details.

Let $f \colon S^2 \mapping S^2$ be an expanding Thurston map with a Jordan curve $\mathcal{C}\subseteq S^2$ satisfying $\post{f} \subseteq \mathcal{C}$. 
We say that a map $F \colon \domF \mapping S^2$ is a \emph{subsystem of $f$ with respect to $\mathcal{C}$} if $\domF = \bigcup \mathfrak{X}$ for some non-empty subset $\mathfrak{X} \subseteq \Tile{1}$ and $F = f|_{\domF}$.
We denote by $\subsystem$ the set of all subsystems of $f$ with respect to $\mathcal{C}$.

Consider a subsystem $F \in \subsystem$. For each $n \in \n_0$, we define the \emph{set of $n$-tiles of $F$} to be 
\[
    \Domain{n} \define \{ X^n \in \Tile{n} \describe X^n \subseteq F^{-n}(F(\domF)) \},
\]
where we set $F^0 \define \id{S^{2}}$ when $n = 0$. We call each $X^n \in \Domain{n}$ an \emph{$n$-tile} of $F$. 
We define the \emph{tile maximal invariant set} associated with $F$ with respect to $\mathcal{C}$ to be
\[
    \limitset(F, \mathcal{C}) \define \bigcap_{n \in \n} \Bigl( \bigcup \Domain{n} \Bigr), 
\]
which is a compact subset of $S^{2}$. 

One of the key properties of $\limitset(F, \mathcal{C})$ is $F(\limitset(F, \mathcal{C})) \subseteq \limitset(F, \mathcal{C})$ (see Proposition~\ref{prop:subsystem:preliminary properties}~\ref{item:subsystem:properties:limitset forward invariant}). 
Therefore, we can restrict $F$ to $\limitset(F, \mathcal{C})$ and consider the map $F|_{\limitset(F, \mathcal{C})} \colon \limitset(F, \mathcal{C}) \to \limitset(F, \mathcal{C})$ and its iterations.


In the following theorem, we investigate the uniqueness and various ergodic properties of the equilibrium states for strongly primitive subsystems (see Definition~\ref{def:irreducibility of subsystem} in Subsection~\ref{sub:Subsystems of expanding Thurston maps}) and \holder continuous potentials, and establish the equidistribution of preimages of subsystems with respect to the equilibrium states.

\begin{theorem}        \label{thm:main:equilibrium state of subsystems}
    Let $f \colon X \mapping X$ be either an expanding Thurston map on a topological 2-sphere $X = S^2$ equipped with a visual metric or a postcritically-finite rational map with no periodic critical points on the Riemann sphere $X = \ccx$ equipped with the chordal metric.
    Let $\potential \colon X \mapping \real$ be \holder continuous.
    Let $\mathcal{C} \subseteq X$ be a Jordan curve containing $\post{f}$ with the property that $f(\mathcal{C}) \subseteq \mathcal{C}$. 
    Consider a strongly primitive subsystem $F \in \subsystem$.
    Denote $\limitset \define \limitset(F, \mathcal{C})$.
    
    Then there exists a unique equilibrium state $\mu_{F, \potential}$ for $F|_{\limitset}$ and $\phi|_{\limitset}$.    
    Moreover, $\mu_{F, \potential}$ is non-atomic and the measure-preserving transformation $F|_{\limitset}$ of the probability space $(\limitset, \mu_{F, \potential})$ is forward quasi-invariant, exact, and in particular, mixing and ergodic.

    In addition, the preimages points of $F$ are equidistributed with respect to $\mu_{F, \potential}$, i.e., for each sequence $\sequen{x_{n}}$ of points in $X$ and each sequence $\sequen{\colour_{n}}$ of colors in $\colours$ satisfying $x_{n} \in X^0_{\colour_{n}}$ for each $n \in \n$, we have
    \[
        \frac{1}{Z_{n}(\potential)} \sum_{ y \in F^{-n}(x_{n}) } \ccndegF{\colour_{n}}{}{n}{y} \myexp[\big]{ S_{n}^{F} \potential(y) } \frac{1}{n} \sum_{i = 0}^{n - 1} \delta_{F^{i}(y)}
        \weakconverge \equstate  \qquad \text{as } n \to +\infty, 
    \]
    where $S_{n}^{F}\potential(y) \define \sum_{i = 0}^{n - 1} \potential(F^{i}(y))$ and $Z_{n}(\potential) \define \sum_{ y \in F^{-n}(x_{n}) } \ccndegF{\colour_{n}}{}{n}{y} \myexp[\big]{S_{n}^{F}\potential(y)}$.
\end{theorem}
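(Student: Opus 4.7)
The plan is to build $\mu_{F, \potential}$ through the Ruelle transfer operator $\mathcal{L} \define \mathcal{L}_{F, \potential}$ associated with $F|_{\limitset}$ and $\potential|_{\limitset}$, following the broad strategy developed for expanding Thurston maps in \cite{li2018equilibrium} and adapted here to the subsystem framework built in the prequel \cite{shi2024thermodynamic}. Building on the existence results already obtained there, I would start from a Borel probability eigen-measure $m_{\potential}$ of the dual operator $\mathcal{L}^{*}$ supported on $\limitset$ with eigenvalue $e^{P(F, \potential)}$, where $P(F, \potential)$ denotes the topological pressure. A Ces\`aro-averaging argument applied to $e^{-n P(F, \potential)} \mathcal{L}^{n} \mathbf{1}$ then produces a strictly positive continuous eigenfunction $u_{\potential}$, normalized by $\int u_{\potential} \,\mathrm{d} m_{\potential} = 1$, and I would set $\mu_{F, \potential} \define u_{\potential} \, m_{\potential}$, which the Variational Principle from \cite{shi2024thermodynamic} identifies as an equilibrium state.

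Next, I would establish a Gibbs-type two-sided bound on $n$-tiles of $F$: there exists $C > 1$ such that
\[
    C^{-1} \leq \frac{\mu_{F, \potential}(X^{n} \cap \limitset)}{\exp\bigl( S_{n}^{F} \potential(y) - n P(F, \potential) \bigr)} \leq C
\]
for every $n \in \n$, every $X^{n} \in \Domain{n}$, and every $y \in X^{n} \cap \limitset$. This rests on a bounded distortion estimate for $S_{n}^{F} \potential$, coming from the \holder regularity of $\potential$ together with the exponential shrinking of tile diameters (in the visual metric) guaranteed by the expansion of $f$. Since tile diameters tend uniformly to zero, the Gibbs bound immediately forces $\mu_{F, \potential}$ to be non-atomic.

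To obtain uniqueness and the ergodic properties, I would exploit strong primitivity of $F$: it guarantees that some iterate of the combinatorial ``tile transition'' for $F$ is strictly positive on $\mathfrak{X} \subseteq \Tile{1}$, from which a Lasota--Yorke-type inequality for $\mathcal{L}$ on a suitable \holder space on $\limitset$ yields uniform convergence $e^{-n P(F, \potential)} \mathcal{L}^{n} g \to u_{\potential} \int g \,\mathrm{d} m_{\potential}$ for every $g \in C(\limitset)$. This ``spectral gap after normalization'' translates directly into exactness of $F|_{\limitset}$ with respect to $\mu_{F, \potential}$, and hence into mixing and ergodicity; forward quasi-invariance follows from the Gibbs bounds together with the fact that $F$ restricted to each $n$-tile is a branched cover whose local degrees are controlled. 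Uniqueness is then obtained by showing that any equilibrium state must itself satisfy the same two-sided Gibbs bounds, after which a standard Radon--Nikodym plus ergodicity argument forces coincidence with $\mu_{F, \potential}$.

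Finally, the equidistribution statement is read off from the same uniform convergence. The key identity is
\[
    \sum_{y \in F^{-n}(x_{n})} \ccndegF{\colour_{n}}{}{n}{y} \myexp[\big]{ S_{n}^{F} \potential(y) } h(y) = \bigl( \mathcal{L}^{n} h \bigr)(x_{n})
\]
for continuous $h$ on $\limitset$, which is precisely why the hypothesis $x_{n} \in X^{0}_{\colour_{n}}$ appears: it aligns the local-degree weights with the transfer operator's definition. Applying this with $h = \frac{1}{n} \sum_{i = 0}^{n - 1} g \circ F^{i}$ for $g \in C(\limitset)$ and combining with the uniform ergodic averaging of $\frac{1}{n} S_{n}^{F} g$ against the Gibbs measure yields the desired weak limit $\mu_{F, \potential}$. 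The main obstacle will be the spectral-gap step: the dynamics lives on $\limitset$, which may be topologically wild (e.g., a \sierpinski carpet), so one must carry out the Lasota--Yorke argument on a \holder space adapted to the tile structure of $F$, reconcile it with the branching degrees at postcritical points, and verify that strong primitivity provides enough combinatorial ``spreading'' of tile chains to cover all of $\limitset$ uniformly.
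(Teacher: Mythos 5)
Your proposal diverges from the paper's argument in several places, and a couple of the gaps are genuine obstructions rather than matters of taste.

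The most serious gap is that you work with a single transfer operator $\mathcal{L}_{F, \potential}$ acting on $C(\limitset)$, and plan to produce a continuous eigenfunction $u_{\potential}$ by Ces\`aro-averaging $e^{-nP}\mathcal{L}^{n}\mathbf{1}$. The paper explicitly identifies the obstruction to this: for a subsystem the combinatorial structure of tiles need not be balanced across colors (for instance the number of white $1$-tiles need not equal the number of black $1$-tiles), so the candidate eigenfunction is continuous only on the interiors of the two $0$-tiles and may have a jump across $\mathcal{C}$. Since $\limitset$ will generally meet $\mathcal{C}$, your Ces\`aro limit need not live in $C(\limitset)$, and the entire spectral-gap/Lasota--Yorke program on a H\"older space on $\limitset$ has nothing to hang on. The paper circumvents this by replacing $S^{2}$ with the \emph{split sphere} $\splitsphere = X^{0}_{\black}\sqcup X^{0}_{\white}$ and by working with the split Ruelle operator $\splopt[\varphi]$ and its normalization $\twistsplopt$ on $C(\splitsphere)$ --- where the eigenfunction $\eigfun$ is a pair $(u_{\black}, u_{\white})$ of H\"older functions, one per $0$-tile, with no continuity forced across $\mathcal{C}$. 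Your own ``key identity'' $\sum_{y}\ccndegF{\colour_{n}}{}{n}{y}e^{S_{n}^{F}\potential(y)}h(y) = (\mathcal{L}^{n}h)(x_{n})$ already carries a color index $\colour_{n}$ depending on which $0$-tile $x_{n}$ lies in; that is exactly the signature of the split operator, and it cannot be reproduced by a single transfer operator on $\limitset$.

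Secondly, the uniqueness mechanism you sketch --- show every equilibrium state satisfies the Gibbs bounds, then use a Radon--Nikodym-plus-ergodicity argument --- is not what the paper does and is a nontrivial claim in its own right: establishing that an arbitrary equilibrium state obeys two-sided Gibbs estimates typically requires additional inputs (e.g.\ a conditional-measure or specification-type argument) that you do not supply. The paper instead proves G\^ateaux differentiability of $t\mapsto \pressure[\potential+t\gamma]$ (Theorem~\ref{thm:derivative of pressure subsystem}) and invokes the tangent-functional criterion (Theorem~\ref{thm:tengent}); the differentiability is extracted from the uniform convergence $\lVert\twistsplopt^{n}(\widetilde{v})\rVert\to 0$ on the split sphere, which is weaker than the exponential decay a spectral gap would give but is all that is needed. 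Finally, for exactness the paper does not pass through a spectral gap either: it runs a direct tile-covering argument using the Jacobian identity and the Gibbs property of $\eigmea$, then upgrades exactness to mixing by Rohlin's theory on Lebesgue spaces. Your route via spectral gap would certainly yield exactness if the gap were available, but without the split framework you have not established the gap, and the paper never claims one.
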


Here $\juxtapose{X^0_{\black}}{X^0_{\white}} \in \mathbf{X}^0(f, \mathcal{C})$ are the black $0$-tile and the white $0$-tile (see Subsection~\ref{sub:Thurston_maps}), respectively, $\ccndegF{\black}{}{n}{x}$ and $\ccndegF{\white}{}{n}{x}$ are the black degree and white degree of $F^n$ at $x$ (see Definition~\ref{def:subsystem local degree} in Subsection~\ref{sub:Subsystems of expanding Thurston maps}), respectively, and the symbol $w^{*}$ indicates convergence in the weak$^{*}$ topology.

Theorem~\ref{thm:main:equilibrium state of subsystems} follows immediately from Remark~\ref{rem:chordal metric visual metric qs equiv}, Theorems~\ref{thm:subsystem:uniqueness of equilibrium state}, \ref{thm:subsystem exact with respect to equilibrium state}, Corollaries~\ref{coro:subsystem equilibrium state non-atomic}, \ref{coro:subsystem mixing}, and Theorem~\ref{thm:split version subsystem equidistribution for preimages}.

\begin{rmk}
The existence of equilibrium states in Theorem~\ref{thm:main:equilibrium state of subsystems} has been established in \cite[Theorem~1.1]{shi2024thermodynamic}. 
\end{rmk}

Based on Theorem~\ref{thm:main:equilibrium state of subsystems}, for strongly primitive subsystems of expanding Thurston maps without periodic critical point, we obtain a level-2 large deviation principle (see Subsection~\ref{sub:Level-2 large deviation principles} for definitions) for Birkhoff averages and iterated preimages.

\begin{theorem}   \label{thm:level-2 larger deviation principles for subsystem}
    Let $f \colon X \mapping X$ be either an expanding Thurston map with no periodic critical points on a topological 2-sphere $X = S^2$ equipped with a visual metric or a postcritically-finite rational map with no periodic critical points on the Riemann sphere $X = \ccx$ equipped with the chordal metric.
    Let $\potential \colon X \mapping \real$ be \holder continuous.
    Let $\mathcal{C} \subseteq X$ be a Jordan curve containing $\post{f}$ with the property that $f(\mathcal{C}) \subseteq \mathcal{C}$. 
    Consider a strongly primitive subsystem $F \in \subsystem$.
    Denote $\limitset \define \limitset(F, \mathcal{C})$.
    Let $\probmea{\limitset}$ denote the space of Borel probability measures on $\limitset$ equipped with the weak$^{*}$-topology.
    Let $\equstate$ be the unique equilibrium state for $F|_{\limitset}$ and $\potential|_{\limitset}$.

    For each $n \in \n$, let $V_{n} \colon \limitset \mapping \probmea{\limitset}$ be the continuous function defined by 
    \begin{equation}    \label{eq:def:delta measure for orbit:subsystem}
        \deltameasure{x} \define \frac{1}{n} \sum_{i = 0}^{n - 1} \delta_{F^{i}(x)},
    \end{equation}
    and denote $S_{n}^{F}\potential(x) \define \sum_{i = 0}^{n - 1} \potential(F^{i}(x))$ for each $x \in \limitset$.
    For each $n \in \n$, we consider the following Borel probability measures on $\probmea{\limitset}$.

    {\bf Birkhoff averages.} $\birkhoffmeasure \define (V_{n})_{*}(\equstate)$ (i.e., $\birkhoffmeasure$ is the push-forward of $\equstate$ by $V_{n} \colon \limitset \mapping \probmea{\limitset}$).


    {\bf Iterated preimages.} Given a sequence $\{ x_{j} \}_{j \in \n}$ of points in $\limitset \setminus \mathcal{C}$, put
    \begin{equation}    \label{eq:def:Iterated preimages distribution} 
        \Omega_{n}(x_{n}) \define \sum_{y \in (F|_{\limitset})^{-n}(x_{n})} \frac{ \myexp[\big]{ S_{n}^{F}\potential(y) } }{\sum_{y' \in (F|_{\limitset})^{-n}(x_{n})} \myexp[\big]{ S_{n}^{F}\potential(y') } }  \delta_{\deltameasure{y}}.
    \end{equation}

    Then each of the sequences $\{ \birkhoffmeasure \}_{n \in \n}$ and $\{ \Omega_{n}(x_{n}) \}_{n \in \n}$ converges to $\delta_{\equstate}$ in the weak$^{*}$ topology, and satisfies a large deviation principle with the rate function $\ratefun \colon \probmea{\limitset} \mapping [0, +\infty]$ given by
    \begin{equation}    \label{eq:def:rate function:subsystem}
         \ratefun(\mu) \define
        \begin{cases}
            \pressure - h_{\mu}(F|_{\limitset}) - \int\! \phi \,\mathrm{d}\mu & \mbox{if } \mu \in \mathcal{M}(\limitset, F|_{\limitset}); \\
            +\infty & \mbox{if } \mu \in \probmea{\limitset} \setminus \mathcal{M}(\limitset, F|_{\limitset}).
        \end{cases}
    \end{equation}
    Furthermore, for each convex open subset $\mathcal{G}$ of $\probmea{\limitset}$ containing some invariant measure, we have $\inf_{\mathcal{G}} \ratefun = \inf_{\overline{\mathcal{G}}} \ratefun$, and
    \begin{equation}    \label{eq:equalities for rate function:subsystem}
        \lim_{n \to +\infty} \frac{1}{n} \log \birkhoffmeasure(\mathcal{G})
        = \lim_{n \to +\infty} \frac{1}{n} \log \Omega_{n}(x_{n})(\mathcal{G})
        = -\inf_{\mathcal{G}} \ratefun,
    \end{equation}
    and \eqref{eq:equalities for rate function:subsystem} remains true with $\mathcal{G}$ replaced by its closure $\overline{\mathcal{G}}$.
\end{theorem}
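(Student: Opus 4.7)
The plan is to derive both large deviation principles from the abstract framework of Y.~Kifer~\cite{kifer1990large}, following the strategy used for (original) expanding Thurston maps without periodic critical points in \cite{li2015weak}. Kifer's framework requires three inputs: (i) upper semi-continuity of the entropy map $\mu \mapsto h_{\mu}(F|_{\limitset})$ on $\mathcal{M}(\limitset, F|_{\limitset})$; (ii) existence and uniqueness of an equilibrium state for each potential in a class $\{\phi + \psi\}$ rich enough to separate points of $\mathcal{M}(\limitset, F|_{\limitset})$; and (iii) for each of the sequences $\{\birkhoffmeasure\}$ and $\{\Omega_{n}(x_{n})\}$, the convergence, for every real-valued continuous $\psi$ on $\limitset$,
\[
    \frac{1}{n} \log \int_{\probmea{\limitset}} \myexp[\big]{ n \textstyle\int \psi \,\mathrm{d}\mu } \,\mathrm{d}Q_{n}(\mu) \longrightarrow \pressure(F|_{\limitset}, \phi + \psi) - \pressure(F|_{\limitset}, \phi)
\]
as $n \to +\infty$. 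Once (i)--(iii) are established, Kifer's theorem yields the LDP with rate function equal to the Legendre transform of $\psi \mapsto \pressure(F|_{\limitset}, \phi+\psi) - \pressure(F|_{\limitset}, \phi)$; the Variational Principle together with uniqueness of equilibrium states then identifies this transform with the explicit formula \eqref{eq:def:rate function:subsystem}, and $\ratefun(\equstate) = 0$ forces the convergence $Q_{n} \weakconverge \delta_{\equstate}$.

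Input (ii) for Hölder $\psi$ is immediate from Theorem~\ref{thm:main:equilibrium state of subsystems} applied to $\phi + \psi$, and Hölder functions are dense in $C(\limitset)$. For input (iii) in the Birkhoff case, since $n \int \psi \,\mathrm{d}V_{n}(x) = S_{n}^{F}\psi(x)$, the integral collapses to $\int \myexp[\big]{ S_{n}^{F} \psi } \,\mathrm{d}\equstate$; using the Gibbs-type tile estimates for $\equstate$ coming from the Ruelle-operator machinery developed in \cite{shi2024thermodynamic}, one sandwiches this integral between constant multiples of $\sum_{X^{n} \in \Domain{n}} \myexp[\big]{ \sup_{X^{n}} S_{n}^{F}(\phi+\psi) } \bigm/ \sum_{X^{n} \in \Domain{n}} \myexp[\big]{ \sup_{X^{n}} S_{n}^{F}\phi }$, whose logarithmic growth rate is exactly $\pressure(F|_{\limitset}, \phi+\psi) - \pressure(F|_{\limitset}, \phi)$. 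For the iterated-preimages case, the ratio in \eqref{eq:def:Iterated preimages distribution} transforms into a quotient of two preimage-weighted partition sums differing only by the extra factor $\myexp[\big]{ S_{n}^{F}\psi }$, and the desired limit then follows from the equidistribution statement in Theorem~\ref{thm:main:equilibrium state of subsystems} and the identification of the exponential growth rate of such sums with the topological pressure.

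The final assertion about convex open $\mathcal{G}$ containing an invariant measure is handled separately after the LDP is in hand. One uses the affineness of $\ratefun$ on $\mathcal{M}(\limitset, F|_{\limitset})$ and the convex structure to show that any $\mu \in \overline{\mathcal{G}}$ with $\ratefun(\mu) < +\infty$ can be approximated in $\mathcal{M}(\limitset, F|_{\limitset})$ by convex combinations with the given invariant measure, yielding $\inf_{\mathcal{G}} \ratefun = \inf_{\overline{\mathcal{G}}} \ratefun$; the two-sided equality \eqref{eq:equalities for rate function:subsystem} then follows by combining the LDP upper bound for $\overline{\mathcal{G}}$ with the LDP lower bound for $\mathcal{G}$.

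The main obstacle will be verifying input (i), upper semi-continuity of the entropy map for $F|_{\limitset}$. The invariant set $\limitset$ can be a genuinely proper subset of $S^{2}$ (e.g.\ a Sierpiński-carpet-like set), and $F|_{\limitset}$ is not open; the natural partition of $\limitset$ by $n$-tiles partitions $\limitset$ only up to boundary pieces lying in $F^{-n}(\mathcal{C})$, so for a general invariant $\mu$ one must control the mass on these tile boundaries uniformly. The hypothesis that $f$ has no periodic critical points is precisely what rules out the mass concentration at critical orbits that destroys upper semi-continuity in the general expanding Thurston setting (as analyzed in \cite{li2015weak}); implementing this rigorously for subsystems will likely require a refined Misiurewicz-type tile argument building on the partition and Gibbs estimates from \cite{shi2024thermodynamic}, together with a careful analysis of how $F$-orbits of points in $\limitset$ interact with $\mathcal{C}$.
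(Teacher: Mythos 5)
Your overall strategy — verifying the three hypotheses of Kifer's abstract theorem (as reformulated by Comman and Rivera-Letelier) and letting the general machinery produce the LDP, the rate function, the weak$^*$ convergence, and the convex-open-set refinement — is exactly the strategy of the paper's proof. Your treatment of input (ii) is correct, and your sketch of input (iii) (reducing the Birkhoff integral to $\int \myexp{S_n^F\psi}\,\mathrm{d}\equstate$ and the preimage sum to a ratio of weighted partition sums, then identifying the growth rates with pressures) is in substance the content of Propositions~\ref{prop:subsystem Birkhoff averages pressure characterization} and~\ref{prop:subsystem preimage pressure:recall}, though the paper runs these through the split Ruelle operator and the characterization \eqref{eq:equalities for characterizations of pressure} rather than a direct Gibbs sandwich. (You also propose reproving the convex-open-set assertion by hand; that work is unnecessary, since it is already one of the conclusions of Theorem~\ref{thm:large deviation principles Kifer Comman Juan}.)

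The place where your plan goes astray is input (i). You flag upper semi-continuity of the entropy map for $F|_{\limitset}$ as the main obstacle and predict it will require a ``refined Misiurewicz-type tile argument'' analyzing how $F$-orbits meet $\mathcal{C}$ and controlling mass on tile boundaries. None of that is needed. Since $\limitset$ is a compact forward-invariant subset of $S^2$, one has $\mathcal{M}(\limitset, F|_{\limitset}) \subseteq \mathcal{M}(S^2, f)$ (after extending invariant measures on $\limitset$ by zero), and $h_{\mu}(F|_{\limitset}) = h_{\mu}(f)$ for every $\mu \in \mathcal{M}(\limitset, F|_{\limitset})$. Thus upper semi-continuity of $\mu \mapsto h_{\mu}(F|_{\limitset})$ follows immediately from upper semi-continuity of $\mu \mapsto h_{\mu}(f)$, which for an expanding Thurston map without periodic critical points is already established in the literature. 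This is precisely Lemma~\ref{lem:upper semi-continuity for submap}, a two-line observation. So the ``hard part'' of your plan is actually the easy part, once you notice the restriction trick; you should replace your speculative tile analysis by this direct reduction. One further small point: to apply uniqueness (your input (ii)) for a Hölder potential $\psi$ defined only on $\limitset$, you need to know it extends to a Hölder potential on $S^2$; this is a standard extension fact and should be cited (it is Lemma~\ref{lem:restriction of holder potential to subset} in the paper).
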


We will prove Theorem~\ref{thm:level-2 larger deviation principles for subsystem} in Subsection~\ref{sub:Proof of large deviation principles}.
As an immediate consequence of Theorem~\ref{thm:level-2 larger deviation principles for subsystem}, we get the following corollary.
See Subsection~\ref{sub:Proof of large deviation principles} for the proof.

\begin{corollary} \label{coro:measure-theoretic pressure infimum on local basis:subsystem}
    Let $f \colon X \mapping X$ be either an expanding Thurston map with no periodic critical points on a topological 2-sphere $X = S^2$ equipped with a visual metric or a postcritically-finite rational map with no periodic critical points on the Riemann sphere $X = \ccx$ equipped with the chordal metric.
    Let $\potential \colon X \mapping \real$ be \holder continuous.
    Let $\mathcal{C} \subseteq X$ be a Jordan curve containing $\post{f}$ with the property that $f(\mathcal{C}) \subseteq \mathcal{C}$. 
    Consider a strongly primitive subsystem $F \in \subsystem$.
    Denote $\limitset \define \limitset(F, \mathcal{C})$.
    Let $\equstate$ be the unique equilibrium state for $F|_{\limitset}$ and $\potential|_{\limitset}$.

    Consider a sequence $\{ x_{n} \}_{n \in \n}$ of points in $\limitset \setminus \mathcal{C}$.
    Then for each $\mu \in \invmea[\limitset][F|_{\limitset}]$ and each convex local basis $G_{\mu}$ of $\probmea{\limitset}$ at $\mu$, we have
    \begin{equation}    \label{eq:coro:measure-theoretic pressure infimum on local basis:subsystem}
        \begin{split}
            h_{\mu}(F|_{\limitset}) + \int \! \potential \,\mathrm{d}\mu 
            &= \inf_{\mathcal{G} \in G_{\mu}} \biggl\{ \lim_{n \to +\infty} \frac{1}{n} \log \equstate (\set{x \in \limitset \describe \deltameasure[n]{x} \in \mathcal{G}}) \biggr\} + \pressure  \\
            &= \inf_{\mathcal{G} \in G_{\mu}} \biggl\{ \lim_{n \to +\infty} \frac{1}{n} \log \sum_{ y \in (F|_{\limitset})^{-n}(x_{n}), \deltameasure[n]{y} \in \mathcal{G} } \myexp[\big]{S_{n}^{F} \potential(y)} \biggr\}.
        \end{split}
    \end{equation}
    Here $V_{n}$ and $S_{n}^{F} \potential$ are as defined in Theorem~\ref{thm:level-2 larger deviation principles for subsystem}.
\end{corollary}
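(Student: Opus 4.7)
The plan is to deduce Corollary~\ref{coro:measure-theoretic pressure infimum on local basis:subsystem} directly from Theorem~\ref{thm:level-2 larger deviation principles for subsystem} by a standard local-basis argument. Fix $\mu \in \invmea[\limitset][F|_{\limitset}]$ and a convex local basis $G_{\mu}$ at $\mu$. Each $\mathcal{G} \in G_{\mu}$ is a convex open subset of $\probmea{\limitset}$ containing the invariant measure $\mu$, so the final assertions of Theorem~\ref{thm:level-2 larger deviation principles for subsystem} apply and give
\[
\lim_{n \to +\infty} \frac{1}{n} \log \birkhoffmeasure(\mathcal{G})
= \lim_{n \to +\infty} \frac{1}{n} \log \Omega_{n}(x_{n})(\mathcal{G})
= -\inf_{\mathcal{G}} \ratefun.
\]
Since $\birkhoffmeasure = (V_{n})_{*}(\equstate)$, the first limit coincides with $\lim_{n\to+\infty} \frac{1}{n} \log \equstate(\set{x \in \limitset \describe \deltameasure[n]{x} \in \mathcal{G}})$, matching the quantity inside the infimum on the first line of~\eqref{eq:coro:measure-theoretic pressure infimum on local basis:subsystem}.

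Next, take the infimum over $\mathcal{G} \in G_{\mu}$ and interchange it with the interior infimum via
\[
\inf_{\mathcal{G} \in G_{\mu}} \bigl( -\inf_{\mathcal{G}} \ratefun \bigr)
= -\sup_{\mathcal{G} \in G_{\mu}} \inf_{\mathcal{G}} \ratefun
= -\ratefun(\mu),
\]
where the last equality uses that $\ratefun$ is lower semi-continuous at $\mu$ and that $G_{\mu}$ is a local basis (so any value strictly below $\ratefun(\mu)$ is excluded on a sufficiently small element of $G_{\mu}$). Substituting $\ratefun(\mu) = \pressure - h_{\mu}(F|_{\limitset}) - \int\!\phi\,\mathrm{d}\mu$ and adding $\pressure$ to both sides yields the first equality in~\eqref{eq:coro:measure-theoretic pressure infimum on local basis:subsystem}. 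Here lower semi-continuity of $\ratefun$ reduces to upper semi-continuity of the entropy map $\nu \mapsto h_{\nu}(F|_{\limitset})$ on $\invmea[\limitset][F|_{\limitset}]$ together with the weak$^{*}$-continuity of $\nu \mapsto \int\!\phi\,\mathrm{d}\nu$; this is available under the no-periodic-critical-point hypothesis, either directly from the companion paper~\cite{shi2024thermodynamic}, or by lifting $F$-invariant measures on $\limitset$ to $f$-invariant measures on $S^{2}$ and invoking the analogous fact for $f$ from~\cite{li2015weak}.

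For the second equality, set $Z_{n} \define \sum_{y \in (F|_{\limitset})^{-n}(x_{n})} \myexp[\big]{S_{n}^{F}\phi(y)}$, so that
\[
\sum_{y \in (F|_{\limitset})^{-n}(x_{n}),\, \deltameasure[n]{y} \in \mathcal{G}} \myexp[\big]{S_{n}^{F}\phi(y)}
= Z_{n} \cdot \Omega_{n}(x_{n})(\mathcal{G}).
\]
Taking $\frac{1}{n}\log$ and letting $n \to +\infty$, the limit splits as $\lim_{n\to+\infty} \frac{1}{n}\log Z_{n} - \inf_{\mathcal{G}} \ratefun$; once we know $\frac{1}{n}\log Z_{n} \to \pressure$ (a Gurevich-type characterization of topological pressure that I would extract from the preimage equidistribution in Theorem~\ref{thm:main:equilibrium state of subsystems}, after a comparison argument relating the unweighted sum $Z_{n}$ to the weighted normalization appearing there), a second application of the local-basis identity from the previous paragraph produces the second equality. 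The main technical obstacle I expect is assembling the lower semi-continuity of $\ratefun$ in the subsystem setting, i.e., upper semi-continuity of the entropy on $\invmea[\limitset][F|_{\limitset}]$; the companion convergence $\frac{1}{n}\log Z_{n} \to \pressure$ should be a short but delicate comparison argument because $Z_{n}$ omits the black/white local-degree weights $\ccndegF{\colour_{n}}{}{n}{y}$ used in Theorem~\ref{thm:main:equilibrium state of subsystems}.
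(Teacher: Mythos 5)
Your proposal follows essentially the same route as the paper's proof: read off the two limits from \eqref{eq:equalities for rate function:subsystem} applied to each $\mathcal{G}\in G_{\mu}$, pass the infimum over $G_{\mu}$ through to recover $-\ratefun(\mu)$ via lower semi-continuity of $\ratefun$ (equivalently, upper semi-continuity of the entropy map of $F|_{\limitset}$, which the paper gets from \cite[Theorem~1.1]{shi2024entropy} together with Lemma~\ref{lem:upper semi-continuity for submap}), and then normalize the iterated-preimage sum by the partition function $Z_{n}$.

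The one place you flag as a ``delicate comparison argument'' is unnecessary: the identity $\lim_{n\to+\infty}\frac{1}{n}\log Z_{n} = \pressure$ that you need is exactly the content of Proposition~\ref{prop:subsystem preimage pressure:recall}, stated in Subsection~\ref{sub:Characterizations of pressures}, and valid for any $y_{0}\in\limitset\setminus\mathcal{C}$ (in particular for each $x_{n}$). You do not need to extract it from the degree-weighted equidistribution in Theorem~\ref{thm:main:equilibrium state of subsystems}, nor to relate the weighted normalization $\sum_{y}\ccndegF{\colour_{n}}{}{n}{y}\myexp[\big]{S_{n}^{F}\potential(y)}$ to the unweighted sum $Z_{n}$; those two quantities are indeed different (the former sums over $F^{-n}(x_{n})$ with multiplicities, the latter over $(F|_{\limitset})^{-n}(x_{n})$ without), and the paper sidesteps the comparison entirely by invoking Proposition~\ref{prop:subsystem preimage pressure:recall}. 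With that substitution, your argument is complete and matches the paper's.
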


\subsection{Strategy and organization of the paper}%
\label{sub:Strategy and organization of the paper}

We now discuss the strategy of the proofs of our main results and describe the organization of the paper.

We divide the proof of Theorem~\ref{thm:main:equilibrium state of subsystems} into three parts: uniqueness, ergodic properties, and equidistribution results.
Note that in Theorem~\ref{thm:main:equilibrium state of subsystems} the existence of equilibrium states and the property that the measure-preserving transformation $F|_{\limitset}$ of the probability space $(\limitset, \mu_{F, \potential})$ is forward quasi-invariant have been established in \cite{shi2024thermodynamic} (see Theorem~\ref{thm:subsystem characterization of pressure and existence of equilibrium state}).

To prove the uniqueness of equilibrium states, we investigate the (G\^ateaux) differentiability of the topological pressure function and apply some techniques from functional analysis.
More precisely, a general fact from functional analysis (record in Theorem~\ref{thm:tengent}) states that for an arbitrary convex continuous function $Q \colon V \mapping \real$ on a separable Banach space $V$, there exists a unique continuous linear functional $L \colon V \mapping \real$ tangent to $Q$ at $x \in V$ if and only if the functional $t \mapsto Q(x + ty)$ is differentiable at $0$ for all $y$ in a subset $U$ of $V$ that is dense in the weak topology on $V$.
One then observes that for each continuous map $g \colon X \mapping X$ on a compact metric space $X$, the topological pressure function $P(g, \cdot) \colon C(X) \mapping \real$ is continuous and convex (see for example, \cite[Theorems~3.6.1 and 3.6.2]{przytycki2010conformal}), and if $\mu$ is an equilibrium state for $g$ and $\varphi \in C(X)$, then the continuous linear functional $u \mapsto \int \! u \,\mathrm{d}\mu$, for $u \in C(X)$, is tangent to $P(g, \cdot)$ at $\varphi$ (see for example, \cite[Theorem~3.6.6]{przytycki2010conformal}).
Thus, in order to verify the uniqueness of the equilibrium state associated with a subsystem $F$ and a real-valued \holder continuous potential $\phi$, it suffices to prove the function $t \mapsto P(F, \phi + t \gamma)$ is differentiable at $0$, for all $\gamma$ in a dense subspace of $C(S^2)$.
This is established in Theorem~\ref{thm:derivative of pressure subsystem}.

To prove Theorem~\ref{thm:derivative of pressure subsystem}, we introduce \emph{normalized split Ruelle operators} induced by split Ruelle operators, and establish some uniform bounds in Proposition~\ref{prop:modulus for function under split ruelle operator} and Lemma~\ref{lem:upper bound for uniform norm of normalized split ruelle operator}, which are then used to show uniform convergence results in Theorem~\ref{thm:converge of uniform norm of normalized split ruelle operator} and Lemma~\ref{lem:converge of derivative for pressure of subsystem}.
Unlike the case of expanding Thurston maps or uniformly expanding maps, for subsystems, one cannot define a normalized Ruelle operator just by normalizing potentials. 
More specifically, since the combinatorial structure of tiles of a subsystem is inadequate (for example, the number of white $1$-tiles may not equal the number of black $1$-tiles), the eigenfunctions of the split Ruelle operator may not be continuous on the sphere.
However, that the eigenfunctions are always continuous in the interior of $0$-tiles, i.e., discontinuities can only occur at the boundary of $0$-tiles.
To overcome such difficulties, we introduce the notion of the split sphere, which is defined as the disjoint union of two $0$-tiles (see Definition~\ref{def:split sphere}), instead of the original topological $2$-sphere.
Then we can define the normalized split Ruelle operators on the space of continuous functions on the split sphere.

We next prove that the measure-preserving transformation $F|_{\limitset}$ of the probability space $(\limitset, \equstate)$ is exact (Theorem~\ref{thm:subsystem exact with respect to equilibrium state}), where we use the Jacobian function and the Gibbs property of the equilibrium state $\equstate$ established in \cite[Proposition~6.29]{shi2024thermodynamic}.
It follows in particular that the equilibrium state $\equstate$ is non-atomic (Corollary~\ref{coro:subsystem equilibrium state non-atomic}) and the transformation $F|_{\limitset}$ is mixing and ergodic (Corollary~\ref{coro:subsystem mixing}).

Finally, we prove the equidistribution results for preimages (Theorem~\ref{thm:split version subsystem equidistribution for preimages}) by applying the uniform convergence results (Theorem~\ref{thm:converge of uniform norm of normalized split ruelle operator} and Lemma~\ref{lem:converge of derivative for pressure of subsystem}) established in the proof of the uniqueness.

To prove Theorem~\ref{thm:level-2 larger deviation principles for subsystem}, we use a variant of Y.~Kifer’s result \cite{kifer1990large} formulated by H.~Comman and J.~Rivera-Letelier \cite{comman2011large}, recorded in Theorem~\ref{thm:large deviation principles Kifer Comman Juan}. 
In order to apply Theorem~\ref{thm:large deviation principles Kifer Comman Juan}, we just need to verify three conditions:
(1) The existence and uniqueness of the equilibrium state.
(2) The upper semi-continuity of the measure-theoretic entropy.
(3) Some characterization of the topological pressure (see Propositions~\ref{prop:subsystem Birkhoff averages pressure characterization} and \ref{prop:subsystem preimage pressure:recall}).
The first condition has been established in Theorem~\ref{thm:main:equilibrium state of subsystems}.
The second condition is known for expanding Thurston maps without periodic critical points (see \cite[Theorem~1.1]{shi2024entropy}) and is satisfied in our setting by Lemma~\ref{lem:upper semi-continuity for submap}.
The last condition can be verified by using a characterization of topological pressure established in \cite{shi2024thermodynamic} (see \eqref{eq:equalities for characterizations of pressure} in Theorem~\ref{thm:existence of f invariant Gibbs measure}).

\smallskip

We now give a brief description of the structure of this paper.

In Section~\ref{sec:Notation}, we fix some notation that will be used throughout the paper.
In Section~\ref{sec:Preliminaries}, we first review some notions from ergodic theory and dynamical systems and go over some key concepts and results on Thurston maps.
Then we review some concepts and results on subsystems of expanding Thurston maps.  
In Section~\ref{sec:The Assumptions}, we state the assumptions on some of the objects in this paper, which we will repeatedly refer to later as the \emph{Assumptions in Section~\ref{sec:The Assumptions}}. 
In Section~\ref{sec:Uniqueness of the equilibrium states for subsystems}, we prove the uniqueness of the equilibrium states for subsystems. 
We introduce normalized split Ruelle operators and prove the uniform convergence for functions under iterations of the normalized split Ruelle operators.
In Section~\ref{sec:Ergodic Properties}, we prove some ergodic properties of the unique equilibrium state for subsystem.
In Section~\ref{sec:Equidistribution}, we establish equidistribution results for preimages for subsystems of expanding Thurston maps.
In Section~\ref{sec:Large deviation principles for subsystems}, we prove level-$2$ large deviation principles for iterated preimages for subsystems of expanding Thurston maps without periodic critical point.

\section{Notation}
\label{sec:Notation}
Let $\cx$ be the complex plane and $\ccx$ be the Riemann sphere. 
Let $S^2$ denote an oriented topological $2$-sphere.
We use $\n$ to denote the set of integers greater than or equal to $1$ and write $\n_0 \define \{0\} \cup \n$. 
The symbol log denotes the logarithm to the base $e$. 
For $x \in \real$, we define $\lfloor x \rfloor$ as the greatest integer $\leqslant x$, and $\lceil x \rceil$ the smallest integer $\geqslant x$.
We denote by $\sgn{x}$ the sign function for each $x \in \real$.
The cardinality of a set $A$ is denoted by $\card{A}$.


Let $g \colon X \mapping Y$ be a map between two sets $X$ and $Y$. We denote the restriction of $g$ to a subset $Z$ of $X$ by $g|_{Z}$.

Consider a map $f \colon X \mapping X$ on a set $X$. 
The inverse map of $f$ is denoted by $f^{-1}$. 
We write $f^n$ for the $n$-th iterate of $f$, and $f^{-n}\define (f^n)^{-1}$, for each $n \in \n$. 
We set $f^0 \define \id{X}$, the identity map on $X$. 
For a real-valued function $\varphi \colon X \mapping \real$, we write
\begin{equation}    \label{eq:def:Birkhoff average}
    S_n \varphi(x) = S^f_n \varphi(x) \define \sum_{j=0}^{n-1} \varphi \bigl( f^j(x) \bigr)
\end{equation}
for each $x\in X$ and each $n\in \n_0$. 
We omit the superscript $f$ when the map $f$ is clear from the context. Note that when $n = 0$, by definition we always have $S_0 \varphi = 0$.

Let $(X,d)$ be a metric space. For each subset $Y \subseteq X$, we denote the diameter of $Y$ by $\diam{d}{Y} \define \sup\{d(x, y) \describe \juxtapose{x}{y} \in Y\}$, the interior of $Y$ by $\interior{Y}$, and the characteristic function of $Y$ by $\indicator{Y}$, which maps each $x \in Y$ to $1 \in \real$ and vanishes otherwise. 
For each $r > 0$ and each $x \in X$, we denote the open (\resp closed) ball of radius $r$ centered at $x$ by $B_{d}(x,r)$ (\resp $\overline{B_{d}}(x,r)$). We often omit the metric $d$ in the subscript when it is clear from the context.

For a compact metrizable topological space $X$, we denote by $C(X)$ (\resp $B(X)$) the space of continuous (\resp bounded Borel) functions from $X$ to $\real$, by $\mathcal{M}(X)$ the set of finite signed Borel measures, and $\mathcal{P}(X)$ the set of Borel probability measures on $X$.
By the Riesz representation theorem (see for example, \cite[Theorems~7.17 and 7.8]{folland2013real}), we identify the dual of $C(X)$ with the space $\mathcal{M}(X)$.
For $\mu \in \mathcal{M}(X)$, we use $\norm{\mu}$ to denote the total variation norm of $\mu$, $\supp{\mu}$ the support of $\mu$ (the smallest closed set $A \subseteq X$ such that $|\mu|(X \setminus A) = 0$), and \[
    \functional{\mu}{u} \define \int \! u \,\mathrm{d}\mu
\]
for each $u \in C(X)$. 
For a point $x \in X$, we define $\delta_x$ as the Dirac measure supported on $\{x\}$.
For a continuous map $g \colon X \mapping X$, we set $\mathcal{M}(X, g)$ to be the set of $g$-invariant Borel probability measures on $X$.
If we do not specify otherwise, we equip $C(X)$ with the uniform norm $\normcontinuous{\cdot}{X} \define \uniformnorm{\cdot}$, and equip $\mathcal{M}(X)$, $\mathcal{P}(X)$, and $\mathcal{M}(X, g)$ with the weak$^*$ topology.

The space of real-valued \holder continuous functions with an exponent $\holderexp \in (0,1]$ on a metric space $(X, d)$ is denoted as $\holderspace$. For each $\phi \in \holderspace$, \[
    \holderseminorm{\phi}{X} \define \sup\left\{ \frac{|\phi(x) - \phi(y)|}{d(x, y)^{\holderexp}} \describe \juxtapose{x}{y} \in X, \, x \ne y \right\},
\]
and the \holder norm is defined as $\holdernorm{\phi}{X} \define \holderseminorm{\phi}{X} + \normcontinuous{\phi}{X}$.

\section{Preliminaries}
\label{sec:Preliminaries}

\subsection{Thermodynamic formalism}  
\label{sub:thermodynamic formalism}

We first review some basic concepts from ergodic theory and dynamical systems. 
We refer the reader to \cite[Chapter~3]{przytycki2010conformal}, \cite[Chapter~9]{walters1982introduction}, or \cite[Chapter~20]{katok1995introduction} for more detailed studies of these concepts.

\smallskip

Let $(X,d)$ be a compact metric space and $g \colon X \mapping X$ a continuous map. Given $n \in \n$, 
\[
    d^n_g(x, y) \define \operatorname{max} \bigl\{  d \bigl(g^k(x), g^k(y) \bigr) \describe k \in \{0, \, 1, \, \dots, \, n-1 \} \!\bigr\}, \quad \text{ for } \juxtapose{x}{y} \in X,
\]
defines a metric on $X$. A set $F \subseteq X$ is \emph{$(n, \epsilon)$-separated} (with respect to $g$), for some $n \in \n$ and $\epsilon > 0$, if for each pair of distinct points $\juxtapose{x}{y} \in F$, we have $d^n_g(x, y) \geqslant \epsilon$. Given $\epsilon > 0$ and $n \in \n$, let $F_n(\epsilon)$ be a maximal (in the sense of inclusion) $(n,\epsilon)$-separated set in $X$.

For each real-valued continuous function $\psi \in C(X)$, the following limits exist and are equal, and we denote these limits by $P(g, \psi)$ (see for example, \cite[Theorem~3.3.2]{przytycki2010conformal}):

\begin{equation}  \label{eq:def:topological pressure}
    \begin{split}
        P(g, \psi) \define &  \lim \limits_{\epsilon \to 0^{+}} \limsup\limits_{n \to +\infty} \frac{1}{n} \log \sum_{x \in F_n(\epsilon)} \myexp{S_n \psi(x)}
        = \lim \limits_{\epsilon \to 0^{+}} \liminf\limits_{n \to +\infty} \frac{1}{n} \log  \sum\limits_{x \in F_n(\epsilon)} \myexp{S_n \psi(x)},
    \end{split}
\end{equation}
where $S_n \psi (x) = \sum_{j = 0}^{n - 1} \psi(g^j(x))$ is defined in \eqref{eq:def:Birkhoff average}. We call $P(g, \psi)$ the \emph{topological pressure} of $g$ with respect to the \emph{potential} $\psi$. 
Note that $P(g, \psi)$ is independent of $d$ as long as the topology on $X$ defined by $d$ remains the same (see for example, \cite[Section~3.2]{przytycki2010conformal}).
The quantity $h_{\operatorname{top}}(g) \define P(g, 0)$ is called the \emph{topological entropy} of $g$. 

We denote by $\mathcal{M}(X, g)$ the set of all $g$-invariant Borel probability measures on $X$.


Let $\mu \in \mathcal{M}(X, g)$. 
Then we say that $g$ is \emph{ergodic} for $\mu$ (or $\mu$ is \emph{ergodic} for $g$) if for each set $A \in \mathcal{B}$ with $g^{-1}(A) = A$ we have $\mu(A) = 0$ or $\mu(A) = 1$. 
The map $g$ is called \emph{mixing} for $\mu$ if
\begin{equation}    \label{eq:def:mixing}
    \lim_{n \to +\infty} \mu(g^{-n}(A) \cap B) = \mu(A) \mu(B)
\end{equation}
for all $\juxtapose{A}{B} \in \mathcal{B}$. 
It is easy to see that if $g$ is mixing for $\mu$, then $g$ is also ergodic. 

For each real-valued continuous function $\psi \in C(X)$, the \emph{measure-theoretic pressure} $P_\mu(g, \psi)$ of $g$ for the measure $\mu \in \mathcal{M}(X, g)$ and the potential $\psi$ is
\begin{equation}  \label{eq:def:measure-theoretic pressure}
P_\mu(g, \psi) \define  h_\mu (g) + \int \! \psi \,\mathrm{d}\mu,
\end{equation}
where $h_{\mu}(g)$ is the measure-theoretic entropy of $g$ for $\mu$.

The topological pressure is related to the measure-theoretic pressure by the so-called \emph{Variational Principle}.
It states that (see for example, \cite[Theorem~3.4.1]{przytycki2010conformal})
\begin{equation}  \label{eq:Variational Principle for pressure}
    P(g, \psi) = \sup \{P_\mu(g, \psi) \describe \mu \in \mathcal{M}(X, g)\}
\end{equation}
for each $\psi \in C(X)$.
In particular, when $\psi$ is the constant function $0$,
\begin{equation}  \label{eq:Variational Principle for entropy}
    h_{\operatorname{top}}(g) = \sup\{h_{\mu}(g) \describe\mu \in \mathcal{M}(X, g)\}.
\end{equation}
A measure $\mu$ that attains the supremum in \eqref{eq:Variational Principle for pressure} is called an \emph{equilibrium state} for the map $g$ and the potential $\psi$. A measure $\mu$ that attains the supremum in \eqref{eq:Variational Principle for entropy} is called a \emph{measure of maximal entropy} of $g$.




Let $\widetilde{X}$ be another compact metric space. If $\mu$ is a measure on $X$ and the map $\pi \colon X \mapping \widetilde{X}$ is continuous, then the \emph{push-forward} $\pi_{*} \mu$ of $\mu$ by $\pi$ is the measure given by $\pi_{*}\mu(A) \define \mu \parentheses[\big]{ \pi^{-1}(A) }$ for all Borel sets $A \subseteq \widetilde{X}$. 

\subsection{Thurston maps}%
\label{sub:Thurston_maps}
In this subsection, we go over some key concepts and results on Thurston maps, and expanding Thurston maps in particular. 
For a more thorough treatment of the subject, we refer to \cite{bonk2017expanding}.

\smallskip

Let $S^2$ denote an oriented topological $2$-sphere. A continuous map $f \colon S^2 \mapping S^2$ is called a \emph{branched covering map} on $S^2$ if for each point $x\in S^2$, there exists a positive integer $d\in \n$, open neighborhoods $U$ of $x$ and $V$ of $y \define f(x)$, open neighborhoods $U'$ and $V'$ of $0$ in $\ccx$, and orientation-preserving homeomorphisms $\varphi \colon U \mapping U'$ and $\eta \colon V \mapping V'$ such that $\varphi(x) = 0, \eta(y) = 0$, and $\big(\eta \circ f\circ \varphi^{-1}\bigr)(z) = z^d$ for each $z \in U'$. 
The positive integer $d$ above is called the \emph{local degree} of $f$ at $x$ and is denoted by $\deg_f(x)$ or $\deg(f, x)$.

The \emph{degree} of $f$ is $\deg{f} = \sum_{x\in f^{-1}(y)} \deg_{f}(x)$ for $y\in S^2$ and is independent of $y$. If $f \colon S^2 \mapping S^2$ and $g \colon S^2 \mapping S^2$ are two branched covering maps on $S^2$, then so is $f\circ g$, and $\deg(f\circ g, x) = \deg(g, x) \deg(f, g(x))$
for each $x\in S^2$, and moreover, $\deg(f\circ g) = (\deg{f})(\deg{g})$.

A point  $x\in S^2$ is a \emph{critical point} of $f$ if $\deg_f(x) \geqslant 2$. The set of critical points of $f$ is denoted by $\crit{f}$. A point $y\in S^2$ is a \emph{postcritical point} of $f$ if $y = f^n(x)$ for some $x \in \crit{f}$ and $n\in \n$. The set of postcritical points of $f$ is denoted by $\post{f}$. Note that $\post{f} = \post{f^n}$ for all $n\in \n$.

\begin{definition}[Thurston maps]
    A Thurston map is a branched covering map $f \colon S^2 \mapping S^2$ on $S^2$ with $\deg f \geqslant 2$ and $\card{\post{f}}< +\infty$.
\end{definition}

We now recall the notation for cell decompositions of $S^2$ used in \cite{bonk2017expanding} and \cite{li2017ergodic}. A \emph{cell of dimension $n$} in $S^2$, $n \in \{1, \, 2\}$, is a subset $c \subseteq S^2$ that is homeomorphic to the closed unit ball $\overline{\mathbb{B}^n}$ in $\real^n$, where $\mathbb{B}^{n}$ is the open unit ball in $\real^{n}$. We define the \emph{boundary of $c$}, denoted by $\partial c$, to be the set of points corresponding to $\partial \mathbb{B}^n$ under such a homeomorphism between $c$ and $\overline{\mathbb{B}^n}$. The \emph{interior of $c$} is defined to be $\inte{c} = c \setminus \partial c$. For each point $x\in S^2$, the set $\{x\}$ is considered as a \emph{cell of dimension $0$} in $S^2$. For a cell $c$ of dimension $0$, we adopt the convention that $\partial c = \emptyset$ and $\inte{c} = c$. 

We record the following definition of cell decompositions from \cite[Definition~3.2]{bonk2017expanding}.

\begin{definition}[Cell decompositions]    \label{def:cell decomposition}
    Let $\mathbf{D}$ be a collection of cells in $S^2$. We say that $\mathbf{D}$ is a \emph{cell decomposition of $S^2$} if the following conditions are satisfied:
    \begin{enumerate}[label= (\roman*)]
        \smallskip
        
        \item the union of all cells in $\mathbf{D}$ is equal to $S^2$,
        
        \smallskip
        
        \item if $c \in \mathbf{D}$, then $\partial c$ is a union of cells in $\mathbf{D}$,
        
        \smallskip
        
        \item for $\juxtapose{c_1}{c_2} \in \mathbf{D}$ with $c_1 \ne c_2$, we have $\inte{c_1} \cap \inte{c_2} = \emptyset$,
        
        \smallskip
        
        \item every point in $S^2$ has a neighborhood that meets only finitely many cells in $\mathbf{D}$.
    \end{enumerate}
\end{definition}

\begin{definition}[Refinements]
    Let $\mathbf{D}'$ and $\mathbf{D}$ be two cell decompositions of $S^2$. We say that $\mathbf{D}'$ is a \emph{refinement} of $\mathbf{D}$ if the following conditions are satisfied:
    \begin{enumerate}[label = (\roman*)]
        \smallskip

        \item every cell $c \in \mathbf{D}$ is the union of all cells $c' \in \mathbf{D}'$ with $c' \subseteq c$.

        \smallskip

        \item for every cell $c' \in \mathbf{D}'$ there exists a cell $c \in \mathbf{D}$ with $c' \subseteq c$.
    \end{enumerate}
\end{definition}

\begin{definition}[Cellular maps and cellular Markov partitions]
    Let $\mathbf{D}'$ and $\mathbf{D}$ be two cell decompositions of $S^2$. We say that a continuous map $f \colon S^2 \mapping S^2$ is \emph{cellular} for $(\mathbf{D}', \mathbf{D})$ if for every cell $c \in \mathbf{D}'$, the restriction $f|_c$ of $f$ to $c$ is a homeomorphism of $c$ onto a cell in $\mathbf{D}$. We say that $(\mathbf{D}',\mathbf{D})$ is a \emph{cellular Markov partition} for $f$ if $f$ is cellular for $(\mathbf{D}', \mathbf{D})$ and $\mathbf{D}'$ is a refinement of $\mathbf{D}$.
\end{definition}

Let $f \colon S^2 \mapping S^2$ be a Thurston map, and $\mathcal{C}\subseteq S^2$ be a Jordan curve containing $\post{f}$. 
Then the pair $f$ and $\mathcal{C}$ induces natural cell decompositions $\mathbf{D}^n(f,\mathcal{C})$ of $S^2$, for each $n \in \n_0$, in the following way:

By the Jordan curve theorem, the set $S^2 \setminus \mathcal{C}$ has two connected components. We call the closure of one of them the \emph{white $0$-tile} for $(f,\mathcal{C})$, denoted by $X^0_{\white}$, and the closure of the other one the \emph{black $0$-tile} for $(f,\mathcal{C})$, denoted be $X^0_{\black}$. 
The set of $0$-\emph{tiles} is $\mathbf{X}^0(f, \mathcal{C}) \define \bigl\{X^0_{\black}, \, X^0_{\white} \bigr\}$. 
The set of $0$-\emph{vertices} is $\mathbf{V}^0(f, \mathcal{C}) \define \post{f}$. 
We set $\overline{\mathbf{V}}^0(f, \mathcal{C}) \define \bigl\{ \{x\} \describe x\in \mathbf{V}^0(f,\mathcal{C}) \bigr\}$. 
The set of $0$-\emph{edges} $\mathbf{E}^0(f,\mathcal{C})$ is the set of the closures of the connected components of $\mathcal{C} \setminus \post{f}$. 
Then we get a cell decomposition\[
    \mathbf{D}^0(f,\mathcal{C}) \define \mathbf{X}^0(f, \mathcal{C}) \cup \mathbf{E}^0(f,\mathcal{C}) \cup \overline{\mathbf{V}}^0(f,\mathcal{C})
\]
of $S^2$ consisting of \emph{cells of level }$0$, or $0$-\emph{cells}.

We can recursively define the unique cell decomposition $\mathbf{D}^n(f,\mathcal{C})$, $n\in \n$, consisting of $n$-\emph{cells} such that $f$ is cellular for $(\mathbf{D}^{n+1}(f,\mathcal{C}), \mathbf{D}^n(f,\mathcal{C}))$. We refer to \cite[Lemma~5.12]{bonk2017expanding} for more details. 
We denote by $\mathbf{X}^n(f,\mathcal{C})$ the set of $n$-cells of dimension 2, called $n$-\emph{tiles}; by $\mathbf{E}^n(f,\mathcal{C})$ the set of $n$-cells of dimension $1$, called $n$-\emph{edges}; by $\overline{\mathbf{V}}^n(f,\mathcal{C})$ the set of $n$-cells of dimension $0$; and by $\mathbf{V}^n(f,\mathcal{C})$ the set $\{x \describe \{x\} \in \overline{\mathbf{V}}^n(f,\mathcal{C})\}$, called the set of $n$-\emph{vertices}. 
The $k$-\emph{skeleton}, for $k\in \{0, \, 1, \, 2\}$, of $\mathbf{D}^n(f,\mathcal{C})$ is the union of all $n$-cells of dimension $k$ in this cell decomposition.

We record \cite[Lemma~5.17]{bonk2017expanding}.

\begin{lemma}[M.~Bonk \& D.~Meyer \cite{bonk2017expanding}]    \label{lem:cell mapping properties of Thurston map}
    Let $\juxtapose{k}{n} \in \n_0$, $f \colon S^2 \mapping S^2$ be a Thurston map, and $\mathcal{C} \subseteq S^2$ be a Jordan curve with $\post{f} \subseteq \mathcal{C}$.
    \begin{enumerate}[label=\rm{(\roman*)}]
        \smallskip

        \item     \label{item:lem:cell mapping properties of Thurston map:i}
        If $c \subseteq S^2$ is a topological cell such that $f^{k}|_{c}$ is a homeomorphism onto its image and $f^{k}(c)$ is an $n$-cell, then $c$ is an $(n + k)$-cell.

        \smallskip

        \item \label{item:lem:cell mapping properties of Thurston map:ii}
        If $X$ is an $n$-tile and $p \in S^2$ is a point with $f^{k}(p) \in \inte{X}$, then there exists a unique $(n + k)$-tile $X'$ with $p \in X'$ and $f^{k}(X') = X$.
    \end{enumerate}
\end{lemma}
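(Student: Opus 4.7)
I would prove part~(ii) first, then deduce part~(i) via a connectedness argument, using throughout that $f^k$ is cellular for $(\mathbf{D}^{n+k}(f,\mathcal{C}), \mathbf{D}^{n}(f,\mathcal{C}))$---a direct consequence of the inductive construction of the cell decompositions.

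For part~(ii), since the $(n+k)$-tiles cover $S^2$, some $X' \in \mathbf{X}^{n+k}(f,\mathcal{C})$ contains $p$. If $p \in \partial X'$, then $p$ would lie on the $1$-skeleton of $\mathbf{D}^{n+k}(f,\mathcal{C})$, and cellularity of $f^k$ would send $f^k(p)$ into the $1$-skeleton of $\mathbf{D}^{n}(f,\mathcal{C})$, contradicting $f^k(p) \in \inte{X}$. Hence $p \in \inte{X'}$, so $f^k(p) \in \inte{f^k(X')}$ because $f^k|_{X'}$ is a homeomorphism; the disjointness of interiors of distinct $n$-tiles then forces $f^k(X') = X$. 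Uniqueness follows from the analogous disjointness among $(n+k)$-tiles.

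For part~(i), let $j \in \{0, 1, 2\}$ denote the common dimension of $c$ and $c' \define f^k(c)$. The case $j = 0$ is immediate from the identity $(f^k)^{-1}(\mathbf{V}^{n}(f,\mathcal{C})) = \mathbf{V}^{n+k}(f,\mathcal{C})$, a direct consequence of cellularity. For $j \in \{1, 2\}$, cellularity implies that the preimage under $f^k$ of the $j$-skeleton of $\mathbf{D}^{n}(f,\mathcal{C})$ minus its $(j-1)$-skeleton decomposes as the disjoint union, over all $(n+k)$-cells $d$ of dimension $j$, of the relatively open sets $\inte{d}$. Since $f^k(\inte{c}) = \inte{c'}$ lies in this set, $\inte{c}$ is a connected subset of it, so $\inte{c} \subseteq \inte{d}$ for a single $(n+k)$-cell $d$ of dimension $j$. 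Taking closures gives $c \subseteq d$; then $f^k(d)$ is a $j$-dimensional $n$-cell whose interior meets $\inte{c'}$ and therefore equals $c'$; and injectivity of $f^k|_{d}$ combined with $f^k(c) = c' = f^k(d)$ forces $c = d$, an $(n+k)$-cell.

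\textbf{Main obstacle.} The pivot of the whole argument is that $f^k$ preserves the dimension stratification, i.e., the preimage under $f^k$ of the $j$-skeleton of $\mathbf{D}^{n}(f,\mathcal{C})$ equals the $j$-skeleton of $\mathbf{D}^{n+k}(f,\mathcal{C})$ for each $j \in \{0, 1, 2\}$. This is built into the inductive construction of $\mathbf{D}^{n+k}(f,\mathcal{C})$ via pullback, so one either invokes it as a standard consequence of the construction or verifies it by induction on $k$.
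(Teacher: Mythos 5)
The paper does not prove this lemma at all: it simply records it as \cite[Lemma~5.17]{bonk2017expanding}, deferring the proof to Bonk and Meyer. So there is no in-paper argument to compare against; your attempt must be evaluated as a self-contained proof, and on that basis it is essentially sound and reasonably close in spirit to the original.

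A few remarks. Your argument for~(ii) is standard and correct once one grants that the $2$-cells of $\mathbf{D}^{n+k}(f,\mathcal{C})$ cover $S^2$ (so that $p$ lies in some $(n+k)$-tile); this is a property of cell decompositions of closed $2$-manifolds, not of abstract cell decompositions satisfying Definition~\ref{def:cell decomposition} alone, so it deserves at least a citation. The uniqueness in~(ii) is a little terse: the cleanest phrasing is that any competing $(n+k)$-tile $X''$ containing $p$ must have $p \in \partial X''$ (since $p\in\inte{X'}$ and distinct tiles have disjoint interiors), which again contradicts $f^k(p)\in\inte{X}$.

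For~(i), the argument is correct but leans on two background facts that you only partially surface. The first is the one you flag explicitly: $(f^k)^{-1}$ carries the $j$-skeleton of $\mathbf{D}^{n}(f,\mathcal{C})$ onto the $j$-skeleton of $\mathbf{D}^{n+k}(f,\mathcal{C})$. This does follow from cellularity of $f^k$ for $(\mathbf{D}^{n+k},\mathbf{D}^{n})$, but the nontrivial inclusion (preimage of the $j$-skeleton lies in the $j$-skeleton) requires showing that if $x\in\inte{d}$ for an $(n+k)$-cell $d$ with $\dim d>j$, then $f^k(x)\in\inte{f^k(d)}$ misses every cell of dimension at most $j$; this in turn uses the standard fact that in a cell decomposition, a cell of lower dimension cannot meet the interior of a cell of higher dimension. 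The second background fact is that each $\inte{d}$ with $\dim d=j$ is relatively open in the $j$-skeleton of $\mathbf{D}^{n+k}$, so that these open cells are genuinely the connected components of the difference of consecutive skeleta; this uses the local finiteness condition~(iv). Both are routine, but your ``invoke it as a standard consequence of the construction'' should really name them. With these gaps filled, the final step in~(i) — concluding $c=d$ from $c\subseteq d$, $f^k(c)=f^k(d)$, and injectivity of $f^k|_{d}$ — is correct.

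In short: the approach is valid and matches the natural (and likely the original) strategy; the writeup would benefit from explicitly citing the two elementary facts it uses about cell decompositions, especially the skeleton-preservation property, which is the genuine crux.
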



For $n\in \n_0$, we define the \emph{set of black $n$-tiles} as\[
    \textbf{X}^n_{\black}(f,\mathcal{C}) \define \left\{ X \in \mathbf{X}^n (f,\mathcal{C}) \describe f^n(X) = X^0_{\black} \right\},
\]
and the \emph{set of white $n$-tiles} as\[
    \mathbf{X}^n_{\white}(f,\mathcal{C}) \define \left\{X\in \mathbf{X}^n(f,\mathcal{C}) \describe f^n(X) = X^0_{\white}\right\}.
\]

From now on, if the map $f$ and the Jordan curve $\mathcal{C}$ are clear from the context, we will sometimes omit $(f,\mathcal{C})$ in the notation above.

\begin{definition}[Expansion]     \label{def:expanding_Thurston_maps}
    A Thurston map $f \colon S^2 \mapping S^2$ is called \emph{expanding} if there exists a metric $d$ on $S^2$ that induces the standard topology on $S^2$ and a Jordan curve $\mathcal{C} \subseteq S^2$ containing $\post{f}$ such that
    \begin{equation}    \label{eq:definition of expansion}
        \lim_{n \to +\infty} \max\{ \diam{d}{X} \describe X \in \mathbf{X}^n(f,\mathcal{C}) \} = 0.
    \end{equation}
\end{definition}

\begin{remark}\label{rem:Expansion_is_independent}
    It is clear that if $f$ is an expanding Thurston map, so is $f^{n}$ for each $n \in \n$. 
    We observe that being expanding is a topological property of a Thurston map and independent of the choice of the metric $d$ that generates the standard topology on $S^2$. 
    By Lemma~6.2 in \cite{bonk2017expanding}, it is also independent of the choice of the Jordan curve $\mathcal{C}$ containing $\post{f}$. More precisely, if $f$ is an expanding Thurston map, then\[
        \lim_{n \to +\infty} \max \{ \diam{\widetilde{d}}{X} \describe X\in \mathbf{X}^n(f,\widetilde{\mathcal{C}}) \} = 0,        
    \]
    for each metric $\widetilde{d}$ that generates the standard topology on $S^2$ and each Jordan curve $\widetilde{\mathcal{C}} \subseteq S^2$ that contains $\post{f}$.
\end{remark}

For an expanding Thurston map $f$, we can fix a particular metric $d$ on $S^2$ called a \emph{visual metric for $f$}. 
For the existence and properties of such metrics, see \cite[Chapter~8]{bonk2017expanding}. 
For a visual metric $d$ for $f$, there exists a unique constant $\Lambda > 1$ called the \emph{expansion factor} of $d$ (see \cite[Chapter~8]{bonk2017expanding} for more details). 
One major advantage of a visual metric $d$ is that in $(S^2,d)$ we have good quantitative control over the sizes of the cells in the cell decompositions discussed above. 

\begin{remark}\label{rem:chordal metric visual metric qs equiv}
    If $f \colon \ccx \mapping \ccx$ is a rational expanding Thurston map, then a visual metric is quasisymmetrically equivalent to the chordal metric on the Riemann sphere $\ccx$ (see \cite[Theorem~18.1~(ii)]{bonk2017expanding}). 
    Here the chordal metric $\sigma$ on $\ccx$ is given by $\sigma (z, w) \define \frac{2\abs{z - w}}{\sqrt{1 + \abs{z}^2} \sqrt{1 + \abs{w}^2}}$ for all $\juxtapose{z}{w} \in \cx$, and $\sigma(\infty, z) = \sigma(z, \infty) \define \frac{2}{\sqrt{1 + \abs{z}^2}}$ for all $z \in \cx$. 
    We also note that quasisymmetric embeddings of bounded connected metric spaces are \holder continuous (see \cite[Section~11.1 and Corollary~11.5]{heinonen2001lectures}). 
    Accordingly, the classes of \holder continuous functions on $\ccx$ equipped with the chordal metric and on $S^2 = \ccx$ equipped with any visual metric for $f$ are the same (up to a change of the \holder exponent).
\end{remark}

A Jordan curve $\mathcal{C} \subseteq S^2$ is \emph{$f$-invariant} if $f(\mathcal{C}) \subseteq \mathcal{C}$. If $\mathcal{C}$ is $f$-invariant with $\post{f} \subseteq \mathcal{C}$, then the cell decompositions $\mathbf{D}^{n}(f, \mathcal{C})$ have nice compatibility properties. In particular, $\mathbf{D}^{n+k}(f, \mathcal{C})$ is a refinement of $\mathbf{D}^{n}(f, \mathcal{C})$, whenever $\juxtapose{n}{k} \in \n_0$. Intuitively, this means that each cell $\mathbf{D}^{n}(f, \mathcal{C})$ is ``subdivided'' by the cells in $\mathbf{D}^{n+k}(f, \mathcal{C})$. A cell $c\in \mathbf{D}^{n}(f, \mathcal{C})$ is actually subdivided by the cells in $\mathbf{D}^{n+k}(f, \mathcal{C})$ ``in the same way'' as the cell $f^n(c) \in \mathbf{D}^{0}(f, \mathcal{C})$ by the cells in $\mathbf{D}^{k}(f, \mathcal{C})$. 

For convenience we record Proposition~12.5~(ii) of \cite{bonk2017expanding} here, which is easy to check but useful. 

\begin{proposition}[M.~Bonk \& D.~Meyer \cite{bonk2017expanding}]    \label{prop:cell decomposition: invariant Jordan curve}
    Let $\juxtapose{k}{n} \in \n_0$, $f \colon S^2 \mapping S^2$ be a Thurston map, and $\mathcal{C} \subseteq S^2$ be an $f$-invariant Jordan curve with $\post{f} \subseteq \mathcal{C}$. Then every $(n+k)$-tile $X^{n+k}$ is contained in a unique $k$-tile $X^k$.
\end{proposition}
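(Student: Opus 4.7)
The plan is to deduce the statement from a nested structure of the $1$-skeletons of the cell decompositions $\mathbf{D}^{m}(f, \mathcal{C})$ in the variable $m$, which in turn follows directly from the $f$-invariance of $\mathcal{C}$. Throughout, by the $1$-skeleton of $\mathbf{D}^{m}(f, \mathcal{C})$ I mean the union of its $0$- and $1$-dimensional cells.

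First I would verify that this $1$-skeleton coincides with $f^{-m}(\mathcal{C})$ for every $m \in \n_{0}$. For $m = 0$ this is immediate since the $1$-skeleton of $\mathbf{D}^{0}(f, \mathcal{C})$ equals $\mathcal{C} \cup \post{f} = \mathcal{C}$. The inductive step uses that $f$ is cellular for $(\mathbf{D}^{m+1}(f, \mathcal{C}), \mathbf{D}^{m}(f, \mathcal{C}))$, so that $f$ sends $(m+1)$-cells of dimension $j$ homeomorphically onto $m$-cells of dimension $j$; in particular, $f^{-1}$ of the $1$-skeleton at level $m$ is exactly the $1$-skeleton at level $m + 1$. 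I would then exploit the $f$-invariance of $\mathcal{C}$: from $f(\mathcal{C}) \subseteq \mathcal{C}$ one gets $\mathcal{C} \subseteq f^{-1}(\mathcal{C})$, and iteration yields $f^{-k}(\mathcal{C}) \subseteq f^{-(n+k)}(\mathcal{C})$, so the $1$-skeleton of $\mathbf{D}^{k}(f, \mathcal{C})$ sits inside that of $\mathbf{D}^{n+k}(f, \mathcal{C})$.

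With this inclusion in hand, the rest is essentially formal. Fix an $(n+k)$-tile $X^{n+k}$. Its interior $\inte{X^{n+k}}$ is a non-empty connected open set disjoint from $f^{-(n+k)}(\mathcal{C})$ and therefore also disjoint from $f^{-k}(\mathcal{C})$. Because the connected components of $S^{2} \setminus f^{-k}(\mathcal{C})$ are precisely the interiors of the $k$-tiles, $\inte{X^{n+k}}$ lies inside $\inte{X^{k}}$ for a unique $k$-tile $X^{k}$. Taking closures and using that each $2$-cell is the closure of its interior gives $X^{n+k} \subseteq X^{k}$. Uniqueness is forced by the disjoint-interiors axiom in Definition~\ref{def:cell decomposition}~(iii): if $X^{n+k}$ were contained in two distinct $k$-tiles $X^{k}_{1} \neq X^{k}_{2}$, then $\inte{X^{n+k}}$ would have to meet both of their disjoint interiors, which is impossible.

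I do not anticipate any substantial obstacle, since the proposition is a formal consequence of the inductive construction of the $\mathbf{D}^{m}(f, \mathcal{C})$ recorded in \cite[Lemma~5.12]{bonk2017expanding} combined with the invariance $f(\mathcal{C}) \subseteq \mathcal{C}$. The only step that requires even mild care is the identification of the $1$-skeleton of $\mathbf{D}^{m}(f, \mathcal{C})$ with $f^{-m}(\mathcal{C})$, which however is precisely the compatibility feature that makes the $f$-invariant setting so convenient.
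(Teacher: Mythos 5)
Your proof is correct, and—since the paper merely records this fact from \cite[Proposition~12.5~(ii)]{bonk2017expanding} with the remark that it is ``easy to check but useful'' and supplies no argument of its own—there is no in-paper proof to compare against. Your route is essentially the standard one: identify the $1$-skeleton of $\mathbf{D}^{m}(f,\mathcal{C})$ with $f^{-m}(\mathcal{C})$, use $f(\mathcal{C})\subseteq\mathcal{C}$ to get the nested chain $f^{-k}(\mathcal{C}) \subseteq f^{-(n+k)}(\mathcal{C})$, then observe that $\inte{X^{n+k}}$ is a connected open set avoiding $f^{-k}(\mathcal{C})$ and hence sits inside the interior of exactly one $k$-tile, after which taking closures and invoking disjointness of cell interiors finishes the job. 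Two small points worth tightening if you were to write this out in full: (a) the identification of the $1$-skeleton with $f^{-m}(\mathcal{C})$ and the description of the components of $S^{2}\setminus f^{-m}(\mathcal{C})$ as the $m$-tile interiors are both part of \cite[Proposition~5.16]{bonk2017expanding} and should be cited rather than re-derived; (b) in the uniqueness step it is cleaner to observe directly that $\inte{X^{n+k}}$, being disjoint from $f^{-k}(\mathcal{C})$, is forced into $\inte{X^{k}_{i}}$ for each candidate $X^{k}_{i}$, so that two candidates would give $\inte{X^{n+k}} \subseteq \inte{X^{k}_{1}}\cap\inte{X^{k}_{2}}=\emptyset$, a contradiction. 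Neither of these is a gap—just phrasing.
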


M.~Bonk and D.~Meyer \cite[Theorem~15.1]{bonk2017expanding} proved that there exists an $f^n$-invariant Jordan curve $\mathcal{C}$ containing $\post{f}$ for each sufficiently large $n$ depending on $f$. 

\begin{lemma}[M.~Bonk \& D.~Meyer \cite{bonk2017expanding}]    \label{lem:invariant_Jordan_curve}
    Let $f \colon S^2 \mapping S^2$ be an expanding Thurston map, and $\widetilde{\mathcal{C}} \subseteq S^2$ be a Jordan curve with $\post{f} \subseteq \widetilde{\mathcal{C}}$. Then there exists an integer $N(f, \widetilde{\mathcal{C}}) \in \n$ such that for each $n \geqslant N(f,\widetilde{\mathcal{C}})$ there exists an $f^n$-invariant Jordan curve $\mathcal{C}$ isotopic to $\widetilde{\mathcal{C}}$ rel. $\post{f}$.
\end{lemma}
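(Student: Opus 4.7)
The plan is to construct $\mathcal{C}$ as the Hausdorff limit of a sequence $\{\mathcal{C}_k\}_{k \in \n_{0}}$ of Jordan curves produced by iterated lifting. Set $\mathcal{C}_0 \define \widetilde{\mathcal{C}}$, and at each step $k \in \n_0$ choose a Jordan curve $\mathcal{C}_{k+1} \subseteq f^{-n}(\mathcal{C}_k)$ that passes through $\post{f}$, is isotopic to $\mathcal{C}_k$ rel.\ $\post{f}$, and lies in a small Hausdorff neighborhood of $\mathcal{C}_k$. The inclusion $\mathcal{C}_{k+1} \subseteq f^{-n}(\mathcal{C}_k)$ will pass to a Hausdorff limit to give $f^n(\mathcal{C}) \subseteq \mathcal{C}$ by continuity of $f^n$, and concatenating the isotopies will yield an isotopy rel.\ $\post{f}$ from $\widetilde{\mathcal{C}}$ to $\mathcal{C}$.

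The technical heart is the lifting step: for any Jordan curve $\mathcal{C}' \subseteq S^2$ containing $\post{f}$ and $n$ sufficiently large, I would show there exists a Jordan curve $\mathcal{C}'' \subseteq f^{-n}(\mathcal{C}')$ passing through $\post{f}$, isotopic to $\mathcal{C}'$ rel.\ $\post{f}$, with $d_H(\mathcal{C}'', \mathcal{C}') \leqslant \max\{ \diam{d}{X} \describe X \in \mathbf{X}^n(f, \mathcal{C}') \}$. The graph $f^{-n}(\mathcal{C}')$ coincides with the $1$-skeleton of $\mathbf{D}^n(f, \mathcal{C}')$, and by Definition~\ref{def:expanding_Thurston_maps} with Remark~\ref{rem:Expansion_is_independent} this mesh tends to $0$ as $n \to \infty$; in particular the $n$-flowers at distinct postcritical points become pairwise disjoint for $n$ large. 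To build $\mathcal{C}''$ I would traverse $\mathcal{C}'$ in cyclic order and, for each $0$-edge of $\mathcal{C}'$ joining consecutive postcritical points $p$ and $q$, construct an edge-path in $f^{-n}(\mathcal{C}')$ from $p$ to $q$ staying in a thin tubular neighborhood of that $0$-edge and entering the $n$-flowers at $p$ and $q$ through prescribed sectors determined by the orientation of $\mathcal{C}'$. Existence of such edge-paths would follow from planarity of $f^{-n}(\mathcal{C}') \subseteq S^2$ together with Lemma~\ref{lem:cell mapping properties of Thurston map}\ref{item:lem:cell mapping properties of Thurston map:ii}, and concatenation produces the desired simple closed curve $\mathcal{C}''$.

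For the iteration to converge, I would use that by induction $\mathcal{C}_k \subseteq f^{-nk}(\widetilde{\mathcal{C}})$, so $\mathcal{C}_k$ is an edge-path in $\mathbf{D}^{nk}(f, \widetilde{\mathcal{C}})$, and that a visual metric $d$ for $f$ with expansion factor $\Lambda > 1$ makes the mesh of $\mathbf{D}^{nk}(f, \widetilde{\mathcal{C}})$ decay geometrically in $k$. Choosing $N(f, \widetilde{\mathcal{C}})$ large enough, the Hausdorff displacements $d_H(\mathcal{C}_k, \mathcal{C}_{k+1})$ are controlled by this mesh and form a summable sequence, so $\{\mathcal{C}_k\}$ is Cauchy. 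Uniform parametrization control on each $\mathcal{C}_k$ (viewed as a topological circle) then shows the parametrizations converge uniformly to an injective loop, so the limit $\mathcal{C}$ is a Jordan curve; combined with the preceding paragraph, this completes the construction.

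The main expected obstacle is the lifting step. Although the existence of a simple closed edge-path in $f^{-n}(\mathcal{C}')$ through $\post{f}$ is intuitive, guaranteeing both simplicity and the correct isotopy class rel.\ $\post{f}$ requires careful bookkeeping of the local combinatorics at each $n$-flower: a naive edge-path choice could self-intersect or encircle a postcritical point the wrong way around. Resolving this requires an orientation-aware sector choice at each postcritical vertex, combined with a planarity argument in each of the two complementary Jordan regions of $\mathcal{C}'$, using the cellular structure provided by $\mathbf{D}^n(f, \mathcal{C}')$.
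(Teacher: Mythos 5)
You have the right overall architecture --- iterated lifting, a sequence $\{\mathcal{C}_k\}$, and a limit curve $\mathcal{C}$ --- and your identification of the first lifting step (the existence of a Jordan curve $\mathcal{C}_1 \subseteq f^{-n}(\widetilde{\mathcal{C}})$ through $\post{f}$ in the right isotopy class, which is \cite[Lemma~15.7]{bonk2017expanding}) as technical is correct. But the two steps you treat as straightforward are where the real difficulty lies, and as written your argument has two genuine gaps.

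First, the contraction estimate does not close up. Your lifting step controls the Hausdorff distance $d_H(\mathcal{C}_k, \mathcal{C}_{k+1})$ by the mesh of $\mathbf{D}^{n}(f, \mathcal{C}_k)$, while the geometric decay you invoke is that of $\mathbf{D}^{nk}(f, \widetilde{\mathcal{C}})$. These are cell decompositions relative to \emph{different} Jordan curves, and the constants in the visual-metric estimates depend on the reference curve; there is no uniform bound on the mesh of $\mathbf{D}^n(f, \mathcal{C}_k)$ for fixed $n$ as $\mathcal{C}_k$ grows increasingly convoluted. What actually makes the scheme work in \cite{bonk2017expanding} is that one does \emph{not} re-run the lifting construction from scratch at each step. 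After producing an isotopy $H^0$ rel $\post{f}$ from $\widetilde{\mathcal{C}}$ to $\mathcal{C}_1$ with $H^0_0 = \operatorname{id}$, one applies the isotopy lifting theorem (\cite[Proposition~11.3]{bonk2017expanding}) to obtain $H^{k+1}$ with $f^n \circ H^{k+1}_t = H^k_t \circ f^n$ and $H^{k+1}_0 = \operatorname{id}$, and sets $\mathcal{C}_{k+1} \define H^k_1(\mathcal{C}_k)$. The tracks of the $k$-fold lift are then automatically confined to cells of level comparable to $nk$ in the \emph{fixed} decomposition $\mathbf{D}^{*}(f, \widetilde{\mathcal{C}})$, which is exactly how the decay from \eqref{eq:definition of expansion} enters and gives summability.

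Second, a Hausdorff limit of Jordan curves need not be a Jordan curve: the limit can acquire interior, develop self-tangencies, or otherwise fail to be the image of an injective loop, and ``uniform parametrization control'' does not by itself repair this --- a uniform limit of homeomorphisms need not be injective. This injectivity question is the genuine heart of the proof. In \cite{bonk2017expanding}, the compositions $h_k \define H^{k-1}_1 \circ \cdots \circ H^0_1$ converge uniformly to a continuous map $h$, and then a separate combinatorial argument using the cellular structure (distinct points of $\widetilde{\mathcal{C}}$ eventually lie in cells whose images under the concatenated isotopy stay disjoint) shows that $h$ is injective on $\widetilde{\mathcal{C}}$, so that $\mathcal{C} \define h(\widetilde{\mathcal{C}})$ is in fact a Jordan curve isotopic to $\widetilde{\mathcal{C}}$ rel $\post{f}$. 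Your sketch omits this step entirely, and without it the construction produces a compact set, not a Jordan curve.
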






We record the following lemma from \cite[Lemma~3.13]{li2018equilibrium}, which generalizes \cite[Lemma~15.25]{bonk2017expanding}.

\begin{lemma}[M.~Bonk \& D.~Meyer \cite{bonk2017expanding}; Z.~Li \cite{li2018equilibrium}]     \label{lem:basic_distortion}
    Let $f \colon S^2 \mapping S^2$ be an expanding Thurston map, and $\mathcal{C} \subseteq S^2$ be a Jordan curve that satisfies $\post{f} \subseteq \mathcal{C}$ and $f^{n_{\mathcal{C}}}(\mathcal{C}) \subseteq \mathcal{C}$ for some $n_{\mathcal{C}} \in \n$. Let $d$ be a visual metric on $S^2$ for $f$ with expansion factor $\Lambda > 1$. 
    Then there exists a constant $C_0 > 1$, depending only on $f$, $\mathcal{C}$, $n_{\mathcal{C}}$, and $d$, with the following property:

    If $\juxtapose{n}{k} \in \n_0$, $X^{n+k}\in \mathbf{X}^{n+k}(f,\mathcal{C})$, and $\juxtapose{x}{y} \in X^{n + k}$, then
    \begin{equation}     \label{eq:basic_distortion}
        C_0^{-1}d(x, y) \leqslant d(f^n(x), f^n(y)) / \Lambda^{n} \leqslant C_0 d(x, y).
    \end{equation}
\end{lemma}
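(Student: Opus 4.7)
The plan is to reduce to the $f$-invariant special case (i.e., $n_{\mathcal{C}} = 1$) treated by \cite[Lemma~15.25]{bonk2017expanding} by passing to the iterate $g \define f^{n_{\mathcal{C}}}$, and then to handle the short remainder iterates $f^{r}$ with $0 \leqslant r < n_{\mathcal{C}}$ via a bounded-iterate distortion argument.

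\emph{Setup and invariant case.} First I would verify that $g$ is an expanding Thurston map with $g(\mathcal{C}) \subseteq \mathcal{C}$ and $\post{g} = \post{f} \subseteq \mathcal{C}$, so that $\mathcal{C}$ is genuinely $g$-invariant. A visual metric for $f$ with expansion factor $\Lambda$ is also a visual metric for $g$ with expansion factor $\Lambda^{n_{\mathcal{C}}}$ (see \cite[Chapter~8]{bonk2017expanding}), and by construction $\mathbf{D}^{j}(g, \mathcal{C}) = \mathbf{D}^{j n_{\mathcal{C}}}(f, \mathcal{C})$ for every $j \in \n_{0}$; Proposition~\ref{prop:cell decomposition: invariant Jordan curve} applied to $g$ then gives that every $(j n_{\mathcal{C}} + k)$-tile of $f$ is contained in a unique $j n_{\mathcal{C}}$-tile of $f$. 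Applying \cite[Lemma~15.25]{bonk2017expanding} to $g$ produces a constant $C_{1} > 1$ such that for all $J, L \in \n_{0}$, every $Y \in \mathbf{X}^{J + L}(g, \mathcal{C})$, and every $x, y \in Y$, $C_{1}^{-1} d(x, y) \leqslant d(g^{J}(x), g^{J}(y)) / \Lambda^{J n_{\mathcal{C}}} \leqslant C_{1} d(x, y)$.

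\emph{Reduction to a bounded remainder.} For arbitrary $n, k \in \n_{0}$, write $n = J n_{\mathcal{C}} + r$ with $J \in \n_{0}$ and $r \in \{0, 1, \dots, n_{\mathcal{C}} - 1\}$. Given $X^{n + k} \in \mathbf{X}^{n + k}(f, \mathcal{C})$ and $x, y \in X^{n + k}$, the image $f^{r}(X^{n + k})$ is an $(n + k - r)$-tile of $f$ by Lemma~\ref{lem:cell mapping properties of Thurston map}, hence a $(J n_{\mathcal{C}} + k)$-tile of $f$, which therefore lies in a unique $J$-tile $Y$ of $g$. Since $f^{n} = g^{J} \circ f^{r}$ and $f^{r}(x), f^{r}(y) \in Y$, the invariant-case estimate with $L = 0$ gives $C_{1}^{-1} d(f^{r}(x), f^{r}(y)) \leqslant d(f^{n}(x), f^{n}(y)) / \Lambda^{J n_{\mathcal{C}}} \leqslant C_{1} d(f^{r}(x), f^{r}(y))$. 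The proof then reduces to establishing a uniform bi-Lipschitz comparison $d(f^{r}(x), f^{r}(y)) \asymp \Lambda^{r} d(x, y)$ for the boundedly many values $r \in \{0, \dots, n_{\mathcal{C}} - 1\}$ and for $x, y$ lying in a common $m$-tile of $f$ with $m \geqslant r$; combined with the previous inequality and a constant $C_{0} \define C_{1} C_{2} \Lambda^{n_{\mathcal{C}}}$ that absorbs the factor $\Lambda^{-r}$, this will deliver the lemma.

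\emph{Bounded-iterate step and main obstacle.} To prove the remaining finite-iterate bound, I would invoke the standard comparison $d(x, y) \asymp \Lambda^{-m_{f, \mathcal{C}}(x, y)}$ between the visual metric and the combinatorial joint level $m_{f, \mathcal{C}}(x, y)$ of pairs of points (see \cite[Chapter~8]{bonk2017expanding}) and observe that each iterate of $f$ shifts this joint level by exactly one, up to a bounded additive constant depending only on the combinatorial type of $\mathbf{D}^{0}(f, \mathcal{C})$; iterating at most $n_{\mathcal{C}} - 1$ times yields the uniform constant $C_{2}$. This is the step I expect to be the main obstacle, because $\mathcal{C}$ is not $f$-invariant: the decompositions $\mathbf{D}^{m}(f, \mathcal{C})$ and $\mathbf{D}^{m + 1}(f, \mathcal{C})$ need not refine one another, so the clean refinement-based distortion control available in the invariant case does not apply directly to individual iterates of $f$. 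Carrying out the comparison carefully, either via the joint-level characterization of visual metrics or via a direct geometric argument through $g$ at finer scales, is the heart of the proof.
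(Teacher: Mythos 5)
The paper itself does not prove this lemma; it records it from \cite[Lemma~3.13]{li2018equilibrium} as a generalization of \cite[Lemma~15.25]{bonk2017expanding}, so your proposal must be judged on its own merits rather than against a proof in this paper. Your strategy of passing to $g \define f^{n_{\mathcal{C}}}$ and applying the invariant-curve version to $g$ is a sensible one, but the reduction step has a genuine gap. You assert, citing Proposition~\ref{prop:cell decomposition: invariant Jordan curve} applied to $g$, that every $(jn_{\mathcal{C}}+k)$-tile of $f$ lies in a unique $jn_{\mathcal{C}}$-tile of $f$, and you use this to place $f^{r}(X^{n+k})$ (a $(Jn_{\mathcal{C}}+k)$-tile of $f$) inside a $J$-tile $Y$ of $g$. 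But Proposition~\ref{prop:cell decomposition: invariant Jordan curve} applied to $g$ only yields refinement between $f$-levels that are both multiples of $n_{\mathcal{C}}$: it says every $(j+l)n_{\mathcal{C}}$-tile of $f$ is contained in a $jn_{\mathcal{C}}$-tile. For general $k$ the needed refinement fails: comparing $1$-skeletons, $\mathbf{D}^{m+k}(f,\mathcal{C})$ refines $\mathbf{D}^{m}(f,\mathcal{C})$ if and only if $f^{-m}(\mathcal{C}) \subseteq f^{-m-k}(\mathcal{C})$, which is equivalent to $f^{k}(\mathcal{C}) \subseteq \mathcal{C}$, and by the minimality of $n_{\mathcal{C}}$ built into Assumption~(2) this fails for every $0 < k < n_{\mathcal{C}}$. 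So $f^{r}(X^{n+k})$ generally does not lie in any $J$-tile of $g$, and your application of the $g$-version of Lemma~15.25 at this point is not justified. A corrected bookkeeping would decompose $n+k$ (not just $n$) modulo $n_{\mathcal{C}}$, so that some bounded forward iterate of $X^{n+k}$ genuinely becomes a tile of $g$; one is then left with a leading and a trailing block of at most $n_{\mathcal{C}}$ unmatched $f$-iterates, both of which must be absorbed by the bounded-iterate estimate.

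That bounded-iterate estimate, which you correctly flag as the main obstacle, is only sketched and in fact carries essentially all the weight. The ``Lipschitz'' inequality $m_{f,\mathcal{C}}(f(x),f(y)) \geq m_{f,\mathcal{C}}(x,y) - 1$ is immediate (push tiles forward by $f$) and already gives the easy half of \eqref{eq:basic_distortion} with no common-tile hypothesis at all; what requires work is the matching upper bound $m_{f,\mathcal{C}}(f(x),f(y)) \leq m_{f,\mathcal{C}}(x,y) - 1 + O(1)$ under the hypothesis that $x,y$ share a high-level tile, and this is where the flower combinatorics and the behavior near critical points enter. Since the visual-metric comparison $d(x,y) \asymp \Lambda^{-m_{f,\mathcal{C}}(x,y)}$ and the tile/flower estimates of \cite[Chapter~8]{bonk2017expanding} hold for an arbitrary Jordan curve $\mathcal{C} \supseteq \post{f}$ with no invariance assumed whatsoever, it would be cleaner (and is likely closer to Li's actual argument) to prove the lemma directly from those rather than to detour through $g$ and then be forced to re-derive essentially the same combinatorial estimate for the remainder iterates.
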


The next distortion lemma follows immediately from \cite[Lemma~5.1]{li2018equilibrium}.

\begin{lemma}    \label{lem:distortion_lemma}
    Let $f \colon S^2 \mapping S^2$ be an expanding Thurston map, and $\mathcal{C} \subseteq S^2$ be a Jordan curve that satisfies $\post{f} \subseteq \mathcal{C}$ and $f^{n_{\mathcal{C}}}(\mathcal{C}) \subseteq \mathcal{C}$ for some $n_{\mathcal{C}} \in \n$. Let $d$ be a visual metric on $S^2$ for $f$ with expansion factor $\Lambda > 1$.
    Let $\potential \in \holderspacesphere$ be a real-valued \holder continuous function with an exponent $\holderexp \in (0, 1]$.
    Then there exists a constant $C_1 \geqslant 0$ depending only on $f$, $\mathcal{C}$, $d$, $\phi$, and $\holderexp$ such that for all $n \in \n_0$, $X^n \in \Tile{n}$, and $\juxtapose{x}{y} \in X^n$,
    \begin{equation}    \label{eq:distortion_lemma}
        \left| S_n\phi(x) - S_n\phi(y) \right| \leqslant C_1 d(f^n(x), f^n(y))^{\holderexp} \leqslant \Cdistortion.
    \end{equation}
    Quantitatively, we choose
    \begin{equation}     \label{eq:const:C_1}
        C_1 \define C_0  \holderseminorm{\potential}{S^2}  \big/ \bigl( 1 - \Lambda^{-\holderexp} \bigr) ,
    \end{equation}
    where $C_0 > 1$ is the constant depending only on $f$, $\mathcal{C}$, and $d$ from Lemma~\ref{lem:basic_distortion}.
\end{lemma}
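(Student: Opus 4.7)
The plan is to apply the H\"older condition on $\phi$ at each orbit point $f^i(x), f^i(y)$ for $i = 0, 1, \ldots, n-1$, and to bound the distances $d(f^i(x), f^i(y))$ by running Lemma~\ref{lem:basic_distortion} backwards: since $x$ and $y$ lie in a common $n$-tile, their images $f^i(x), f^i(y)$ lie in a common $(n-i)$-tile, and so the distance between them is at most $C_0 \Lambda^{-(n-i)}$ times the distance $d(f^n(x), f^n(y))$. Summing the resulting H\"older estimates produces a geometric series in $\Lambda^{-\holderexp}$ whose sum yields the advertised constant $C_1$.

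Concretely, fix $n \in \n_0$, $X^n \in \Tile{n}$, and $\juxtapose{x}{y} \in X^n$. By the cellular Markov structure of the decompositions $\mathbf{D}^{n-i}(f,\mathcal{C})$ (see Subsection~\ref{sub:Thurston_maps}), for each $i \in \{0, 1, \ldots, n\}$ the image $f^i(X^n)$ is an $(n-i)$-tile containing both $f^i(x)$ and $f^i(y)$. Applying Lemma~\ref{lem:basic_distortion} to this $(n-i)$-tile, with $f^{n-i}$ in the role of the iterate, yields
\[
d(f^i(x), f^i(y)) \leqslant C_0 \Lambda^{-(n-i)} d(f^n(x), f^n(y)).
\]
Combining with the H\"older estimate $|\phi(u) - \phi(v)| \leqslant \holderseminorm{\phi}{S^2} d(u, v)^{\holderexp}$ gives
\[
|\phi(f^i(x)) - \phi(f^i(y))| \leqslant C_0^{\holderexp} \holderseminorm{\phi}{S^2} \Lambda^{-(n-i)\holderexp} d(f^n(x), f^n(y))^{\holderexp}.
\]

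Summing this over $i \in \{0, 1, \ldots, n-1\}$ (reindexing by $j = n - i$) and bounding the resulting partial geometric sum by $\sum_{j=1}^{\infty} \Lambda^{-j\holderexp} \leqslant 1/(1 - \Lambda^{-\holderexp})$, together with $C_0^{\holderexp} \leqslant C_0$ (valid since $C_0 > 1$ and $\holderexp \in (0, 1]$), gives
\[
|S_n \phi(x) - S_n \phi(y)| \leqslant \frac{C_0 \holderseminorm{\phi}{S^2}}{1 - \Lambda^{-\holderexp}} d(f^n(x), f^n(y))^{\holderexp},
\]
which is the first inequality with the prescribed value of $C_1$. For the rightmost inequality, $f^n(X^n)$ is a $0$-tile, hence $d(f^n(x), f^n(y)) \leqslant M$ where $M \define \max\{\diam{d}{X} \describe X \in \Tile{0}\}$; the constant $\Cdistortion \define C_1 M^{\holderexp}$ then depends only on $f$, $\mathcal{C}$, $d$, $\phi$, and $\holderexp$.

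I expect no serious obstacle: the proof is routine iteration-plus-H\"older bookkeeping once Lemma~\ref{lem:basic_distortion} is in hand. The only detail requiring care is tracking the level drop of the tiles $f^i(X^n)$ from $n$ down to $0$ and feeding the corresponding iterate $f^{n-i}$ into Lemma~\ref{lem:basic_distortion} so that the correct power of $\Lambda$ appears in each summand; everything else reduces to summing a geometric series.
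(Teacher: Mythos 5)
Your proof is correct and takes essentially the standard approach: for $x,y$ in a common $n$-tile, bound each orbit distance $d(f^i(x),f^i(y))$ via Lemma~\ref{lem:basic_distortion} applied to the $(n-i)$-tile $f^i(X^n)$, feed these bounds into the H\"older seminorm, and sum the resulting geometric series to get $C_1 = C_0 \holderseminorm{\potential}{S^2}/(1-\Lambda^{-\holderexp})$. The paper does not reprove the lemma but invokes \cite[Lemma~5.1]{li2018equilibrium}, whose proof is precisely this calculation, so your argument matches the intended one; the only cosmetic difference is that you bound $d(f^n(x),f^n(y))$ by the maximal diameter of a $0$-tile rather than by $\diam{d}{S^2}$, which is a slightly sharper but equivalent way to obtain the rightmost constant in \eqref{eq:distortion_lemma}.
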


\subsection{Subsystems of expanding Thurston maps}%
\label{sub:Subsystems of expanding Thurston maps}

In this subsection, we review some concepts and results on subsystems of expanding Thurston maps. 
We refer the reader to \cite[Section~5]{shi2024thermodynamic} for details. 

\smallskip

We first introduce the definition of subsystems along with relevant concepts and notations that will be used frequently throughout this paper.
Additionally, we will provide examples to illustrate these ideas.

\begin{definition}    \label{def:subsystems}
    Let $f \colon S^2 \mapping S^2$ be an expanding Thurston map with a Jordan curve $\mathcal{C}\subseteq S^2$ satisfying $\post{f} \subseteq \mathcal{C}$. 
    We say that a map $F \colon \domF \mapping S^2$ is a \emph{subsystem of $f$ with respect to $\mathcal{C}$} if $\domF = \bigcup \mathfrak{X}$ for some non-empty subset $\mathfrak{X} \subseteq \Tile{1}$ and $F = f|_{\domF}$.
    We denote by $\subsystem$ the set of all subsystems of $f$ with respect to $\mathcal{C}$.
    Define \[
        \operatorname{Sub}_{*}(f, \mathcal{C}) \define \{ F \in \subsystem \describe \domF \subseteq F(\domF) \}.
    \]
\end{definition}

Consider a subsystem $F \in \subsystem$. 
For each $n \in \n_0$, we define the \emph{set of $n$-tiles of $F$} to be
\begin{equation}    \label{eq:definition of tile of subsystem}
    \Domain{n} \define \{ X^n \in \Tile{n} \describe X^n \subseteq F^{-n}(F(\domF)) \},
\end{equation}
where we set $F^0 \define \id{S^{2}}$ when $n = 0$. We call each $X^n \in \Domain{n}$ an \emph{$n$-tile} of $F$. 
We define the \emph{tile maximal invariant set} associated with $F$ with respect to $\mathcal{C}$ to be
\begin{equation}    \label{eq:def:limitset}
    \limitset(F, \mathcal{C}) \define \bigcap_{n \in \n} \Bigl( \bigcup \Domain{n} \Bigr), 
\end{equation}
which is a compact subset of $S^{2}$. 
Indeed, $\limitset(F, \mathcal{C})$ is forward invariant with respect to $F$, namely, $F(\limitset(F, \mathcal{C})) \subseteq \limitset(F, \mathcal{C})$ (see Proposition~\ref{prop:subsystem:preliminary properties}~\ref{item:subsystem:properties:limitset forward invariant}). 
We denote by $\limitmap$ the map $F|_{\limitset(F, \mathcal{C})} \colon \limitset(F, \mathcal{C}) \mapping \limitset(F, \mathcal{C})$.

Let $\juxtapose{X^0_{\black}}{X^0_{\white}} \in \mathbf{X}^0(f, \mathcal{C})$ be the black $0$-tile and the white $0$-tile, respectively. 
We define the \emph{color set of $F$} as \[
    \colourset \define \bigl\{ \colour \in \colours \describe X^0_{\colour} \in \Domain{0} \bigr\}.
\]
For each $n \in \n_0$, we define the \emph{set of black $n$-tiles of $F$} as\[
    \bFTile{n} \define \bigl\{ X \in \Domain{n} \describe F^{n}(X) = X^0_{\black} \bigr\},
\] 
and the \emph{set of white $n$-tiles of $F$} as\[
    \wFTile{n} \define \bigl\{ X \in \Domain{n} \describe F^{n}(X) = X^0_{\white} \bigr\}. 
\]
Moreover, for each $n \in \n_0$ and each pair of $\juxtapose{\colour}{\colour'} \in \colours$ we define 
\[
    \ccFTile{n}{\colour}{\colour'} \define \bigl\{ X \in \cFTile{n} \describe X \subseteq X^0_{\colour'} \bigr\}.
\]
In other words, for example, a tile $X \in \ccFTile{n}{\black}{\white}$ is a \emph{black $n$-tile of $F$ contained in $\whitetile$}, i.e., an $n$-tile of $F$ that is contained in the white $0$-tile $X^0_{\white}$ as a set, and is mapped by $F^{n}$ onto the black $0$-tile $\blacktile$.

By abuse of notation, we often omit $(F, \mathcal{C})$ in the notations above when it is clear from the context.


We discuss three examples below and refer the reader to \cite[Subsection~5.1]{shi2024thermodynamic} for more examples.

\begin{example}    \label{exam:subsystems}
    Let $f \colon S^2 \mapping S^2$ be an expanding Thurston map with a Jordan curve $\mathcal{C}\subseteq S^2$ satisfying $\post{f} \subseteq \mathcal{C}$.
    Consider $F \in \subsystem$.
    \begin{enumerate}[label=(\roman*)]

        \item     \label{item:exam:subsystems:strongly irreducible but not primitive} 
            The map $F$ satisfies $\domF = X^1_{\black} \cup X^1_{\white}$ for some $X^1_{\black} \in \mathbf{X}^1_{\black}(f, \mathcal{C})$ and $X^1_{\white} \in \cTile{1}{\white}$ satisfying $X^1_{\black} \subseteq \inte[\big]{X^0_{\white}}$ and $X^1_{\white} \subseteq \inte[\big]{X^0_{\black}}$. 
            In this case, $F$ is surjective and $\limitset = \{p,\, q\}$ for some $p \in X^1_{\black}$ and $q \in X^1_{\white}$. 
            One sees that $F(\limitset) = \limitset$ since $F(p) = q$ and $F(q) = p$.

        \smallskip

        \item     \label{item:exam:subsystems:Sierpinski carpet}
            The map $F \colon \domF \mapping S^2$ is represented by Figure~\ref{fig:subsystem:example:carpet}.
            Here $S^{2}$ is identified with a pillow that is obtained by gluing two squares together along their boundaries.
            Moreover, each square is subdivided into $3\times 3$ subsquares, and $\dom{F}$ is obtained from $S^2$ by removing the interior of the middle subsquare $X^{1}_{\white} \in \cTile{1}{\white}$ and $X^{1}_{\black} \in \cTile{1}{\black}$ of the respective squares. 
            In this case, $\limitset$ is a \sierpinski carpet. 
            It consists of two copies of the standard square \sierpinski carpet glued together along the boundaries of the squares.
            \begin{figure}[H]
                \centering
                \begin{overpic}
                    [width=12cm, tics=20]{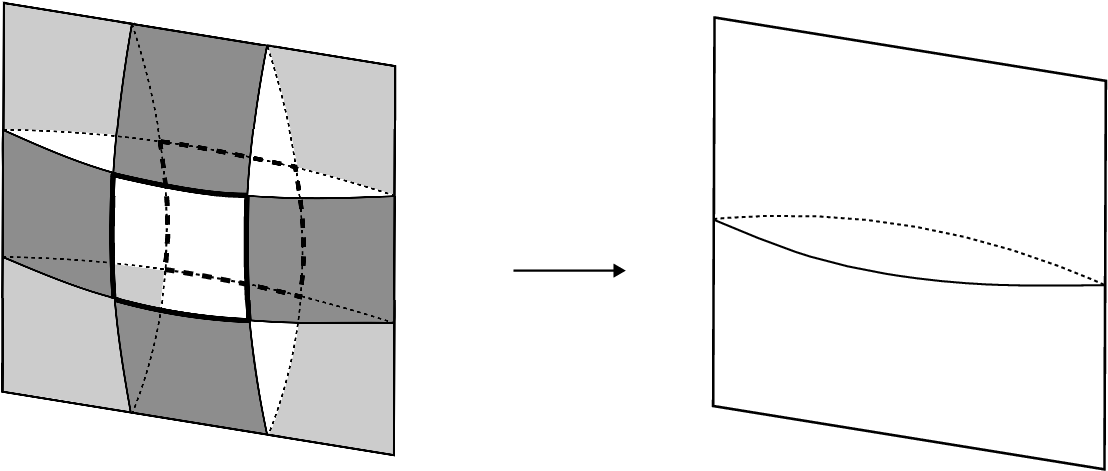}
                    \put(50,20){$F$}
                    \put(16,41){$\domF$}
                    \put(83,40){$S^2$}
                \end{overpic}
                \caption{A \sierpinski carpet subsystem.} 
                \label{fig:subsystem:example:carpet}
            \end{figure}

        \smallskip

        \item     \label{item:exam:subsystems:Sierpinski gasket}
            The map $F \colon \domF \mapping S^2$ is represented by Figure~\ref{fig:subsystem:example:gasket}.
            Here $S^{2}$ is identified with a pillow that is obtained by gluing two equilateral triangles together along their boundaries.
            Moreover, each triangle is subdivided into $4$ small equilateral triangles, and $\dom{F}$ is obtained from $S^2$ by removing the interior of the middle small triangle $X^{1}_{\black} \in \cTile{1}{\black}$ and $X^{1}_{\white} \in \cTile{1}{\white}$ of the respective triangle. 
            In this case, $\limitset$ is a \sierpinski gasket. 
            It consists of two copies of the standard \sierpinski gasket glued together along the boundaries of the triangles.
            \begin{figure}[H]
                \centering
                \begin{overpic}
                    [width=12cm, tics=20]{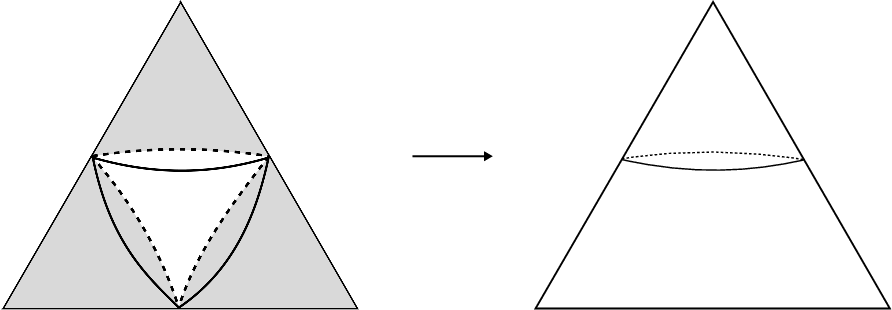}
                    \put(50,19){$F$}
                    \put(0,23){$\domF$}
                    \put(90,24){$S^2$}
                \end{overpic}
                \caption{A \sierpinski gasket subsystem.} 
                \label{fig:subsystem:example:gasket}
            \end{figure}
    \end{enumerate}
\end{example}

We summarize some preliminary results for subsystems in the following proposition.

\begin{proposition}[Z.~Li, X.~Shi, Y.~Zhang \cite{shi2024thermodynamic}]    \label{prop:subsystem:preliminary properties}
    Let $f \colon S^2 \mapping S^2$ be an expanding Thurston map with a Jordan curve $\mathcal{C}\subseteq S^2$ satisfying $\post{f} \subseteq \mathcal{C}$.
    Consider $F \in \subsystem$.
    Consider arbitrary $\juxtapose{n}{k} \in \n_0$.
    Then the following statements hold:
    \begin{enumerate}[label=\rm{(\roman*)}]
        \smallskip
        \item    \label{item:subsystem:properties:homeo} 
            If $X \in \Domain{n + k}$ is any $(n + k)$-tile of $F$, then $F^k(X)$ is an $n$-tile of $F$, and $F^{k}|_{X}$ is a homeomorphism of $X$ onto $F^{k}(X)$. As a consequence we have $\bigl\{ F^{k}(X) \describe X \in \Domain{n + k} \bigr\} \subseteq \Domain{n}$.
             
        \smallskip

        \item     \label{item:subsystem:properties:limitset forward invariant}
            The tile maximal invariant set $\limitset$ is forward invariant with respect to $F$, i.e., $F(\limitset) \subseteq \limitset$.

        \smallskip

        \item     \label{item:subsystem:properties:properties invariant Jordan curve:decreasing relation of domains}
            If $f(\mathcal{C}) \subseteq \mathcal{C}$, then $\domain{n + k} \subseteq \domain{n} \subseteq \domain{1} = \domF$ for all $\juxtapose{n}{k} \in \n$.

        \smallskip
        
        \item    \label{item:subsystem:properties:sursubsystem properties:property of domain and limitset} 
            If $f(\mathcal{C}) \subseteq \mathcal{C}$ and $F \in \sursubsystem$, then $F(\limitset) = \limitset \ne \emptyset$.    
    \end{enumerate}
\end{proposition}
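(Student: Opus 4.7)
\textbf{Plan for the proof of Proposition~\ref{prop:subsystem:preliminary properties}.}

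For part~\ref{item:subsystem:properties:homeo}, I would unpack the definition of $\Domain{n + k}$. If $X \in \Domain{n + k}$, then $X$ is in particular an $(n + k)$-tile of $f$ contained in $\domF$ on which all iterates $F^0, F^1, \dots, F^{n + k - 1}$ are defined, so $F^k|_{X} = f^k|_{X}$, and by the cellular Markov property of $f$ for $(\Tile{n+k}, \Tile{n})$ this restriction is a homeomorphism onto an $n$-tile $F^k(X)$ of $f$. To verify $F^k(X) \in \Domain{n}$, I would observe that all intermediate iterates $F^j(F^k(X)) = F^{j + k}(X) \subseteq \domF$ for $j < n$ and that $F^{n}(F^k(X)) = F^{n + k}(X) \in F(\domF)$, using only that $X \in \Domain{n + k}$.

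Part~\ref{item:subsystem:properties:limitset forward invariant} is then immediate from part~\ref{item:subsystem:properties:homeo}: given $x \in \limitset$ and any $n \in \n$, pick $X^{n + 1} \in \Domain{n + 1}$ with $x \in X^{n + 1}$; then $F(x) \in F(X^{n + 1}) \in \Domain{n}$, so $F(x) \in \bigcup \Domain{n}$ for every $n$.

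For part~\ref{item:subsystem:properties:properties invariant Jordan curve:decreasing relation of domains}, I first unpack $\Domain{1}$: every $X^1 \in \mathfrak{X}$ satisfies $F(X^1) \in F(\domF)$, so $\mathfrak{X} \subseteq \Domain{1}$; conversely, any $X^1 \in \Domain{1}$ is a $1$-tile with $\inte X^1 \subseteq \inte{(\bigcup \mathfrak{X})}$, which forces $X^1 \in \mathfrak{X}$ since distinct $1$-tiles have disjoint interiors. For the inclusion $\domain{n + k} \subseteq \domain{n}$, I would take $x \in X^{n + k} \in \Domain{n + k}$, invoke Proposition~\ref{prop:cell decomposition: invariant Jordan curve} (using the $f$-invariance $f(\mathcal{C}) \subseteq \mathcal{C}$) to pick the unique $n$-tile $X^n \supseteq X^{n + k}$, and then show $X^n \in \Domain{n}$. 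For each $j \in \{0, 1, \dots, n - 1\}$, the $(n - j)$-tile $f^j(X^n)$ contains $f^j(X^{n + k})$, an $(n + k - j)$-tile that lies in $\domF$; by the disjoint-interiors argument above together with Proposition~\ref{prop:cell decomposition: invariant Jordan curve} applied at level $1$, the unique $1$-tile containing $f^j(X^n)$ must belong to $\mathfrak{X}$. Hence $f^j(X^n) \subseteq \domF$ for all such $j$, and in particular $f^{n - 1}(X^n) \in \mathfrak{X}$, so $F^n(X^n) = f(f^{n - 1}(X^n)) \in F(\domF)$.

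For part~\ref{item:subsystem:properties:sursubsystem properties:property of domain and limitset}, the inclusion $F(\limitset) \subseteq \limitset$ is given by part~\ref{item:subsystem:properties:limitset forward invariant}. The heart of the argument is the reverse inclusion. Given $y \in \limitset$ and $n \in \n$, pick $X^n \in \Domain{n}$ with $y \in X^n$; then $F^n(X^n) \in F(\domF)$, so by $F \in \sursubsystem$ and the definition of $F(\domF)$ there exists $W \in \mathfrak{X}$ with $f(W) = F^n(X^n)$. Since $f|_{W}$ is a homeomorphism onto the $0$-tile $f(W)$, I set $\widetilde{X}^{n + 1} \define (f|_{W})^{-1}(X^n)$, which by Lemma~\ref{lem:cell mapping properties of Thurston map}~\ref{item:lem:cell mapping properties of Thurston map:i} is an $(n + 1)$-tile satisfying $F(\widetilde{X}^{n + 1}) = X^n$ and lying in $W \subseteq \domF$. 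A short check using $X^n \in \Domain{n}$ shows $\widetilde{X}^{n + 1} \in \Domain{n + 1}$. Choose $x_n \in \widetilde{X}^{n + 1}$ with $F(x_n) = y$; by part~\ref{item:subsystem:properties:properties invariant Jordan curve:decreasing relation of domains}, $x_n \in \bigcup \Domain{m}$ for every $m \leqslant n + 1$. Passing to a subsequential limit $x$ by compactness of $S^{2}$ and using that each $\bigcup \Domain{m}$ is closed, I obtain $x \in \limitset$ with $F(x) = y$ by continuity of $F$. Finally, $\limitset \ne \emptyset$ follows from the same lifting construction, which inductively produces an element of $\Domain{n}$ for every $n$ starting from any $X^0 \in \Domain{0}$, so that $\limitset$ is an intersection of a nested sequence of nonempty closed subsets of $S^2$.

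The main obstacle is part~\ref{item:subsystem:properties:sursubsystem properties:property of domain and limitset}: producing preimages that land in $\Domain{n + 1}$ rather than merely in $\Domain{1} = \domF$ requires using the surjectivity hypothesis exactly at the level of $0$-tile images and then exploiting the cellular Markov structure to lift.
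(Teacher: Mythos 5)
Your arguments for parts~\ref{item:subsystem:properties:homeo}, \ref{item:subsystem:properties:limitset forward invariant}, and \ref{item:subsystem:properties:properties invariant Jordan curve:decreasing relation of domains} are sound; the proof breaks down in the lifting step of part~\ref{item:subsystem:properties:sursubsystem properties:property of domain and limitset}. You take $W \in \mathfrak{X}$ with $f(W) = F^n(X^n)$ and set $\widetilde{X}^{n+1} := (f|_W)^{-1}(X^n)$, but $(f|_W)^{-1}$ is only defined on $f(W) = F^n(X^n)$, and $F^n(X^n)$ is the $0$-tile that $f^n$ maps $X^n$ \emph{onto}, which is in general not the $0$-tile \emph{containing} $X^n$. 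For example, if $X^n \in \ccFTile{n}{\black}{\white}$ is a black $n$-tile of $F$ sitting inside the white $0$-tile, then $X^n \subseteq X^0_{\white}$ while $F^n(X^n) = X^0_{\black}$, so $X^n \not\subseteq F^n(X^n)$ and the expression $(f|_W)^{-1}(X^n)$ is meaningless. You also invoke $F \in \sursubsystem$ to justify the existence of this $W$, but $F^n(X^n) \subseteq F(\domF)$ is automatic from $X^n \in \Domain{n}$ and needs no surjectivity—which is a hint that the hypothesis was not being applied where the argument actually requires it.

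The fix is to take $X^0$ to be the unique $0$-tile with $X^n \subseteq X^0$ (supplied by Proposition~\ref{prop:cell decomposition: invariant Jordan curve}, using $f(\mathcal{C}) \subseteq \mathcal{C}$) and to choose $W \in \mathfrak{X}$ with $f(W) = X^0$. This is exactly where surjectivity is needed: since $X^n \subseteq \domF$ and $\domF \subseteq F(\domF)$ by $F \in \sursubsystem$, the tile $X^n$ with nonempty interior lies in $F(\domF)$, which is a union of $0$-tiles, forcing $X^0 \subseteq F(\domF)$ and hence the existence of such a $W$ (and for $n = 0$ the membership $X^0 \subseteq F(\domF)$ is immediate from $X^0 \in \Domain{0}$). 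With this $W$, $(f|_W)^{-1}(X^n)$ is well-defined, is an $(n+1)$-tile by Lemma~\ref{lem:cell mapping properties of Thurston map}~\ref{item:lem:cell mapping properties of Thurston map:i}, and satisfies $F(\widetilde{X}^{n+1}) = X^n$; your verification that $\widetilde{X}^{n+1} \in \Domain{n+1}$, the extraction of a convergent subsequence of preimages $x_n$, and the argument for $\limitset \ne \emptyset$ by nested nonempty closed sets then all go through as you outlined.
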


Proposition~\ref{prop:subsystem:preliminary properties}~\ref{item:subsystem:properties:homeo} and \ref{item:subsystem:properties:limitset forward invariant} are from \cite[Proposition~5.4~(i) and (ii)]{shi2024thermodynamic}. 
Proposition~\ref{prop:subsystem:preliminary properties}~\ref{item:subsystem:properties:properties invariant Jordan curve:decreasing relation of domains} is from \cite[Proposition~5.5~(i)]{shi2024thermodynamic}. 
Proposition~\ref{prop:subsystem:preliminary properties}~\ref{item:subsystem:properties:sursubsystem properties:property of domain and limitset} is from \cite[Proposition~5.6~(ii)]{shi2024thermodynamic}.

\smallskip

We record the following notions of degrees and local degrees for subsystems from \cite[Subsection~5.3]{shi2024thermodynamic}.

\begin{definition}[Degrees]   \label{def:subsystem local degree}
    Let $f \colon S^2 \mapping S^2$ be an expanding Thurston map with a Jordan curve $\mathcal{C}\subseteq S^2$ satisfying $\post{f} \subseteq \mathcal{C}$.
    Consider $F \in \subsystem$.
    The \emph{degree} of $F$ is defined as \[
        \deg{(F)} \define \sup \bigl\{ \card[\big]{F^{-1}(\{y\})} \describe y \in S^2 \bigr\}.
    \]

    Fix arbitrary $x \in S^2$ and $n \in \n$. 
    We define the \emph{black degree} of $F^n$ at $x$ as
    \[ 
        \ccndegF{\black}{}{n}{x} \define \card{ \neighbortile{n}{\black}{}{x} },
    \]
    where $\neighbortile{n}{\black}{}{x} \define \{X \in \bFTile{n} \describe x \in X \}$ is the \emph{set of black $n$-tiles of $F$ at $x$}.
    Similarly, we define the \emph{white degree} of $F^n$ at $x$ as\[
        \ccndegF{\white}{}{n}{x} \define \card{ \neighbortile{n}{\white}{}{x} },
    \]
    where $\neighbortile{n}{\white}{}{x} \define \{X \in \wFTile{n} \describe x \in X \}$ is the \emph{set of white $n$-tiles of $F$ at $x$}. 
    Moreover, the \emph{local degree} of $F^{n}$ at $x$ is defined as
    \begin{equation}    \label{eq:subsystem local degree greater than color degree}
        \ccndegF{}{}{n}{x} \define \max\{\ccndegF{\black}{}{n}{x}, \ccndegF{\white}{}{n}{x}\},
    \end{equation}
    and the \emph{set of $n$-tiles of $F$ at $x$} is $\neighbortile{n}{}{}{x} \define \{X \in \Domain{n} \describe x \in X \}$.
    Furthermore, for each pair of $\juxtapose{\colour}{\colour'} \in \colours$ we define
    \begin{align*}
        \neighbortile{n}{\colour}{\colour'}{x} &\define \{X \in \ccFTile{n}{\colour}{\colour'} \describe x \in X \},  \\
        \ccndegF{\colour}{\colour'}{n}{x} &\define \card{ \neighbortile{n}{\colour}{\colour'}{x} }, 
    \end{align*}
    and the \emph{local degree matrix of $F^{n}$ at $x$} is
    \[
        \qquad \quad \Deg{n}{x} \define \begin{bmatrix}
            \ccndegF{\black}{\black}{n}{x} & \ccndegF{\white}{\black}{n}{x} \\
            \ccndegF{\black}{\white}{n}{x} & \ccndegF{\white}{\white}{n}{x}
        \end{bmatrix}.
    \]
\end{definition}

\smallskip

We record the following two definitions from \cite[Subsection~5.5]{shi2024thermodynamic}.

\begin{definition}[Irreducibility]    \label{def:irreducibility of subsystem}
    Let $f \colon S^2 \mapping S^2$ be an expanding Thurston map with a Jordan curve $\mathcal{C}\subseteq S^2$ satisfying $\post{f} \subseteq \mathcal{C}$.
    Consider $F \in \subsystem$.
    We say $F$ is an \emph{irreducible} (\resp a \emph{strongly irreducible}) subsystem (of $f$ with respect to $\mathcal{C}$) if for each pair of $\juxtapose{\colour}{\colour'} \in \colours$, there exists an integer $n_{\colour \colour'} \in \n$ and $X^{n_{\colour \colour'}} \in \cFTile{n_{\colour \colour'}}$ satisfying $X^{n_{\colour \colour'}} \subseteq X^0_{\colour'}$ (\resp $X^{n_{\colour \colour'}} \subseteq \inte[\big]{X^0_{\colour'}}$).
    We denote by $n_{F}$ the constant $\max_{\juxtapose{\colour}{\colour'} \in \colours} n_{\colour \colour'}$, which depends only $F$ and $\mathcal{C}$.
\end{definition}

Obviously, if $F$ is irreducible then $\colourset = \colours$ and $F(\domF) = S^{2}$.


\begin{definition}[Primitivity]    \label{def:primitivity of subsystem}
    Let $f \colon S^2 \mapping S^2$ be an expanding Thurston map with a Jordan curve $\mathcal{C}\subseteq S^2$ satisfying $\post{f} \subseteq \mathcal{C}$.
    Consider $F \in \subsystem$.
    We say that $F$ is a \emph{primitive} (\resp \emph{strongly primitive}) subsystem (of $f$ with respect to $\mathcal{C}$) if there exists an integer $n_{F} \in \n$ such that for each pair of $\juxtapose{\colour}{\colour'} \in \colours$ and each integer $n \geqslant n_{F}$, there exists $X^n \in \cFTile{n}$ satisfying $X^n \subseteq X^0_{\colour'}$ (\resp $X^n \subseteq \inte[\big]{X^0_{\colour'}}$). 
\end{definition}




We record \cite[Lemmas~5.21 and 5.22]{shi2024thermodynamic} below.
\begin{lemma}[Z.~Li, X.~Shi, Y.~Zhang \cite{shi2024thermodynamic}]    \label{lem:strongly irreducible:tile in interior tile for high enough level}
    Let $f \colon S^2 \mapping S^2$ be an expanding Thurston map with a Jordan curve $\mathcal{C}\subseteq S^2$ satisfying $\post{f} \subseteq \mathcal{C}$.
    Let $F \in \subsystem$ be irreducible (\resp strongly irreducible).
    Let $n_F \in \n$ be the constant from Definition~\ref{def:primitivity of subsystem}, which depends only on $F$ and $\mathcal{C}$. 
    Then for each $k \in \n_{0}$, each $\colour \in \colours$, and each $k$-tile $X^k \in \Domain{k}$, there exists an integer $n \in \n$ with $n \leqslant n_{F}$ and $X^{k + n}_{\colour} \in \cFTile{k + n}$ satisfying $X^{k + n}_{\colour} \subseteq X^k$ (\resp $X^{k + n}_{\colour} \subseteq \inte{X^k}$).
\end{lemma}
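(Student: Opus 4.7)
The plan is to apply the irreducibility (resp.\ strong irreducibility) hypothesis after first identifying the color of $F^{k}(X^k)$, and then to pull back a small color-$\colour$ tile through the cellular homeomorphism $F^{k}|_{X^k}$.

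First, by Proposition~\ref{prop:subsystem:preliminary properties}~\ref{item:subsystem:properties:homeo} applied to the pair $(0, k)$, the restriction $F^{k}|_{X^k}$ is a homeomorphism from $X^k$ onto some $0$-tile of $F$; since $\Domain{0} = \bigl\{ X^{0}_{\colour''} \describe \colour'' \in \colourset \bigr\}$, there exists $\colour' \in \colourset \subseteq \colours$ with $F^{k}(X^k) = X^{0}_{\colour'}$. The irreducibility (resp.\ strong irreducibility) of $F$ applied to the pair $(\colour, \colour')$ via Definition~\ref{def:irreducibility of subsystem} then furnishes an integer $n \define n_{\colour\colour'} \in \n$ with $n \leqslant n_{F}$ and a tile $X^{n}_{\colour} \in \cFTile{n}$ satisfying $X^{n}_{\colour} \subseteq X^{0}_{\colour'}$ (resp.\ $X^{n}_{\colour} \subseteq \inte{X^{0}_{\colour'}}$).

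Next, I form the pullback $Y \define \bigl(F^{k}|_{X^k}\bigr)^{-1}\bigl(X^{n}_{\colour}\bigr) \subseteq X^k$. Since $F^{k}|_{X^k}$ is a homeomorphism and $f^{k}|_{Y}$ is a homeomorphism onto the $n$-cell $X^{n}_{\colour}$, Lemma~\ref{lem:cell mapping properties of Thurston map}~\ref{item:lem:cell mapping properties of Thurston map:i} (with $n$, $k$ replaced by $n$, $k$) identifies $Y$ as an $(n + k)$-tile, i.e., $Y \in \Tile{k + n}$. In the strongly irreducible case, because a cellular homeomorphism maps interior to interior, $X^{n}_{\colour} \subseteq \inte{X^{0}_{\colour'}}$ forces $Y \subseteq \inte{X^k}$.

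It remains to verify that $Y \in \cFTile{k + n}$, which amounts to checking (a) the color condition $F^{k + n}(Y) = X^{0}_{\colour}$ and (b) the membership $Y \in \Domain{k + n}$. For (a), a direct computation gives
\[
    f^{k + n}(Y) = f^{n}\bigl(f^{k}(Y)\bigr) = f^{n}\bigl(X^{n}_{\colour}\bigr) = F^{n}\bigl(X^{n}_{\colour}\bigr) = X^{0}_{\colour}.
\]
For (b), since irreducibility gives $F(\domF) = S^{2}$, the defining condition $Y \subseteq F^{-(k + n)}(F(\domF))$ reduces to showing $Y \subseteq \dom{F^{k + n}}$, i.e., $f^{j}(Y) \subseteq \domF$ for every $j \in \{0, 1, \dots, k + n - 1\}$. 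For $j < k$, this is immediate from $Y \subseteq X^k \in \Domain{k}$; for $k \leqslant j < k + n$, we use $f^{j}(Y) = f^{j - k}\bigl(X^{n}_{\colour}\bigr)$ together with $X^{n}_{\colour} \in \Domain{n} \subseteq \dom{F^{n}}$.

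The pullback construction in the middle step is standard from the theory of branched covering maps, and I expect the main (though still mild) technical obstacle to be the last verification—keeping the bookkeeping of $\Domain{\cdot}$ and the orbit of $Y$ aligned with the orbit of $X^{n}_{\colour}$ while correctly distinguishing the irreducible and strongly irreducible cases in the interior condition.
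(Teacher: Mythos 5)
Your proof is correct and takes what is surely the intended approach: identify the color $\colour'$ of $F^k(X^k)$ via Proposition~\ref{prop:subsystem:preliminary properties}~\ref{item:subsystem:properties:homeo}, apply irreducibility (resp.\ strong irreducibility) to produce a color-$\colour$ tile of level $n_{\colour\colour'} \leqslant n_F$ inside $X^0_{\colour'}$ (resp.\ its interior), and pull it back through the cellular homeomorphism $F^k|_{X^k}$, finally checking the tile identity via Lemma~\ref{lem:cell mapping properties of Thurston map}~\ref{item:lem:cell mapping properties of Thurston map:i} and the membership $Y \in \Domain{k+n}$ by unwinding $F^{-(k+n)}(F(\domF))$ with $F(\domF) = S^2$. (The paper itself cites this from the companion paper \cite{shi2024thermodynamic} rather than reproving it, and your reference to Definition~\ref{def:irreducibility of subsystem} for the constant $n_F$ is the right reading of what is a typo in the statement.) All the small verifications are sound: the pullback is a topological $2$-cell because $F^k|_{X^k}$ is a homeomorphism of $2$-cells, the interior condition transfers because homeomorphisms of $2$-cells carry boundary to boundary and hence interior to interior, and the orbit bookkeeping for $j < k$ and $k \leqslant j < k+n$ is exactly the right split.
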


\begin{lemma}[Z.~Li, X.~Shi, Y.~Zhang \cite{shi2024thermodynamic}]    \label{lem:strongly primitive:tile in interior tile for high enough level}
    Let $f \colon S^2 \mapping S^2$ be an expanding Thurston map with a Jordan curve $\mathcal{C}\subseteq S^2$ satisfying $\post{f} \subseteq \mathcal{C}$.
    Let $F \in \subsystem$ be primitive (\resp strongly primitive).
    Let $n_F \in \n$ be the constant from Definition~\ref{def:primitivity of subsystem}, which depends only on $F$ and $\mathcal{C}$. 
    Then for each $n \in \n$ with $n \geqslant n_{F}$, each $m \in \n_0$, each $\colour \in \colours$, and each $m$-tile $X^m \in \Domain{m}$, there exists an $(n + m)$-tile $X^{n + m}_{\colour} \in \cFTile{n + m}$ such that $X^{n + m}_{\colour} \subseteq X^m$ (\resp $X^{n + m}_{\colour} \subseteq \inte{X^m}$).
\end{lemma}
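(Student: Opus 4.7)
The plan is to pull back a suitable $n$-tile of $F$ through the restriction $F^m|_{X^m}$. The key ingredients are Proposition~\ref{prop:subsystem:preliminary properties}~\ref{item:subsystem:properties:homeo}, which guarantees that $F^m|_{X^m}$ is a homeomorphism of $X^m$ onto some $0$-tile of $F$, and Lemma~\ref{lem:cell mapping properties of Thurston map}~\ref{item:lem:cell mapping properties of Thurston map:i}, which will let me identify the pullback as a genuine $(n+m)$-tile of $f$.

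First I let $\colour' \in \colourset$ be the unique color with $F^m(X^m) = X^0_{\colour'}$; by Proposition~\ref{prop:subsystem:preliminary properties}~\ref{item:subsystem:properties:homeo}, $F^m|_{X^m}$ is then a homeomorphism of $X^m$ onto $X^0_{\colour'}$. Applying the primitivity (resp.\ strong primitivity) hypothesis to the pair $(\colour, \colour')$ and to the integer $n \geqslant n_F$, I select $X^n \in \cFTile{n}$ with $X^n \subseteq X^0_{\colour'}$ (resp.\ $X^n \subseteq \inte{X^0_{\colour'}}$), and I set
\[
    X^{n+m}_{\colour} \define \bigl(F^m|_{X^m}\bigr)^{-1}(X^n) \subseteq X^m.
\]
Since $F^m = f^m$ on $X^{n+m}_{\colour}$, this restriction is a homeomorphism of $X^{n+m}_{\colour}$ onto the $n$-cell $X^n$, so Lemma~\ref{lem:cell mapping properties of Thurston map}~\ref{item:lem:cell mapping properties of Thurston map:i} identifies $X^{n+m}_{\colour}$ as an $(n+m)$-tile of $f$.

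It remains to upgrade $X^{n+m}_{\colour}$ from an $(n+m)$-tile of $f$ to a member of $\cFTile{n+m}$, and to handle the interior assertion in the strongly primitive case. The color is correct since $F^{n+m}\bigl(X^{n+m}_{\colour}\bigr) = F^n(X^n) = X^0_{\colour}$. For membership in $\Domain{n+m}$, I observe that primitivity of $F$ forces $\colourset = \colours$ (for either color $\colour$, primitivity produces tiles in $\cFTile{n}$, whose $F^n$-image is $X^0_{\colour}$, so $X^0_{\colour} \in \Domain{0}$), hence $X^0_{\colour} \subseteq F(\domF)$; consequently $X^{n+m}_{\colour} \subseteq F^{-(n+m)}\bigl(F(\domF)\bigr)$, which is exactly the defining condition for $\Domain{n+m}$. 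In the strongly primitive case, $F^m|_{X^m}$ is a homeomorphism of topological $2$-cells and therefore carries $\inte{X^m}$ onto $\inte{X^0_{\colour'}}$, so $X^n \subseteq \inte{X^0_{\colour'}}$ pulls back to $X^{n+m}_{\colour} \subseteq \inte{X^m}$.

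I anticipate no real obstacle in this argument; the only point requiring a little care is the verification that the pulled-back cell belongs to $\Domain{n+m}$ rather than merely to $\mathbf{X}^{n+m}(f,\mathcal{C})$, and this follows immediately from the computation $F^{n+m}\bigl(X^{n+m}_{\colour}\bigr) = X^0_{\colour}$ combined with the equality $\colourset = \colours$ enforced by primitivity.
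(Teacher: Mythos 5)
Your proof is correct and follows what I take to be the natural (and, in essence, the cited paper's) route: use Proposition~\ref{prop:subsystem:preliminary properties}~\ref{item:subsystem:properties:homeo} to realize $F^m|_{X^m}$ as a homeomorphism onto some $0$-tile $X^0_{\colour'}$ of $F$, apply the primitivity hypothesis to obtain an $n$-tile of $F$ of color $\colour$ inside $X^0_{\colour'}$ (in its interior, in the strongly primitive case), pull it back, and invoke Lemma~\ref{lem:cell mapping properties of Thurston map}~\ref{item:lem:cell mapping properties of Thurston map:i}.

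One small remark on your verification that the pullback lies in $\Domain{n+m}$: your route through $\colourset = \colours$ works, but it implicitly relies on $F^{n+m}$ being defined on $X^{n+m}_{\colour}$ (which you have from the construction, but do not flag). A more immediate argument avoids this altogether: since $X^{n+m}_{\colour} \subseteq F^{-m}(X^n)$ and $X^n \in \Domain{n}$ gives $X^n \subseteq F^{-n}(F(\domF))$, you get directly $X^{n+m}_{\colour} \subseteq F^{-m}\bigl(F^{-n}(F(\domF))\bigr) = F^{-(n+m)}(F(\domF))$, with no need to establish $\colourset = \colours$ separately. Either way, the conclusion is sound, and the interior assertion in the strongly primitive case is handled correctly by the fact that a homeomorphism of $2$-cells carries interiors to interiors.
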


The following distortion lemma serves as cornerstones in the development of thermodynamic formalism for subsystems of expanding Thurston maps (see \cite[Lemma~5.25]{shi2024thermodynamic}).

\begin{lemma}[Z.~Li, X.~Shi, Y.~Zhang \cite{shi2024thermodynamic}]    \label{lem:distortion lemma for subsystem}
    Let $f \colon S^2 \mapping S^2$ be an expanding Thurston map, and $\mathcal{C} \subseteq S^2$ be a Jordan curve that satisfies $\post{f} \subseteq \mathcal{C}$ and $f(\mathcal{C}) \subseteq \mathcal{C}$.
    Consider $F \in \subsystem$. 
    Let $d$ be a visual metric on $S^2$ for $f$ with expansion factor $\Lambda > 1$.
    Let $\potential \in \holderspacesphere$ be a real-valued \holder continuous function with an exponent $\holderexp \in (0, 1]$.
    Then the following statements hold:
    \begin{enumerate}[label=\rm{(\roman*)}]
        \smallskip
        
        \item     \label{item:lem:distortion lemma for subsystem:holder bound on same color tile}
        For each $n \in \n_{0}$, each $\colour \in \colourset$, and each pair of $\juxtapose{x}{y} \in X^0_{\colour}$, we have
        \begin{equation}    \label{eq:same color distortion bound for split operator}
            \frac{ \sum_{X^n \in \cFTile{n}} \myexp[\big]{ S^{F}_{n} \phi\bigl( (F^n|_{X^n})^{-1}(x) \bigr) } }{ \sum_{X^n \in \cFTile{n}} \myexp[\big]{ S^{F}_{n} \phi\bigl( (F^n|_{X^n})^{-1}(x) \bigr) } }  
            \leqslant \myexp[\big]{ C_1 d(x, y)^{\holderexp} } 
            \leqslant \myexp[\big]{ \Cdistortion },
        \end{equation}
        where $C_1 \geqslant 0$ is the constant defined in \eqref{eq:const:C_1} in Lemma~\ref{lem:distortion_lemma} and depends only on $f$, $\mathcal{C}$, $d$, $\phi$, and $\holderexp$.
            
        \smallskip

        \item     \label{item:lem:distortion lemma for subsystem:uniform bound}
        If $F$ is irreducible, then there exists a constant $\Csplratio \geqslant 1$ depending only on $F$, $\mathcal{C}$, $d$, $\phi$, and $\holderexp$ such that for each $n \in \n_{0}$, each pair of $\juxtapose{\colour}{\colour'} \in \colourset$, each $x \in X^0_{\colour}$, and each $y \in X^0_{\colour'}$, we have
        \begin{equation}    \label{eq:distinct color distortion bound for split operator}
            \frac{ \sum_{X^n_{\colour} \in \ccFTile{n}{\colour}{}} \myexp[\big]{ S^{F}_{n} \phi\bigl( (F^n|_{X^n_{\colour}})^{-1}(x) \bigr) } }{ \sum_{X^n_{\colour'} \in \ccFTile{n}{\colour'}{}} \myexp[\big]{ S^{F}_{n} \phi\bigl( (F^n|_{X^n_{\colour'}})^{-1}(y) \bigr) } } \leqslant \Csplratio.
        \end{equation}
        Quantitatively, we choose 
        \begin{equation}    \label{eq:const:Csplratio}
            \Csplratio \define \CsplratioExpression,
        \end{equation}
        where $n_{F} \in \n$ is the constant in Definition~\ref{def:irreducibility of subsystem} and depends only on $F$ and $\mathcal{C}$, and $C_1 \geqslant 0$ is the constant defined in \eqref{eq:const:C_1} in Lemma~\ref{lem:distortion_lemma} and depends only on $f$, $\mathcal{C}$, $d$, $\phi$, and $\holderexp$.
    \end{enumerate}
\end{lemma}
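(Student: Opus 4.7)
For part~\ref{item:lem:distortion lemma for subsystem:holder bound on same color tile}, my plan is to reduce the bound to a per-branch distortion estimate and then to sum over inverse branches. By Proposition~\ref{prop:subsystem:preliminary properties}~\ref{item:subsystem:properties:homeo}, for each $X^n \in \cFTile{n}$ the restriction $F^n|_{X^n}$ is a homeomorphism of $X^n$ onto $X^0_\colour$, so for $\juxtapose{x}{y} \in X^0_\colour$ both points $u \define (F^n|_{X^n})^{-1}(x)$ and $v \define (F^n|_{X^n})^{-1}(y)$ lie in $X^n$ with $F^n(u) = x$ and $F^n(v) = y$. Applying Lemma~\ref{lem:distortion_lemma} to the tile $X^n$ and the points $\juxtapose{u}{v}$ yields
\begin{equation*}
    \bigl| S_n^F \phi(u) - S_n^F \phi(v) \bigr| \leqslant C_1 d(x, y)^\holderexp \leqslant C_1 (\diam{d}{S^2})^\holderexp = \Cdistortion.
\end{equation*}
Exponentiation gives $\myexp{S_n^F \phi(u)} \leqslant \myexp{C_1 d(x, y)^\holderexp} \cdot \myexp{S_n^F \phi(v)}$ for each branch, and summing over $X^n \in \cFTile{n}$ immediately produces~\eqref{eq:same color distortion bound for split operator}.

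For part~\ref{item:lem:distortion lemma for subsystem:uniform bound}, assume $F$ is irreducible. The case $\colour = \colour'$ follows from part~\ref{item:lem:distortion lemma for subsystem:holder bound on same color tile}, so I assume $\colour \ne \colour'$. For each color $\widetilde{\colour}$ and each $z \in X^0_{\widetilde{\colour}}$, abbreviate
\begin{equation*}
    L_n(z) \define \sum_{X^n \in \ccFTile{n}{\widetilde{\colour}}{}} \myexp[\big]{ S_n^F \phi \bigl( (F^n|_{X^n})^{-1}(z) \bigr) },
\end{equation*}
so that the ratio in~\eqref{eq:distinct color distortion bound for split operator} equals $L_n(x) / L_n(y)$. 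My plan is to bridge the two colors using irreducibility together with part~\ref{item:lem:distortion lemma for subsystem:holder bound on same color tile}. Applying Definition~\ref{def:irreducibility of subsystem} to the pair $(\colour', \colour)$ yields $N_1 \define n_{\colour'\colour} \leqslant n_F$ and $X^{N_1}_{\colour'} \in \ccFTile{N_1}{\colour'}{}$ with $X^{N_1}_{\colour'} \subseteq X^0_\colour$. Set $x_0 \define (F^{N_1}|_{X^{N_1}_{\colour'}})^{-1}(y) \in X^{N_1}_{\colour'} \subseteq X^0_\colour$. Then every $w \in F^{-n}(x_0)$ satisfies $F^{n + N_1}(w) = y$ and belongs to a unique element of $\ccFTile{n + N_1}{\colour'}{}$, and the identity $S_{n + N_1}^F \phi(w) = S_n^F \phi(w) + S_{N_1}^F \phi(x_0)$ together with the trivial bound $|S_{N_1}^F \phi(x_0)| \leqslant n_F \normcontinuous{\phi}{S^2}$ gives $L_{n + N_1}(y) \geqslant \myexp{-n_F \normcontinuous{\phi}{S^2}} L_n(x_0)$ after restricting $L_{n + N_1}(y)$ to this subcollection of preimages. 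Combining with part~\ref{item:lem:distortion lemma for subsystem:holder bound on same color tile} applied inside $X^0_\colour$ yields
\begin{equation*}
    L_n(x) \leqslant \myexp{\Cdistortion + n_F \normcontinuous{\phi}{S^2}} L_{n + N_1}(y).
\end{equation*}

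The principal obstacle is to eliminate the level shift $N_1$. The symmetric construction using the pair $(\colour, \colour')$ (choosing $N_2 \define n_{\colour \colour'} \leqslant n_F$ and an $X^{N_2}_\colour \subseteq X^0_{\colour'}$) delivers $L_n(y) \leqslant \myexp{\Cdistortion + n_F \normcontinuous{\phi}{S^2}} L_{n + N_2}(x)$, whose chained application with the first bound produces the sub-multiplicative estimate $L_n(x) \leqslant \myexp{2(\Cdistortion + n_F \normcontinuous{\phi}{S^2})} L_{n + N_1 + N_2}(x)$. To close the level gap I plan to combine these level-shifted comparisons with a Perron--Frobenius-type analysis applied to the irreducible $2 \times 2$ matrix of color-to-color $1$-tile counts of $F$: this will produce a uniform bound of the form $L_{n + k}(z) \leqslant C_{*} L_n(z)$ for $0 \leqslant k \leqslant 2 n_F$ and $z \in X^0_{\widetilde{\colour}}$, with $C_{*}$ depending only on $F$, $\mathcal{C}$, $d$, $\phi$, and $\holderexp$. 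Inserting this control into the level-shifted inequality for $L_n(x)$ versus $L_{n + N_1}(y)$ delivers the desired same-level bound $L_n(x) \leqslant \Csplratio L_n(y)$, where $\Csplratio$ takes the explicit quantitative form in~\eqref{eq:const:Csplratio}, being an explicit combination of $\myexp{\Cdistortion}$, $\myexp{n_F \normcontinuous{\phi}{S^2}}$, and combinatorial quantities extracted from the tile transition structure.
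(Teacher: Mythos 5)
Your argument for part~\ref{item:lem:distortion lemma for subsystem:holder bound on same color tile} is correct: each $X^n \in \cFTile{n}$ is also an $n$-tile of $f$, $F^n|_{X^n} = f^n|_{X^n}$, and both preimages of $x$ and $y$ lie in $X^n$ with $F^n$-images exactly $x$ and $y$, so Lemma~\ref{lem:distortion_lemma} gives $\bigl|S_n^F\phi(u) - S_n^F\phi(v)\bigr| \leqslant C_1 d(x,y)^{\holderexp}$ per branch, and summing the exponentiated per-branch bound yields~\eqref{eq:same color distortion bound for split operator}.

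Part~\ref{item:lem:distortion lemma for subsystem:uniform bound}, however, has a genuine gap at the level-gap-closure step. Your level-shifted inequality
\[
  L_n(x) \leqslant \myexp[\big]{\Cdistortion + n_F \normcontinuous{\phi}{S^2}}\, L_{n + N_1}(y)
\]
is fine: one chooses a single bridge tile in $\ccFTile{N_1}{\colour'}{\colour}$, pulls back each $X^n \in \ccFTile{n}{\colour}{}$ through it to a subfamily of $\ccFTile{n+N_1}{\colour'}{}$, uses the cocycle identity for $S^F$, and then moves $x_0$ to $x$ inside $X^0_\colour$ via part~\ref{item:lem:distortion lemma for subsystem:holder bound on same color tile}. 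But the claimed uniform bound $L_{n+k}(z) \leqslant C_* L_n(z)$ for $0 \leqslant k \leqslant 2n_F$ is circular. Indeed, by the cocycle identity for partial Ruelle operators (Lemma~\ref{lem:well-defined partial Ruelle operator}), for $z \in X^0_{\colour'}$ one has
\[
  L_{n+k}(z) \;=\; \sum_{\colour'' \in \colours}\ \sum_{X^k \in \ccFTile{k}{\colour'}{\colour''}} L_n\bigl((F^k|_{X^k})^{-1}(z)\bigr)\, \myexp[\big]{S_k^F\phi\bigl((F^k|_{X^k})^{-1}(z)\bigr)}.
\]
The $\colour'' = \colour'$ summand is controlled by $L_n(z)$ via part~\ref{item:lem:distortion lemma for subsystem:holder bound on same color tile}; but the $\colour'' \ne \colour'$ summand evaluates $L_n$ at points in the opposite $0$-tile, and bounding that term by a constant times $L_n(z)$ is \emph{precisely} the content of part~\ref{item:lem:distortion lemma for subsystem:uniform bound}. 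A Perron--Frobenius analysis of the $2\times 2$ matrix of $1$-tile counts carries no information about the $\exp(S_n^F\phi)$-weights and therefore cannot break this circularity. Moreover, the chained inequality $L_n(x) \leqslant \myexp{2(\Cdistortion + n_F\normcontinuous{\phi}{S^2})} L_{n+N_1+N_2}(x)$ that you derive bounds a lower-level quantity by a higher-level one, which is the opposite direction from what gap closure requires.

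What is missing is a direct same-level comparison between $\ccFTile{n}{\colour}{}$ and $\ccFTile{n}{\colour'}{}$ that does not detour through a level-shifted quantity of the opposite color. Note also that the lemma is quoted here from \cite[Lemma~5.25]{shi2024thermodynamic}, so there is no proof in the present paper to compare against; but any correct argument must use irreducibility in a way that respects the potential weights, not merely the tile-count combinatorics.
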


\subsection{Ergodic theory of subsystems}%
\label{sub:Ergodic theory of subsystems}

In this subsection, we review some concepts and results on ergodic theory of subsystems of expanding Thurston maps. 
We refer the reader to \cite[Section~6]{shi2024thermodynamic} for details and proofs. 

\smallskip

We first recall the topological pressure for subsystems.

\begin{definition}[Topological pressure]    \label{def:pressure for subsystem}
    Let $f \colon S^2 \mapping S^2$ be an expanding Thurston map with a Jordan curve $\mathcal{C}\subseteq S^2$ satisfying $\post{f} \subseteq \mathcal{C}$.
    Consider $F \in \subsystem$.
    For a real-valued function $\varphi \colon S^2 \mapping \real$, we denote \[
        Z_{n}(F, \varphi ) \define \sum_{X^n \in \Domain{n}} \myexp[\big]{ \sup\bigl\{S^F_n \varphi(x) \describe x \in X^n \bigr\} } 
    \]
    for each $n \in \n$. We define the \emph{topological pressure} of $F$ with respect to the \emph{potential} $\varphi$ by
    \begin{equation}    \label{eq:pressure of subsystem}
        \pressure[\varphi] \define \liminf_{n \mapping +\infty} \frac{1}{n} \log ( Z_{n}(F, \varphi) ).
    \end{equation}
    We denote 
    \begin{equation}    \label{eq:def:normed potential}
        \overline{\varphi} \define \varphi - \pressure[\varphi].
    \end{equation}
\end{definition}

We introduce the notion of the split sphere (see Definition~\ref{def:split sphere}), and set up some identifications and conventions (see Remarks~\ref{rem:disjoint union} and \ref{rem:probability measure in split setting}), which will be used frequently in this paper.

\begin{definition}    \label{def:split sphere}
    Let $f \colon S^2 \mapping S^2$ be an expanding Thurston map with a Jordan curve $\mathcal{C}\subseteq S^2$ satisfying $\post{f} \subseteq \mathcal{C}$. 
    We define the \emph{split sphere} $\splitsphere$ to be the disjoint union of $X^0_{\black}$ and $X^0_{\white}$, i.e., \[
        \splitsphere \define X^0_{\black} \sqcup X^0_{\white} = \bigl\{ (x, \colour) \describe \colour \in \colours, \, x \in X^0_{\colour} \bigr\}.
    \] 
    For each $\colour \in \colours$, let
    \begin{equation}    \label{eq:natural injection into splitsphere}
        i_{\colour} \colon X^0_{\colour} \mapping \splitsphere
    \end{equation}
    be the natural injection (defined by $i_{\colour}(x) \define (x, \colour)$). 
    Recall that the topology on $\splitsphere$ is defined as the finest topology on $\splitsphere$ for which both the natural injections $i_{\black}$ and $i_{\white}$ are continuous. 
    In particular, $\splitsphere$ is compact and metrizable.
\end{definition}

Let $X$ and $Y$ be normed vector spaces. Recall that a bounded linear map $T$ from $X$ to $Y$ is said to be an \emph{isomorphism} if $T$ is bijective and $T^{-1}$ is bounded (in other words, $\norm{T(x)} \geqslant C\norm{x}$ for some $C > 0$), and $T$ is called an \emph{isometry} if $\norm{T(x)} = \norm{x}$ for all $x \in X$.
\begin{proposition}[Dual of the product space is isometric to the product of the dual spaces]    \label{prop:Dual of the product is isometric to the product of the dual}
    Let $X$ and $Y$ be normed vector spaces and define $T \colon X^{*} \times Y^{*} \mapping (X \times Y)^{*}$ by $T(u, v)(x, y) = u(x) + v(y)$. Then $T$ is an isomorphism which is an isometry with respect to the norm $\norm{(x, y)} = \max\{\norm{x}, \norm{y}\}$ on $X \times Y$, the corresponding operator norm on $(X \times Y)^{*}$, and the norm $\norm{(u, v)} \define \norm{u} + \norm{v}$ on $X^{*} \times Y^{*}$.
\end{proposition}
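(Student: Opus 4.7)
The plan is to verify in turn that $T$ is well-defined, linear, bijective, and norm-preserving, with the main work concentrated in the two-sided isometry estimate.

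First I would check that $T$ actually lands in $(X\times Y)^{*}$ and is linear. For any $(u,v)\in X^{*}\times Y^{*}$, the map $(x,y)\mapsto u(x)+v(y)$ is clearly linear, and the bound
\[
    |u(x)+v(y)|\leqslant \|u\|\|x\|+\|v\|\|y\|\leqslant (\|u\|+\|v\|)\max\{\|x\|,\|y\|\}
\]
gives both boundedness and the easy direction $\|T(u,v)\|\leqslant \|u\|+\|v\|$ of the isometry. Linearity of $T$ in $(u,v)$ is immediate from the definition.

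Next I would prove the reverse inequality $\|T(u,v)\|\geqslant \|u\|+\|v\|$ by an $\varepsilon$-argument. Given $\varepsilon>0$, pick $x\in X$ and $y\in Y$ with $\|x\|,\|y\|\leqslant 1$ and $|u(x)|\geqslant \|u\|-\varepsilon/2$, $|v(y)|\geqslant \|v\|-\varepsilon/2$; after replacing $x$ by $-x$ and $y$ by $-y$ if necessary (which does not change norms), we may assume $u(x)\geqslant \|u\|-\varepsilon/2$ and $v(y)\geqslant \|v\|-\varepsilon/2$. Since $\|(x,y)\|=\max\{\|x\|,\|y\|\}\leqslant 1$ in the chosen norm on $X\times Y$, evaluating $T(u,v)$ at $(x,y)$ yields $\|T(u,v)\|\geqslant u(x)+v(y)\geqslant \|u\|+\|v\|-\varepsilon$, and letting $\varepsilon\to 0$ finishes the isometry. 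This step is the crux of the argument: it is where the specific pairing between the $\max$-norm on $X\times Y$ and the $\ell^{1}$-type norm $\|u\|+\|v\|$ on $X^{*}\times Y^{*}$ shows up, and it is the only place one must be careful with the sign convention.

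Finally I would establish bijectivity. Injectivity follows because $T(u,v)=0$ implies $u(x)=T(u,v)(x,0)=0$ and $v(y)=T(u,v)(0,y)=0$ for all $x,y$. For surjectivity, given any $L\in (X\times Y)^{*}$, define $u(x)\define L(x,0)$ and $v(y)\define L(0,y)$; these are linear, and $|u(x)|\leqslant \|L\|\|(x,0)\|=\|L\|\|x\|$ (likewise for $v$) shows $u\in X^{*}$ and $v\in Y^{*}$. Linearity of $L$ gives $L(x,y)=L(x,0)+L(0,y)=T(u,v)(x,y)$, so $T(u,v)=L$. Combining this with the isometry shows in particular that $T^{-1}$ is bounded, so $T$ is an isomorphism as well as an isometry, completing the proof. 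None of the steps present a genuine obstacle; the only mild subtlety is the sign correction in the $\varepsilon$-argument.
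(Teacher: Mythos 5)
Your proof is correct and follows the standard textbook argument for this duality: the upper bound on $\|T(u,v)\|$ from the triangle inequality, the $\varepsilon$-argument with the sign correction to get the matching lower bound, and the injectivity/surjectivity check by restricting to the two factors. Since the paper only cites the proof from its companion reference, I cannot compare line by line, but this is the canonical route and there is nothing to add; the one point you flag (the sign replacement $x\mapsto -x$) is indeed the only place that uses the real scalar field, which is the setting of the paper (for complex scalars one would multiply by a unimodular scalar instead).
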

See \cite[Proposition~6.12]{shi2024thermodynamic} for a proof of Proposition~\ref{prop:Dual of the product is isometric to the product of the dual}.

By Proposition~\ref{prop:Dual of the product is isometric to the product of the dual} and the Riesz representation theorem (see \cite[Theorems~7.17 and 7.8]{folland2013real}), we can identify $\bigl( \splfunspace \bigr)^{*}$ with the product of spaces of finite signed Borel measures $\splmeaspace$, where we use the norm $\norm{\splfun} \define \max\{\norm{u_{\black}}, \norm{u_{\white}}\}$ on $\splfunspace$, the corresponding operator norm on $\bigl(\splfunspace\bigl)^{*}$, and the norm $\norm{\splmea} \define \norm{\mu_{\black}} + \norm{\mu_{\white}}$ on $\splmeaspace$.

\smallskip

\noindent\textbf{Notational Remark.}
From now on, we write
\begin{align}
    \splmea(A_{\black}, A_{\white}) &\define \mu_{\black}(A_{\black}) + \mu_{\white}(A_{\white}), \nonumber\\
    \functional{\splmea}{\splfun} 
    &\define \functional{\mu_{\black}}{u_{\black}} + \functional{\mu_{\white}}{u_{\white}}
    = \int_{X^0_{\black}} \! u_{\black} \,\mathrm{d}\mu_{\black} + \int_{X^0_{\white}} \! u_{\white} \,\mathrm{d}\mu_{\white} \nonumber, 
\end{align}
whenever $\splmea \in \splmeaspace$, $\splfun \in \splboufunspace$, and $A_{\black}$ and $A_{\white}$ are Borel subset of $X^0_{\black}$ and $X^0_{\white}$, respectively.
In particular, for each Borel set $A \subseteq S^2$, we define
\begin{equation}    \label{eq:split measure of set}
    \splmea(A) \define \splmea \bigl(A \cap X^0_{\black}, A \cap X^0_{\white} \bigr)
        = \mu_{\black} \bigl(A \cap X^0_{\black} \bigr) + \mu_{\white} \bigl(A \cap X^0_{\white} \bigr).
\end{equation}

\begin{remark}    \label{rem:disjoint union}
    In the natural way, the product space $\splfunspace$ (\resp $\splboufunspace$) can be identified with $C(\splitsphere)$ (\resp $B(\splitsphere)$). 
    Similarly, the product space $\splmeaspace$ can be identified with $\mathcal{M}(\splitsphere)$. 
    Under such identifications, we write\[
        \int \! \splfun \,\mathrm{d}\splmea \define \functional{\splmea}{\splfun} \qquad \text{and } \qquad
        \splfun\splmea \define (u_{\black}\mu_{\black}, u_{\white}\mu_{\white})
    \]
    whenever $\splmea \in \splmeaspace$ and $\splfun \in \splboufunspace$.

    Moreover, we have the following natural identification of $\probmea{\splitsphere}$:\[
        \probmea{\splitsphere} = \bigl\{ \splmea \in \splmeaspace \describe \mu_{\black} \text{ and } \mu_{\white} \text{ are positive measures, } \mu_{\black}\bigl(X^0_{\black}\bigr) + \mu_{\white}\bigl(X^0_{\white}\bigr) = 1 \bigr\}.
    \]
    Here we follow the terminology in \cite[Section~3.1]{folland2013real} that a \emph{positive measure} is a signed measure that takes values in $[0, +\infty]$. 
\end{remark}

\begin{remark}\label{rem:probability measure in split setting}
    It is easy to see that \eqref{eq:split measure of set} defines a finite signed Borel measure $\mu \define \splmea$ on $S^2$. 
    Here we use the notation $\mu$ (\resp $\splmea$) when we view the measure as a measure on $S^2$ (\resp $\splitsphere$), and we will always use these conventions in this paper. 
    In this sense, for each $u \in B(S^2)$ we have
    \begin{equation}    \label{eq:split measure on S2}
        \functional{\mu}{u} = \int \! u \,\mathrm{d}\mu = \int \! \splfun \,\mathrm{d}\splmea = \int_{X^0_{\black}} \! u \,\mathrm{d}\mu_{\black} + \int_{X^0_{\white}} \! u \,\mathrm{d}\mu_{\white},
    \end{equation}
    where $u_{\black} \define u|_{X^0_{\black}}$ and $u_{\white} \define u|_{X^0_{\white}}$. 
    Moreover, if both $\mu_{\black}$ and $\mu_{\white}$ are positive measures and $\mu_{\black}\bigl(X^0_{\black}\bigr) + \mu_{\white}\bigl(X^0_{\white}\bigr) = 1$, then $\mu = \splmea$ defined by \eqref{eq:split measure of set} is a Borel probability measure on $S^2$. In view of the identifications in Remark~\ref{rem:disjoint union}, this means that if $\splmea \in \mathcal{P}(\splitsphere)$, then $\mu \in \mathcal{P}(S^2)$.
\end{remark}

We next introduce the split Ruelle operator and its adjoint operator, which are the main tools in \cite{shi2024thermodynamic} and this paper to develop the thermodynamic formalism for subsystems of expanding Thurston maps.
We summarize relevant definitions and facts about the split Ruelle operator and its adjoint operator, and refer the reader to \cite[Subsections~6.2 and 6.4]{shi2024thermodynamic} for a detailed discussion.

\begin{definition}[Partial split Ruelle operators]    \label{def:partial Ruelle operator}
    Let $f \colon S^2 \mapping S^2$ be an expanding Thurston map with a Jordan curve $\mathcal{C}\subseteq S^2$ satisfying $\post{f} \subseteq \mathcal{C}$.
    Consider $F \in \subsystem$ and $\varphi \in C(S^2)$.
    We define a map $ \poperator[\varphi]{n} \colon \boufunspace{\colour'} \mapping \boufunspace{\colour}$, for $\juxtapose{\colour}{\colour'} \in \colours$, and $n \in \n_{0}$, by
    \begin{equation}    \label{eq:definition of partial Ruelle operators}
        \begin{split}
            \poperator[\varphi]{n}(u)(y) 
            &\define \sum_{ x \in F^{-n}(y) }  \ccndegF{\colour}{\colour'}{n}{x} u(x) \myexp[\big]{ S_n^F \varphi(x) }   \\
            &= \sum_{ X^n \in \ccFTile{n}{\colour}{\colour'} }  u\bigl((F^n|_{X^{n}})^{-1}(y)\bigr)  \myexp[\big]{ S_n^F \varphi\bigl((F^n|_{X^{n}})^{-1}(y)\bigr) }
        \end{split}
    \end{equation}
    for each real-valued bounded Borel function $u \in \boufunspace{\colour'}$ and each point $y \in X^0_{\colour}$. 
\end{definition}

The following lemma proved in \cite[Lemma~6.8]{shi2024thermodynamic} shows that the partial split Ruelle operators are well-defined and well-behaved under iterations.

\begin{lemma}[Z.~Li, X.~Shi, Y.~Zhang \cite{shi2024thermodynamic}]    \label{lem:well-defined partial Ruelle operator}
    Let $f \colon S^2 \mapping S^2$ be an expanding Thurston map with a Jordan curve $\mathcal{C}\subseteq S^2$ satisfying $\post{f} \subseteq \mathcal{C}$ and $f(\mathcal{C}) \subseteq \mathcal{C}$.
    Consider $F \in \sursubsystem$ and $\varphi \in C(S^2)$.
    Then for all $\juxtapose{n}{k} \in \n_0$, $\juxtapose{\colour}{\colour'} \in \colours$, and $u \in \confunspace{\colour'}$, we have
    \begin{align}
        \label{eq:well-defined continuity of partial Ruelle operator}
        \poperator[\varphi]{n} (u) &\in \confunspace{\colour} \qquad\quad \text{ and }  \\
        \label{eq:iteration of partial Ruelle operator}
        \qquad \qquad \poperator[\varphi]{n + k}(u) &= \sum_{\colour'' \in \colours} \paroperator[\varphi]{n}{\colour}{\colour''} \bigl( \paroperator[\varphi]{k}{\colour''}{\colour'}(u) \bigr).  
    \end{align}

\end{lemma}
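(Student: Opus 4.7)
The plan is to prove the continuity statement \eqref{eq:well-defined continuity of partial Ruelle operator} by direct inspection of \eqref{eq:definition of partial Ruelle operators} and then derive the iteration formula \eqref{eq:iteration of partial Ruelle operator} from a natural bijection between $(n+k)$-tiles of $F$ and compatible pairs of $n$-tiles and $k$-tiles. For continuity, the key observation is that $\ccFTile{n}{\colour}{\colour'}$ is a finite subset of $\Tile{n}$, and for each $X^n$ in it, Proposition~\ref{prop:subsystem:preliminary properties}~\ref{item:subsystem:properties:homeo} guarantees that $F^n|_{X^n}$ is a homeomorphism from $X^n$ onto $F^n(X^n) = X^0_{\colour}$. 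Its inverse $(F^n|_{X^n})^{-1} \colon X^0_{\colour} \mapping X^n \subseteq X^0_{\colour'}$ is therefore continuous, so each summand in \eqref{eq:definition of partial Ruelle operators} is a continuous function on $X^0_{\colour}$, and the finite sum lies in $\confunspace{\colour}$.

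For \eqref{eq:iteration of partial Ruelle operator}, I would set up a bijection
\[
    \Phi \colon \ccFTile{n+k}{\colour}{\colour'} \longleftrightarrow \bigsqcup_{\colour'' \in \colours} \ccFTile{n}{\colour}{\colour''} \times \ccFTile{k}{\colour''}{\colour'}
\]
and then apply the Birkhoff cocycle identity. The forward map sends $X^{n+k}$ to the pair $(Y^n, X^k)$ where $Y^n \define F^k(X^{n+k}) \in \Domain{n}$ (Proposition~\ref{prop:subsystem:preliminary properties}~\ref{item:subsystem:properties:homeo}) and $X^k$ is the unique $k$-tile of $f$ containing $X^{n+k}$ supplied by Proposition~\ref{prop:cell decomposition: invariant Jordan curve}; the intermediate color $\colour''$ is determined by $Y^n \subseteq X^0_{\colour''}$. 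Proposition~\ref{prop:subsystem:preliminary properties}~\ref{item:subsystem:properties:properties invariant Jordan curve:decreasing relation of domains} yields $X^{n+k} \subseteq \bigcup \Domain{k}$, which combined with the uniqueness in Proposition~\ref{prop:cell decomposition: invariant Jordan curve} forces $X^k \in \Domain{k}$, and the color labels are read off from those of $X^{n+k}$ and $Y^n$. The inverse map sends $(Y^n, X^k)$ to $(F^k|_{X^k})^{-1}(Y^n)$, which Lemma~\ref{lem:cell mapping properties of Thurston map}~\ref{item:lem:cell mapping properties of Thurston map:i} identifies as an $(n+k)$-tile of $f$, and whose membership in $\ccFTile{n+k}{\colour}{\colour'}$ follows from $F^{n+k}((F^k|_{X^k})^{-1}(Y^n)) = F^n(Y^n) = X^0_{\colour} \subseteq F(\domF)$ together with $X^{n+k} \subseteq X^k \subseteq X^0_{\colour'}$. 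Fixing $y \in X^0_{\colour}$ and writing $z \define (F^n|_{Y^n})^{-1}(y)$ and $w \define (F^k|_{X^k})^{-1}(z)$, one verifies $w = (F^{n+k}|_{X^{n+k}})^{-1}(y)$ and $S^F_{n+k} \varphi(w) = S^F_k \varphi(w) + S^F_n \varphi(z)$, so the summand indexed by $X^{n+k}$ factors as
\[
    \bigl[ u(w) \myexp[\big]{ S^F_k \varphi(w) } \bigr] \cdot \myexp[\big]{ S^F_n \varphi(z) }.
\]
Summing first over $X^k$ collapses the bracketed factor to $\paroperator[\varphi]{k}{\colour''}{\colour'}(u)(z)$, and summing then over $Y^n$ and $\colour''$ reproduces the right-hand side of \eqref{eq:iteration of partial Ruelle operator}.

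The main obstacle is the careful bookkeeping needed to justify the bijection $\Phi$, in particular verifying that the unique $k$-tile of $f$ containing an $(n+k)$-tile of $F$ is itself in $\Domain{k}$ with the correct color class, and that the reverse construction lands in $\Domain{n+k}$ rather than only in $\Tile{n+k}$. The hypotheses $f(\mathcal{C}) \subseteq \mathcal{C}$ (giving the nested cell decompositions of Proposition~\ref{prop:cell decomposition: invariant Jordan curve}) and $F \in \sursubsystem$ (ensuring compatibility of the tile structures across iteration) enter precisely at these points; once the bijection is in place, the cocycle identity and re-indexing are routine.
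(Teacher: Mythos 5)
Your overall approach---continuity from the finiteness of $\ccFTile{n}{\colour}{\colour'}$ and Proposition~\ref{prop:subsystem:preliminary properties}~\ref{item:subsystem:properties:homeo}, and the iteration formula from the tile bijection $\Phi$ combined with the Birkhoff cocycle identity---is the natural argument and is structurally sound. The continuity verification is complete, and the algebra after $\Phi$ is in place is routine, as you say. The problem is that the hints you offer for the membership claims built into $\Phi$ do not actually close them, even though you correctly flag these as the main obstacle.

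For the reverse construction: by \eqref{eq:definition of tile of subsystem}, $X^{n+k} \define (F^k|_{X^k})^{-1}(Y^n) \in \Domain{n+k}$ means $X^{n+k} \subseteq F^{-(n+k)}(F(\domF))$, and since $F$ is only partially defined this requires \emph{both} that $F^{n+k}(X^{n+k}) \subseteq F(\domF)$ \emph{and} that $X^{n+k}$ lies in the domain of $F^{n+k}$, i.e.\ $F^{i}(X^{n+k}) \subseteq \domF$ for every $i \in \{0,\dots,n+k-1\}$. Your justification ("$F^{n+k}((F^k|_{X^k})^{-1}(Y^n)) = F^n(Y^n) = X^0_{\colour} \subseteq F(\domF)$ together with $X^{n+k} \subseteq X^k \subseteq X^0_{\colour'}$") establishes the first requirement and the color labels, but says nothing about the second. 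That part does hold, but it must be argued: for $0 \leqslant i < k$ it follows from $X^{n+k} \subseteq X^k$ and $X^k \in \Domain{k}$, and for $k \leqslant i < n+k$ from $F^i(X^{n+k}) = F^{i-k}(Y^n)$ together with $Y^n \in \Domain{n}$. Similarly, the forward-direction inference that "$X^{n+k} \subseteq \bigcup \Domain{k}$ ... forces $X^k \in \Domain{k}$" is too quick, since pointwise containment in a union of $k$-tiles does not directly pick out $X^k$; one should take a point $x \in \interior{X^{n+k}}$, note $x \in \interior{X^k}$ (so $x$ lies in no $k$-tile other than $X^k$), and conclude from $x \in \bigcup \Domain{k}$ that $X^k \in \Domain{k}$. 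Finally, Proposition~\ref{prop:subsystem:preliminary properties}~\ref{item:subsystem:properties:properties invariant Jordan curve:decreasing relation of domains} is stated only for $n, k \in \n$; the cases $n = 0$ or $k = 0$ are precisely where the hypothesis $F \in \sursubsystem$ is used, since it supplies the missing inclusion $\domain{1} = \domF \subseteq F(\domF) = \domain{0}$ at the bottom of the chain.
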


\begin{definition}[Split Ruelle operators]    \label{def:split ruelle operator}
    Let $f \colon S^2 \mapping S^2$ be an expanding Thurston map with a Jordan curve $\mathcal{C}\subseteq S^2$ satisfying $\post{f} \subseteq \mathcal{C}$.
    Consider $F \in \subsystem$ and $\varphi \in C(S^2)$.
    The \emph{split Ruelle operator} for the subsystem $F$ and the potential $\varphi$
    \[
        \splopt[\varphi] \colon C\bigl(X^0_{\black}\bigr) \times C\bigl(X^0_{\white}\bigr) \mapping C\bigl(X^0_{\black}\bigr) \times C\bigl(X^0_{\white}\bigr) 
    \]
    on the product space $\splfunspace$ is defined by
    \begin{equation}    \label{eq:def:split ruelle operator}
        \splopt[\varphi]\splfun \define \bigl( 
            \paroperator[\varphi]{1}{\black}{\black}(u_{\black}) + \paroperator[\varphi]{1}{\black}{\white}(u_{\white}), 
            \paroperator[\varphi]{1}{\white}{\black}(u_{\black}) + \paroperator[\varphi]{1}{\white}{\white}(u_{\white})
        \bigr)
    \end{equation}
    for each $u_{\black} \in C\bigl(X^0_{\black}\bigr)$ and each $u_{\white} \in C\bigl(X^0_{\white}\bigr)$.
\end{definition}

The following lemma proved in \cite[Lemma~6.10]{shi2024thermodynamic} shows that the split Ruelle operator is well-behaved under iterations.

\begin{lemma}[Z.~Li, X.~Shi, Y.~Zhang \cite{shi2024thermodynamic}]    \label{lem:iteration of split-partial ruelle operator}
    Let $f \colon S^2 \mapping S^2$ be an expanding Thurston map with a Jordan curve $\mathcal{C}\subseteq S^2$ satisfying $\post{f} \subseteq \mathcal{C}$ and $f(\mathcal{C}) \subseteq \mathcal{C}$.
    Consider $F \in \subsystem$ and $\varphi \in C(S^2)$.
    We assume in addition that $F(\domF) = S^2$. 
    Then for all $n \in \n_0$, $u_{\black} \in C\bigl(X^0_{\black}\bigr)$, and $u_{\white} \in C\bigl(X^0_{\white}\bigr)$,
    \begin{equation}    \label{eq:iteration of split-partial ruelle operator}
        \splopt[\varphi]^{n} \splfun = \bigl( \paroperator[\varphi]{n}{\black}{\black}(u_{\black}) + \paroperator[\varphi]{n}{\black}{\white}(u_{\white}), \
        \paroperator[\varphi]{n}{\white}{\black}(u_{\black}) + \paroperator[\varphi]{n}{\white}{\white}(u_{\white}) \bigr).
    \end{equation}
\end{lemma}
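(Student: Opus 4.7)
The plan is to prove the identity by induction on $n \in \n_0$, leveraging the composition rule \eqref{eq:iteration of partial Ruelle operator} for partial split Ruelle operators established in Lemma~\ref{lem:well-defined partial Ruelle operator} and the linearity of each $\poperator[\varphi]{k}$. The main technical tool is already in place; the work is purely algebraic bookkeeping.

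First I would handle the base case $n=0$. Unwinding Definition~\ref{def:partial Ruelle operator} at $n=0$, the sum is indexed by $\ccFTile{0}{\colour}{\colour'}$, i.e.\ $0$-tiles of $F$ of color $\colour$ contained in $X^0_{\colour'}$. Since $F^0 = \id{S^2}$, such a tile exists only when $\colour = \colour'$ (and then equals $X^0_{\colour}$, which lies in $\Domain{0}$ because the hypothesis $F(\domF) = S^2$ forces $\colourset = \colours$). As $S_0^F \varphi \equiv 0$, we get $\paroperator[\varphi]{0}{\colour}{\colour} = \id$ on $C\bigl(X^0_{\colour}\bigr)$ and $\paroperator[\varphi]{0}{\colour}{\colour'} = 0$ when $\colour \ne \colour'$, so the right-hand side of \eqref{eq:iteration of split-partial ruelle operator} is $(u_{\black}, u_{\white}) = \splopt[\varphi]^{0} \splfun$. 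The case $n = 1$ is just Definition~\ref{def:split ruelle operator}.

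For the inductive step, assume the identity holds for some $n \in \n$. Apply $\splopt[\varphi]$ to both sides and expand the first coordinate using \eqref{eq:def:split ruelle operator} and the linearity of $\paroperator[\varphi]{1}{\black}{\black}$ and $\paroperator[\varphi]{1}{\black}{\white}$:
\begin{align*}
    \bigl(\splopt[\varphi]^{n+1}\splfun\bigr)_{\black}
    &= \paroperator[\varphi]{1}{\black}{\black}\bigl(\paroperator[\varphi]{n}{\black}{\black}(u_{\black}) + \paroperator[\varphi]{n}{\black}{\white}(u_{\white})\bigr) + \paroperator[\varphi]{1}{\black}{\white}\bigl(\paroperator[\varphi]{n}{\white}{\black}(u_{\black}) + \paroperator[\varphi]{n}{\white}{\white}(u_{\white})\bigr) \\
    &= \sum_{\colour'' \in \colours} \paroperator[\varphi]{1}{\black}{\colour''}\bigl(\paroperator[\varphi]{n}{\colour''}{\black}(u_{\black})\bigr) + \sum_{\colour'' \in \colours} \paroperator[\varphi]{1}{\black}{\colour''}\bigl(\paroperator[\varphi]{n}{\colour''}{\white}(u_{\white})\bigr).
\end{align*}
By \eqref{eq:iteration of partial Ruelle operator} applied with $k = n$, each of these sums collapses to $\paroperator[\varphi]{n+1}{\black}{\black}(u_{\black})$ and $\paroperator[\varphi]{n+1}{\black}{\white}(u_{\white})$ respectively, yielding the required expression. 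The second coordinate is treated identically with $\black$ and $\white$ interchanged in the outer subscript.

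I expect no serious obstacles: continuity of the output is already supplied by Lemma~\ref{lem:well-defined partial Ruelle operator}, so every term is a well-defined element of $C\bigl(X^0_{\colour}\bigr)$ and linearity is immediate from \eqref{eq:definition of partial Ruelle operators}. The only point deserving care is verifying the base case $n = 0$ cleanly, since the off-diagonal partial operators must vanish — this uses precisely the assumption $F(\domF) = S^2$ to ensure that both $0$-tiles belong to $\Domain{0}$ so that the diagonal partial operators at level $0$ are genuinely the identity.
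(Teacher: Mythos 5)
Your proof is correct, and it is essentially the same induction (base cases $n = 0, 1$ by definition, inductive step via linearity plus the composition rule \eqref{eq:iteration of partial Ruelle operator}) that this paper carries out for the analogous normalized operator in the proof of Lemma~\ref{lem:iteration of normalized split ruelle operator}; the lemma itself is quoted from \cite[Lemma~6.10]{shi2024thermodynamic} without a proof in the present paper. Your extra care at $n = 0$ — noting that $F(\domF) = S^2$ forces both $0$-tiles into $\Domain{0}$ so the diagonal level-$0$ operators are the identity and the off-diagonal ones vanish — is exactly the point that makes the base case legitimate.
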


Let $f \colon S^2 \mapping S^2$ be an expanding Thurston map with a Jordan curve $\mathcal{C}\subseteq S^2$ satisfying $\post{f} \subseteq \mathcal{C}$.
Consider $F \in \subsystem$ and $\varphi \in C(S^2)$.
We assume in addition that $F(\domF) = S^2$. 
Note that the split Ruelle operator $\splopt[\varphi]$ (see Definition~\ref{def:split ruelle operator}) is a positive, continuous operator on $\splfunspace$. 
Thus, the adjoint operator \[
    \splopt[\varphi]^{*} \colon \bigl( \splfunspace \bigr)^{*} \mapping \bigl( \splfunspace \bigr)^{*}
\]
of $\splopt[\varphi]$ acts on the dual space $\bigl( \splfunspace \bigr)^{*}$ of the Banach space $\splfunspace$. 
Recall that we identify $\bigl( \splfunspace \bigr)^{*}$ with the product of spaces of finite signed Borel measures $\splmeaspace$, where we use the norm $\norm{\splfun} = \max\{\norm{u_{\black}}, \norm{u_{\white}}\}$ on $\splfunspace$, the corresponding operator norm on $\bigl( \splfunspace \bigr)^{*}$, and the norm $\norm{\splmea} = \norm{\mu_{\black}} + \norm{\mu_{\white}}$ on $\splmeaspace$.
Then by Remark~\ref{rem:disjoint union}, we can also view $\splopt[\varphi]$ (\resp $\dualsplopt[\varphi]$) as an operator on $C(\splitsphere)$ (\resp $\mathcal{M}(\splitsphere)$).

We record \cite[Proposition~6.15~(iv)]{shi2024thermodynamic} in the following.
Roughly speaking, the following proposition says that measures will be concentrated on the limit set under iteration of the adjoint operator $\dualsplopt[\varphi]$.

\begin{proposition}[Z.~Li, X.~Shi, Y.~Zhang \cite{shi2024thermodynamic}]    \label{prop:dual split operator}
    Let $f \colon S^2 \mapping S^2$ be an expanding Thurston map with a Jordan curve $\mathcal{C}\subseteq S^2$ satisfying $\post{f} \subseteq \mathcal{C}$ and $f(\mathcal{C}) \subseteq \mathcal{C}$.
    Consider $F \in \subsystem$ and $\varphi \in C(S^2)$.
    We assume in addition that $F(\domF) = S^2$. 
    Consider arbitrary $n \in \n$ and $\splmea \in \splmeaspace$. 
    Then we have \[
        \bigl( \dualsplopt[\varphi] \bigr)^n \splmea \parentheses[\Big]{ \bigcup \splDomain{n - 1} }  
        = \bigl( \dualsplopt[\varphi] \bigr)^n \splmea \parentheses[\Big]{ \bigcup \splDomain{n} } ,
    \] 
    where $\splDomain{k} \define  \bigcup_{\colour \in \colours} \bigl\{ i_{\colour}(X^k) \describe X^k \in \Domain{k}, \, X^k \subseteq X^0_{\colour} \bigr\}$ for each $k \in \n_{0}$ (here $i_{\colour}$ is defined in \eqref{eq:natural injection into splitsphere}).
\end{proposition}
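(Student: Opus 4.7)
My plan is to reduce the claimed equality to showing that the ``boundary layer'' $A \define \bigcup\splDomain{n-1} \setminus \bigcup\splDomain{n}$ is a null set for $(\dualsplopt[\varphi])^n\splmea$, then to verify this by pairing against its characteristic function through the adjoint relation. Concretely, Proposition~\ref{prop:subsystem:preliminary properties}~\ref{item:subsystem:properties:properties invariant Jordan curve:decreasing relation of domains} gives $\bigcup\Domain{n} \subseteq \bigcup\Domain{n-1}$, and hence $\bigcup\splDomain{n} \subseteq \bigcup\splDomain{n-1}$, so the desired identity is equivalent to $((\dualsplopt[\varphi])^n\splmea)(A) = 0$.

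The next step would be to promote the basic duality $\int u \,\mathrm{d}\bigl((\dualsplopt[\varphi])^n\splmea\bigr) = \int \splopt[\varphi]^n u \,\mathrm{d}\splmea$ --- which holds for all continuous $u$ on $\splitsphere$ by the definition of $\dualsplopt[\varphi]$ as the Banach-space adjoint of $\splopt[\varphi]$ --- to every bounded Borel $u$. The iteration formula of Lemma~\ref{lem:iteration of split-partial ruelle operator} defines $\splopt[\varphi]^n u$ verbatim for bounded Borel $u$ as a finite sum of uniformly bounded weights times pullbacks $u \circ (F^n|_{X^n})^{-1}$, which is itself bounded Borel. Both sides of the duality are continuous under uniformly bounded pointwise convergence of $u$ (by dominated convergence on the measure side and by the finiteness of the sum on the operator side), so a standard functional monotone class argument extends the identity from $C(\splitsphere)$ to every bounded Borel function on $\splitsphere$, in particular to $u = \indicator{A}$.

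It then remains to show $\splopt[\varphi]^n\indicator{A} \equiv 0$. For $\colour \in \colours$ and $y \in X^0_{\colour}$, Lemma~\ref{lem:iteration of split-partial ruelle operator} yields
\[
(\splopt[\varphi]^n\indicator{A})_{\colour}(y) = \sum_{\colour' \in \colours} \sum_{X^n \in \ccFTile{n}{\colour}{\colour'}} \indicator{A}\bigl(i_{\colour'}\bigl((F^n|_{X^n})^{-1}(y)\bigr)\bigr) \myexp[\big]{ S_n^F\varphi\bigl((F^n|_{X^n})^{-1}(y)\bigr) }.
\]
For each summand, $x \define (F^n|_{X^n})^{-1}(y)$ lies in $X^n$; since $X^n \in \ccFTile{n}{\colour}{\colour'} \subseteq \Domain{n}$ is contained in $X^0_{\colour'}$, we have $i_{\colour'}(X^n) \in \splDomain{n}$, so $i_{\colour'}(x) \in \bigcup\splDomain{n}$ and hence $i_{\colour'}(x) \notin A$. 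Every indicator factor vanishes, giving $\splopt[\varphi]^n\indicator{A} \equiv 0$ and therefore $((\dualsplopt[\varphi])^n\splmea)(A) = \int \splopt[\varphi]^n\indicator{A} \,\mathrm{d}\splmea = 0$ as desired.

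The only mildly technical point is the extension of the duality from $C(\splitsphere)$ to characteristic functions; the combinatorial heart of the argument --- that $\splopt[\varphi]^n$ samples $u$ only at preimages lying in $n$-tiles of $F$, which are by construction subsets of $\bigcup\splDomain{n}$ and so disjoint from $A$ --- is essentially an immediate unwinding of Lemma~\ref{lem:iteration of split-partial ruelle operator} together with the definition of $\splDomain{n}$.
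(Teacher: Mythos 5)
Your argument is correct. The paper itself does not prove this proposition; it records the statement from \cite[Proposition~6.15~(iv)]{shi2024thermodynamic}, so there is no internal proof to compare against. Your reduction to proving $\bigl((\dualsplopt[\varphi])^n\splmea\bigr)(A)=0$ for $A\define\bigcup\splDomain{n-1}\setminus\bigcup\splDomain{n}$ (using the nesting $\bigcup\splDomain{n}\subseteq\bigcup\splDomain{n-1}$ from Proposition~\ref{prop:subsystem:preliminary properties}~\ref{item:subsystem:properties:properties invariant Jordan curve:decreasing relation of domains}) is the natural move, and the combinatorial observation at the heart of your argument --- that $\splopt[\varphi]^n$ evaluates a function only at inverse-branch images lying in $n$-tiles of $F$, so $\splopt[\varphi]^n\indicator{A}\equiv 0$ --- is correct and is surely the substance of the original proof.

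Two small remarks. First, your computation never uses that $A\subseteq\bigcup\splDomain{n-1}$; the same reasoning gives $\splopt[\varphi]^n\indicator{\splitsphere\setminus\bigcup\splDomain{n}}\equiv 0$, so in fact $(\dualsplopt[\varphi])^n\splmea$ vanishes on all of $\splitsphere\setminus\bigcup\splDomain{n}$, a slightly stronger conclusion that immediately implies the stated one. Second, for the extension of the adjoint identity $\langle(\dualsplopt[\varphi])^n\splmea,u\rangle=\langle\splmea,\splopt[\varphi]^n u\rangle$ from continuous to bounded Borel $u$, it may be cleaner to argue in the other direction: define $\nu(B)\define\int\splopt[\varphi]^n\indicator{B}\,\mathrm{d}\splmea$ for Borel $B\subseteq\splitsphere$, verify that $\nu$ is a finite signed Borel measure (countable additivity follows from dominated convergence together with the finiteness of the sum defining $\splopt[\varphi]^n$), check $\int u\,\mathrm{d}\nu=\int\splopt[\varphi]^n u\,\mathrm{d}\splmea$ for every continuous $u$, and conclude $\nu=(\dualsplopt[\varphi])^n\splmea$ by the Riesz representation theorem. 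This carries the same content as your functional monotone class step but replaces the appeal to a general theorem with a direct construction. Either way, the proof stands.
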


We record the following three results (Lemma~\ref{lem:distortion lemma for normed split operator}, Theorems~\ref{thm:subsystem:eigenmeasure existence and basic properties}, and \ref{thm:existence of f invariant Gibbs measure}) on the split Ruelle operators and their adjoint operators in our context.

\begin{lemma}[Z.~Li, X.~Shi, Y.~Zhang \cite{shi2024thermodynamic}]    \label{lem:distortion lemma for normed split operator}
    Let $f \colon S^2 \mapping S^2$ be an expanding Thurston map with a Jordan curve $\mathcal{C}\subseteq S^2$ satisfying $\post{f} \subseteq \mathcal{C}$ and $f(\mathcal{C}) \subseteq \mathcal{C}$.
    Let $F \in \subsystem$ be irreducible.
    Let $d$ be a visual metric on $S^2$ for $f$ with expansion factor $\Lambda > 1$.
    Let $\potential \in \holderspacesphere$ be a real-valued \holder continuous function with an exponent $\holderexp \in (0, 1]$.
    Then there exists a constant $\Cnormspllocal \geqslant 0$ depending only on $F$, $\mathcal{C}$, $d$, $\phi$, and $\holderexp$ such that for each $n \in \n$, each $\colour \in \colours$, and each pair of $\juxtapose{x}{y} \in X^0_{\colour}$, the following inequalities holds:
    \begin{align}    
        \label{eq:first bound for normed split operator}
        \normsplopt^{n}\bigl(\indicator{\splitsphere}\bigr)(\widetilde{x}) \big/ \normsplopt^{n}\bigl(\indicator{\splitsphere}\bigr)(\widetilde{y})
        &\leqslant \myexp[\big]{ C_1 d(x, y)^{\holderexp} }  \leqslant \Csplratio,\\
        \label{eq:second bound for normed split operator}
        \Csplratio^{-1} & \leqslant \normsplopt^{n}\bigl(\indicator{\splitsphere}\bigr)(\widetilde{x}) \leqslant \Csplratio,   \\
        \label{eq:third bound for normed split operator}
        \abs[\big]{\normsplopt^{n}\bigl(\indicator{\splitsphere}\bigr)(\widetilde{x}) - \normsplopt^{n}\bigl(\indicator{\splitsphere}\bigr)(\widetilde{y})}
        &\leqslant \Csplratio \bigl( \myexp[\big]{ C_1 d(x, y)^{\holderexp} } - 1 \bigr) \leqslant \Cnormspllocal d(x, y)^{\holderexp},
    \end{align}
    where $\widetilde{x} \define i_{\colour}(x) = (x, \colour) \in \splitsphere$, $\widetilde{y} \define i_{\colour}(y) = (y, \colour) \in \splitsphere$ (recall Remark~\ref{rem:disjoint union}), $C_1 \geqslant 0$ is the constant in Lemma~\ref{lem:distortion_lemma} depending only on $f$, $\mathcal{C}$, $d$, $\phi$, and $\holderexp$, and $\Csplratio \geqslant 1$ is the constant in Lemma~\ref{lem:distortion lemma for subsystem}~\ref{item:lem:distortion lemma for subsystem:uniform bound} depending only on $F$, $\mathcal{C}$, $d$, $\phi$, and $\holderexp$.
\end{lemma}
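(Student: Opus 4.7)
The plan is to unwind the definition of $\normsplopt^n\bigl(\indicator{\splitsphere}\bigr)$ and then reduce each of the three estimates to the distortion bounds of Lemma~\ref{lem:distortion lemma for subsystem}. Using Lemma~\ref{lem:iteration of split-partial ruelle operator} applied to the normalized potential $\overline\phi = \phi - \pressure[\phi]$, for any $x \in X^0_\colour$ and $\widetilde x = i_\colour(x)$,
\[
    \normsplopt^n\bigl(\indicator{\splitsphere}\bigr)(\widetilde x) = \myexp[\big]{-n\pressure[\phi]} \sum_{X^n \in \cFTile{n}} \myexp[\big]{S_n^F\phi\bigl((F^n|_{X^n})^{-1}(x)\bigr)}.
\]
Hence the ratio in \eqref{eq:first bound for normed split operator} has the factors $\myexp{-n\pressure[\phi]}$ cancel, leaving exactly the left-hand side of \eqref{eq:same color distortion bound for split operator}. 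This yields the first bound $\leqslant \myexp[\big]{C_1 d(x,y)^\holderexp}$, and the second bound $\leqslant \Csplratio$ follows since $d(x,y) \leqslant \diam{d}{S^2}$ and the definition of $\Csplratio$ in \eqref{eq:const:Csplratio} is chosen to dominate this exponential.

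The two-sided bound \eqref{eq:second bound for normed split operator} is the main technical step. I would compare $\normsplopt^n\bigl(\indicator{\splitsphere}\bigr)(\widetilde x)$ with $\myexp[\big]{-n\pressure[\phi]} Z_n(F,\phi)$. By Lemma~\ref{lem:distortion_lemma}, the supremum and the value of $S_n^F\phi$ at any point of an $n$-tile differ by at most $\Cdistortion$, so $Z_n(F,\phi)$ is comparable, up to a multiplicative constant depending only on the data, with any ``point-evaluated'' sum $\sum_{X^n \in \Domain{n}} \myexp[\big]{S_n^F\phi((F^n|_{X^n})^{-1}(z))}$. Splitting this sum into the two color classes and invoking Lemma~\ref{lem:distortion lemma for subsystem}~\ref{item:lem:distortion lemma for subsystem:uniform bound}, which uses irreducibility to compare the two single-color sums via a uniform multiplicative constant, each single-color sum is comparable with $Z_n(F,\phi)$. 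Combined with the pressure characterization recorded in Theorem~\ref{thm:existence of f invariant Gibbs measure} (namely that $\myexp[\big]{-n\pressure[\phi]}Z_n(F,\phi)$ stays in a bounded positive range), this gives $\Csplratio^{-1} \leqslant \normsplopt^n\bigl(\indicator{\splitsphere}\bigr)(\widetilde x) \leqslant \Csplratio$ after possibly enlarging $\Csplratio$.

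For the Hölder-type bound \eqref{eq:third bound for normed split operator} I would factor
\[
    \abs[\big]{\normsplopt^n\bigl(\indicator{\splitsphere}\bigr)(\widetilde x) - \normsplopt^n\bigl(\indicator{\splitsphere}\bigr)(\widetilde y)} = \normsplopt^n\bigl(\indicator{\splitsphere}\bigr)(\widetilde y) \cdot \abs[\bigg]{\frac{\normsplopt^n\bigl(\indicator{\splitsphere}\bigr)(\widetilde x)}{\normsplopt^n\bigl(\indicator{\splitsphere}\bigr)(\widetilde y)} - 1},
\]
bound the first factor by $\Csplratio$ via \eqref{eq:second bound for normed split operator} and the second by $\myexp[\big]{C_1 d(x,y)^\holderexp} - 1$ via \eqref{eq:first bound for normed split operator}, and apply the elementary inequality $e^t - 1 \leqslant t\,e^{t_0}$ on $[0, t_0]$ with $t_0 \define C_1\diam{d}{S^2}^\holderexp$. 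The resulting constant $\Cnormspllocal$ then depends only on $F$, $\mathcal{C}$, $d$, $\phi$, $\holderexp$. I expect the main obstacle to be the lower bound in \eqref{eq:second bound for normed split operator}: irreducibility enters essentially here, because without it a single color class could be empty at certain levels and $\myexp{-n\pressure[\phi]}$ would fail to balance the color-$\colour$ partial sum, destroying any uniform positive lower bound.
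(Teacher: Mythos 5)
Your treatment of \eqref{eq:first bound for normed split operator} and \eqref{eq:third bound for normed split operator} is correct: cancelling the $\myexp{-n\pressure}$ factors reduces the ratio in \eqref{eq:first bound for normed split operator} to the left-hand side of \eqref{eq:same color distortion bound for split operator}, and the factorization you propose for \eqref{eq:third bound for normed split operator}, combined with $e^{t}-1\leqslant t\,e^{t_0}$ on $[0,t_0]$, is exactly what is needed. The gap is in \eqref{eq:second bound for normed split operator}. You correctly reduce it to showing that $\myexp{-n\pressure}Z_n(F,\phi)$ stays in a bounded positive range, but you invoke Theorem~\ref{thm:existence of f invariant Gibbs measure} for that fact. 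This is circular: in \cite{shi2024thermodynamic} the present lemma is Lemma~6.22, while Theorem~\ref{thm:existence of f invariant Gibbs measure} is Theorem~6.25, which is proved \emph{using} the two-sided bound \eqref{eq:second bound for normed split operator}. Moreover, even ignoring the logical ordering, \eqref{eq:equalities for characterizations of pressure} only asserts $\lim_{n}\frac{1}{n}\log\bigl(\splopt^{n}(\indicator{\splitsphere})(\widetilde{y})\bigr)=\pressure$, which says $\normsplopt^{n}(\indicator{\splitsphere})(\widetilde{y})$ has sub-exponential growth; that is strictly weaker than a bounded range and would not, by itself, give you \eqref{eq:second bound for normed split operator}.

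What is actually available at this logical stage is the existence of an eigenmeasure $\spleigmea\in\probmea{\splitsphere}$ with $\dualsplopt\spleigmea=\eigenvalue\spleigmea$ for some $\eigenvalue>0$, namely the unconditional first part of Theorem~\ref{thm:subsystem:eigenmeasure existence and basic properties}, which is \cite[Theorem~6.16]{shi2024thermodynamic} and hence prior to the lemma. Since $\spleigmea$ is a probability measure and $\int\splopt^{n}(\indicator{\splitsphere})\,\mathrm{d}\spleigmea=\eigenvalue^{n}$, there are points $\widetilde{z}_{-},\widetilde{z}_{+}\in\splitsphere$ with $\splopt^{n}(\indicator{\splitsphere})(\widetilde{z}_{-})\leqslant\eigenvalue^{n}\leqslant\splopt^{n}(\indicator{\splitsphere})(\widetilde{z}_{+})$; the ratio bound \eqref{eq:distinct color distortion bound for split operator} — which, unlike \eqref{eq:same color distortion bound for split operator}, compares arbitrary pairs of colours and is where irreducibility is used — then gives $\Csplratio^{-1}\eigenvalue^{n}\leqslant\splopt^{n}(\indicator{\splitsphere})(\widetilde{x})\leqslant\Csplratio\eigenvalue^{n}$ for every $\widetilde{x}\in\splitsphere$. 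Comparing with $Z_n(F,\phi)$ via Lemma~\ref{lem:distortion_lemma} and taking $\frac{1}{n}\log$ then forces $\log\eigenvalue=\pressure$, so $\normsplopt^{n}(\indicator{\splitsphere})=\eigenvalue^{-n}\splopt^{n}(\indicator{\splitsphere})$ and \eqref{eq:second bound for normed split operator} follows with exactly the constant $\Csplratio$. In particular your hedge ``after possibly enlarging $\Csplratio$'' is unnecessary, and this matters: the same constant $\Csplratio$ with this precise value is reused in \eqref{eq:two sides bounds for eigenfunction}.
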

Recall from \eqref{eq:def:normed potential} that $\overline{\potential} = \potential - \pressure$. 
Lemma~\ref{lem:distortion lemma for normed split operator} was proved in \cite[Lemma~6.22]{shi2024thermodynamic}.

\begin{definition}[Gibbs measures for subsystems]    \label{def:subsystem gibbs measure}
    Let $f \colon S^2 \mapping S^2$ be an expanding Thurston map with a Jordan curve $\mathcal{C}\subseteq S^2$ satisfying $\post{f} \subseteq \mathcal{C}$.
    Consider $F \in \subsystem$.
    Let $d$ be a visual metric on $S^2$ for $f$ and $\potential$ be a real-valued \holder continuous function on $S^2$ with respect to the metric $d$.
    A Borel probability measure $\mu \in \mathcal{P}(S^2)$ is called a \emph{Gibbs measure} with respect to $F$, $\mathcal{C}$, and $\phi$ if there exist constants $P_{\mu} \in \real$ and $C_{\mu} \geqslant 1$ such that for each $n\in \n_0$, each $n$-tile $X^n \in \Domain{n}$, and each $x \in X^n$, we have
    \begin{equation}    \label{eq:def:subsystem gibbs measure}
        \frac{1}{C_{\mu}} \leqslant \frac{\mu(X^n)}{ \myexp{ S_{n}^{F}\phi(x) - nP_{\mu} } } \leqslant C_{\mu}.
    \end{equation}
\end{definition}

One observes that for each Gibbs measure $\mu$ with respect to $F$, $\mathcal{C}$, and $\phi$, the constant $P_{\mu}$ is unique.

\begin{theorem}[Z.~Li, X.~Shi, Y.~Zhang \cite{shi2024thermodynamic}]    \label{thm:subsystem:eigenmeasure existence and basic properties}
    Let $f \colon S^2 \mapping S^2$ be an expanding Thurston map with a Jordan curve $\mathcal{C}\subseteq S^2$ satisfying $\post{f} \subseteq \mathcal{C}$ and $f(\mathcal{C}) \subseteq \mathcal{C}$.
    Consider $F \in \subsystem$.
    We assume in addition that $F(\domF) = S^2$. 
    Let $d$ be a visual metric on $S^2$ for $f$ and $\potential$ be a real-valued \holder continuous function on $S^2$ with respect to the metric $d$.
    Then there exists a Borel probability measure $\eigmea = (m_{\black}, m_{\white}) \in \mathcal{P}(\splitsphere)$ such that
    \begin{equation}    \label{eq:eigenmeasure for subsystem}
        \dualsplopt\spleigmea = \eigenvalue \spleigmea,
    \end{equation}
    where $\eigenvalue = \big\langle \dualsplopt\spleigmea, \indicator{\splitsphere} \big\rangle$. 
    Moreover, if $F$ is strongly irreducible, then any $\eigmea = \spleigmea \in \mathcal{P}(\splitsphere)$ that satisfies \eqref{eq:eigenmeasure for subsystem} for some $\eigenvalue > 0$ has the following properties:
    \begin{enumerate}[label=\rm{(\roman*)}]
        \smallskip

        \item     \label{item:thm:subsystem:eigenmeasure existence and basic properties:strongly irreducible edge zero measure}
        $\eigmea \bigl( \bigcup_{j = 0}^{+\infty} f^{-j}(\mathcal{C}) \bigr) = 0$.

        \smallskip

        \item     \label{item:thm:subsystem:eigenmeasure existence and basic properties:Jacobian}
        For each Borel set $A \subseteq \domF$ on which $F$ is injective, $\eigmea(F(A)) = \int_{A} \! \eigenvalue \myexp{-\phi} \,\mathrm{d}\mu$.

        \smallskip 
        
        \item     \label{item:thm:subsystem:eigenmeasure existence and basic properties:Jacobian:Gibbs property}
        The measure $\eigmea$ is a Gibbs measure with respect to $F$, $\mathcal{C}$, and $\phi$ with $P_{\mu} = \pressure = \log \eigenvalue$. 
        Here $\pressure$ is defined in \eqref{eq:pressure of subsystem}. 
    \end{enumerate}
\end{theorem}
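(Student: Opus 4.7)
The plan is to establish the existence of the eigenmeasure $\spleigmea$ by a Schauder--Tychonoff fixed-point argument and then read off properties (i)--(iii) from the eigenmeasure equation combined with the distortion estimates. Define the normalized map $T \colon \mathcal{P}(\splitsphere) \mapping \mathcal{P}(\splitsphere)$ by
\[
    T(\splmea) \define \frac{\dualsplopt \splmea}{\langle \dualsplopt \splmea, \indicator{\splitsphere} \rangle}.
\]
First I would check that the denominator $\langle \dualsplopt \splmea, \indicator{\splitsphere} \rangle = \langle \splmea, \splopt \indicator{\splitsphere} \rangle$ is strictly positive for every $\splmea \in \probmea{\splitsphere}$, which follows from $F(\domF) = S^2$ together with Lemma~\ref{lem:well-defined partial Ruelle operator} (ensuring $\splopt \indicator{\splitsphere}$ is continuous and bounded below by a positive constant). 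Since $\probmea{\splitsphere}$ is convex and weak$^*$-compact, and $T$ is weak$^*$-continuous, the Schauder--Tychonoff theorem produces a fixed point $\spleigmea$, yielding $\dualsplopt \spleigmea = \eigenvalue \spleigmea$ with $\eigenvalue = \langle \dualsplopt \spleigmea, \indicator{\splitsphere} \rangle > 0$.

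For part (i), assume $F$ is strongly irreducible and $\spleigmea$ is an eigenmeasure with eigenvalue $\eigenvalue > 0$. By Proposition~\ref{prop:dual split operator} and the eigenvalue equation, $\spleigmea$ is supported on $\bigcap_{n \in \n} \bigcup \splDomain{n}$, which corresponds to $\limitset$. Since $\bigcup_{j \geqslant 0} f^{-j}(\mathcal{C})$ is a countable union of sets, each contained in the $1$-skeleton of some $\mathbf{D}^n(f, \mathcal{C})$, it suffices to show $\spleigmea(\mathcal{C}) = 0$. Using strong irreducibility via Lemma~\ref{lem:strongly primitive:tile in interior tile for high enough level}, for each $n$-tile $X^n$ of $F$ one locates an $(n+n_F)$-tile of $F$ lying in $\inte{X^n}$. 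Combined with the uniform upper and lower bounds of Lemma~\ref{lem:distortion lemma for normed split operator}, this yields a uniform factor $\theta \in (0,1)$ by which the $\spleigmea$-measure of tile-boundary neighborhoods shrinks under passing from level $n$ to $n+n_F$. Iterating gives $\spleigmea(\mathcal{C}) = 0$, and invariance under $\dualsplopt$ then propagates this to all $f^{-j}(\mathcal{C})$.

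Part (ii) follows directly from the eigenmeasure equation: for a Borel $A \subseteq \domF$ on which $F$ is injective, decompose $A = A_{\black} \sqcup A_{\white}$ according to which $0$-tile contains each piece, write the test function $u = \indicator{F(A_{\black})} i_{\black}^{-1}$-style so that $\splopt(\myexp{-\phi} u) = \indicator{A}$ (up to edge sets, which have $\spleigmea$-measure zero by (i)), and apply $\langle \spleigmea, \splopt v\rangle = \eigenvalue\langle \spleigmea, v\rangle$. Iterating (ii) along tiles gives
\[
    \spleigmea(X^n) = \int_{F^n(X^n)} \eigenvalue^{-n} \myexp[\big]{ S_n^F \phi \circ (F^n|_{X^n})^{-1} } \,\mathrm{d}\spleigmea,
\]
and the distortion estimate \eqref{eq:distortion_lemma} from Lemma~\ref{lem:distortion_lemma} allows $\myexp{S_n^F \phi}$ to be pulled out up to a factor of $\myexp{\pm \Cdistortion}$, giving the Gibbs bound \eqref{eq:def:subsystem gibbs measure} with $P_\mu = \log \eigenvalue$. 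To identify $\log \eigenvalue$ with $\pressure$, summing the Gibbs bound over $X^n \in \Domain{n}$ yields $1 = \spleigmea\bigl(\bigcup\Domain{n}\bigr) \asymp \eigenvalue^{-n} Z_n(F, \phi)$, so $\log\eigenvalue = \liminf_n \frac{1}{n}\log Z_n(F, \phi) = \pressure$.

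The main obstacle will be part (i): carefully quantifying the decay of $\spleigmea$ on shrinking neighborhoods of $\mathcal{C}$. Strong irreducibility is needed exactly to prevent mass accumulation on tile boundaries, and one must combine it with the Gibbs-type uniform estimates of Lemma~\ref{lem:distortion lemma for normed split operator} in a way that controls both black and white components of $\spleigmea$ simultaneously; care is also required in (ii) to handle the two-color bookkeeping in the adjoint formula since the indicator of $F(A)$ must be split across the two $0$-tiles before applying $\dualsplopt$.
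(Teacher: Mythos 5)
The paper does not prove this theorem; it records it from \cite{shi2024thermodynamic} with pointer citations to Theorem~6.16, Proposition~6.27, and Proposition~6.29 of that reference. So I am judging your reconstruction on its own. The overall architecture --- Schauder--Tychonoff for existence, then the Jacobian formula from the adjoint equation, then iterate with the distortion estimate for the Gibbs property, then sum over $n$-tiles to identify $\log c = P(F,\phi)$ --- is the standard and almost certainly correct skeleton of the cited proof. You also correctly flag that (i) feeds into (ii), and that the two-color bookkeeping requires care.

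There are, however, concrete gaps. First, in part~(i) you invoke Lemma~\ref{lem:strongly primitive:tile in interior tile for high enough level}, which requires strong \emph{primitivity}; the theorem only assumes strong \emph{irreducibility}, so the available tool is Lemma~\ref{lem:strongly irreducible:tile in interior tile for high enough level}, which produces an $(n+k)$-tile of $F$ inside $\inte{X^n}$ for \emph{some} $k \leqslant n_F$, not for a fixed $k = n_F$. This is repairable (work in blocks of at most $n_F$ levels), but as written the lemma you cite is not applicable. Second, the "uniform factor $\theta$" in~(i) is not justified: to show $\eigmea(\partial X^n) \leqslant (1 - \theta)\,\eigmea(X^n)$ you need a two-sided estimate comparing $\eigmea$ of the interior tile to $\eigmea(X^n)$, which is essentially the Gibbs inequality \eqref{eq:def:subsystem gibbs measure} that you only establish in~(iii); one must either extract a local version of that bound first (directly from the fixed-point equation and Lemma~\ref{lem:distortion lemma for subsystem}) or reorder the argument. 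Third, the passage from $\eigmea(\mathcal{C}) = 0$ to $\eigmea\bigl(\bigcup_{j} f^{-j}(\mathcal{C})\bigr) = 0$ needs two steps you omit: extend $\langle\spleigmea, \splopt v\rangle = c\langle\spleigmea, v\rangle$ from $v \in C(\splitsphere)$ to bounded Borel $v$ (regularity of $\spleigmea$ plus Urysohn approximation), which gives $c\,\eigmea(F^{-1}(B)) = \int_{B} \splopt\bigl(\indicator{\splitsphere}\bigr)\,\mathrm{d}\eigmea \lesssim \eigmea(B)$ and hence $\eigmea(F^{-j}(\mathcal{C})) = 0$ by induction; then observe that $\eigmea(\limitset) = 1$ and $f^{-j}(\mathcal{C}) \cap \limitset \subseteq F^{-j}(\mathcal{C})$ to conclude for $f^{-j}(\mathcal{C})$. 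The same Borel-extension step is needed to make~(ii) rigorous, since the adjoint equation a priori holds only for continuous test functions. The approach is right, but as written the key measure-theoretic estimates in~(i) and the extension to Borel functions are missing.
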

We follow the conventions discussed in Remarks~\ref{rem:disjoint union} and \ref{rem:probability measure in split setting}.
In particular, we use the notation $\spleigmea$ (\resp $\eigmea$) to emphasize that we treat the eigenmeasure as a Borel probability measure on $\splitsphere$ (\resp $S^2$).
The existence of eigenmeasure in Theorem~\ref{thm:subsystem:eigenmeasure existence and basic properties} was established in \cite[Theorem~6.16]{shi2024thermodynamic}.
Theorem~\ref{thm:subsystem:eigenmeasure existence and basic properties}~\ref{item:thm:subsystem:eigenmeasure existence and basic properties:strongly irreducible edge zero measure} is part of \cite[Proposition~6.27]{shi2024thermodynamic}.
Theorem~\ref{thm:subsystem:eigenmeasure existence and basic properties}~\ref{item:thm:subsystem:eigenmeasure existence and basic properties:Jacobian} and \ref{item:thm:subsystem:eigenmeasure existence and basic properties:Jacobian:Gibbs property} follow immediately from \cite[Proposition~6.29]{shi2024thermodynamic}.

The next theorem was established in \cite[Theorem~6.25]{shi2024thermodynamic}.
\begin{theorem}[Z.~Li, X.~Shi, Y.~Zhang \cite{shi2024thermodynamic}]    \label{thm:existence of f invariant Gibbs measure}
    Let $f \colon S^2 \mapping S^2$ be an expanding Thurston map with a Jordan curve $\mathcal{C}\subseteq S^2$ satisfying $\post{f} \subseteq \mathcal{C}$ and $f(\mathcal{C}) \subseteq \mathcal{C}$.
    Let $F \in \subsystem$ be strongly irreducible.
    Let $d$ be a visual metric on $S^2$ for $f$ with expansion factor $\Lambda > 1$.
    Let $\potential \in \holderspacesphere$ be a real-valued \holder continuous function with an exponent $\holderexp \in (0, 1]$.
    Then the sequence $\bigl\{ \frac{1}{n} \sum_{j = 0}^{n - 1} \normsplopt^j\bigl(\indicator{\splitsphere}\bigr) \bigr\}_{n \in \n}$ converges uniformly to a function $\eigfun = \splfun \in \splholderspace$, which satisfies
    \begin{align}
        \label{eq:eigenfunction of normed split operator}
        \normsplopt(\eigfun) &= \eigfun \qquad \text{and} \\
        \label{eq:two sides bounds for eigenfunction}
        \Csplratio^{-1} \leqslant \eigfun(\widetilde{x}) &\leqslant \Csplratio, \qquad \text{for each } \widetilde{x} \in \splitsphere,
    \end{align}
    where $\Csplratio \geqslant 1$ is the constant from Lemma~\ref{lem:distortion lemma for subsystem}~\ref{item:lem:distortion lemma for subsystem:uniform bound} depending only on $f$, $\mathcal{C}$, $d$, $\phi$, and $\holderexp$. 
    Moreover, if we let $\eigmea = \spleigmea$ be an eigenmeasure from Theorem~\ref{thm:subsystem:eigenmeasure existence and basic properties}, then
    \begin{equation}    \label{eq:integration of eigenfunction with respect to eigenmeasure is one}
        \int_{\splitsphere} \! \eigfun \,\mathrm{d}\spleigmea = 1,
    \end{equation}
    and $\equstate = \splmea \define \eigfun \spleigmea$ is an $f$-invariant Gibbs measure with respect to $F$, $\mathcal{C}$, and $\phi$, with $\equstate(\limitset(F, \mathcal{C})) = 1$ and
    \begin{equation}    \label{eq:equalities for characterizations of pressure}
        P_{\equstate} = P_{\eigmea} = \pressure = \lim_{n \to +\infty} \frac{1}{n}\log \bigl( \splopt^{n}\bigl(\indicator{\splitsphere}\bigr)(\widetilde{y}) \bigr),
    \end{equation}
    for each $\widetilde{y} \in \splitsphere$. 
    In particular, $\equstate(U) \ne 0$ for each open set $U \subseteq S^{2}$ with $U \cap \limitset(F, \mathcal{C}) \ne \emptyset$.
\end{theorem}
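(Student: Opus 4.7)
The plan is to run an Arzelà--Ascoli / Cesàro--averaging argument on the normalized operator, then use the eigenmeasure $\spleigmea$ to build the invariant Gibbs measure. Set $\eigfun_n \define \frac{1}{n}\sum_{j=0}^{n-1} \normsplopt^j\bigl(\indicator{\splitsphere}\bigr)$. By the bounds \eqref{eq:second bound for normed split operator} and \eqref{eq:third bound for normed split operator} of Lemma~\ref{lem:distortion lemma for normed split operator}, each iterate $\normsplopt^j\bigl(\indicator{\splitsphere}\bigr)$ lies in $[\Csplratio^{-1},\Csplratio]$ and is $\holderexp$-Hölder with seminorm at most $\Cnormspllocal$, and the same is therefore true of $\eigfun_n$. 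So $\{\eigfun_n\}$ is precompact in $C(\splitsphere)$, any uniform subsequential limit $\eigfun$ inherits the two-sided bounds \eqref{eq:two sides bounds for eigenfunction} and Hölder regularity, and the telescoping identity $\normsplopt(\eigfun_n) - \eigfun_n = \frac{1}{n}\bigl(\normsplopt^n\bigl(\indicator{\splitsphere}\bigr) - \indicator{\splitsphere}\bigr)$ forces $\normsplopt(\eigfun) = \eigfun$ by continuity of $\normsplopt$. By Theorem~\ref{thm:subsystem:eigenmeasure existence and basic properties}~\ref{item:thm:subsystem:eigenmeasure existence and basic properties:Jacobian:Gibbs property} the eigenvalue in \eqref{eq:eigenmeasure for subsystem} is $\eigenvalue = \myexp{\pressure}$, so $\spleigmea$ is a fixed point of the adjoint of $\normsplopt$; hence $\int \eigfun_n \,\mathrm{d}\spleigmea = 1$ for each $n$, giving \eqref{eq:integration of eigenfunction with respect to eigenmeasure is one} in the limit.

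The main obstacle is upgrading subsequential convergence to uniform convergence of the full sequence. The natural route is to prove that the cone of nonnegative fixed points of $\normsplopt$ is one-dimensional, so that the normalization $\int \eigfun \,\mathrm{d}\spleigmea = 1$ pins down a unique limit. I would establish this by a Perron--Frobenius type positivity argument: using strong irreducibility (Definition~\ref{def:irreducibility of subsystem}) together with Lemma~\ref{lem:strongly irreducible:tile in interior tile for high enough level}, some fixed iterate $\normsplopt^{N}$ maps any strictly positive Hölder function on $\splitsphere$ into a function whose values on the two color sheets are comparable, which yields a Birkhoff-type contraction of the ratio $\max/\min$ under iteration. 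If two nonnegative eigenfunctions $\eigfun$, $\eigfun'$ coexisted, scaling to $\min(\eigfun/\eigfun') = 1$ and examining $\eigfun - \eigfun'$ would produce another fixed point that is nonnegative, somewhere zero, hence identically zero; the normalization then forces $\eigfun = \eigfun'$.

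Given uniqueness of $\eigfun$, define $\equstate \define \eigfun\,\spleigmea \in \probmea{S^2}$. To check $F$-invariance (hence $f$-invariance on $\limitset$), I use the fixed-point relation $\dualsplopt\spleigmea = \myexp{\pressure}\spleigmea$ together with the standard transfer identity $\normsplopt\bigl(\eigfun \cdot (u \circ F)\bigr) = u \cdot \normsplopt(\eigfun) = u \cdot \eigfun$ for $u \in C(S^2)$; integrating against $\spleigmea$ gives $\int u\circ F \,\mathrm{d}\equstate = \int u \,\mathrm{d}\equstate$. That $\equstate(\limitset) = 1$ follows from Proposition~\ref{prop:dual split operator} applied to $\spleigmea$: iterating the fixed-point relation forces $\spleigmea$ to be concentrated on $\bigcap_n \bigcup \splDomain{n}$, which corresponds to $\limitset$; the boundedness $\eigfun \leqslant \Csplratio$ preserves this. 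The Gibbs property of $\equstate$ with $P_{\equstate} = \pressure$ follows from that of $\eigmea$ by Theorem~\ref{thm:subsystem:eigenmeasure existence and basic properties}~\ref{item:thm:subsystem:eigenmeasure existence and basic properties:Jacobian:Gibbs property} after multiplying the Gibbs constant by $\Csplratio^{\pm 1}$, yielding $P_{\equstate} = P_{\eigmea} = \pressure$.

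The pressure identity \eqref{eq:equalities for characterizations of pressure} finally comes from $\splopt^n\bigl(\indicator{\splitsphere}\bigr) = \myexp{n\pressure}\,\normsplopt^n\bigl(\indicator{\splitsphere}\bigr)$ and taking $\frac{1}{n}\log$, since \eqref{eq:second bound for normed split operator} keeps the second factor in $[\Csplratio^{-1},\Csplratio]$. Positivity of $\equstate$ on any open $U$ meeting $\limitset$ is immediate from the Gibbs property: by expansion \eqref{eq:definition of expansion} and the definition of $\limitset$, for large $n$ there is an $n$-tile $X^n \in \Domain{n}$ with $X^n \subseteq U$ and $X^n \cap \limitset \ne \emptyset$, and \eqref{eq:def:subsystem gibbs measure} then yields $\equstate(U) \geqslant \equstate(X^n) > 0$.
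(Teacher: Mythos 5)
This theorem is not proved in the paper; it is cited from \cite{shi2024thermodynamic}, so your argument cannot be checked line-by-line against a proof in the text. Evaluated on its own merits, the strategy is sound, and most steps are carried out correctly: the equicontinuity and two-sided bounds from Lemma~\ref{lem:distortion lemma for normed split operator}, the telescoping identity that turns subsequential limits into fixed points, the normalization $\int\eigfun\,\mathrm{d}\spleigmea = 1$, the transfer identity $\normsplopt(\eigfun\cdot(u\circ F)) = u\cdot\eigfun$ used for $F$-invariance, the concentration of $\spleigmea$ (and hence of $\equstate$) on the limit set via Proposition~\ref{prop:dual split operator}, and the pressure and positivity statements all follow as you describe.

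The genuine gap is in the uniqueness of the nonnegative fixed point of $\normsplopt$, which you rightly identify as the step needed to upgrade subsequential convergence to convergence of the full sequence. Your Birkhoff-cone sketch as stated does not apply: $\normsplopt^{N}$ does not map the nonnegative cone into its interior, since for $g \geqslant 0$ vanishing on a neighborhood of the limit set, every $N$-tile of $F$ eventually lies in that neighborhood and hence $\normsplopt^{N}(g)\equiv 0$; and even on the cone of strictly positive functions, "comparable values on the two color sheets" does not by itself give a contraction of $\max/\min$ for a general input $g$ --- that requires working in a smaller cone (e.g.\ of controlled log-Hölder functions) which you do not set up. Your second route --- showing a nonnegative fixed point $g$ that vanishes at some $\widetilde{x}_{0}$ must vanish identically --- is the right one, but you omit the two observations that make it work: (i) the fixed-point equation and nonnegativity force $g$ to vanish at every $F^{n}$-preimage of $\widetilde{x}_{0}$ lying in a tile of $F$, and by Lemma~\ref{lem:strongly irreducible:tile in interior tile for high enough level} such preimages are dense in $\widetilde{\limitset}$, so $g|_{\widetilde{\limitset}} = 0$ by continuity; (ii) for arbitrary $\widetilde{x}\in\splitsphere$, $g(\widetilde{x}) = \normsplopt^{n}(g)(\widetilde{x})$ is a convex combination (the weights sum to $\normsplopt^{n}(\indicator{\splitsphere}) = 1$) of values of $g$ at points whose distance to $\widetilde{\limitset}$ is bounded by the mesh of $\Domain{n}$, which tends to zero by expansion, so uniform continuity gives $g(\widetilde{x}) = 0$. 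Without (ii) the vanishing on $\widetilde{\limitset}$ does not propagate to all of $\splitsphere$, because the operator is defined on the whole split sphere while the preimage structure only sees the tiles of $F$.

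An alternative that avoids uniqueness entirely: the Yosida--Kakutani mean ergodic theorem gives norm convergence of the Cesàro averages of $\normsplopt^{j}(\indicator{\splitsphere})$ directly from the precompactness of the orbit and the uniform bound on $\normcontinuous{\normsplopt^{n}}{\splitsphere}$. This matches the order of dependence in the present paper, where uniqueness of $\spleigmea$ is a downstream consequence (Proposition~\ref{prop:uniqueness of eigenmeasure subsystem}) rather than an ingredient of this theorem.
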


We record the following Variational Principle and the existence of equilibrium states for subsystems established in \cite[Theorems~6.30 and 6.31]{shi2024thermodynamic}.
Recall that $F(\limitset) \subseteq \limitset$ by Proposition~\ref{prop:subsystem:preliminary properties}~\ref{item:subsystem:properties:limitset forward invariant}.

\begin{theorem}[Z.~Li, X.~Shi, Y.~Zhang \cite{shi2024thermodynamic}]    \label{thm:subsystem characterization of pressure and existence of equilibrium state}
    Let $f \colon S^2 \mapping S^2$ be an expanding Thurston map with a Jordan curve $\mathcal{C}\subseteq S^2$ satisfying $\post{f} \subseteq \mathcal{C}$ and $f(\mathcal{C}) \subseteq \mathcal{C}$.
    Let $F \in \subsystem$ be strongly irreducible.
    Let $d$ be a visual metric on $S^2$ for $f$ and $\potential$ be a real-valued \holder continuous function on $S^2$ with respect to the metric $d$.
    Then we have
    \begin{equation}    \label{eq:subsystem Variational Principle}
        \pressure = P(F|_{\limitset}, \potential|_{\limitset}) = \sup \Bigl\{ h_{\mu}(F|_{\limitset}) + \int_{\limitset} \! \phi \,\mathrm{d}\mu \describe \mu \in \mathcal{M}(\limitset, F|_{\limitset}) \Bigr\},
    \end{equation}
    and there exists an equilibrium state for $F|_{\limitset}$ and $\phi|_{\limitset}$,
    where $\pressure$ is defined by \eqref{eq:pressure of subsystem} and $P(F|_{\limitset}, \potential|_{\limitset})$ is defined by \eqref{eq:def:topological pressure}.
    
    Moreover, any measure $\equstate \in \mathcal{M}(S^{2}, f)$ defined in Theorem~\ref{thm:existence of f invariant Gibbs measure} is an equilibrium state for $F|_{\limitset}$ and $\phi|_{\limitset}$, and the map $F|_{\limitset}$ with respect to such $\equstate$ is forward quasi-invariant (i.e., for each Borel set $A \subseteq \limitset$, if $\equstate(A) = 0$, then $\equstate((F|_{\limitset})(A)) = 0$) and non-singular (i.e., for each Borel set $A \subseteq \limitset$, if $\equstate(A) = 0$, then $\equstate((F|_{\limitset})^{-1}(A)) = 0$).
\end{theorem}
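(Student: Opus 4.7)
The plan is to use the $f$-invariant Gibbs measure $\equstate$ produced in Theorem~\ref{thm:existence of f invariant Gibbs measure} as the candidate equilibrium state, and to derive the Variational Principle by showing (i) $\equstate$ realizes the value $\pressure$, and (ii) no other invariant measure on $\limitset$ can do strictly better. The ergodic-theoretic tools below rely crucially on the Jacobian identity in Theorem~\ref{thm:subsystem:eigenmeasure existence and basic properties}~\ref{item:thm:subsystem:eigenmeasure existence and basic properties:Jacobian}, the Gibbs bound in Theorem~\ref{thm:subsystem:eigenmeasure existence and basic properties}~\ref{item:thm:subsystem:eigenmeasure existence and basic properties:Jacobian:Gibbs property}, and the two-sided bound \eqref{eq:two sides bounds for eigenfunction} on the eigenfunction $\eigfun$.

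\textbf{Step 1: $\equstate$ attains $\pressure$.} Since $\equstate = \eigfun \spleigmea$ with $\eigfun$ bounded and bounded away from $0$, Theorem~\ref{thm:subsystem:eigenmeasure existence and basic properties}~\ref{item:thm:subsystem:eigenmeasure existence and basic properties:Jacobian} combined with $\normsplopt(\eigfun)=\eigfun$ gives that the Jacobian of $F|_{\limitset}$ with respect to $\equstate$ on each $1$-tile is $J = \eigenvalue e^{-\phi} \cdot (\eigfun\circ F) / \eigfun$, where $\eigenvalue = e^{\pressure}$. Applying Rokhlin's formula to the partition of $\limitset$ by $1$-tiles (a one-sided generator for $F|_{\limitset}$ by expansion of $f$, since $n$-tile diameters tend to $0$), one obtains $h_{\equstate}(F|_{\limitset}) = \int_{\limitset} \log J \,\mathrm{d}\equstate = \pressure - \int_{\limitset}\phi \,\mathrm{d}\equstate$, so $h_{\equstate}(F|_{\limitset}) + \int \phi\,\mathrm{d}\equstate = \pressure$.

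\textbf{Step 2: Upper bound for arbitrary $\mu \in \mathcal{M}(\limitset,F|_{\limitset})$.} The partition $\mathcal{Q} := \{X \cap \limitset : X \in \Domain{1}\}$ is a one-sided generator for $F|_{\limitset}$. For its $n$-th refinement $\mathcal{Q}^n$, each atom is (up to $\equstate$-null sets on $\mathcal{C}$, which are $\mu$-null once we verify this via Theorem~\ref{thm:subsystem:eigenmeasure existence and basic properties}~\ref{item:thm:subsystem:eigenmeasure existence and basic properties:strongly irreducible edge zero measure} and a standard invariance/smoothing argument) an $n$-tile $X^n \in \Domain{n}$. The Gibbs bound gives $\equstate(X^n) \asymp \exp(S_n^F \phi(x) - n \pressure)$ for any $x \in X^n$. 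Combining with the elementary inequality $\sum_i a_i \log (b_i/a_i) \leqslant \log \sum_i b_i$ (taking $a_i=\mu(X^n)$ and $b_i = \equstate(X^n)$) yields
\[
    H_\mu(\mathcal{Q}^n) + \int S_n^F \phi \,\mathrm{d}\mu - n\pressure \leqslant \log\!\sum_{X^n \in \Domain{n}} \equstate(X^n) + O(1) = O(1).
\]
Dividing by $n$ and letting $n\to\infty$ gives $h_\mu(F|_{\limitset}) + \int \phi\,\mathrm{d}\mu \leqslant \pressure$.

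\textbf{Step 3: Identification with the classical topological pressure, and non-singularity.} The Variational Principle \eqref{eq:Variational Principle for pressure} for $F|_{\limitset}$ together with Steps 1--2 yields $P(F|_{\limitset}, \phi|_{\limitset}) = \pressure$ and that $\equstate$ is an equilibrium state. For forward quasi-invariance, fix a Borel set $A \subseteq \limitset$ with $\equstate(A)=0$. Removing the $\equstate$-null set $A \cap \bigcup_{j\geqslant 0} f^{-j}(\mathcal{C})$ (null by Theorem~\ref{thm:subsystem:eigenmeasure existence and basic properties}~\ref{item:thm:subsystem:eigenmeasure existence and basic properties:strongly irreducible edge zero measure} transported to $\equstate$ via the bounded density $\eigfun$), decompose $A = \bigsqcup_{X \in \Domain{1}} (A \cap \inte{X})$; on each piece $F$ is a homeomorphism, so the Jacobian identity of Theorem~\ref{thm:subsystem:eigenmeasure existence and basic properties}~\ref{item:thm:subsystem:eigenmeasure existence and basic properties:Jacobian} (bumped up from $\eigmea$ to $\equstate$ via $\eigfun$) gives $\equstate(F(A \cap \inte{X}))=0$, hence $\equstate(F(A))=0$. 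Non-singularity is immediate from $f$-invariance of $\equstate$: $\equstate(F^{-1}(A)\cap\limitset) \leqslant \equstate(f^{-1}(A)) = \equstate(A)$.

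The main obstacle is Step 2: the partition by tiles is not quite a Markov partition in the classical sense (its atoms overlap on the $\mu$-boundary $\mathcal{C}$), so one must carefully justify that the cell decomposition by tiles is essentially disjoint modulo $\mu$ for every invariant $\mu$, not merely for $\equstate$. This requires combining the strong irreducibility with the fact that $\mu(\mathcal{C}) = 0$ for every $F|_{\limitset}$-invariant measure, which itself follows from an argument using the finite-to-one action of $F$ on $\mathcal{C} \cap \limitset$ together with the invariance.
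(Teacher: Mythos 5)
This theorem is recorded in the paper as a citation to \cite[Theorems~6.30 and~6.31]{shi2024thermodynamic}; the present paper does not contain its own proof, so there is no in-paper argument to compare line by line. I will therefore assess your proposal on its internal merits.

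Your overall strategy (Gibbs bounds plus a Rokhlin/Jensen comparison for the variational principle, then transport of the Jacobian formula from $\eigmea$ to $\equstate$ via the bounded density $\eigfun$) is a reasonable line of attack and Steps~1 and~3 are essentially sound. However, Step~2 contains a genuine gap. You assert that $\mu(\mathcal{C}) = 0$ for every $F|_{\limitset}$-invariant Borel probability measure $\mu$, and you base the applicability of the tile partition on this. That claim is false in general, even under strong irreducibility. In the Sierpi\'nski carpet example (Example~\ref{exam:subsystems}~\ref{item:exam:subsystems:Sierpinski carpet}) the limit set $\limitset$ contains all of $\mathcal{C}$, the map $F|_{\mathcal{C}} = f|_{\mathcal{C}}$ is a self-map of $\mathcal{C}$ (since $f(\mathcal{C}) \subseteq \mathcal{C}$), and any $f|_{\mathcal{C}}$-invariant measure — for instance a Dirac mass at a fixed postcritical point, or the measure of maximal entropy for the circle dynamics on $\mathcal{C}$ — is an $F|_{\limitset}$-invariant measure with $\mu(\mathcal{C}) = 1$. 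Theorem~\ref{thm:subsystem:eigenmeasure existence and basic properties}~\ref{item:thm:subsystem:eigenmeasure existence and basic properties:strongly irreducible edge zero measure} gives $\eigmea$-nullity of $\bigcup_j f^{-j}(\mathcal{C})$, and hence $\equstate$-nullity via $\eigfun$, but there is no "invariance/smoothing" trick that upgrades this to all invariant $\mu$; the family of invariant measures simply contains measures concentrated on $\mathcal{C}$.

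Because of this, the set-valued family $\{X \cap \limitset : X \in \Domain{1}\}$ is not a measurable partition modulo $\mu$-null sets for a general invariant $\mu$, and the refinement/Jensen computation does not directly apply. The standard repair is to avoid the claim entirely: fix a measurable selection $\tau_n \colon \limitset \mapping \Domain{n}$ with $x \in \tau_n(x)$, so that $\mathcal{Q}_n^{\tau} \define \{\tau_n^{-1}(X^n) : X^n \in \Domain{n}\}$ is a genuine finite measurable partition with each atom contained in exactly one $n$-tile. The Gibbs bound then yields $\int_A S_n^F\phi\,\mathrm{d}\mu - n\pressure\,\mu(A) \leqslant \mu(A)\bigl(\log C_\mu + \log \equstate(X^n_A)\bigr)$ for $A = \tau_n^{-1}(X^n_A)$, and summing and using concavity of $\log$ gives $H_\mu(\mathcal{Q}_n^{\tau}) + \int S_n^F\phi\,\mathrm{d}\mu - n\pressure \leqslant \log C_\mu + \log \sum_{X^n} \equstate(X^n) = O(1)$, where $\sum_{X^n}\equstate(X^n) \leqslant 1$ because $\equstate$ gives zero mass to tile boundaries. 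One then still needs to verify that $\liminf_n \tfrac{1}{n}H_\mu(\mathcal{Q}_n^{\tau}) \geqslant h_\mu(F|_{\limitset})$, which uses the uniform shrinking of tile diameters but requires a bit more care than the one-line generator claim, since $\mathcal{Q}_n^{\tau}$ is not the $n$-fold refinement of $\mathcal{Q}_1^{\tau}$. In short: the architecture is right, but the "$\mu(\mathcal{C})=0$ for all invariant $\mu$" lemma you lean on is not available, and Step~2 must be reworked around a measurable tile selection (or some equivalent device) to be correct.
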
 
\section{The Assumptions}
\label{sec:The Assumptions}

We state below the hypotheses under which we will develop our theory in most parts of this paper. 
We will selectively use some of those assumptions in the later sections. 

\begin{assumptions}
\quad
    \begin{enumerate}[label=\textrm{(\arabic*)}]
        \smallskip

        \item \label{assumption:expanding Thurston map}
            $f \colon S^2 \mapping S^2$ is an expanding Thurston map.

        \smallskip

        \item \label{assumption:Jordan curve}
            $\mathcal{C} \subseteq S^2$ is a Jordan curve containing $\post{f}$ with the property that there exists an integer $n_{\mathcal{C}} \in \n$ such that $f^{n_{\mathcal{C}}}(\mathcal{C}) \subseteq \mathcal{C}$ and $f^m(\mathcal{C}) \not\subseteq \mathcal{C}$ for each $m \in \oneton[n_{\mathcal{C}} - 1]$.
        
        \smallskip

        \item \label{assumption:subsystem}
            $F \in \subsystem$ is a subsystem of $f$ with respect to $\mathcal{C}$.
        
        \smallskip

        \item \label{assumption:visual metric and expansion factor}
            $d$ is a visual metric on $S^2$ for $f$ with expansion factor $\Lambda > 1$.

        \smallskip

        \item \label{assumption:holder exponent}
        $\holderexp \in (0, 1]$.

        \smallskip

        \item \label{assumption:holder potential}
        $\potential \in C^{0,\holderexp}(S^2, d)$ is a real-valued H\"{o}lder continuous function with an exponent $\holderexp$.




    \end{enumerate}
\end{assumptions}

    
Observe that by Lemma~\ref{lem:invariant_Jordan_curve}, for each $f$ in \ref{assumption:expanding Thurston map}, there exists at least one Jordan curve $\mathcal{C}$ that satisfies \ref{assumption:Jordan curve}. 
Since for a fixed $f$, the number $n_{\mathcal{C}}$ is uniquely determined by $\mathcal{C}$ in \ref{assumption:Jordan curve}, in the remaining part of the paper, we will say that a quantity depends on $\mathcal{C}$ even if it also depends on $n_{\mathcal{C}}$.

Recall that the expansion factor $\Lambda$ of a visual metric $d$ on $S^2$ for $f$ is uniquely determined by $d$ and $f$. We will say that a quantity depends on $f$ and $d$ if it depends on $\Lambda$.

Note that even though the value of $L$ is not uniquely determined by the metric $d$, in the remainder of this paper, for each visual metric $d$ on $S^2$ for $f$, we will fix a choice of linear local connectivity constant $L$. We will say that a quantity depends on the visual metric $d$ without mentioning the dependence on $L$, even though if we had not fixed a choice of $L$, it would have depended on $L$ as well.

In the discussion below, depending on the conditions we will need, we will sometimes say ``Let $f$, $\mathcal{C}$, $d$, $\potential$ satisfy the Assumptions.", and sometimes say ``Let $f$ and $\mathcal{C}$ satisfy the Assumptions.'', etc.  



\section{Uniqueness of the equilibrium states for subsystems}
\label{sec:Uniqueness of the equilibrium states for subsystems}

This section is devoted to the uniqueness of the equilibrium states for subsystems, with the main result being Theorem~\ref{thm:subsystem:uniqueness of equilibrium state}. 
We first define normalized split Ruelle operators and obtain some basic properties in Subsection~\ref{sub:Normalized split Ruelle operators}. 
Then in Subsection~\ref{sub:Uniform convergence} we introduce the notion of abstract modulus of continuity and prove the uniform convergence for functions under iterations of the normalized split Ruelle operators. 
Finally, in Subsection~\ref{sub:Uniqueness} we establish Theorem~\ref{thm:subsystem:uniqueness of equilibrium state}.

\begin{theorem}    \label{thm:subsystem:uniqueness of equilibrium state}
    Let $f$, $\mathcal{C}$, $F$, $d$, $\potential$ satisfy the Assumptions in Section~\ref{sec:The Assumptions}. 
    We assume in addition that $f(\mathcal{C}) \subseteq \mathcal{C}$ and $F \in \subsystem$ is strongly primitive.
    Denote $\limitset \define \limitset(F, \mathcal{C})$ and $\limitmap \define F|_{\limitset}$.
    Then there exists a unique equilibrium state $\equstate$ for $\limitmap$ and $\potential|_{\limitset}$.
    Moreover, the map $\limitmap$ with respect to $\equstate$ is forward quasi-invariant (i.e., for each Borel set $A \subseteq \limitset$, if $\equstate(A) = 0$, then $\equstate(\limitmap(A)) = 0$) and non-singular (i.e., for each Borel set $A \subseteq \limitset$, if $\equstate(A) = 0$, then $\equstate(\limitmap^{-1}(A)) = 0$).
\end{theorem}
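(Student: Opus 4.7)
The existence of an equilibrium state together with the forward quasi-invariance and non-singularity of $\limitmap$ with respect to any measure constructed in Theorem~\ref{thm:existence of f invariant Gibbs measure} are already provided by Theorem~\ref{thm:subsystem characterization of pressure and existence of equilibrium state}. So the heart of the matter is uniqueness; once uniqueness is known, the unique equilibrium state must coincide with the measure $\equstate = \eigfun \spleigmea$ from Theorem~\ref{thm:existence of f invariant Gibbs measure}, and the two additional properties follow automatically.

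The plan for uniqueness is functional-analytic, following the outline in Subsection~\ref{sub:Strategy and organization of the paper}. The pressure function $P(\limitmap, \,\cdot\,) \colon C(\limitset) \mapping \real$ is convex and continuous, and each equilibrium state $\mu$ for $\limitmap$ and $\phi|_{\limitset}$ induces a continuous linear functional $u \mapsto \int u \,\mathrm{d}\mu$ on $C(\limitset)$ that is tangent to $P(\limitmap, \,\cdot\,)$ at $\phi|_{\limitset}$. By the standard characterization of uniqueness of tangent functionals on a separable Banach space (\emph{Mazur-type criterion}, recorded later in the paper as Theorem~\ref{thm:tengent}), it suffices to show that for every $\gamma$ in a weak-dense subspace of $C(S^2)$ the one-variable function $t \mapsto P(\limitmap, (\phi + t\gamma)|_{\limitset})$ is differentiable at $t = 0$. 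Since $C^{0,\holderexp}(S^2,d)$ is norm-dense, and hence weak-dense, in $C(S^2)$, it is enough to handle H\"older continuous $\gamma$.

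For such $\gamma$, we compute the pressure via the split Ruelle operator using Theorem~\ref{thm:existence of f invariant Gibbs measure}: for each $\widetilde{y} \in \splitsphere$,
\[
    P(F, \phi + t\gamma) = \lim_{n \to +\infty} \frac{1}{n} \log \splopt[\phi + t\gamma]^{n} \bigl(\indicator{\splitsphere}\bigr)(\widetilde{y}).
\]
The plan is to pass to the normalized split Ruelle operator $\normsplopt[\phi + t\gamma]$ and prove that, under strong primitivity, its iterates acting on $\indicator{\splitsphere}$ are uniformly equicontinuous on $\splitsphere$ and converge uniformly to the eigenfunction $\eigfun_{\phi + t\gamma}$, with rates and bounds depending continuously on $t$ in a neighborhood of $0$. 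This is exactly the content of the uniform-convergence results announced in the strategy (Proposition~\ref{prop:modulus for function under split ruelle operator}, Lemma~\ref{lem:upper bound for uniform norm of normalized split ruelle operator}, and Theorem~\ref{thm:converge of uniform norm of normalized split ruelle operator}). The distortion bound of Lemma~\ref{lem:distortion lemma for normed split operator} and the strong primitivity of $F$ (via Lemma~\ref{lem:strongly primitive:tile in interior tile for high enough level}) are the engine behind these estimates. Differentiating in $t$ and interchanging limits with the help of these uniform bounds yields the existence of $\frac{d}{dt}\big|_{t = 0} P(F, \phi + t\gamma)$, which is the statement of Lemma~\ref{lem:converge of derivative for pressure of subsystem} and Theorem~\ref{thm:derivative of pressure subsystem}.

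\textbf{Main obstacle.} The main technical difficulty is that, unlike the case of expanding Thurston maps, a positive continuous eigenfunction of $\splopt$ on $S^2$ need not exist: the natural eigenfunction may be discontinuous across $\mathcal{C}$ because black and white $1$-tiles of the subsystem do not match up combinatorially. This is precisely the reason we must work on the split sphere $\splitsphere$ and use the normalized split Ruelle operator rather than a normalization of the potential. Executing the perturbation argument with uniform-in-$t$ bounds on this split setup, while keeping careful track of the dependence on the irreducibility constant $n_F$ and the distortion constants $C_1, \Csplratio, \Cnormspllocal$, is the step that requires the most care; once it is in hand, uniqueness is immediate from the Mazur-type criterion, and the forward quasi-invariance and non-singularity are inherited from Theorem~\ref{thm:subsystem characterization of pressure and existence of equilibrium state}.
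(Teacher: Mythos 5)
Your proposal is correct and takes essentially the same route as the paper's proof: reduce uniqueness to differentiability of the pressure function via the Mazur-type criterion (Theorem~\ref{thm:tengent}), establish that differentiability through the normalized split Ruelle operators and the uniform convergence machinery (Proposition~\ref{prop:modulus for function under split ruelle operator}, Lemma~\ref{lem:upper bound for uniform norm of normalized split ruelle operator}, Theorem~\ref{thm:converge of uniform norm of normalized split ruelle operator}, and ultimately Theorem~\ref{thm:derivative of pressure subsystem}), and inherit forward quasi-invariance and non-singularity from Theorem~\ref{thm:subsystem characterization of pressure and existence of equilibrium state}. One small refinement: Theorem~\ref{thm:tengent} must be applied with the Banach space $V = C(\limitset)$, the domain of $\fpressure[\cdot]$, so the weak-dense subspace you feed it is $W = \bigl\{\psi|_{\limitset} : \psi \in \holderspacesphere\bigr\}$, which is norm-dense in $C(\limitset)$ by Lemma~\ref{lem:holder space dense in continuous space}, rather than a subspace of $C(S^2)$.
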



We adopt the following convention.
\begin{remark}\label{rem:invariant measure equivalent}
    Let $X$ be a non-empty Borel subset of $S^{2}$.
    Given a Borel probability measure $\mu \in \probmea{X}$, by abuse of notation, we can view $\mu$ as a Borel probability measure on $S^{2}$ by setting $\mu(A) \define \mu(A \cap X)$ for all Borel subsets $A \subseteq S^{2}$. 
    Conversely, for each measure $\nu \in \probsphere$ supported on $X$, we can view $\nu$ as a Borel probability measure on $X$. 
\end{remark}

To prove the uniqueness of the equilibrium state of a continuous map $g$ on a compact metric space $X$, one of the techniques is to prove the (G\^ateaux) differentiability of the topological pressure function $P(g, \cdot \, ) \colon C(X) \mapping \real$. 
We summarize the general ideas below, but refer the reader to \cite[Section~3.6]{przytycki2010conformal} for a detailed treatment.

For a continuous map $g \colon X \mapping X$ on a compact metric space $X$, the topological pressure function $P(g, \cdot \, ) \colon C(X) \mapping \real$ is Lipschitz continuous \cite[Theorem~3.6.1]{przytycki2010conformal} and convex \cite[Theorem~3.6.2]{przytycki2010conformal}. For an arbitrary convex continuous function $Q \colon V \mapping \real$ on a real topological vector space $V$, we call a continuous linear functional $L \colon V \mapping \real$ \emph{tangent to $Q$ at $x \in V$} if 
\begin{equation}     \label{eq:def:tangent}
    Q(x) + L(y) \leqslant Q(x + y), \qquad \text{for each } y \in V.
\end{equation}
We denote the set of all continuous linear functionals tangent to $Q$ at $x \in V$ by $\tangent{V}{x}{Q}$. It is know (see for example, \cite[Proposition~3.6.6]{przytycki2010conformal}) that if $\mu \in \mathcal{M}(X, g)$ is an equilibrium state for $g$ and $\varphi \in C(X)$, then the continuous linear functional $u \mapsto \int \! u \,\mathrm{d}\mu$ for $u \in C(X)$ is tangent to the topological pressure function $P(g, \cdot \, )$ at $\varphi$. Indeed, let $\varphi, \gamma \in C(X)$ and $\mu \in \mathcal{M}(X, g)$ be an equilibrium state for $g$ and $\varphi$. Then $P(g, \varphi + \gamma) \geqslant h_{\mu}(g) + \int \! \varphi + \gamma \,\mathrm{d}\mu$ by the Variational Principle~\eqref{eq:Variational Principle for pressure} in Subsection~\ref{sub:thermodynamic formalism}, and $P(g, \varphi) = h_{\mu}(g) + \int \! \varphi \,\mathrm{d}\mu$. It follows that $P(g, \varphi) + \int \! \gamma \,\mathrm{d}\mu \leqslant P(g, \varphi + \gamma)$. 

Thus to prove the uniqueness of the equilibrium state for a continuous map $g \colon X \mapping X$ and a continuous potential $\varphi$, it suffices to show that $\operatorname{card}\bigl(\tangent{C(X)}{\varphi}{P(g, \cdot \, )} \bigr) = 1$. 
Then we can apply the following fact from functional analysis (see \cite[Theorem~3.6.5]{przytycki2010conformal} for a proof):

\begin{theorem}[F.~Przytycki \& M.~Urba\'{n}ski \cite{przytycki2010conformal}]  \label{thm:tengent}
    Let $V$ be a separable Banach space and $Q \colon V \mapping \real$ be a convex continuous function. Then for each $x \in V$, the following statements are equivalent:
    \begin{enumerate}
        \smallskip

        \item $\operatorname{card}{\!\bigl(\tangent{V}{x}{Q}\bigr)} = 1$.

        \smallskip

        \item The function $t \mapsto Q(x + ty)$ is differentiable at $0$ for each $y \in V$.

        \smallskip

        \item There exists a subset $U \subseteq V$ that is dense in the weak topology on $V$ such that the function $t \mapsto Q(x + ty)$ is differentiable at $0$ for each $y \in U$.
    \end{enumerate}
\end{theorem}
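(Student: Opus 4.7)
The plan is to reduce all three conditions to standard properties of the one-sided directional derivatives of the convex function $Q$. For every $\juxtapose{x}{y} \in V$, monotonicity of the convex difference quotient guarantees the existence of
\[
    Q'_{+}(x; y) \define \lim_{t \to 0^{+}} \frac{Q(x + ty) - Q(x)}{t}, \qquad Q'_{-}(x; y) \define \lim_{t \to 0^{-}} \frac{Q(x + ty) - Q(x)}{t},
\]
with $Q'_{-}(x; y) \leqslant Q'_{+}(x; y)$ and the symmetry identity $Q'_{-}(x; y) = -Q'_{+}(x; -y)$; the map $y \mapsto Q'_{+}(x; y)$ is sublinear, and continuity of $Q$ forces it to be locally bounded. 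The function $t \mapsto Q(x + ty)$ is differentiable at $0$ exactly when $Q'_{-}(x; y) = Q'_{+}(x; y)$.

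The key tool is the tangency characterization: a continuous linear functional $L \in V^{*}$ lies in $\tangent{V}{x}{Q}$ if and only if $Q'_{-}(x; y) \leqslant L(y) \leqslant Q'_{+}(x; y)$ for every $y \in V$. Necessity follows by dividing the inequality \eqref{eq:def:tangent} (applied to $ty$ and to $-ty$) by $t > 0$ and passing to the limit. Sufficiency uses that $Q(x + y) - Q(x) \geqslant Q'_{+}(x; y) \geqslant L(y)$, since the difference quotient is non-decreasing in $t > 0$.

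With this tool in place, (2)$\,\Rightarrow\,$(3) is immediate upon taking $U \define V$. For (3)$\,\Rightarrow\,$(1), let $L_{1}, L_{2} \in \tangent{V}{x}{Q}$; on each $y \in U$ the characterization combined with differentiability forces $L_{1}(y) = Q'_{+}(x; y) = L_{2}(y)$. The functional $L_{1} - L_{2}$ is continuous for the weak topology on $V$ (every element of $V^{*}$ is weakly continuous by the very definition of the weak topology), so, vanishing on a weakly dense set, it vanishes identically.

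For (1)$\,\Rightarrow\,$(2) I argue contrapositively. Suppose $Q'_{-}(x; y_{0}) < Q'_{+}(x; y_{0})$ for some $y_{0} \in V$, and pick distinct values $\alpha_{1}, \alpha_{2} \in [Q'_{-}(x; y_{0}), Q'_{+}(x; y_{0})]$. Define linear functionals $\ell_{i}(t y_{0}) \define t \alpha_{i}$ on the line $\real y_{0}$. Using the symmetry identity $Q'_{+}(x; -y_{0}) = -Q'_{-}(x; y_{0})$ one verifies $\ell_{i}(z) \leqslant Q'_{+}(x; z)$ for all $z \in \real y_{0}$ (the case $t > 0$ is $\alpha_{i} \leqslant Q'_{+}(x; y_{0})$, the case $t < 0$ is $\alpha_{i} \geqslant Q'_{-}(x; y_{0})$). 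The Hahn--Banach theorem then extends $\ell_{i}$ to a linear functional $L_{i}$ on $V$ dominated by the sublinear $Q'_{+}(x; \cdot)$; local boundedness of the dominant (from continuity of $Q$) promotes $L_{i}$ to a member of $V^{*}$, and the tangency characterization makes $L_{i} \in \tangent{V}{x}{Q}$. Since $L_{1}(y_{0}) = \alpha_{1} \ne \alpha_{2} = L_{2}(y_{0})$, this contradicts (1). The main obstacle is precisely this step: verifying carefully that an arbitrary value in the bracket $[Q'_{-}(x; y_{0}), Q'_{+}(x; y_{0})]$ extends to a sublinearly dominated functional on the whole line, which is what makes Hahn--Banach applicable. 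I note in passing that separability of $V$ plays no role in the equivalence itself; it is presumably retained in the statement for compatibility with the surrounding applications.
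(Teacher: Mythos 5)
Your argument is correct. Note that the paper itself gives no proof of this statement: it is recorded as a known fact and attributed to \cite[Theorem~3.6.5]{przytycki2010conformal}, so there is no internal proof to compare against; your write-up is essentially the standard convex-analysis argument one finds in that reference, namely the identification of tangent functionals with the linear functionals squeezed between the one-sided directional derivatives $Q'_{-}(x;\cdot)$ and $Q'_{+}(x;\cdot)$, Hahn--Banach extension dominated by the sublinear map $y \mapsto Q'_{+}(x;y)$ for (1)$\Rightarrow$(2), and weak continuity of elements of $V^{*}$ for (3)$\Rightarrow$(1). Your side remark is also accurate: separability of $V$ is not used anywhere in this pointwise equivalence (it matters for generic differentiability statements of Mazur type, and here it is simply inherited from the ambient setting $V = C(X)$ with $X$ compact metrizable, where the theorem is applied).
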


Now the problem of the uniqueness of equilibrium states transforms to the problem of (G\^ateaux) differentiability of the topological pressure function. To investigate the latter, we need a closer study of the fine properties of split Ruelle operators. 

\subsection{Normalized split Ruelle operator}%
\label{sub:Normalized split Ruelle operators}

In this subsection, we define normalized split Ruelle operators and establish some basic properties which will be frequently used later.

\smallskip


Let $f \colon S^2 \mapping S^2$ be an expanding Thurston map with a Jordan curve $\mathcal{C}\subseteq S^2$ satisfying $\post{f} \subseteq \mathcal{C}$ and $f(\mathcal{C}) \subseteq \mathcal{C}$. 
Let $d$ be a visual metric for $f$ on $S^2$, and $\phi \in C^{0,\holderexp}(S^2,d)$ a real-valued \holder continuous function with an exponent $\holderexp \in (0,1]$. 
Let $\juxtapose{X^0_{\black}}{X^0_{\white}} \in \mathbf{X}^0(f, \mathcal{C})$ be the black $0$-tile and the white $0$-tile, respectively. 
Recall from Definition~\ref{def:split sphere} and Remark~\ref{rem:disjoint union} that $\splitsphere = X^0_{\black} \sqcup X^0_{\white}$ is the disjoint union of $X^0_{\black}$ and $X^0_{\white}$, and the product spaces $\splfunspace$, $\splboufunspace$, and $\splmeaspace$ are identified with $C(\splitsphere)$, $B(\splitsphere)$, and $\mathcal{M}(\splitsphere)$, respectively.

Let $F \in \subsystem$ be strongly irreducible and $\eigfun = \splfun \in C(\splitsphere)$ be the function given by Theorem~\ref{thm:existence of f invariant Gibbs measure}.
Note that $\eigfun(\widetilde{x}) > 0$ for each $\widetilde{x} \in \splitsphere = \disjointuniontile$ by \eqref{eq:two sides bounds for eigenfunction} in Theorem~\ref{thm:existence of f invariant Gibbs measure}.
Recall from \eqref{eq:def:normed potential} that $\normpotential = \phi - \pressure$.

\begin{definition}[Partial normalized split Ruelle operator]    \label{def:partial normalized split Ruelle operator}
    Let $f$, $\mathcal{C}$, $F$, $d$, $\potential$ satisfy the Assumptions in Section~\ref{sec:The Assumptions}.
    We assume in addition that $f(\mathcal{C}) \subseteq \mathcal{C}$ and $F \in \subsystem$ is strongly irreducible. 
    For each $n \in \n_0$ and each pair of $\juxtapose{\colour}{\colour'} \in \colours$, we define a map $\toperator{n}{\colour}{\colour'} \colon \boufunspace{\colour'} \mapping B(X^{0}_{\colour})$ by
    \begin{equation}    \label{eq:def:twisted partial Ruelle operator}
        \begin{split}
           & \toperator{n}{\colour}{\colour'}(v)(x) \\
            &\qquad\define \sum_{ y \in F^{-n}(x) } \ccndegF{\colour}{\colour'}{n}{y} v(y) \myexp[\big]{ S_{n}^{F}\phi(y) - n\pressure + \log u_{\colour'}(y) - \log u_{\colour}(x) }  \\
            &\qquad= \sum_{ \substack{ y = (F^{n}|_{X^n})^{-1}(x) \\ X^n \in \ccFTile{n}{\colour}{\colour'}} } v(y) \myexp[\big]{ S_{n}^{F}\phi(y) - n\pressure + \log u_{\colour'}(y) - \log u_{\colour}(x) }
        \end{split}
    \end{equation}
    for each $v \in \boufunspace{\colour'}$ and each point $x \in X^0_{\colour}$. 

\end{definition} 

\begin{rmk}
    By \eqref{eq:definition of partial Ruelle operators} in Definition~\ref{def:partial Ruelle operator}, we can write the right hand side of \eqref{eq:def:twisted partial Ruelle operator} as
    \begin{equation}\label{eq:equivalent definition of twisted partial Ruelle operator}
        \begin{split}        
            &\toperator{n}{\colour}{\colour'}(v)(x) \\
            &\qquad= \frac{1}{u_{\colour}(x)} \sum_{ X^n \in \ccFTile{n}{\colour}{\colour'} } 
            (u_{\colour'} \cdot v) \bigl((F^{n}|_{X^n})^{-1}(x) \bigr) \myexp[\big]{ S_{n}^{F}\phi\bigl( (F^{n}|_{X^n})^{-1}(x) \bigr) -n\pressure } \\
            &\qquad= \frac{1}{u_{\colour}(x)} \poperator[\normpotential]{n}(u_{\colour'} v)(x).
        \end{split}
    \end{equation}
    Thus, it follows immediately from Lemma~\ref{lem:well-defined partial Ruelle operator} that for all $\juxtapose{n}{k} \in \n_0$, $\juxtapose{\colour}{\colour'} \in \colours$, and $v \in \confunspace{\colour'}$, 
    \begin{equation}    \label{eq:well-defined continuity of twisted partial Ruelle operator}
        \, \toperator{n}{\colour}{\colour'} (v) \in C\bigl(X^{0}_{\colour}\bigr) \qquad\qquad \text{and }
    \end{equation}
    \begin{equation}    \label{eq:iteration of twisted partial Ruelle operator}
        \qquad \qquad  \toperator{n + k}{\colour}{\colour'} (v) = \sum_{\colour'' \in \colours} \toperator{n}{\colour}{\colour''} \bigl( \toperator{k}{\colour''}{\colour'}(v) \bigr).
    \end{equation}
\end{rmk}

\begin{definition}[Normalized split Ruelle operators]    \label{def:normalized split ruelle operator}
    Let $f$, $\mathcal{C}$, $F$, $d$, $\phi$ satisfy the Assumptions in Section~\ref{sec:The Assumptions}. 
    We assume in addition that $f(\mathcal{C}) \subseteq \mathcal{C}$ and $F \in \subsystem$ is strongly irreducible. 
    Let $\eigfun = \splfun \in C(\splitsphere)$ be the continuous function given by Theorem~\ref{thm:existence of f invariant Gibbs measure}. 
    The \emph{normalized split Ruelle operator} $\twistsplopt \colon \splfunspace \mapping \splfunspace$ for the subsystem $F$ and the potential $\phi$ is defined by
    \begin{equation}    \label{eq:def:normalized split Ruelle operator expand form}
        \twistsplopt\splfunv \define \bigl( 
            \toperator{1}{\black}{\black}(u_{\black}) + \toperator{1}{\black}{\white}(u_{\white}), 
            \toperator{1}{\white}{\black}(u_{\black}) + \toperator{1}{\white}{\white}(u_{\white})
        \bigr)
    \end{equation}
    for each $v_{\black} \in C\bigl(X^0_{\black}\bigr)$ and each $v_{\white} \in C\bigl(X^0_{\white}\bigr)$, or equivalently, $\twistsplopt \colon C(\splitsphere) \mapping C(\splitsphere)$ is defined by 
    \begin{equation}    \label{eq:def:normalized split Ruelle operator}
        \twistsplopt(\widetilde{v}) \define \frac{1}{\eigfun} \normsplopt(\eigfun \widetilde{v} )
    \end{equation} 
    for each $\widetilde{v} \in C(\splitsphere)$.    
\end{definition}

Note that by \eqref{eq:def:twisted partial Ruelle operator} and \eqref{eq:well-defined continuity of twisted partial Ruelle operator}, the normalized split Ruelle operator $\twistsplopt$ is well-defined, and the equivalence of the two definitions \eqref{eq:def:normalized split Ruelle operator expand form} and \eqref{eq:def:normalized split Ruelle operator} follows from \eqref{eq:equivalent definition of twisted partial Ruelle operator} and Definition~\ref{def:split ruelle operator}.
By \eqref{eq:def:twisted partial Ruelle operator}, $\twistsplopt^{0}$ is the identity map on $C(\splitsphere)$.
Moreover, one sees that $\twistsplopt \colon \splfunspace \mapping \splfunspace$ has a natural extension to the space $\splboufunspace$ given by \eqref{eq:def:normalized split Ruelle operator expand form} for each $v_{\black} \in B\bigl(X^0_{\black}\bigr)$ and each $v_{\white} \in B\bigl(X^0_{\white}\bigr)$. 

We show that the normalized split Ruelle operator $\twistsplopt$ is well-behaved under iterations.

\begin{lemma}    \label{lem:iteration of normalized split ruelle operator}
    Let $f$, $\mathcal{C}$, $F$, $d$, $\phi$ satisfy the Assumptions in Section~\ref{sec:The Assumptions}. 
    We assume in addition that $f(\mathcal{C}) \subseteq \mathcal{C}$ and $F \in \subsystem$ is strongly irreducible.
    Then for all $n \in \n_0$ and $\widetilde{v} = \splfunv \in C(\splitsphere)$, we have
    \begin{equation}    \label{eq:iteration of normalized split ruelle operator}
        \twistsplopt^{n}(\widetilde{v}) = \frac{1}{\eigfun} \normsplopt^{n}(\eigfun \widetilde{v} ) \qquad\quad \text{ and }
    \end{equation}
    \begin{equation}    \label{eq:iteration of normalized split ruelle operator expand form}
        \twistsplopt^{n} \splfunv = \bigl( \toperator{n}{\black}{\black}(v_{\black}) + \toperator{n}{\black}{\white}(v_{\white}), \
        \toperator{n}{\white}{\black}(v_{\black}) + \toperator{n}{\white}{\white}(v_{\white}) \bigr).
    \end{equation}
\end{lemma}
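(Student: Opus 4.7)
The plan is to establish the two identities by a straightforward induction, treating \eqref{eq:iteration of normalized split ruelle operator} first and then deducing \eqref{eq:iteration of normalized split ruelle operator expand form} as its expanded form via the prior identities \eqref{eq:equivalent definition of twisted partial Ruelle operator} and the iteration formula \eqref{eq:iteration of split-partial ruelle operator} from Lemma~\ref{lem:iteration of split-partial ruelle operator}. Note at the outset that strong irreducibility of $F$ implies $F(\domF) = S^{2}$ (by the remark following Definition~\ref{def:irreducibility of subsystem}), so Lemma~\ref{lem:iteration of split-partial ruelle operator} is available, and the eigenfunction $\eigfun$ supplied by Theorem~\ref{thm:existence of f invariant Gibbs measure} satisfies $\eigfun \geqslant \Csplratio^{-1} > 0$ everywhere on $\splitsphere$, so division by $\eigfun$ is legitimate.

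For \eqref{eq:iteration of normalized split ruelle operator}, I would argue by induction on $n$. The base case $n = 0$ is immediate since $\twistsplopt^{0}$ and $\normsplopt^{0}$ are both the identity. For the inductive step, assuming $\twistsplopt^{n}(\widetilde{v}) = \frac{1}{\eigfun}\normsplopt^{n}(\eigfun \widetilde{v})$, apply $\twistsplopt$ once more and use its definition \eqref{eq:def:normalized split Ruelle operator}; the factor $\eigfun$ cancels the factor $1/\eigfun$ inside, yielding
\begin{equation*}
\twistsplopt^{n+1}(\widetilde{v}) = \frac{1}{\eigfun}\normsplopt\!\Bigl(\eigfun \cdot \tfrac{1}{\eigfun}\normsplopt^{n}(\eigfun \widetilde{v})\Bigr) = \frac{1}{\eigfun}\normsplopt^{n+1}(\eigfun \widetilde{v}).
\end{equation*}

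For \eqref{eq:iteration of normalized split ruelle operator expand form}, I would combine \eqref{eq:iteration of normalized split ruelle operator} with Lemma~\ref{lem:iteration of split-partial ruelle operator} applied to the potential $\normpotential$, which (since $F(\domF)=S^{2}$) gives
\begin{equation*}
\normsplopt^{n}(u_{\black} v_{\black},\, u_{\white} v_{\white}) = \bigl(\paroperator[\normpotential]{n}{\black}{\black}(u_{\black} v_{\black}) + \paroperator[\normpotential]{n}{\black}{\white}(u_{\white} v_{\white}),\, \paroperator[\normpotential]{n}{\white}{\black}(u_{\black} v_{\black}) + \paroperator[\normpotential]{n}{\white}{\white}(u_{\white} v_{\white})\bigr).
\end{equation*}
Dividing the $\colour$-component by $u_{\colour}(x)$ and invoking the identity \eqref{eq:equivalent definition of twisted partial Ruelle operator}, each term $\frac{1}{u_{\colour}(x)}\paroperator[\normpotential]{n}{\colour}{\colour'}(u_{\colour'} v_{\colour'})(x)$ collapses exactly to $\toperator{n}{\colour}{\colour'}(v_{\colour'})(x)$, producing the claimed expansion. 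As an alternative that avoids invoking Lemma~\ref{lem:iteration of split-partial ruelle operator} a second time, one could instead prove \eqref{eq:iteration of normalized split ruelle operator expand form} directly by induction on $n$, with the inductive step using linearity of the partial operators $\toperator{1}{\colour}{\colour'}$ followed by \eqref{eq:iteration of twisted partial Ruelle operator} to combine $\toperator{1}{\colour}{\colour''} \circ \toperator{n}{\colour''}{\colour'}$ into $\toperator{n+1}{\colour}{\colour'}$.

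There is no substantive obstacle here: the lemma is essentially a bookkeeping statement asserting that the conjugation of $\normsplopt$ by multiplication by $\eigfun$ iterates in the expected way, and that this conjugation acts on each color component as the partial normalized operators $\toperator{n}{\colour}{\colour'}$. The only points requiring care are confirming $\eigfun > 0$ so that \eqref{eq:def:normalized split Ruelle operator} makes sense at every iteration, confirming that $F(\domF) = S^{2}$ so that Lemma~\ref{lem:iteration of split-partial ruelle operator} applies, and keeping the color indices straight when unpacking the product-space identifications from Remark~\ref{rem:disjoint union}.
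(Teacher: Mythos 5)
Your proof is correct and follows essentially the same route as the paper: \eqref{eq:iteration of normalized split ruelle operator} is a direct computation from the conjugation structure of $\twistsplopt$, and \eqref{eq:iteration of normalized split ruelle operator expand form} reduces to the iteration identity \eqref{eq:iteration of twisted partial Ruelle operator} for the partial operators $\toperator{n}{\colour}{\colour'}$. Your primary route for \eqref{eq:iteration of normalized split ruelle operator expand form} (via Lemma~\ref{lem:iteration of split-partial ruelle operator} and \eqref{eq:equivalent definition of twisted partial Ruelle operator}) and your alternative (direct induction using \eqref{eq:iteration of twisted partial Ruelle operator}, which is what the paper does) are two presentations of the same bookkeeping, and both are valid.
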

\begin{proof}
    It follows immediately from \eqref{eq:def:normalized split Ruelle operator} and Definition~\ref{def:split ruelle operator} that \eqref{eq:iteration of normalized split ruelle operator} holds for all $n \in \n$. 
    Since $F$ is surjective, i.e., $F(\domF) = S^{2}$, we know that $\splopt^{0}$ is the identity map on $C(\splitsphere)$ by Definition~\ref{def:split ruelle operator}. 
    Thus \eqref{eq:iteration of normalized split ruelle operator} also holds for $n = 0$.

    The case where $n = 0$ and the case where $n = 1$ both hold by definition. 
    Assume now \eqref{eq:iteration of normalized split ruelle operator expand form} holds for $n = k$ for some $k \in \n$. 
    Then by Definition~\ref{def:normalized split ruelle operator} and \eqref{eq:iteration of twisted partial Ruelle operator}, for each $\colour \in \colours$ we have
    \begin{align*}
    \pi_{\colour}\bigl( \twistsplopt^{k + 1}\splfunv \bigr) 
        &= \pi_{\colour}\bigl( 
            \twistsplopt\bigl( 
                \toperator{k}{\black}{\black}(v_{\black}) + \toperator{k}{\black}{\white}(v_{\white}), \
                \toperator{k}{\white}{\black}(v_{\black}) + \toperator{k}{\white}{\white}(v_{\white}) 
            \bigr) 
        \bigr) \\
        &= \sum_{\colour' \in \colours} \toperator{1}{\colour}{\colour'} \bigl( 
                \toperator{k}{\colour'}{\black}(v_{\black}) + \toperator{k}{\colour'}{\white}(v_{\white}) 
            \bigr)  \\
        &= \sum_{\colour'' \in \colours} \sum_{\colour' \in \colours} \toperator{1}{\colour}{\colour'} \bigl( \toperator{k}{\colour'}{\colour''}(v_{\colour''}) \bigr)  \\
        &= \sum_{\colour'' \in \colours} \toperator{k + 1}{\colour}{\colour''}(v_{\colour''}),
    \end{align*}
    for $v_{\black} \in C\bigl(X^0_{\black}\bigr)$ and $v_{\white} \in C\bigl(X^{0}_{\white}\bigr)$. 
    This completes the inductive step, establishing \eqref{eq:iteration of normalized split ruelle operator expand form}.
\end{proof}

\begin{rmk}
    Similarly, one can show that \eqref{eq:iteration of normalized split ruelle operator} and \eqref{eq:iteration of normalized split ruelle operator expand form} holds for $\widetilde{v} = \splfunv \in \splboufunspace$.
\end{rmk}

Recall $\spleigmea \in \probmea{\splitsphere}$ from Theorem~\ref{thm:subsystem:eigenmeasure existence and basic properties}. 
By Theorem~\ref{thm:subsystem:eigenmeasure existence and basic properties}~\ref{item:thm:subsystem:eigenmeasure existence and basic properties:Jacobian:Gibbs property}, $\normdualsplopt\spleigmea = \spleigmea$.
Then we can show that $\splmea = \eigfun \spleigmea \in \probmea{\splitsphere}$ defined in Theorem~\ref{thm:existence of f invariant Gibbs measure} satisfies
\begin{equation}  \label{eq:equilibrium state is eigenmeasure of dual normalized split operator}
    \dualtwistsplopt\splmea = \splmea,
\end{equation}
where $\dualtwistsplopt$ is the adjoint operator of $\twistsplopt$ on the space $\splmeaspace$.
Indeed, by \eqref{eq:def:normalized split Ruelle operator}, for every $\widetilde{v} \in C(\splitsphere)$,
\begin{align*}
    \functional[\big]{\dualtwistsplopt\splmea}{\widetilde{v}} 
    &= \functional[\big]{\eigfun\spleigmea}{\twistsplopt(\widetilde{v})} = \functional[\big]{\spleigmea}{ \normsplopt(\eigfun \widetilde{v})} \\
    &= \functional[\big]{ \normdualsplopt\spleigmea}{\eigfun \widetilde{v}} = \functional{\spleigmea}{\eigfun \widetilde{v}} = \functional{\splmea}{\widetilde{v}}.
\end{align*}

Recall that we equip the spaces $C(\splitsphere)$ and $\splfunspace$ with the uniform norm given by\[
    \normcontinuous{\widetilde{v}}{\splitsphere} = \norm{\splfunv} 
    = \max \bigl\{ \normcontinuous{v_{\black}}{X^0_{\black}}, \normcontinuous{v_{\white}}{X^{0}_{\white}} \bigr\}
\] for $\widetilde{v} = \splfunv \in \splfunspace$.

\begin{lemma}   \label{lem:norm of normalized split operator}
    Let $f$, $\mathcal{C}$, $F$, $d$, $\phi$ satisfy the Assumptions in Section~\ref{sec:The Assumptions}. 
    We assume in addition that $f(\mathcal{C}) \subseteq \mathcal{C}$ and $F \in \subsystem$ is strongly irreducible.
    Then the operator norm of $\twistsplopt$ is     $\normcontinuous[\big]{\twistsplopt}{\splitsphere} = 1$. 
    In addition, $\twistsplopt\bigl(\indicator{\splitsphere}\bigr) = \indicator{\splitsphere}$.
\end{lemma}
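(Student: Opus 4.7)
The plan is to establish the identity $\twistsplopt(\indicator{\splitsphere}) = \indicator{\splitsphere}$ first, and then deduce the operator norm bound from positivity of the operator.

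For the fixed-point identity, I would use directly the characterization of $\twistsplopt$ given in \eqref{eq:def:normalized split Ruelle operator}, namely $\twistsplopt(\widetilde{v}) = \frac{1}{\eigfun} \normsplopt(\eigfun \widetilde{v})$. Setting $\widetilde{v} = \indicator{\splitsphere}$ and noting that $\eigfun \cdot \indicator{\splitsphere} = \eigfun$, this reduces to $\twistsplopt(\indicator{\splitsphere}) = \frac{1}{\eigfun} \normsplopt(\eigfun)$. By \eqref{eq:eigenfunction of normed split operator} in Theorem~\ref{thm:existence of f invariant Gibbs measure}, $\eigfun$ is a fixed point of $\normsplopt$, so the right-hand side equals $\indicator{\splitsphere}$. (One should also invoke \eqref{eq:two sides bounds for eigenfunction} to ensure that $\eigfun$ is bounded away from $0$, so the pointwise division is well-defined and produces a continuous function on $\splitsphere$.)

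For the operator norm, the lower bound $\normcontinuous[\big]{\twistsplopt}{\splitsphere} \geqslant 1$ is immediate from the fixed-point identity just established, since $\normcontinuous{\indicator{\splitsphere}}{\splitsphere} = 1$. For the upper bound, I would observe that $\twistsplopt$ is a positive operator: inspecting \eqref{eq:def:twisted partial Ruelle operator}, the partial normalized operators $\toperator{1}{\colour}{\colour'}$ are sums over preimages with nonnegative weights $\myexp[\big]{S_{1}^{F}\phi(y) - \pressure + \log u_{\colour'}(y) - \log u_{\colour}(x)}$ (nonnegativity of $\log$-terms is not needed, just nonnegativity of the exponential), hence $\twistsplopt$ maps nonnegative functions to nonnegative functions. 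Therefore, for any $\widetilde{v} \in C(\splitsphere)$, the pointwise inequality $-\normcontinuous{\widetilde{v}}{\splitsphere} \indicator{\splitsphere} \leqslant \widetilde{v} \leqslant \normcontinuous{\widetilde{v}}{\splitsphere} \indicator{\splitsphere}$ implies, after applying $\twistsplopt$ and using the fixed-point identity, that $|\twistsplopt(\widetilde{v})| \leqslant \normcontinuous{\widetilde{v}}{\splitsphere} \indicator{\splitsphere}$ pointwise on $\splitsphere$, yielding $\normcontinuous[\big]{\twistsplopt(\widetilde{v})}{\splitsphere} \leqslant \normcontinuous{\widetilde{v}}{\splitsphere}$.

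There is essentially no obstacle here, since both statements follow formally from the definition of $\twistsplopt$ together with the eigenfunction identity from Theorem~\ref{thm:existence of f invariant Gibbs measure}. The only minor care needed is verifying that the division by $\eigfun$ in \eqref{eq:def:normalized split Ruelle operator} is legitimate at every point of $\splitsphere$, which is precisely the content of the lower bound in \eqref{eq:two sides bounds for eigenfunction}.
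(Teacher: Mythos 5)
Your proof is correct and follows essentially the same route as the paper: both establish the fixed-point identity $\twistsplopt(\indicator{\splitsphere}) = \indicator{\splitsphere}$ from \eqref{eq:def:normalized split Ruelle operator} and the eigenfunction equation \eqref{eq:eigenfunction of normed split operator}, and both obtain $\normcontinuous[\big]{\twistsplopt}{\splitsphere} \leqslant 1$ from the nonnegativity of the weights in the operator (the paper phrases this as a triangle-inequality estimate on the explicit sum, which is the same thing as your positivity argument).
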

\begin{proof}
    By Definition~\ref{def:normalized split ruelle operator}, \eqref{eq:eigenfunction of normed split operator} in Theorem~\ref{thm:existence of f invariant Gibbs measure}, and \eqref{eq:def:twisted partial Ruelle operator} in Definition~\ref{def:partial normalized split Ruelle operator}, we have
    \[
        \twistsplopt\bigl(\indicator{\splitsphere}\bigr) = \frac{1}{\eigfun} \normsplopt(\eigfun) = \indicator{\splitsphere},
    \]
    i.e., for each $\widetilde{x} = (x, \colour) \in \splitsphere$,
    \[
        \begin{split}
            1 = \sum_{\colour' \in \colours} \sum_{ X^1 \in \ccFTile{1}{\colour}{\colour'} } \myexp[\big]{ \phi ( x_{X^{1}} ) - \pressure + \log u_{\colour'} ( x_{X^{1}} ) - \log u_{\colour}(x) },
        \end{split}
    \]
    where we write $x_{X^{1}} \define (F|_{X^{1}})^{-1}(x)$ for $X^1 \in \ccFTile{1}{\colour}{\colour'}$. 
    Then for each $\widetilde{x} = (x, \colour) \in \splitsphere$ and each $\widetilde{v} = \splfunv \in \splfunspace$, we have
    \begin{align*}
        \Bigl| \twistsplopt(\widetilde{v})(\widetilde{x}) \Bigl|
        &= \bigg| \sum_{\colour' \in \colours} \sum_{  X^1 \in \ccFTile{1}{\colour}{\colour'} } v_{\colour'}( x_{X^{1}} ) \myexp[\big]{ \phi( x_{X^{1}} ) - \pressure + \log u_{\colour'}( x_{X^{1}} ) - \log u_{\colour}(x) } \bigg|  \\
        &\leqslant \normcontinuous{\widetilde{v}}{\splitsphere}  \abs[\bigg]{ \sum_{\colour' \in \colours} \sum_{ X^1 \in \ccFTile{1}{\colour}{\colour'} } \myexp[\big]{ \phi( x_{X^{1}} ) - \pressure + \log u_{\colour'}( x_{X^{1}} ) - \log u_{\colour}(x) } }  \\
        &= \normcontinuous{\widetilde{v}}{\splitsphere}.
    \end{align*}
    Thus, $\normcontinuous[\big]{\twistsplopt}{\splitsphere} \leqslant 1$. 
    Since $\twistsplopt\bigl(\indicator{\splitsphere}\bigr) = \indicator{\splitsphere}$, we get $\normcontinuous[\big]{\twistsplopt}{\splitsphere} = 1$.
\end{proof}

\subsection{Uniform convergence}
\label{sub:Uniform convergence}

In this subsection, we prove the uniform convergence for functions under iterations of the normalized split Ruelle operators. 

\smallskip

Let $(X, d)$ be a metric space. A function $\modufun \colon [0, +\infty) \mapping [0, +\infty)$ is an \emph{abstract modulus of continuity} if it is continuous at $0$, non-decreasing, and $\modufun(0) = 0$.  
Given any constant $\moduconst \in [0, +\infty]$ and any abstract modulus of continuity $\modufun$, we define the subclass $\moduspace$ of $C(X)$ as
\begin{align*}
    \moduspace \define \bigl\{ u \in C(X) \describe \uniformnorm{u} \leqslant \moduconst \text{ and for } \juxtapose{x}{y} \in S^2, \, \abs{u(x) - u(y)} \leqslant \modufun(d(x, y)) \bigr\}.
\end{align*}

Assume now that $(X, d)$ is compact. Then by the \aalem Theorem, each $\moduspace$ is precompact in $C(X)$ equipped with the uniform norm. It is easy to see that each $\moduspace$ is compact. 
On the other hand, for $v \in C(X)$, we can define an abstract modulus of continuity by
\begin{equation}    \label{eq:abstract modulus of continuity for continuous function}
    \modufun(t) = \sup \{ \abs{v(x) - v(y)} \describe \juxtapose{x}{y} \in X, \, d(x, y) \leqslant t \}
\end{equation}
for $t \in [0, +\infty)$, so that $v \in \moduspace[\iota]$, where $\iota = \uniformnorm{v}$.

The following lemma is easy to check (see also \cite[Lemma~5.24]{li2017ergodic}).
\begin{lemma}    \label{lem:product and inverse of modulus of continuity}
    Let $(X, d)$ be a metric space. 
    For all constants $\moduconst > 0$, $\moduconst_1, \, \moduconst_2 \geqslant 0$ and abstract moduli of continuity $\modufun_1, \, \modufun_2$, we have
    \[
        \begin{split}
            \bigl\{ v_1 v_2 \describe v_1 \in \moduspace[\moduconst_{1}][\modufun_{1}], \, v_2 \in \moduspace[\moduconst_{2}][\modufun_{2}] \bigr\} &\subseteq \moduspace[\moduconst_{1}\moduconst_{2}][\moduconst_{1}\modufun_{2} + \moduconst_{2}\modufun_{1}], \\
            \bigl\{ 1 / v \describe v \in \moduspace[\moduconst_{1}][\modufun_{1}] , \, v(x) \geqslant \moduconst \text{ for each } x \in X \bigr\}  &\subseteq \moduspace[\moduconst^{-1}][\moduconst^{-2} \modufun_{1}] .
        \end{split}
    \]    
\end{lemma}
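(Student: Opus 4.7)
The plan is to verify the two inclusions by direct computations, treating the uniform bound and the modulus of continuity estimate separately for each of the two statements.

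For the first inclusion, given $v_1 \in \moduspace[\moduconst_{1}][\modufun_{1}]$ and $v_2 \in \moduspace[\moduconst_{2}][\modufun_{2}]$, the uniform bound $\uniformnorm{v_1 v_2} \leqslant \uniformnorm{v_1} \uniformnorm{v_2} \leqslant \moduconst_1 \moduconst_2$ is immediate. For the modulus estimate, I would use the standard add-and-subtract trick
\[
    v_1(x) v_2(x) - v_1(y) v_2(y) = v_1(x) \bigl( v_2(x) - v_2(y) \bigr) + v_2(y) \bigl( v_1(x) - v_1(y) \bigr),
\]
apply the triangle inequality, and bound $|v_1(x)| \leqslant \moduconst_1$ and $|v_2(y)| \leqslant \moduconst_2$, together with $|v_i(x) - v_i(y)| \leqslant \modufun_i(d(x,y))$. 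This gives $|v_1(x) v_2(x) - v_1(y) v_2(y)| \leqslant (\moduconst_1 \modufun_2 + \moduconst_2 \modufun_1)(d(x,y))$, which is exactly what is required. Note that $\moduconst_1 \modufun_2 + \moduconst_2 \modufun_1$ is visibly an abstract modulus of continuity.

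For the second inclusion, given $v \in \moduspace[\moduconst_{1}][\modufun_{1}]$ with $v(x) \geqslant \moduconst > 0$ everywhere, the bound $\uniformnorm{1/v} \leqslant \moduconst^{-1}$ is clear. For the modulus estimate, I would write
\[
    \frac{1}{v(x)} - \frac{1}{v(y)} = \frac{v(y) - v(x)}{v(x) v(y)},
\]
then use $v(x) v(y) \geqslant \moduconst^{2}$ and $|v(x) - v(y)| \leqslant \modufun_1(d(x,y))$ to conclude $|1/v(x) - 1/v(y)| \leqslant \moduconst^{-2} \modufun_1(d(x,y))$, as required.

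Since both steps are short and elementary, I do not anticipate any real obstacle; the only minor point to remember is that $\modufun_1$, $\modufun_2$, and positive linear combinations and scalar multiples of abstract moduli of continuity are again abstract moduli of continuity, so the resulting functions on the right-hand side belong to the asserted classes $\moduspace[\cdot][\cdot]$.
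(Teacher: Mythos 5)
Your proof is correct and is exactly the elementary verification one would write out for this lemma; the paper itself gives no proof (it just remarks that the statement is "easy to check" and cites \cite[Lemma~5.24]{li2017ergodic}), so there is no alternative argument in the paper to compare against.
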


\begin{proposition}   \label{prop:modulus for function under split ruelle operator}
    Let $f$, $\mathcal{C}$, $F$, $d$, $\Lambda$ satisfy the Assumptions in Section~\ref{sec:The Assumptions}. 
    We assume in addition that $f(\mathcal{C}) \subseteq \mathcal{C}$ and $F \in \subsystem$ is strongly irreducible.
    Then for each $\holderexp \in (0,1]$, each $\moduconst \geqslant 0$, and each $K \geqslant 0$, there exist constants $\widehat{\moduconst} \geqslant 0$ and $\widehat{C} \geqslant 0$ with the following property:

    For each abstract modulus of continuity $\modufun$, there exists an abstract modulus of continuity $\widetilde{\modufun}$ such that for each $\potential \in \holderspacesphere$ with $\holdernorm{\phi}{S^2} \leqslant K$, we have
    \begin{align}
        \label{eq:modulus for function under normed split Ruelle operator}
        \bigl\{ \normsplopt^{n}(\widetilde{v}) \describe \widetilde{v}  \in \splmoduspace, \, n \in \n_0 \bigr\} &\subseteq \splmoduspace[\widehat{\moduconst}][\widehat{\modufun}],  \\
        \label{eq:modulus for function under normalized split Ruelle operator}
        \bigl\{ \twistsplopt^{n}(\widetilde{v}) \describe \widetilde{v}  \in \splmoduspace, \, n \in \n_0 \bigr\} &\subseteq \splmoduspace[\moduconst][\widetilde{\modufun}],
    \end{align}
    where $\widehat{\modufun}(t) \define \widehat{C}\bigl(t^{\holderexp} + \modufun(C_0 t)\bigr)$ is an abstract modulus of continuity, and $C_0 > 1$ is the constant depending only on $f$, $\mathcal{C}$, and $d$ from Lemma~\ref{lem:basic_distortion}.
\end{proposition}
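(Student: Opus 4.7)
The plan is to first establish \eqref{eq:modulus for function under normed split Ruelle operator} via a tile-by-tile estimate, and then deduce \eqref{eq:modulus for function under normalized split Ruelle operator} from it using the conjugation identity $\twistsplopt^{n}(\widetilde{v}) = \eigfun^{-1}\normsplopt^n(\eigfun\widetilde{v})$ from \eqref{eq:iteration of normalized split ruelle operator} together with the Hölder regularity and two-sided bounds on $\eigfun$.

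For \eqref{eq:modulus for function under normed split Ruelle operator}, fix $\widetilde{v} \in \splmoduspace$, $n \in \n_0$, and same-colored points $\widetilde{x} = (x,\colour)$, $\widetilde{y} = (y,\colour) \in \splitsphere$. Using \eqref{eq:iteration of split-partial ruelle operator}, I would expand
\[
    \normsplopt^n(\widetilde{v})(\widetilde{x})
    = \sum_{\colour' \in \colours}\sum_{X^n \in \ccFTile{n}{\colour}{\colour'}}  v_{\colour'}(x_{X^n})\,\myexp[\big]{S_n^F\overline{\phi}(x_{X^n})},
\]
with $x_{X^n} \define (F^n|_{X^n})^{-1}(x)$ and analogously $y_{X^n}$. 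Lemma~\ref{lem:basic_distortion} gives $d(x_{X^n}, y_{X^n}) \leqslant C_0\Lambda^{-n}d(x,y) \leqslant C_0 d(x,y)$. Writing each summand difference via $a_1 b_1 - a_2 b_2 = (a_1 - a_2)b_1 + a_2(b_1 - b_2)$, I split the total into a $v$-difference and an exponential-ratio part. The first is bounded by $\modufun(C_0 d(x,y)) \cdot \normsplopt^n(\indicator{\splitsphere})(\widetilde{x}) \leqslant \Csplratio\,\modufun(C_0 d(x,y))$ via \eqref{eq:second bound for normed split operator}. For the second, Lemma~\ref{lem:distortion_lemma} yields $\abs[\big]{S_n^F\phi(x_{X^n}) - S_n^F\phi(y_{X^n})} \leqslant C_1 d(x,y)^{\holderexp} \leqslant \Cdistortion$ (the $nP$ terms in $\overline{\phi}$ cancel), and the elementary inequality $\abs{1 - e^a} \leqslant \abs{a}e^{\abs{a}}$ together with \eqref{eq:second bound for normed split operator} gives a bound of the form $\moduconst\,\Csplratio\,C_1\,e^{\Cdistortion}\,d(x,y)^{\holderexp}$. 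Setting $\widehat{C} \define \Csplratio(1 + \moduconst\,C_1\,e^{\Cdistortion})$ and $\widehat{\moduconst} \define \Csplratio\,\moduconst$ (the uniform bound coming directly from \eqref{eq:second bound for normed split operator}) produces the required $\widehat{\modufun}(t) = \widehat{C}(t^{\holderexp} + \modufun(C_0 t))$.

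For \eqref{eq:modulus for function under normalized split Ruelle operator}, the key inputs are that $\eigfun \in \splholderspace$ with $\Csplratio^{-1} \leqslant \eigfun \leqslant \Csplratio$ (Theorem~\ref{thm:existence of f invariant Gibbs measure}) and a Hölder seminorm controlled uniformly by $K$ through the Cesàro construction of $\eigfun$ and \eqref{eq:third bound for normed split operator}. Hence $\eigfun$ lies in some $\moduspace[\Csplratio][\modufun_{\eigfun}]$ with $\modufun_{\eigfun}$ depending only on the allowed parameters. Apply Lemma~\ref{lem:product and inverse of modulus of continuity} to place $\eigfun\widetilde{v}$ in a class of the form $\moduspace[\Csplratio\moduconst][\cdot]$, feed this into the already-established \eqref{eq:modulus for function under normed split Ruelle operator}, and then multiply by $\eigfun^{-1}$ (again via Lemma~\ref{lem:product and inverse of modulus of continuity}, using $\eigfun \geqslant \Csplratio^{-1}$). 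Combining with \eqref{eq:iteration of normalized split ruelle operator} yields the required $\widetilde{\modufun}$; the uniform bound $\normcontinuous{\twistsplopt^n(\widetilde{v})}{\splitsphere} \leqslant \moduconst$ is immediate from $\normcontinuous[\big]{\twistsplopt}{\splitsphere} = 1$ (Lemma~\ref{lem:norm of normalized split operator}).

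The only subtlety I expect is bookkeeping: verifying that $C_1$, $\Csplratio$, and the Hölder modulus of $\eigfun$ can all be bounded uniformly over $\phi$ with $\holdernorm{\phi}{S^2} \leqslant K$. This follows from the explicit formulae \eqref{eq:const:C_1} and \eqref{eq:const:Csplratio}, which depend on $\phi$ only through $\holderseminorm{\phi}{S^2} \leqslant K$, so no genuine obstacle is anticipated.
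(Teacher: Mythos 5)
Your proposal is correct and follows essentially the same route as the paper's proof: for \eqref{eq:modulus for function under normed split Ruelle operator} you split the oscillation of $\normsplopt^{n}(\widetilde{v})$ tile-by-tile into a $v$-difference part (bounded via Lemma~\ref{lem:basic_distortion} and \eqref{eq:second bound for normed split operator}) and an exponential-ratio part (bounded via Lemma~\ref{lem:distortion_lemma}), and for \eqref{eq:modulus for function under normalized split Ruelle operator} you conjugate by $\eigfun$ using Lemma~\ref{lem:product and inverse of modulus of continuity} and the $K$-uniform regularity of $\eigfun$ from the Ces\`aro construction with \eqref{eq:third bound for normed split operator}. The only cosmetic difference is the elementary inequality you use for the exponential-difference piece, $\abs{1-e^a}\leqslant\abs{a}e^{\abs{a}}$ versus the paper's $\abs{e^s-e^t}\leqslant(e^s+e^t)(e^{\abs{s-t}}-1)$; both give the same modulus shape, and you correctly flag that $C_1$, $\Csplratio$, and the modulus of $\eigfun$ must be replaced by their $K$-uniform majorants, which is exactly the bookkeeping the paper carries out explicitly with the primed constants.
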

\begin{proof}
    \def\moduspl#1#2{\widetilde{C}^{#1}_{#2}(\splitsphere, d)}
    We write $\moduspl{\moduconst}{\modufun} \define \splmoduspace[\moduconst][\modufun]$ for each $\moduconst > 0$ and each abstract modulus of continuity $\modufun$ in this proof. For each $\widetilde{v} \in \moduspl{\moduconst}{\modufun}$, write $\widetilde{v} = \splfunv$.

    Fix arbitrary $\holderexp \in (0,1]$, $\moduconst \geqslant 0$, and $K \geqslant 0$. 
    By Lemma~\ref{lem:iteration of split-partial ruelle operator} and \eqref{eq:second bound for normed split operator} in Lemma~\ref{lem:distortion lemma for normed split operator}, for all $n \in \n_0$, $\widetilde{v} = \splfunv \in \moduspl{\moduconst}{\modufun}$, and $\potential \in \holderspacesphere$ with $\holdernorm{\potential}{S^{2}} \leq K$, we have\[
        \normcontinuous[\big]{\normsplopt^{n}(\widetilde{v})}{\splitsphere} 
        \leqslant \normcontinuous{\widetilde{v}}{\splitsphere} \normcontinuous[\big]{\normsplopt^{n}(\indicator{\splitsphere})}{\splitsphere} 
        \leqslant \Csplratio \normcontinuous{\widetilde{v}}{\splitsphere},
    \]
    where $\Csplratio \geqslant 1$ is the constant defined in \eqref{eq:const:Csplratio} in Lemma~\ref{lem:distortion lemma for subsystem}~\ref{item:lem:distortion lemma for subsystem:uniform bound} and depends only on $F$, $\mathcal{C}$, $d$, $\phi$, and $\holderexp$.
    By \eqref{eq:const:Csplratio}, quantitatively,
    \[
        \Csplratio = \CsplratioExpression, 
    \]
    where $n_{F} \in \n$ is the constant depending only on $F$ and $\mathcal{C}$ in Definition~\ref{def:primitivity of subsystem} since $F$ is primitive, and $C_1 \geqslant 0$ is the constant defined in \eqref{eq:const:C_1} in Lemma~\ref{lem:distortion_lemma} depends only on $F$, $\mathcal{C}$, $d$, $\phi$, and $\holderexp$. 
    Quantitatively,\[
        C_1 = C_0 \holderseminorm{\potential}{S^2} / ({1 - \Lambda^{-\holderexp}}),
    \]
    where $C_0 > 1$ is the constant depending only on $f$, $\mathcal{C}$, and $d$ in Lemma~\ref{lem:basic_distortion}.
    Let $C'_{1} \define C_0 K /(1 - \Lambda^{-\holderexp})$ and \[
        \Csplratio' \define (\deg{f})^{n_{F}} \myexp[\big]{ 2n_{F} K + C'_{1} (\diam{d}{S^{2}} )^{\holderexp} }.
    \]
    Then we have $C_1 \leqslant C'_1$ and $\Csplratio \leqslant \Csplratio'$ for each $\potential \in \holderspacesphere$ with $\holdernorm{\potential}{S^{2}} \leq K$.
    Note that both $C'_{1}$ and $\Csplratio'$ only depend on $F$, $\mathcal{C}$, $d$, $K$, and $\holderexp$. 
    Thus we can choose $\widehat{\moduconst} \define \Csplratio' \moduconst$. 

    For each $\colour \in \colours$ and each pair of $\juxtapose{x}{y} \in X^0_{\colour}$, we have
    \begin{align*}
        &\abs[\Big]{\pi_{\colour}\parentheses[\big]{ \normsplopt^{n}\splfunv } (x) - \pi_{\colour} \parentheses[\big]{ \normsplopt^{n}\splfunv } (y)}  \\
        &\qquad= \abs[\bigg]{  \paroperator[\normpotential]{n}{\colour}{\black}(v_{\black})(x) + \paroperator[\normpotential]{n}{\colour}{\white}(v_{\white})(x) 
            - \paroperator[\normpotential]{n}{\colour}{\black}(v_{\black})(y) - \paroperator[\normpotential]{n}{\colour}{\white}(v_{\white})(y)  } \\
        &\qquad\leqslant \sum_{\colour' \in \colours} \abs[\bigg]{\paroperator[\normpotential]{n}{\colour}{\colour'}(v_{\colour})(x) - \paroperator[\normpotential]{n}{\colour}{\colour'}(v_{\colour})(y) }  \\
        &\qquad= \sum_{\colour' \in \colours} \abs[\bigg]{ \sum_{X^n \in \ccFTile{n}{\colour}{\colour'}} 
            \bigl( v_{\colour'} \myexp[\big]{  S_n^{F}\normpotential } \bigr) \bigl( (F^{n}|_{X^{n}})^{-1}(x) \bigr) - \bigl( v_{\colour'} \myexp[\big]{ S_n^{F}\normpotential } \bigr) \bigl( (F^{n}|_{X^{n}})^{-1}(y) \bigr) }    \\ 
        &\qquad\leqslant \sum_{\colour' \in \colours} \abs[\bigg]{ \sum_{X^n \in \ccFTile{n}{\colour}{\colour'}} 
            v_{\colour'}\bigl( (F^{n}|_{X^{n}})^{-1}(x) \bigr) \Bigl( e^{  S_n^{F}\normpotential( (F^{n}|_{X^{n}})^{-1}(x) ) } - e^{ S_n^{F}\normpotential( (F^{n}|_{X^{n}})^{-1}(y) ) } \Bigr) }  \\
        &\qquad \qquad + \sum_{\colour' \in \colours} \abs[\bigg]{ \sum_{X^n \in \ccFTile{n}{\colour}{\colour'}} 
            e^{ S_n^{F}\normpotential( (F^{n}|_{X^{n}})^{-1}(y) ) }   \Bigl( v_{\colour'}\bigl( (F^{n}|_{X^{n}})^{-1}(x) \bigr) - v_{\colour'}\bigl( (F^{n}|_{X^{n}})^{-1}(y) \bigr) \Bigr)   } .
    \end{align*}
    The second term above is \[
        \leqslant \Csplratio \modufun(C_0 \Lambda^{-n} d(x, y)) \leqslant \Csplratio \modufun(C_0 d(x, y)) \leqslant \Csplratio' \modufun(C_0 d(x, y)),
    \]
    due to \eqref{eq:second bound for normed split operator} in Lemma~\ref{lem:distortion lemma for normed split operator} and the fact that $d\bigl( (F^{n}|_{X^{n}})^{-1}(x), (F^{n}|_{X^{n}})^{-1}(y) \bigr) \leqslant C_0 \Lambda^{-n} d(x, y)$ by Lemma~\ref{lem:basic_distortion}, where the constant $C_0$ comes from.

    To estimate the first term, we use the following general inequality for $s, t \in \real$,\[
        | \myexp{s} -\myexp{t} | \leqslant ( \myexp{s} + \myexp{t} ) ( \myexp{|s - t|} - 1).
    \]
    Then it follows from Lemma~\ref{lem:distortion_lemma}, Lemma~\ref{lem:iteration of split-partial ruelle operator}, and \eqref{eq:second bound for normed split operator} in Lemma~\ref{lem:distortion lemma for normed split operator} that the first term is 
    \begin{align*}
        &\leqslant \sum_{\colour' \in \colours} \sum_{X^n \in \ccFTile{n}{\colour}{\colour'}} \uniformnorm{v_{\colour'}} 
            \Bigl( e^{S_n^{F}\normpotential( (F^{n}|_{X^{n}})^{-1}(x) )} + e^{S_n^{F}\normpotential( (F^{n}|_{X^{n}})^{-1}(y) )} \Bigr)  \\
        &\qquad\qquad\qquad\qquad\qquad\qquad\!  \cdot \Bigl( e^{| S_n^{F}\normpotential( (F^{n}|_{X^{n}})^{-1}(x) ) - S_n^{F}\normpotential( (F^{n}|_{X^{n}})^{-1}(y) ) |} - 1 \Bigr)  \\
        &\leqslant \sum_{X^n \in \ccFTile{n}{\colour}{}} \moduconst \Bigl( e^{S_n^{F}\normpotential( (F^{n}|_{X^{n}})^{-1}(x) )} + e^{S_n^{F}\normpotential( (F^{n}|_{X^{n}})^{-1}(y) )} \Bigr) \bigl( \myexp[\big]{ C_1 d(x, y)^{\holderexp} } - 1\bigr) \\
        &\leqslant 2 \moduconst \Csplratio \bigl( \myexp[\big]{ C_1 d(x, y)^{\holderexp} } - 1\bigr) 
        \leqslant 2 \moduconst \Csplratio' \bigl( \myexp[\big]{ C'_{1} d(x, y)^{\holderexp} } - 1\bigr) \\
        &\leqslant 2 \moduconst \widetilde{C}_{1} d(x, y)^{\holderexp},
    \end{align*}
    for some constant $\widetilde{C}_{1} > 0$ that only depends on $C'_1$, $\Csplratio'$, and $\diam{d}{S^2}$.
    Here the justification of the third inequality above is similar to that of \eqref{eq:third bound for normed split operator} in Lemma~\ref{lem:distortion lemma for normed split operator}.
    Recall that both $C'_{1}$ and $\Csplratio'$ only depend on $F$, $\mathcal{C}$, $d$, $K$, and $\holderexp$, so does $\widetilde{C}_{1}$.

    Hence for each $\colour \in \colours$ and each pair of $\juxtapose{x}{y} \in X^0_{\colour}$, we have
     \[
        \Big| \pi_{\colour}\bigl( \normsplopt^{n}\splfunv \bigl)(x) - \pi_{\colour}\bigl( \normsplopt^{n}\splfunv \bigl)(y) \Big| \leqslant 
        \Csplratio' \modufun(C_0 d(x, y)) + 2 \moduconst \widetilde{C}_{1} d(x, y)^{\holderexp}.
    \]
    By choosing $\widehat{C} \define \max \bigl\{ \Csplratio', 2 \moduconst \widetilde{C}_{1} \bigr\}$, which only depends on $F$, $\mathcal{C}$, $d$, $K$, and $\holderexp$, we complete the proof of \eqref{eq:modulus for function under normed split Ruelle operator}.

    \smallskip

    We now prove \eqref{eq:modulus for function under normalized split Ruelle operator}.

    We fix an arbitrary $\potential \in \holderspacesphere$ with $\holdernorm{\potential}{S^{2}} \leq K$.
    Recall that $\eigfun = \splfun \in \splfunspace$ is a continuous function on $\splitsphere$ given by Theorem~\ref{thm:existence of f invariant Gibbs measure}. 
    Thus, by \eqref{eq:two sides bounds for eigenfunction} in Theorem~\ref{thm:existence of f invariant Gibbs measure} we have\[
        \normcontinuous{\eigfun}{\splitsphere} \leqslant \moduconst_{1},
    \]
    where $\moduconst_{1} \define \Csplratio' = (\deg{f})^{n_{F}} \myexp[\big]{ 2n_{F} K + C_{1}' (\diam{d}{S^{2}} )^{\holderexp} }$.
    For each $\colour \in \colours$ and each pair of $\juxtapose{x}{y} \in X^0_{\colour}$, it follows from Theorem~\ref{thm:existence of f invariant Gibbs measure} and \eqref{eq:third bound for normed split operator} in Lemma~\ref{lem:distortion lemma for normed split operator} that 
    \begin{align*}
        | u_{\colour}(x) - u_{\colour}(y)| 
        &= \bigg| \lim_{n \to +\infty} \frac{1}{n} \sum_{j=0}^{n-1} \Bigl( \normsplopt^{j}\bigl(\indicator{\splitsphere}\bigr)(x, \colour) - \normsplopt^{j}\bigl(\indicator{\splitsphere}\bigr)(y, \colour) \Bigr)  \bigg| \\
        &\leqslant \limsup_{n \to +\infty} \frac{1}{n} \sum_{j=0}^{n-1} \Big| \normsplopt^{j}\bigl(\indicator{\splitsphere}\bigr)(x, \colour) - \normsplopt^{j}\bigl(\indicator{\splitsphere}\bigr)(y, \colour) \Big| \\
        &\leqslant \Csplratio \bigl( \myexp[\big]{ C_1 d(x, y)^{\holderexp} } - 1\bigr) \\
        &\leqslant \Csplratio' \bigl( \myexp[\big]{ C'_1 d(x, y)^{\holderexp} } - 1\bigr).
    \end{align*}
    Thus, $\eigfun = \splfun \in \moduspl{\moduconst_{1}}{\modufun_{1}}$, where $\modufun_{1}$ is an abstract modulus of continuity defined by
    \[
        \modufun_{1}(t) \define \Csplratio' \bigl( \myexp[\big]{ C'_1 t^{\holderexp} } - 1\bigr), \quad \text{ for } t \in [0, +\infty).
    \]

    Note that it follows from Lemma~\ref{lem:product and inverse of modulus of continuity} that
    \[
        \begin{split}
            \bigl\{ \widetilde{v} \eigfun \describe \widetilde{v} = \splfunv & \in \moduspl{\moduconst}{\modufun}, \,  \potential \in \holderspacesphere, \, \holderseminorm{\potential}{S^2} \leqslant K \bigr\} 
            \subseteq \moduspl{\moduconst\moduconst_{1}}{\moduconst\modufun_{1} + \moduconst_{1}\modufun}.
        \end{split}
    \]
    Then by \eqref{eq:iteration of normalized split ruelle operator} in Lemma~\ref{lem:iteration of normalized split ruelle operator}, \eqref{eq:modulus for function under normed split Ruelle operator}, and Lemma~\ref{lem:product and inverse of modulus of continuity}, we get that there exists a constant $\widetilde{\moduconst} \geqslant 0$ and an abstract modulus of continuity $\widetilde{\modufun}$ such that for each $\potential \in \holderspacesphere$ with $\holderseminorm{\potential}{S^2} \leqslant K$, \[
        \bigl\{ \twistsplopt^{n}(\widetilde{v}) \describe \widetilde{v} = \splfunv \in \moduspl{\moduconst}{\modufun}, \, n \in \n_0 \bigr\} \subseteq \moduspl{\widetilde{\moduconst}}{\widetilde{\modufun}}.
    \] 

    On the other hand, by Lemma~\ref{lem:norm of normalized split operator}, $\normcontinuous[\big]{\twistsplopt^{n}(\widetilde{v})}{\splitsphere} \leqslant \normcontinuous{\widetilde{v}}{\splitsphere} \leqslant \moduconst$ for each $\widetilde{v} = \splfunv \in \moduspl{\moduconst}{\modufun}$, each $n \in \n_0$, and each $\potential \in \holderspacesphere$. 
    Therefore, we have proved \eqref{eq:modulus for function under normalized split Ruelle operator}.
\end{proof}

\begin{lemma}    \label{lem:upper bound for uniform norm of normalized split ruelle operator}
    Let $f$, $\mathcal{C}$, $F$, $d$, $\Lambda$ satisfy the Assumptions in Section~\ref{sec:The Assumptions}. 
    We assume in addition that $f(\mathcal{C}) \subseteq \mathcal{C}$ and $F \in \subsystem$ is strongly primitive.
    Let $\modufun$ be an abstract modulus of continuity. 
    Then for each $\holderexp \in (0,1]$, each $K \in [0 +\infty)$, and each $\delta_{1} \in (0, +\infty)$, there exist constants $\delta_2 \in (0, +\infty)$ and $N \in \n$ with the following property:

    For each $\widetilde{v} = \splfunv \in \splmoduspace[+\infty]$, each $\potential \in \holderspacesphere$, and each choice of $\spleigmea \in \probmea{\splitsphere}$ from Theorem~\ref{thm:subsystem:eigenmeasure existence and basic properties}, if $\holdernorm{\potential}{S^2} \leqslant K$, $\normcontinuous{\widetilde{v}}{\splitsphere} \geqslant \delta_1$, and $\int \! \widetilde{v}\eigfun \,\mathrm{d}\spleigmea = 0$, then\[
        \normcontinuous[\big]{\twistsplopt^{N}(\widetilde{v})}{\splitsphere} \leqslant \normcontinuous{\widetilde{v}}{\splitsphere} - \delta_2.
    \]
\end{lemma}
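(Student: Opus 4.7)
The plan is a direct quantitative argument, avoiding any compactness or contradiction: I will use strong primitivity to find, for every $\widetilde{x} \in \splitsphere$, a preimage of $\widetilde{x}$ under $F^N$ that is close to a ``witness'' point in the limit set where $\widetilde{v}$ is small, and then lower-bound the corresponding weight of $\twistsplopt^N$ via the Gibbs property uniformly in $\phi$. This turns the convex-combination expansion of $\twistsplopt^N \widetilde{v}(\widetilde{x})$ into a pointwise contraction.

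Setting $\splmea \define \eigfun \spleigmea$, the hypothesis $\int \widetilde{v} \eigfun \,\mathrm{d}\spleigmea = 0$ rewrites as $\int \widetilde{v} \,\mathrm{d}\splmea = 0$. Theorem~\ref{thm:existence of f invariant Gibbs measure} gives $\splmea(\limitset) = 1$, and since strong primitivity implies strong irreducibility, Theorem~\ref{thm:subsystem:eigenmeasure existence and basic properties}~\ref{item:thm:subsystem:eigenmeasure existence and basic properties:strongly irreducible edge zero measure} gives $\splmea(\mathcal{C}) = 0$. Hence I select witness points $\widetilde{y}^{+}, \widetilde{y}^{-} \in \limitset \setminus \mathcal{C}$ with $\widetilde{v}(\widetilde{y}^{+}) \geqslant 0$ and $\widetilde{v}(\widetilde{y}^{-}) \leqslant 0$, each lying in the interior of a unique $0$-tile $X^0_{c'_{\pm}}$. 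Next I choose $k \in \n$ large enough that $\modufun(C \Lambda^{-k}) \leqslant \delta_1 / 2$, where $C > 0$ depends only on $f, \mathcal{C}, d$ and bounds the diameter of every $k$-tile by $C \Lambda^{-k}$ (via Lemma~\ref{lem:basic_distortion}), and set $N \define n_{F} + k$ with $n_{F}$ the strong-primitivity constant from Definition~\ref{def:primitivity of subsystem}.

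Fix $\widetilde{x} = (x, c) \in \splitsphere$. By Lemma~\ref{lem:iteration of normalized split ruelle operator},
\[
    \twistsplopt^N \widetilde{v}(\widetilde{x}) = \sum_{X^N \in \cFTile{N}} \kappa_{X^N}(\widetilde{x}) \widetilde{v}(y_{X^N})
\]
is a convex combination (weights sum to $1$ by Lemma~\ref{lem:norm of normalized split operator}), where $y_{X^N} \define (F^N|_{X^N})^{-1}(x)$. Letting $X^k_{\pm} \in \Domain{k}$ be a $k$-tile of $F$ containing $\widetilde{y}^{\pm}$ (which exists inside $X^0_{c'_{\pm}}$ since $\widetilde{y}^{\pm} \in \inte{X^0_{c'_{\pm}}} \cap \limitset \subseteq \bigcup \Domain{k}$), Lemma~\ref{lem:strongly primitive:tile in interior tile for high enough level} applied with $n = n_{F}$, $m = k$, and color $c$ supplies $N$-tiles $X^N_{\pm} \in \cFTile{N}$ with $X^N_{\pm} \subseteq \inte{X^k_{\pm}}$. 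Then $y_{\pm} \define (F^N|_{X^N_{\pm}})^{-1}(x) \in X^k_{\pm}$ lies within distance $C \Lambda^{-k}$ of $\widetilde{y}^{\pm}$ in the same component of $\splitsphere$, so the modulus of continuity yields $\widetilde{v}(y_{-}) \leqslant \delta_1/2$ and $\widetilde{v}(y_{+}) \geqslant -\delta_1/2$. The explicit formula for $\kappa_{X^N}$ in Definition~\ref{def:partial normalized split Ruelle operator}, combined with $\Csplratio^{-1} \leqslant \eigfun \leqslant \Csplratio$ from Theorem~\ref{thm:existence of f invariant Gibbs measure} and the crude bounds $|S_N^F \phi(y_{\pm})| \leqslant NK$ and $|\pressure| \leqslant K + \log \deg(F)$, yields
\[
    \kappa_{X^N_{\pm}}(\widetilde{x}) \geqslant c_0 \define \Csplratio^{-2} \myexp[\big]{-N (2K + \log \deg(F))} > 0.
\]
Singling out the distinguished term in the convex combination and setting $M \define \uniformnorm{\widetilde{v}} \geqslant \delta_1$ gives
\[
    -M + c_0 \delta_1/2 \leqslant \twistsplopt^N \widetilde{v}(\widetilde{x}) \leqslant M - c_0 \delta_1/2,
\]
so $\delta_2 \define c_0 \delta_1/2$ works.

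The main technical point will be verifying that $c_0$ is genuinely uniform over all $\phi$ with $\holdernorm{\phi}{S^2} \leqslant K$: $\Csplratio$ in Lemma~\ref{lem:distortion lemma for subsystem} and $C_1$ in Lemma~\ref{lem:distortion_lemma} are given by explicit expressions in terms of $K$ (with the remaining dependencies fixed by $F, \mathcal{C}, d, \holderexp$), and a similarly uniform crude bound on $|\pressure|$ follows directly from the definition~\eqref{eq:pressure of subsystem}. A secondary subtlety is that the witnesses $\widetilde{y}^{\pm}$ must lie in $\limitset \setminus \mathcal{C}$ so that the containing $k$-tile has an unambiguous ambient $0$-tile color; Theorem~\ref{thm:subsystem:eigenmeasure existence and basic properties}~\ref{item:thm:subsystem:eigenmeasure existence and basic properties:strongly irreducible edge zero measure} supplies precisely this.
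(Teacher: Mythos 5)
Your proof is correct and takes essentially the same approach as the paper's: select witness points of the right sign from the support of $\eigfun\spleigmea$, use strong primitivity to produce a preimage of $x$ at level $N = n_F + k$ lying close to the witness, and lower-bound the corresponding weight in the convex-combination expansion of $\twistsplopt^{N}\widetilde{v}$ uniformly in $\phi$ via the explicit formulas for $\Csplratio$ and $C_1$ together with the crude bound on $\lvert\pressure\rvert$. The only cosmetic differences are that the paper works with the split limit set $\widetilde{\limitset} \subseteq \splitsphere$ directly (so each witness already carries its ambient-color data and the $\mathcal{C}$-avoidance step via Theorem~\ref{thm:subsystem:eigenmeasure existence and basic properties}~\ref{item:thm:subsystem:eigenmeasure existence and basic properties:strongly irreducible edge zero measure} becomes unnecessary, though it is harmless), and the paper constructs the deep preimage tile by a manual pull-back from Definition~\ref{def:primitivity of subsystem} rather than citing Lemma~\ref{lem:strongly primitive:tile in interior tile for high enough level}; these are equivalent.
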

\begin{rmk}
    Note that at this point, we have yet to prove that $\spleigmea$ from Theorem~\ref{thm:subsystem:eigenmeasure existence and basic properties} is unique. We will prove it in Proposition~\ref{prop:uniqueness of eigenmeasure subsystem}. 
    Recall that $\eigfun = \splfun \in C(\splitsphere)$ is defined in Theorem~\ref{thm:existence of f invariant Gibbs measure} that depends only on $F$, $\mathcal{C}$, and $\potential$. 
\end{rmk}
\begin{proof}
    Fix arbitrary constants $\holderexp \in (0, 1]$, $K \in [0, +\infty)$, and $\delta_1 \in (0, +\infty)$. 
    Fix $\epsilon > 0$ sufficiently small such that $\modufun(\epsilon) < \delta_{1}/2$. Then $\epsilon$ depends only on $\modufun$ and $\delta_{1}$.
    Fix an arbitrary choice of $\spleigmea \in \probmea{\splitsphere}$ from Theorem~\ref{thm:subsystem:eigenmeasure existence and basic properties}, an arbitrary $\potential \in \holderspacesphere$, and an arbitrary $\widetilde{v} = \splfunv \in \splmoduspace[+\infty]$ with $\holdernorm{\potential}{S^2} \leqslant K$, $\normcontinuous{\widetilde{v}}{\splitsphere} \geqslant \delta_1$, and $\int \! \widetilde{v}\eigfun \,\mathrm{d}\spleigmea = 0$.

    Let $\spllimitset$ be the subset of $\splitsphere$ defined by 
    \[
        \spllimitset \define \bigcap\limits_{n \in \n} \bigcup \splDomain{n},
    \]
    where $\splDomain{n} \define  \bigcup_{\colour \in \colours} \bigl\{ i_{\colour}(X^n) \describe X^n \in \Domain{n}, \, X^n \subseteq X^0_{\colour} \bigr\}$ and $i_{\colour}$ is defined by \eqref{eq:natural injection into splitsphere}. 

    We first show that $\spleigmea \bigl(\widetilde{\limitset} \bigr) = 1$.
    Indeed, since $\spleigmea$ is an eigenmeasure of $\dualsplopt$, it follows from Proposition~\ref{prop:dual split operator}
    and induction on $n$ that for each $n \in \n$,
    \[
        1 \geqslant \spleigmea\parentheses[\Big]{ \bigcup \splDomain{n} } = \spleigmea \parentheses[\Big]{ \bigcup \splDomain{0} } = \spleigmea \parentheses{ \splitsphere } = 1,
    \]
    where we use the fact that $\bigcup\splDomain{0} = \splitsphere$ since $F$ is irreducible so that $F(\domF) = S^2$.
    Note that by \cite[Proposition~5.5~(i)]{shi2024thermodynamic}, $\bigl\{ \bigcup\splDomain{n} \bigr\}_{n \in \n}$ is a decreasing sequence of sets.
    Thus, 
    \[
        \spleigmea\bigl(\spllimitset\bigr) = \lim_{n \to +\infty} \spleigmea \parentheses[\Big]{ \bigcup\splDomain{n} } = 1.
    \]

    Since $\spleigmea \in \probmea{\splitsphere}$ is supported on $\widetilde{\limitset}$ and $\int \! \widetilde{v}\eigfun \,\mathrm{d}\spleigmea = 0$, there exist points $\widetilde{y}_{-}, \, \widetilde{y}_{+} \in \widetilde{\limitset}$ such that $\widetilde{v}(\widetilde{y}_{-}) \leqslant 0$ and $\widetilde{v}(\widetilde{y}_{+}) \geqslant 0$. 
    By Definition~\ref{def:split sphere}, we have $\widetilde{y}_{-} = (y_{-}, \colour')$ for some $\colour' \in \colours$ and $y_{-} \in X^0_{\colour'}$, and $\widetilde{y}_{+} = (y_{+}, \colour'')$ for some $\colour'' \in \colours$ and $y_{+} \in X^0_{\colour''}$.

    We fix an arbitrary point $\widetilde{x} \in \splitsphere$. 
    Then there exist $\colour \in \colours$ and $x \in X^0_{\colour}$ satisfying $\widetilde{x} = (x, \colour)$. 

    Since $\widetilde{y}_{-} = (y_{-}, \colour') \in \widetilde{\limitset}$, it follows from the definition of $\widetilde{\limitset}$ that there exists a sequence of tiles $\{ X^{n} \}_{n \in \n}$ satisfying $X^n \in \Domain{n}$ and $y_{-} \in X^{n + 1} \subseteq X^n \subseteq X^0_{\colour'}$ for each $n \in \n$. 
    By \cite[Proposition~8.4~(ii)]{bonk2017expanding}, there exists an integer $n_{\epsilon} \in \n$ depending only on $F$, $\mathcal{C}$, $d$, $\modufun$, and $\delta_{1}$ such that $\diam{d}{Y^{n_{\epsilon}}} < \epsilon$ for each $n_{\epsilon}$-tile $Y^{n_{\epsilon}} \in \Tile{n_{\epsilon}}$.
    Thus we have $y_{-} \in X^{n_{\epsilon}} \subseteq B_{d}(y_{-}, \epsilon) \cap X^0_{\colour'}$. 
    By Proposition~\ref{prop:subsystem:preliminary properties}~\ref{item:subsystem:properties:homeo}, we have $X^0 \define F^{n_{\epsilon}}(X^{n_{\epsilon}}) \in \bigl\{ \blacktile, \whitetile \bigr\}$. 
    Since $F \in \subsystem$ is primitive, by Definition~\ref{def:primitivity of subsystem}, there exist $n_{F} \in \n$ and $Y^{n_{F}} \in \Domain{n_{F}}$ satisfying $Y^{n_{F}} \subseteq X^0$ and $F^{n_{F}}(Y^{n_{F}}) = X^0_{\colour}$. 
    Then it follows from \cite[Lemma~5.17~(i)]{bonk2017expanding} and Proposition~\ref{prop:subsystem:preliminary properties}~\ref{item:subsystem:properties:homeo} that $Y^{n_{\epsilon} + n_{F}} \define \parentheses{ F^{n_{\epsilon}}|_{X^{n_{\epsilon}}} }^{-1}(Y^{n_{F}}) \in \Domain{n_{\epsilon} + n_{F}}$. 
    Note that $Y^{n_{\epsilon} + n_{F}} \subseteq X^{n_{\epsilon}} \subseteq X^0_{\colour'}$ and $F^{n_{\epsilon} + n_{F}}(Y^{n_{\epsilon} + n_{F}}) = F^{n_{F}}(Y^{n_{F}}) = X^0_{\colour}$. 
    Thus $Y^{n_{\epsilon} + n_{F}} \in \ccFTile{n_{\epsilon} + n_{F}}{\colour}{\colour'}$.
    Set $y \define \parentheses{  F^{n_{\epsilon} + n_{F}}|_{Y^{n_{\epsilon} + n_{F}}} }^{-1}(x)$. 
    Then we have $y \in Y^{n_{\epsilon} + n_{F}} \subseteq X^{n_{F}} \subseteq B_{d}(y_{-}, \epsilon) \cap X^0_{\colour'}$.
    Thus
    \[
        v_{\colour'}(y) \leqslant v_{\colour'}(y_{-}) + \modufun(\epsilon) = \widetilde{v}(\widetilde{y}_{-}) + \modufun(\epsilon) \leqslant \modufun(\epsilon) < \delta_{1} / 2 \leqslant \normcontinuous{\widetilde{v}}{\splitsphere} - \delta_{1} / 2.
    \]

    Denote $N \define n_{\epsilon} + n_{F}$, which depends only on $F$, $\mathcal{C}$, $d$, $\modufun$, and $\delta_{1}$. 
    Write $ x_{X^{N}} \define \bigl( F^{N}|_{X^N} \bigr)^{-1}(x) $ for $\widetilde{\colour} \in \colours$ and $X^N \in \ccFTile{N}{\colour}{\widetilde{\colour}}$.
    By Definition~\ref{def:normalized split ruelle operator}, \eqref{eq:def:twisted partial Ruelle operator}, and Lemma~\ref{lem:norm of normalized split operator}, we have
    \begin{align*}
        \twistsplopt^{N}(\widetilde{v})(\widetilde{x}) 
        &= \toperator{N}{\colour}{\black}(v_{\black})(x) + \toperator{N}{\colour}{\white}(v_{\white})(x) \\
        &= v_{\colour'}(y) \myexp[\big]{ S_{N}^{F}\normpotential(y) + \log u_{\colour'}(y) - \log u_{\colour}(x) }    \\
        &\quad\ + \sum_{\widetilde{\colour} \in \colours}  \sum_{ X^N \in \ccFTile{N}{\colour}{\widetilde{\colour}} \setminus \{ Y^{N} \} } 
        v_{\widetilde{\colour}}( x_{X^{N}} ) \myexp[\big]{ S_{N}^{F}\normpotential( x_{X^{N}} ) + \log u_{\widetilde{\colour}}( x_{X^{N}} ) - \log u_{\colour}(x) }     \\
        &\leqslant \bigl( \normcontinuous{\widetilde{v}}{\splitsphere} - \delta_{1} / 2 \bigr) \myexp[\big]{ S_{N}^{F}\normpotential(y) + \log u_{\colour'}(y) - \log u_{\colour}(x) } \\
        &\quad\ + \normcontinuous{\widetilde{v}}{\splitsphere} \sum_{\widetilde{\colour} \in \colours}  \sum_{ X^N \in \ccFTile{N}{\colour}{\widetilde{\colour}} \setminus \{ Y^{N} \} }  \myexp[\big]{ S_{N}^{F}\normpotential( x_{X^{N}} ) + \log u_{\widetilde{\colour}}( x_{X^{N}} ) - \log u_{\colour}(x) }   \\
        &\leqslant \normcontinuous{\widetilde{v}}{\splitsphere}  \sum_{\widetilde{\colour} \in \colours} \, \sum_{ X^N \in \ccFTile{N}{\colour}{\widetilde{\colour}} }  \myexp[\big]{ S_{N}^{F}\normpotential( x_{X^{N}} ) + \log u_{\widetilde{\colour}}( x_{X^{N}} ) - \log u_{\colour}(x) } \\
        &\quad\ - 2^{-1} \delta_{1} \myexp[\big]{ S_{N}^{F}\normpotential(y) + \log u_{\colour'}(y) - \log u_{\colour}(x) } \\
        &= \normcontinuous{\widetilde{v}}{\splitsphere} - 2^{-1} \delta_{1} \myexp[\big]{ S_{N}^{F}\normpotential(y) + \log u_{\colour'}(y) - \log u_{\colour}(x) }.
    \end{align*}
    Similarly, there exists $z \define \bigl(F^{N}|_{Z^{N}}\bigr)^{-1}(x)$ for some $Z^N \in \ccFTile{N}{\colour}{\colour''}$ such that $z \in Z^{N} \subseteq B_{d}(y_{+}, \epsilon) \cap X^0_{\colour''}$ and 
    \[
        \twistsplopt^{N}(\widetilde{v})(\widetilde{x}) \geqslant -\normcontinuous{\widetilde{v}}{\splitsphere} + 2^{-1} \delta_{1} \myexp[\big]{ S_{N}^{F}\normpotential(z) + \log u_{\colour''}(z) - \log u_{\colour}(x) }.
    \]
    Recall that $\eigfun = \splfun \in \splfunspace$ is a continuous function on $\splitsphere$ given by Theorem~\ref{thm:existence of f invariant Gibbs measure}. 
    Then by \eqref{eq:two sides bounds for eigenfunction} in Theorem~\ref{thm:existence of f invariant Gibbs measure} we have\[
        \Csplratio^{-1} \leqslant \eigfun(\widetilde{w}) \leqslant \Csplratio, \qquad \text{for each } \widetilde{w} \in \splitsphere,
    \]
    where $\Csplratio \geqslant 1$ is the constant defined in \eqref{eq:const:Csplratio} in Lemma~\ref{lem:distortion lemma for subsystem}~\ref{item:lem:distortion lemma for subsystem:uniform bound} and depends only on $F$, $\mathcal{C}$, $d$, $\phi$, and $\holderexp$.
    Hence we get 
    \begin{equation}    \label{eq:temp:upper bound for norm of normalized split ruelle operator}
        \begin{split}
            \normcontinuous[\big]{\twistsplopt^{N}(\widetilde{v})}{\splitsphere} 
            &\leqslant \normcontinuous{\widetilde{v}}{\splitsphere} 
            - 2^{-1} \delta_{1} \Csplratio^{-2} \inf_{w \in S^2} \myexp[\big]{ S_{N}^{F}\normpotential(w) } \\
            &\leqslant \normcontinuous{\widetilde{v}}{\splitsphere} 
            - 2^{-1} \delta_{1} \Csplratio^{-2} \myexp{ - N \uniformnorm{\normpotential} }.
        \end{split}
    \end{equation}

    Now we bound $\uniformnorm{\normpotential} = \uniformnorm{\potential - \pressure}$. 
    By the definition of \holder norm in Section~\ref{sec:Notation} and the hypothesis, $\uniformnorm{\potential} \leqslant \holdernorm{\potential}{S^2} \leqslant K$. Recall the definition of topological pressure as given in \eqref{eq:def:topological pressure} and the Variational Principle~\eqref{eq:Variational Principle for pressure} in Subsection~\ref{sub:thermodynamic formalism}. 
    Then by \eqref{eq:subsystem Variational Principle} in Theorem~\ref{thm:subsystem characterization of pressure and existence of equilibrium state}, \eqref{eq:def:measure-theoretic pressure}, and the fact that $\uniformnorm{\potential} \leqslant K$, we get\[
        - K \leqslant \pressure \leqslant P(f, \potential) \leqslant h_{\operatorname{top}}(f) + K.
    \]    
    Then $|\pressure| \leqslant K + h_{\operatorname{top}}(f) = K + \log(\deg{f})$ (see \cite[Corollary~17.2]{bonk2017expanding}). 
    Hence \[
        \uniformnorm{\normpotential} \leqslant \uniformnorm{\potential} + |\pressure| \leqslant 2K +  \log(\deg{f}).
    \]

    By \eqref{eq:const:Csplratio} in Lemma~\ref{lem:distortion lemma for subsystem}~\ref{item:lem:distortion lemma for subsystem:uniform bound} and \eqref{eq:const:C_1} in Lemma~\ref{lem:distortion_lemma}, quantitatively, we have
    \[
        \Csplratio = (\deg{f})^{n_{F}} \myexp[\bigg]{ 2n_{F} \uniformnorm{\phi} + C_0 \frac{ \holderseminorm{\potential}{S^2} }{1 - \Lambda^{-\holderexp}} (\diam{d}{S^{2}} )^{\holderexp} },
    \]
    where $C_0 > 1$ is the constant depending only on $f$, $\mathcal{C}$, and $d$ from Lemma~\ref{lem:basic_distortion}. 
    Set
    \[
        \Csplratio' \define (\deg{f})^{n_{F}} \myexp[\bigg]{ 2n_{F} K + \frac{ C_0 K }{1 - \Lambda^{-\holderexp}} (\diam{d}{S^{2}} )^{\holderexp} }.
    \]
    Then we have $\Csplratio \leqslant \Csplratio'$ for each $\potential \in \holderspacesphere$ with $\holdernorm{\potential}{S^{2}} \leq K$, and the constant $\Csplratio'$ only depends on $F$, $\mathcal{C}$, $d$, $K$, and $\holderexp$. 

    Therefore, by \eqref{eq:temp:upper bound for norm of normalized split ruelle operator}, $\normcontinuous[\big]{\twistsplopt^{n}(\widetilde{v})}{\splitsphere} \leqslant \normcontinuous{\widetilde{v}}{\splitsphere} - \delta_2$, where\[
        \delta_{2} \define 2^{-1} \delta_{1} \bigl( \Csplratio' \bigl)^{-2} \myexp{ - 2NK - N \log(\deg{f}) },
    \]
    which depends only on $F$, $\mathcal{C}$, $d$, $\modufun$, $\holderexp$, $K$, and $\delta_{1}$. 
\end{proof}
\begin{rmk}
    In Lemma~\ref{lem:upper bound for uniform norm of normalized split ruelle operator}, one cannot reduce the assumption ``$F \in \subsystem$ is strongly primitive'' to ``$F \in \subsystem$ is strongly irreducible''.
    To see this, let $F$ be as in Example~\ref{exam:subsystems}~\ref{item:exam:subsystems:strongly irreducible but not primitive}, which is strongly irreducible but not strongly primitive.
    In this case, $\limitset = \{p, \, q\}$ for some points $p \in \inte[\big]{X^0_{\black}}$ and $q \in \inte[\big]{X^0_{\white}}$ that satisfy $F(p) = q$ and $F(q) = p$.
    Set $\potential \equiv 0$ on $S^{2}$, $v_{\black} \equiv 1$ on $X^{0}_{\black}$, and $v_{\white} \equiv -1$ on $X^{0}_{\white}$.
    Then $\eigfun = \indicator{\splitsphere}$ and $\spleigmea = ( \delta_{p} / 2, \delta_{q} / 2 )  \in \probmea{\splitsphere}$ is an eigenmeasure of $\dualsplopt$ such that $\int \! \widetilde{v}\eigfun \,\mathrm{d}\spleigmea = 0$.
    However, $\twistsplopt(\widetilde{v}) = - \widetilde{v}$, which implies that $\normcontinuous[\big]{\twistsplopt^{n}(\widetilde{v})}{\splitsphere} = \normcontinuous{\widetilde{v}}{\splitsphere}$ for each $n \in \n$, contradicting the conclusion in Lemma~\ref{lem:upper bound for uniform norm of normalized split ruelle operator}.
\end{rmk}

We now establish a generalization of \cite[Theorem~5.17]{li2023prime:split}.

\begin{theorem}    \label{thm:converge of uniform norm of normalized split ruelle operator}
    Let $f$, $\mathcal{C}$, $F$, $d$, $\Lambda$ satisfy the Assumptions in Section~\ref{sec:The Assumptions}. 
    We assume in addition that $f(\mathcal{C}) \subseteq \mathcal{C}$ and $F \in \subsystem$ is strongly primitive.
    Let $\moduconst \in (0, +\infty)$ be a constant and $\modufun \colon [0, +\infty) \mapping [0, +\infty)$ an abstract modulus of continuity. 
    Let $H$ be a bounded subset of $\holderspacesphere$ for some $\holderexp \in (0, 1]$.
    Then for each $\widetilde{v} \in \splmoduspace$, each $\potential \in H$, and each choice of $\spleigmea \in \probmea{\splitsphere}$ from Theorem~\ref{thm:subsystem:eigenmeasure existence and basic properties}, we have
    \begin{equation}    \label{eq:converge of uniform norm of normed split ruelle operator}
         \lim_{n \to +\infty} \normcontinuous[\bigg]{\normsplopt^{n}(\widetilde{v}) - \eigfun \int \! \widetilde{v} \,\mathrm{d}\spleigmea}{\splitsphere} = 0.
    \end{equation}
    If, in addition, $\int \! \widetilde{v} \eigfun \,\mathrm{d}\spleigmea = 0$, then
    \begin{equation}    \label{eq:converge of uniform norm of normalized split ruelle operator}
        \lim_{n \to +\infty} \normcontinuous[\big]{\twistsplopt^{n}(\widetilde{v})}{\splitsphere} = 0.
    \end{equation}
    Moreover, the convergence in both \eqref{eq:converge of uniform norm of normed split ruelle operator} and \eqref{eq:converge of uniform norm of normalized split ruelle operator} is uniform in $\widetilde{v} \in \splmoduspace$, $\potential \in H$, and the choice of $\spleigmea$.
\end{theorem}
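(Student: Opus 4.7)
The plan is to establish the second convergence (\ref{eq:converge of uniform norm of normalized split ruelle operator}) first, and then deduce (\ref{eq:converge of uniform norm of normed split ruelle operator}) from it via the conjugation identity $\normsplopt^{n}(\widetilde{u}) = \eigfun \cdot \twistsplopt^{n}(\widetilde{u}/\eigfun)$ derived from Lemma~\ref{lem:iteration of normalized split ruelle operator}. Set $K := \sup\bigl\{\holdernorm{\potential}{S^2} \describe \potential \in H\bigr\} < +\infty$. Fix any $\potential \in H$, any eigenmeasure $\spleigmea$ from Theorem~\ref{thm:subsystem:eigenmeasure existence and basic properties}, and any $\widetilde{v} \in \splmoduspace$ with $\int \widetilde{v}\eigfun \,\mathrm{d}\spleigmea = 0$; write $\widetilde{v}_{n} := \twistsplopt^{n}(\widetilde{v})$. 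Three structural facts drive the argument. First, by Proposition~\ref{prop:modulus for function under split ruelle operator} there exist constants $\widetilde{\moduconst} \geqslant 0$ and an abstract modulus $\widetilde{\modufun}$, depending only on $F, \mathcal{C}, d, \holderexp, K, \moduconst, \modufun$, such that $\widetilde{v}_{n} \in \splmoduspace[\widetilde{\moduconst}][\widetilde{\modufun}]$ for every $n \in \n_{0}$. Second, (\ref{eq:equilibrium state is eigenmeasure of dual normalized split operator}) gives $\dualtwistsplopt(\eigfun\spleigmea) = \eigfun\spleigmea$, so the mean-zero condition propagates: $\int \widetilde{v}_{n}\eigfun \,\mathrm{d}\spleigmea = 0$ for every $n$. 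Third, by Lemma~\ref{lem:norm of normalized split operator} the sequence $\bigl\{\normcontinuous{\widetilde{v}_{n}}{\splitsphere}\bigr\}_{n \in \n_{0}}$ is non-increasing and bounded above by $\moduconst$.

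Now fix any $L > 0$ and apply Lemma~\ref{lem:upper bound for uniform norm of normalized split ruelle operator} with the abstract modulus $\widetilde{\modufun}$, constant $K$, and $\delta_{1} = L$ to obtain constants $\delta_{2} > 0$ and $N \in \n$, \emph{independent of the admissible choices of} $\potential, \spleigmea, \widetilde{v}$, such that any $\widetilde{u} \in \splmoduspace[+\infty][\widetilde{\modufun}]$ with $\normcontinuous{\widetilde{u}}{\splitsphere} \geqslant L$ and $\int \widetilde{u}\eigfun \,\mathrm{d}\spleigmea = 0$ satisfies $\normcontinuous{\twistsplopt^{N}(\widetilde{u})}{\splitsphere} \leqslant \normcontinuous{\widetilde{u}}{\splitsphere} - \delta_{2}$. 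Applying this to $\widetilde{v}_{kN}$ successively (which is legitimate thanks to the three facts above) yields $\normcontinuous{\widetilde{v}_{(k+1)N}}{\splitsphere} \leqslant \normcontinuous{\widetilde{v}_{kN}}{\splitsphere} - \delta_{2}$ whenever $\normcontinuous{\widetilde{v}_{kN}}{\splitsphere} \geqslant L$; iterating and using $\normcontinuous{\widetilde{v}_{0}}{\splitsphere} \leqslant \moduconst$ forces $\normcontinuous{\widetilde{v}_{k^{\ast}N}}{\splitsphere} < L$ for some integer $k^{\ast} \leqslant \moduconst/\delta_{2}$. By monotonicity, $\normcontinuous{\widetilde{v}_{n}}{\splitsphere} < L$ for every $n \geqslant M(L) := N\lceil \moduconst/\delta_{2}\rceil$. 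Since $L > 0$ is arbitrary and $M(L)$ depends only on the admissible parameters, (\ref{eq:converge of uniform norm of normalized split ruelle operator}) holds uniformly in $\widetilde{v} \in \splmoduspace$, $\potential \in H$, and the choice of $\spleigmea$.

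For (\ref{eq:converge of uniform norm of normed split ruelle operator}), let $c := \int \widetilde{v} \,\mathrm{d}\spleigmea$ and set $\widetilde{w} := \widetilde{v}/\eigfun - c$. The uniform bounds $\Csplratio^{-1} \leqslant \eigfun \leqslant \Csplratio$ from Theorem~\ref{thm:existence of f invariant Gibbs measure}, together with Lemma~\ref{lem:product and inverse of modulus of continuity}, guarantee that $\widetilde{w}$ lies in $\splmoduspace[\moduconst'][\modufun']$ for some $\moduconst'$ and $\modufun'$ depending only on $\moduconst, \modufun, F, \mathcal{C}, d, \holderexp, K$. From $\int \eigfun \,\mathrm{d}\spleigmea = 1$ as in (\ref{eq:integration of eigenfunction with respect to eigenmeasure is one}), one has $\int \widetilde{w}\eigfun \,\mathrm{d}\spleigmea = c - c = 0$, so the part already proved applies to $\widetilde{w}$. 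Since $\twistsplopt^{n}(\indicator{\splitsphere}) = \indicator{\splitsphere}$ by Lemma~\ref{lem:norm of normalized split operator}, the conjugation identity gives
\[
    \normsplopt^{n}(\widetilde{v}) - c\eigfun \;=\; \eigfun \cdot \twistsplopt^{n}(\widetilde{w}),
\]
and the uniform bound on $\eigfun$ transfers the uniform convergence of the right side to $0$, yielding (\ref{eq:converge of uniform norm of normed split ruelle operator}).

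The principal obstacle is the simultaneous uniformity of the convergence over $\widetilde{v}, \potential, \spleigmea$; this is resolved entirely by tracking that the constants supplied by Proposition~\ref{prop:modulus for function under split ruelle operator}, Lemma~\ref{lem:upper bound for uniform norm of normalized split ruelle operator}, and Theorem~\ref{thm:existence of f invariant Gibbs measure} depend on $H$ only through the uniform bound $K$ on $\holdernorm{\cdot}{S^2}$, and not on any finer feature of $\potential$ or on the particular eigenmeasure chosen.
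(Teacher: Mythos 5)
Your proof is correct and follows essentially the same route as the paper: prove \eqref{eq:converge of uniform norm of normalized split ruelle operator} first using Proposition~\ref{prop:modulus for function under split ruelle operator}, Lemma~\ref{lem:upper bound for uniform norm of normalized split ruelle operator}, the propagation of the mean-zero condition via $\dualtwistsplopt\splmea = \splmea$, and the monotonicity from Lemma~\ref{lem:norm of normalized split operator}, then deduce \eqref{eq:converge of uniform norm of normed split ruelle operator} via the conjugation identity $\normsplopt^{n}(\widetilde{v}) - \eigfun\int\widetilde{v}\,\mathrm{d}\spleigmea = \eigfun\twistsplopt^{n}(\widetilde{w})$ together with the two-sided bounds on $\eigfun$. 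The only stylistic difference is in the first step: the paper argues by contradiction on the supremum sequence $a_{n}$ taken over all admissible $(\widetilde{v},\potential,\spleigmea)$, whereas you iterate Lemma~\ref{lem:upper bound for uniform norm of normalized split ruelle operator} directly and extract an explicit threshold $M(L)$; both rest on identical ingredients, but your direct iteration is slightly more quantitative.
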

\begin{proof}
    \def\moduspl{\widetilde{C}^{\moduconst}_{\modufun}(\splitsphere, d)}
    We write $\moduspl \define \splmoduspace$ in this proof.

    Fix a constant $K \in [0, +\infty)$ such that $\holdernorm{\potential}{S^2} \leqslant K$ for each $\potential \in H$. 
    Let $\mathcal{M}_{F, \potential}$ be the set of possible choices of $\spleigmea \in \probmea{\splitsphere}$ from Theorem~\ref{thm:subsystem:eigenmeasure existence and basic properties}, i.e., 
    \begin{equation}    \label{eq:temp:set of eigenmeasure}
        \mathcal{M}_{F, \potential} \define \bigl\{ \splmeav \in \probmea{\splitsphere} \describe \dualsplopt\splmeav = \eigenvalue \splmeav \text{ for some } c \in \real \bigr\}.
    \end{equation}

    We recall that $\splmea \in \probmea{\splitsphere}$ defined in Theorem~\ref{thm:existence of f invariant Gibbs measure} by $\splmea = \eigfun \spleigmea$ depends on the choice of $\spleigmea$.

    Define for each $n \in \n_0$,\[
        a_n \define \sup \Bigl\{ \normcontinuous[\big]{\twistsplopt^{n}(\widetilde{v})}{\splitsphere} \describe \potential \in H, \, \widetilde{v} \in \moduspl, \, \int \! \widetilde{v} \eigfun \,\mathrm{d}\spleigmea = 0, \, \spleigmea \in \mathcal{M}_{F, \potential} \Bigr\}.
    \]
    By Lemma~\ref{lem:norm of normalized split operator}, $\normcontinuous[\big]{\twistsplopt}{\splitsphere} = 1$, so $\normcontinuous[\big]{\twistsplopt^{n}(\widetilde{v})}{\splitsphere}$ is non-increasing in $n$ for fixed $\potential \in H$ and $\widetilde{v} \in \moduspl$. Note that $a_0 \leqslant \moduconst < +\infty$. Thus $\{a_{n}\}_{n \in \n_0}$ is a non-increasing sequence of non-negative real numbers.

    Suppose now that $\lim_{n \to +\infty} a_{n} = a > 0$. By Proposition~\ref{prop:modulus for function under split ruelle operator}, there exists an abstract modulus of continuity $\widetilde{\modufun}$ such that \[
        \set[\big]{ \twistsplopt^{n}(\widetilde{v}) \describe n \in \n_0, \, \potential \in H, \, \widetilde{v} \in \moduspl } \subseteq \splmoduspace[\moduconst][\widetilde{\modufun}].
    \]
    Note that for each $\potential \in H$, each $n \in \n_0$, and each $\widetilde{v} \in \moduspl$ with $\int \! \widetilde{v}\eigfun \,\mathrm{d}\spleigmea = 0$, it follows from \eqref{eq:equilibrium state is eigenmeasure of dual normalized split operator} that \[
        \int \! \twistsplopt^{n}(\widetilde{v}) \eigfun \,\mathrm{d}\spleigmea = \int \! \twistsplopt^{n}(\widetilde{v}) \,\mathrm{d}\splmea = \int \! \widetilde{v} \,\mathrm{d}\splmea = 0.
    \] 
    Then by applying Lemma~\ref{lem:upper bound for uniform norm of normalized split ruelle operator} with $\widetilde{\modufun}$, $\holderexp$, $K$, and $\delta_{1} = a/2$, we find constants $n_0 \in \n$ and $\delta_{2} > 0$ such that \[
        \normcontinuous[\big]{\twistsplopt^{n_{0}}\bigl( \twistsplopt^{n}(\widetilde{v}) \bigl)  }{\splitsphere} 
        \leqslant \normcontinuous[\big]{\twistsplopt^{n}(\widetilde{v})}{\splitsphere} - \delta_{2},
    \]  
    for each $n \in \n_0$, each $\potential \in H$, each $\spleigmea \in \mathcal{M}_{F, \potential}$, and each $\widetilde{v} \in \moduspl$ with $\int \! \widetilde{v} \eigfun \,\mathrm{d}\spleigmea = 0$ and $\normcontinuous[\big]{\twistsplopt^{n}(\widetilde{v})}{\splitsphere} \geqslant a/2$. Since $\lim_{n \to +\infty} a_{n} = a$, we can fix integer $m > 1$ sufficiently large such that $a_{m} \leqslant a + \delta_{2}/2$. 
    Then for each $\potential \in H$, each $\spleigmea \in \mathcal{M}_{F, \potential}$, and each $\widetilde{v} \in \moduspl$ with $\int \! \widetilde{v} \eigfun \,\mathrm{d}\spleigmea = 0$ and $\normcontinuous[\big]{\twistsplopt^{m}(\widetilde{v})}{\splitsphere} \geqslant a/2$. we have\[
        \normcontinuous[\big]{\twistsplopt^{n_{0} + m}(\widetilde{v})}{\splitsphere} 
        \leqslant \normcontinuous[\big]{\twistsplopt^{m}(\widetilde{v})}{\splitsphere} - \delta_{2} 
        \leqslant a_{m} - \delta_{2} \leqslant a - \delta_{2}/2.
    \]
    On the other hand, since $\normcontinuous[\big]{\twistsplopt^{n}(\widetilde{v})}{\splitsphere}$ is non-increasing in $n$, we have that for each $\potential \in H$, each $\spleigmea \in \mathcal{M}_{F, \potential}$, and each $\widetilde{v} \in \moduspl$ with $\int \! \widetilde{v} \eigfun \,\mathrm{d}\spleigmea = 0$ and $\normcontinuous[\big]{\twistsplopt^{m}(\widetilde{v})}{\spllimitset} < a/2$, the following holds: \[
        \normcontinuous[\big]{\twistsplopt^{n_{0} + m}(\widetilde{v})}{\splitsphere} 
        \leqslant \normcontinuous[\big]{\twistsplopt^{m}(\widetilde{v})}{\splitsphere} < a/2.
    \]
    Thus $a_{n_{0} + m} \leqslant \max \bigl\{ a - \delta_{2}/2, \, a/2 \bigr\} < a$, contradicting the fact that $\{a_{n}\}_{n \in \n_0}$ is a non-increasing sequence and the assumption that $\lim_{n \to +\infty} a_{n} = a$. This proves the uniform convergence in \eqref{eq:converge of uniform norm of normalized split ruelle operator}.

    Next, we prove the uniform convergence in \eqref{eq:converge of uniform norm of normed split ruelle operator}. By Lemma~\ref{lem:norm of normalized split operator} and \eqref{eq:iteration of normalized split ruelle operator} in Lemma~\ref{lem:iteration of normalized split ruelle operator}, for each $\widetilde{v} \in \moduspl$, each $\potential \in H$, and each $\spleigmea \in \mathcal{M}_{F, \potential}$, we have
    \begin{equation}    \label{eq:temp:upper bound for norm associated with normed split ruelle operator}
        \begin{split}
            &\normcontinuous[\bigg]{\normsplopt^{n}(\widetilde{v}) - \eigfun \int \! \widetilde{v} \,\mathrm{d}\spleigmea}{\splitsphere} \\
            &\qquad\leqslant \normcontinuous{\eigfun}{\splitsphere} \normcontinuous[\bigg]{ \frac{1}{\eigfun} \normsplopt^{n}(\widetilde{v}) - \int \! \widetilde{v} \,\mathrm{d}\spleigmea}{\splitsphere} \\
            &\qquad= \normcontinuous{\eigfun}{\splitsphere} \normcontinuous[\bigg]{ \twistsplopt^{n}\biggl( \frac{\widetilde{v}}{\eigfun} \biggr) - \int \! \frac{\widetilde{v}}{\eigfun} \,\mathrm{d}\splmea}{\splitsphere} \\
            &\qquad= \normcontinuous{\eigfun}{\splitsphere} \normcontinuous[\bigg]{ \twistsplopt^{n}\biggl( \frac{\widetilde{v}}{\eigfun} - \indicator{\splitsphere} \int \! \frac{\widetilde{v}}{\eigfun} \,\mathrm{d}\splmea \biggr) }{\splitsphere}.
        \end{split}
    \end{equation}
    By \eqref{eq:eigenfunction of normed split operator} in Theorem~\ref{thm:existence of f invariant Gibbs measure}, we have
    \begin{equation}    \label{eq:temp:bounds for norm of eigenfunction}
        \Csplratio^{-1} \leqslant \normcontinuous{\eigfun}{\splitsphere} \leqslant \Csplratio,
    \end{equation}
    where $\Csplratio \geqslant 1$ is the constant defined in \eqref{eq:const:Csplratio} in Lemma~\ref{lem:distortion lemma for subsystem}~\ref{item:lem:distortion lemma for subsystem:uniform bound} and depends only on $F$, $\mathcal{C}$, $d$, $\phi$, and $\holderexp$.
    By \eqref{eq:const:Csplratio} and \eqref{eq:const:C_1}, quantitatively,
    \[
        \Csplratio = (\deg{f})^{n_{F}} \myexp[\bigg]{ 2n_{F} \uniformnorm{\phi} + C_0 \frac{ \holderseminorm{\potential}{S^2} }{1 - \Lambda^{-\holderexp}} (\diam{d}{S^{2}} )^{\holderexp} },
    \]
    where $C_0 > 1$ is the constant depending only on $f$, $\mathcal{C}$, and $d$ from Lemma~\ref{lem:basic_distortion} and $n_{F} \in \n$ is the constant depending only on $F$ and $\mathcal{C}$ from Definition~\ref{def:primitivity of subsystem} since $F$ is primitive. 
    Set
    \[
        \Csplratio' \define (\deg{f})^{n_{F}} \myexp[\bigg]{ 2n_{F} K + \frac{ C_0 K }{1 - \Lambda^{-\holderexp}} (\diam{d}{S^{2}} )^{\holderexp} }.
    \]
    Then we have $\Csplratio \leqslant \Csplratio'$ for each $\potential \in \holderspacesphere$ with $\holdernorm{\potential}{S^{2}} \leq K$, and the constant $\Csplratio'$ only depends on $F$, $\mathcal{C}$, $d$, $K$, and $\holderexp$. 
    Denote 
    \[
        \widetilde{w} \define \frac{\widetilde{v}}{\eigfun} - \indicator{\splitsphere} \int \! \frac{\widetilde{v}}{\eigfun} \,\mathrm{d}\splmea \in C(\splitsphere).
    \] 
    Then $\normcontinuous{\widetilde{w}}{\splitsphere} \leqslant 2 \normcontinuous{ \widetilde{v} / \eigfun} {\splitsphere} \leqslant 2 \moduconst \Csplratio'$.
    Due to the first inequality in \eqref{eq:two sides bounds for eigenfunction} and the fact that $\eigfun \in \splholderspace$ by Theorem~\ref{thm:existence of f invariant Gibbs measure}, we can apply Lemma~\ref{lem:product and inverse of modulus of continuity} and conclude that there exists an abstract modulus of continuity $\widehat{\modufun}$ associated with $\widetilde{v} / \eigfun$ such that $\widehat{\modufun}$ is independent of the choices of $\widetilde{v} \in \moduspl$, $\potential \in H$, and $\spleigmea \in \mathcal{M}_{F, \potential}$. 
    Thus $\widetilde{w} \in \moduspace[\widehat{\moduconst}][\widehat{\modufun}]$, where $\widehat{\moduconst} \define 2 \moduconst \Csplratio'$. Note that $\int \! \widetilde{w} \eigfun \,\mathrm{d}\spleigmea = \int \! \widetilde{w} \,\mathrm{d}\splmea = 0$. Finally, we can apply the uniform convergence in \eqref{eq:converge of uniform norm of normalized split ruelle operator} with $\widetilde{v} = \widetilde{w}$  to conclude the uniform convergence in \eqref{eq:converge of uniform norm of normed split ruelle operator} by \eqref{eq:temp:upper bound for norm associated with normed split ruelle operator} and \eqref{eq:temp:bounds for norm of eigenfunction}.   
\end{proof}

The following proposition is an immediate consequence of Theorem~\ref{thm:converge of uniform norm of normalized split ruelle operator}.
\begin{proposition}    \label{prop:convergence to equilibrium state under dual split Ruelle operator}
    Let $f$, $\mathcal{C}$, $F$, $d$, $\potential$ satisfy the Assumptions in Section~\ref{sec:The Assumptions}. 
    We assume in addition that $f(\mathcal{C}) \subseteq \mathcal{C}$ and $F \in \subsystem$ is strongly primitive.
    Let $\splmea \in \probmea{\splitsphere}$ be a Borel probability measure defined in Theorem~\ref{thm:existence of f invariant Gibbs measure}. 
    Then for each Borel probability measure $\splmeav \in \probmea{\splitsphere}$, we have
    \[
        \bigl( \dualtwistsplopt \bigl)^{n}\splmeav \weakconverge \splmea \quad \text{ as } n \to +\infty.
    \]
\end{proposition}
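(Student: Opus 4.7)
The plan is to reduce weak$^{*}$ convergence of the measures $\bigl(\dualtwistsplopt\bigr)^{n}\splmeav$ to uniform convergence of the functions $\twistsplopt^{n}(\widetilde{v})$ via the duality pairing, and then invoke Theorem~\ref{thm:converge of uniform norm of normalized split ruelle operator}. To that end, fix an arbitrary $\widetilde{v} \in C(\splitsphere)$. By the definition of the adjoint operator, one has
\[
    \functional[\big]{\bigl(\dualtwistsplopt\bigr)^{n}\splmeav}{\widetilde{v}} = \functional[\big]{\splmeav}{\twistsplopt^{n}(\widetilde{v})},
\]
so it suffices to show that $\twistsplopt^{n}(\widetilde{v})$ converges uniformly on $\splitsphere$ to the constant function $\indicator{\splitsphere} \int\!\widetilde{v}\,\mathrm{d}\splmea$.

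To arrange for the hypothesis of Theorem~\ref{thm:converge of uniform norm of normalized split ruelle operator}, set $c \define \int\!\widetilde{v}\,\mathrm{d}\splmea = \int\!\widetilde{v}\eigfun\,\mathrm{d}\spleigmea$ (recalling $\splmea = \eigfun\spleigmea$) and define
\[
    \widetilde{w} \define \widetilde{v} - c\,\indicator{\splitsphere} \in C(\splitsphere).
\]
Since $\int\!\eigfun\,\mathrm{d}\spleigmea = 1$ by \eqref{eq:integration of eigenfunction with respect to eigenmeasure is one}, a direct computation gives $\int\!\widetilde{w}\eigfun\,\mathrm{d}\spleigmea = 0$. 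Because $\splitsphere$ is compact and $\widetilde{w}$ is continuous, $\widetilde{w}$ belongs to $\splmoduspace$ for some $\moduconst \geqslant 0$ and some abstract modulus of continuity $\modufun$ (compare \eqref{eq:abstract modulus of continuity for continuous function}).

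Applying Theorem~\ref{thm:converge of uniform norm of normalized split ruelle operator} (with the singleton bounded set $H = \{\potential\}$) to $\widetilde{w}$ yields $\normcontinuous[\big]{\twistsplopt^{n}(\widetilde{w})}{\splitsphere} \to 0$ as $n \to +\infty$. Combining this with Lemma~\ref{lem:norm of normalized split operator}, which states $\twistsplopt(\indicator{\splitsphere}) = \indicator{\splitsphere}$ and hence $\twistsplopt^{n}(\indicator{\splitsphere}) = \indicator{\splitsphere}$ by iteration, and using the linearity of $\twistsplopt$, we obtain
\[
    \twistsplopt^{n}(\widetilde{v}) = \twistsplopt^{n}(\widetilde{w}) + c\,\indicator{\splitsphere} \;\longrightarrow\; c\,\indicator{\splitsphere} \qquad \text{uniformly on } \splitsphere.
\]
Since $\splmeav$ is a Borel probability measure, uniform convergence of the integrands transfers to convergence of the integrals, giving
\[
    \functional[\big]{\bigl(\dualtwistsplopt\bigr)^{n}\splmeav}{\widetilde{v}} = \int\!\twistsplopt^{n}(\widetilde{v})\,\mathrm{d}\splmeav \;\longrightarrow\; c \cdot \splmeav(\splitsphere) = \int\!\widetilde{v}\,\mathrm{d}\splmea.
\]
As $\widetilde{v} \in C(\splitsphere)$ was arbitrary, this is precisely the weak$^{*}$ convergence $\bigl(\dualtwistsplopt\bigr)^{n}\splmeav \weakconverge \splmea$.

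There is no substantial obstacle: all of the analytic work has been front-loaded into Theorem~\ref{thm:converge of uniform norm of normalized split ruelle operator} and Lemma~\ref{lem:norm of normalized split operator}. The only nontrivial verification is that the constant $c = \int\!\widetilde{v}\,\mathrm{d}\splmea$ arising from the eigenfunction normalization $\int\!\eigfun\,\mathrm{d}\spleigmea = 1$ is indeed the right limit, so that the ``centered'' function $\widetilde{w}$ satisfies the zero-average condition needed to apply \eqref{eq:converge of uniform norm of normalized split ruelle operator}.
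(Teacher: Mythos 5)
Your proof is correct and follows essentially the same route as the paper: both decompose $\widetilde{v}$ into the centered part $\widetilde{v} - \functional{\splmea}{\widetilde{v}}\indicator{\splitsphere}$ (whose image under $\twistsplopt^{n}$ is killed by Theorem~\ref{thm:converge of uniform norm of normalized split ruelle operator}) and the constant part (fixed by $\twistsplopt$ via Lemma~\ref{lem:norm of normalized split operator}), then pass to the limit in the duality pairing. You merely spell out a few intermediate verifications, such as $\int\!\widetilde{w}\eigfun\,\mathrm{d}\spleigmea = 0$, that the paper leaves implicit.
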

\begin{proof}
    Recall that for each $\widetilde{v} \in C(\splitsphere)$, there exists some abstract modulus of continuity $\modufun$ such that $\widetilde{v} \in \moduspace[][][S^2]$, where $\moduconst \define \normcontinuous{\widetilde{v}}{\splitsphere}$. 
    Recall from Theorem~\ref{thm:existence of f invariant Gibbs measure} that $\splmea = \eigfun \spleigmea$. 
    Then by Lemma~\ref{lem:norm of normalized split operator} and \eqref{eq:converge of uniform norm of normalized split ruelle operator} in Theorem~\ref{thm:converge of uniform norm of normalized split ruelle operator},
    \[
        \begin{split}
            &\lim_{n \to +\infty} \functional[\big]{ \parentheses[\big]{ \dualtwistsplopt } ^{n}\splmeav }{\widetilde{v}} \\
            &\qquad= \lim_{n \to +\infty} \parentheses[\big]{ \functional[\big]{ \splmeav }{ \twistsplopt^{n} \parentheses[\big]{ \widetilde{v} - \functional{\splmea}{\widetilde{v}}\indicator{\splitsphere} }  } 
                + \functional[\big]{\splmeav}{ \twistsplopt^{n} \parentheses[\big]{ \functional{\splmea}{\widetilde{v}} \indicator{\splitsphere} }  } } \\
            &\qquad= 0 + \functional[\big]{\splmeav}{ \functional{\splmea}{\widetilde{v}} \indicator{\splitsphere} } \\
            &\qquad= \functional{\splmea}{\widetilde{v}},
        \end{split}
    \]
    for each $\widetilde{v} \in C(\splitsphere)$. 
    This completes the proof.
\end{proof}

\subsection{Uniqueness}%
\label{sub:Uniqueness}
In this subsection, we finish the proof of Theorem~\ref{thm:subsystem:uniqueness of equilibrium state}. 

\smallskip

Theorem~\ref{thm:converge of uniform norm of normalized split ruelle operator} implies in particular the uniqueness of $\spleigmea \in \probmea{\splitsphere}$ from Theorem~\ref{thm:subsystem:eigenmeasure existence and basic properties}.

\begin{proposition}    \label{prop:uniqueness of eigenmeasure subsystem}
    Let $f$, $\mathcal{C}$, $F$, $d$, $\potential$ satisfy the Assumptions in Section~\ref{sec:The Assumptions}. 
    We assume in addition that $f(\mathcal{C}) \subseteq \mathcal{C}$ and $F \in \subsystem$ is strongly primitive.
    Then the measure $\eigmea = \spleigmea \in \probmea{\splitsphere}$ from Theorem~\ref{thm:subsystem:eigenmeasure existence and basic properties} is unique, i.e., $\spleigmea$ is the unique Borel probability measure on $\splitsphere$ that satisfies $\dualsplopt\spleigmea = \eigenvalue \spleigmea$ for some constant $\eigenvalue \in \real$. 
    Moreover, the measure $\equstate = \splmea \define \eigfun \spleigmea$ from Theorem~\ref{thm:existence of f invariant Gibbs measure} is the unique Borel probability measure on $\splitsphere$ that satisfies $\dualtwistsplopt\splmea = \splmea$.
\end{proposition}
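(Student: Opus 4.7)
The plan is to prove the uniqueness of $\splmea$ first, and then leverage it to deduce the uniqueness of $\spleigmea$. For the uniqueness of $\splmea$, the crucial input is Proposition~\ref{prop:convergence to equilibrium state under dual split Ruelle operator}. Let $\splmea' \in \probmea{\splitsphere}$ be any Borel probability measure satisfying $\dualtwistsplopt \splmea' = \splmea'$. Iteration gives $\bigl(\dualtwistsplopt\bigr)^n \splmea' = \splmea'$ for every $n \in \n$, whereas Proposition~\ref{prop:convergence to equilibrium state under dual split Ruelle operator} applied with $\splmeav \define \splmea'$ yields $\bigl(\dualtwistsplopt\bigr)^n \splmea' \weakconverge \splmea$ as $n \to +\infty$. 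Since weak$^{*}$ limits are unique we conclude $\splmea' = \splmea$, giving the desired uniqueness.

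We now turn to the uniqueness of $\spleigmea$. Let $\spleigmea'$ be any Borel probability measure on $\splitsphere$ satisfying $\dualsplopt \spleigmea' = c' \spleigmea'$ for some $c' \in \real$. First we determine $c'$. Pairing the eigenvalue equation with $\indicator{\splitsphere}$, and using that $\splopt\bigl(\indicator{\splitsphere}\bigr)$ is strictly positive on $\splitsphere$ (since $F(\domF) = S^2$ by the strong primitivity, so that at every point the defining sum in \eqref{eq:def:split ruelle operator} contains at least one positive exponential summand), we obtain
\[
    c' = \functional[\big]{\dualsplopt \spleigmea'}{\indicator{\splitsphere}} = \int_{\splitsphere} \splopt\bigl(\indicator{\splitsphere}\bigr) \,\mathrm{d}\spleigmea' > 0.
\]
Consequently, Theorem~\ref{thm:subsystem:eigenmeasure existence and basic properties}~\ref{item:thm:subsystem:eigenmeasure existence and basic properties:Jacobian:Gibbs property} applies to $\spleigmea'$ and gives that $\spleigmea'$ is a Gibbs measure for $F$, $\mathcal{C}$, $\phi$ with $\log c' = P_{\spleigmea'} = \pressure$. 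In particular $c' = \myexp{\pressure} = \eigenvalue$, which is equivalent to $\normdualsplopt \spleigmea' = \spleigmea'$.

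To finish, note that \eqref{eq:integration of eigenfunction with respect to eigenmeasure is one} in Theorem~\ref{thm:existence of f invariant Gibbs measure}, which applies to any such eigenmeasure, gives $\int_{\splitsphere} \eigfun \,\mathrm{d}\spleigmea' = 1$, so $\splmea'' \define \eigfun \spleigmea'$ is a Borel probability measure on $\splitsphere$. Reproducing verbatim the computation that established \eqref{eq:equilibrium state is eigenmeasure of dual normalized split operator}, but with $\spleigmea$ replaced by $\spleigmea'$, we get $\dualtwistsplopt \splmea'' = \splmea''$. The uniqueness of $\splmea$ proved in the first step then forces $\splmea'' = \splmea = \eigfun \spleigmea$, hence $\eigfun \spleigmea' = \eigfun \spleigmea$ as Borel measures on $\splitsphere$. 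Since $\eigfun \geqslant \Csplratio^{-1} > 0$ pointwise by \eqref{eq:two sides bounds for eigenfunction}, we may divide both sides by $\eigfun$ to conclude $\spleigmea' = \spleigmea$. No single step in this chain is particularly delicate; the real work has already been absorbed into Theorem~\ref{thm:converge of uniform norm of normalized split ruelle operator} and its corollary, Proposition~\ref{prop:convergence to equilibrium state under dual split Ruelle operator}.
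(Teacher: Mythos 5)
Your proof is correct, but it runs the paper's argument in the opposite direction. The paper first establishes uniqueness of $\spleigmea$ directly from \eqref{eq:converge of uniform norm of normed split ruelle operator} in Theorem~\ref{thm:converge of uniform norm of normalized split ruelle operator}: if $\spleigmea$ and $\spleigmeaanother$ are two eigenmeasures, then $\normsplopt^{n}(\widetilde{v})$ converges uniformly both to $\eigfun \int \! \widetilde{v} \,\mathrm{d}\spleigmea$ and to $\eigfun \int \! \widetilde{v} \,\mathrm{d}\spleigmeaanother$, and the two-sided bounds \eqref{eq:two sides bounds for eigenfunction} on $\eigfun$ force the two integrals to coincide; uniqueness of $\splmea$ is then deduced by dividing an arbitrary fixed point $\splmeav$ of $\dualtwistsplopt$ by $\eigfun$, normalizing by a constant $\lambda > 0$, and checking that the result is an eigenmeasure of $\dualsplopt$ with eigenvalue $e^{\pressure}$, whence $\splmeav = \lambda \splmea$ and $\lambda = 1$. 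You instead derive uniqueness of $\splmea$ first from Proposition~\ref{prop:convergence to equilibrium state under dual split Ruelle operator} (itself a corollary of the same convergence theorem via \eqref{eq:converge of uniform norm of normalized split ruelle operator}), and then work back to uniqueness of $\spleigmea$. Your second step is a bit longer than the paper's first step because you must pin down the eigenvalue $c'$: you pair the eigenvalue equation with $\indicator{\splitsphere}$ to show $c' > 0$, then invoke the Gibbs property from Theorem~\ref{thm:subsystem:eigenmeasure existence and basic properties}~\ref{item:thm:subsystem:eigenmeasure existence and basic properties:Jacobian:Gibbs property} to force $c' = e^{\pressure}$, and finally convert $\spleigmea'$ into a fixed point of $\dualtwistsplopt$ before dividing by $\eigfun$. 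A minor advantage of your order is that it makes explicit that any eigenvalue $c'$ for a probability eigenmeasure is strictly positive (which is the hypothesis under which the assertions of Theorem~\ref{thm:subsystem:eigenmeasure existence and basic properties} apply, while the proposition's statement only assumes $c' \in \real$); the paper's proof tacitly restricts to eigenmeasures that already ``arise from Theorem~\ref{thm:subsystem:eigenmeasure existence and basic properties}''. Both routes ultimately rest on Theorem~\ref{thm:converge of uniform norm of normalized split ruelle operator} together with the bounds on $\eigfun$, and neither is fundamentally deeper than the other.
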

Note that by Theorem~\ref{thm:subsystem characterization of pressure and existence of equilibrium state}, $\equstate$ is an equilibrium state for $\limitmap$ and $\potential|_{\limitset}$.
\begin{proof}
    Let $\spleigmea, \, \spleigmeaanother \in \probmea{\splitsphere}$ be two measures, both of which arise from Theorem~\ref{thm:subsystem:eigenmeasure existence and basic properties}. Note that for each $\widetilde{v} = \splfunv \in C(\splitsphere)$, there exists some abstract modulus of continuity $\modufun$ such that $\widetilde{v} = \splfunv \in \splmoduspace$, where $\moduconst \define \normcontinuous{\widetilde{v}}{\splitsphere}$.
    Then by \eqref{eq:converge of uniform norm of normed split ruelle operator} in Theorem~\ref{thm:converge of uniform norm of normalized split ruelle operator} and \eqref{eq:two sides bounds for eigenfunction} in Theorem~\ref{thm:existence of f invariant Gibbs measure}, we see that $\int \! \widetilde{v} \,\mathrm{d}\spleigmea = \int \! \widetilde{v} \,\mathrm{d}\spleigmeaanother$ for each $\widetilde{v} \in C(\splitsphere)$. Thus $\spleigmea = \spleigmeaanother$.

    Recall from \eqref{eq:equilibrium state is eigenmeasure of dual normalized split operator} that $\dualtwistsplopt\splmea = \splmea$. 
    Suppose $\splmeav \in \probmea{\splitsphere}$ is another measure with $\dualtwistsplopt\splmeav = \splmeav$. 
    It suffices to show that $\splmeav = \splmea$.
    Note that by \eqref{eq:two sides bounds for eigenfunction} in Theorem~\ref{thm:existence of f invariant Gibbs measure}, there exists a constant $C > 0$ such that $\eigfun(\widetilde{x}) \geqslant C$ for each $\widetilde{x}$. 
    Then by \eqref{eq:def:normalized split Ruelle operator} in Definition~\ref{def:normalized split ruelle operator}, for each $\widetilde{v} \in C(\splitsphere)$, we have
    \[
        \begin{split}
            \functional[\Big]{\dualtwistsplopt\splmeav}{\widetilde{v}} &= \functional[\Big]{\splmeav}{\twistsplopt(\widetilde{v})} = \functional[\Big]{\splmeav}{ \frac{1}{\eigfun}  \normsplopt( \eigfun \widetilde{v})} \\
            &= \functional[\bigg]{ \frac{\splmeav}{\eigfun} }{\normsplopt(\eigfun \widetilde{v})} = \functional[\bigg]{\eigfun \normdualsplopt\biggl(\frac{\splmeav}{\eigfun}\biggr)}{\widetilde{v}}.
        \end{split}
    \]
    This implies $\eigfun \normdualsplopt\Bigl(\frac{\splmeav}{\eigfun}\Bigr) = \dualtwistsplopt\splmeav = \splmeav$, i.e., $\normdualsplopt\Bigl(\frac{\splmeav}{\eigfun}\Bigr) = \frac{\splmeav}{\eigfun}$. 
    Denote $\lambda \define \functional[\Big]{\frac{\splmeav}{\eigfun}}{\indicator{\splitsphere}} > 0$. 
    Then by \eqref{eq:def:normed potential} we have $\dualsplopt\Bigl(\frac{\splmeav}{\lambda \eigfun}\Bigr) = e^{\pressure} \frac{\splmeav}{\lambda \eigfun}$.
    Noting that $\frac{\splmeav}{\lambda \eigfun}$ is also a Borel probability measure on $\splitsphere$, by the uniqueness of $\spleigmea$ we have $\frac{\splmeav}{\lambda \eigfun} = \spleigmea$. 
    Hence $\splmeav = \lambda \eigfun \spleigmea = \lambda \splmea$. 
    Since $\juxtapose{\splmeav}{\splmea} \in \probmea{\splitsphere}$, we get $\lambda = 1$ and $\splmeav = \splmea$. 
    Thus $\splmea$ is the unique Borel probability measure on $\splitsphere$ that satisfies $\dualtwistsplopt\splmea = \splmea$.
\end{proof}

We follow the conventions discussed in Remarks~\ref{rem:disjoint union} and \ref{rem:probability measure in split setting}.
\begin{lemma}     \label{lem:converge of derivative for pressure of subsystem} \def\ncolour{\colour_{n}}
    Let $f$, $\mathcal{C}$, $F$, $d$ satisfy the Assumptions in Section~\ref{sec:The Assumptions}. 
    We assume in addition that $f(\mathcal{C}) \subseteq \mathcal{C}$ and $F \in \subsystem$ is strongly primitive.
    Let $\moduconst \geqslant 0$ be a constant and $\modufun$ an abstract modulus of continuity. 
    Let $H$ be a bounded subset of $\holderspacesphere$ for some $\holderexp \in (0, 1]$.
    Fix arbitrary sequence $\sequen{x_{n}}$ of points in $S^2$ and sequence $\sequen{\ncolour}$ of colors in $\colours$ that satisfies $x_{n} \in X^0_{\ncolour}$ for each $n \in \n$.
    Then for each $v \in \moduspace[\moduconst][\modufun][S^2]$ and each $\potential \in H$, we have
    \begin{equation}    \label{eq:converge of derivative for pressure of subsystem}
        \lim_{n \to +\infty}  \frac{ \frac{1}{n} \sum\limits_{ X^n \in \ccFTile{n}{\ncolour}{} } \bigl(S_{n}^{F}v( x_{X^{n}} )\bigr) \myexp[\big]{ S^{F}_{n} \phi ( x_{X^{n}} ) } }{ \sum\limits_{ X^n \in \ccFTile{n}{\ncolour}{} } \myexp[\big]{ S^{F}_{n} \phi ( x_{X^{n}} ) } }
        = \int_{S^2} \! v \,\mathrm{d}\equstate,
    \end{equation}
    where we denote $x_{X^{n}} \define (F^{n}|_{X^{n}})^{-1}(x_{n})$ for each $X^{n} \in \ccFTile{n}{\ncolour}{}$, and $\equstate \in \probsphere$ is defined in Theorem~\ref{thm:existence of f invariant Gibbs measure}. 
    Moreover, the convergence is uniform in $v \in \moduspace[\moduconst][\modufun][S^2]$ and $\potential \in H$.
\end{lemma}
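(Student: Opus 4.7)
The strategy is to rewrite the left-hand side of \eqref{eq:converge of derivative for pressure of subsystem} as a Cesàro average of ratios of iterates of the split Ruelle operator, and then apply the uniform convergence result Theorem~\ref{thm:converge of uniform norm of normalized split ruelle operator} together with the identity $\equstate = \eigfun \spleigmea$ from Theorem~\ref{thm:existence of f invariant Gibbs measure}.

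First, using the Birkhoff decomposition $S_{n}^{F}v(x_{X^{n}}) = \sum_{i = 0}^{n - 1} v(F^{i}(x_{X^{n}}))$ and grouping the $n$-tiles $X^{n} \in \ccFTile{n}{\ncolour}{}$ according to the image $Y^{n-i} = F^{i}(X^{n}) \in \Domain{n-i}$, I would establish the identity
\begin{equation*}
\sum_{X^{n} \in \ccFTile{n}{\ncolour}{}} v(F^{i}(x_{X^{n}})) \myexp[\big]{S_{n}^{F}\phi(x_{X^{n}})} = \splopt^{n-i}\bigl(v \cdot \splopt^{i}(\indicator{\splitsphere})\bigr)(x_{n}, \ncolour),
\end{equation*}
interpreting $v$ as an element of $C(\splitsphere)$ via Remark~\ref{rem:disjoint union}; this follows from the semigroup property of $\splopt$ (Lemma~\ref{lem:iteration of split-partial ruelle operator}) combined with the cocycle identity $S_{n}^{F}\phi(x_{X^{n}}) = S_{i}^{F}\phi(x_{X^{n}}) + S_{n-i}^{F}\phi(F^{i}(x_{X^{n}}))$. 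Since $\splopt = e^{\pressure}\normsplopt$ by \eqref{eq:def:normed potential}, the factor $e^{n\pressure}$ cancels between numerator and denominator, so that the expression on the left of \eqref{eq:converge of derivative for pressure of subsystem} rewrites as
\begin{equation*}
\frac{1}{n}\sum_{i=0}^{n-1} R_{n,i}, \qquad R_{n,i} \define \frac{\normsplopt^{n-i}\bigl(v \cdot \normsplopt^{i}(\indicator{\splitsphere})\bigr)(x_{n}, \ncolour)}{\normsplopt^{n}(\indicator{\splitsphere})(x_{n}, \ncolour)}.
\end{equation*}

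Next, I would analyze each $R_{n,i}$ via Theorem~\ref{thm:converge of uniform norm of normalized split ruelle operator}. By Proposition~\ref{prop:modulus for function under split ruelle operator}, the family $\{\normsplopt^{i}(\indicator{\splitsphere})\}_{i \in \n_{0}, \, \phi \in H}$ is contained in a single class $\splmoduspace[\widehat{C}][\widehat{\modufun}]$ in $C(\splitsphere)$; combined with Lemma~\ref{lem:product and inverse of modulus of continuity}, this places the entire family $\{v \cdot \normsplopt^{i}(\indicator{\splitsphere})\}$ in a fixed class $\splmoduspace[C'][\modufun']$ depending only on $F$, $\mathcal{C}$, $d$, $H$, $\holderexp$, $\moduconst$, and $\modufun$. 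Applying Theorem~\ref{thm:converge of uniform norm of normalized split ruelle operator} to this class yields, uniformly in $v \in \moduspace[\moduconst][\modufun][S^{2}]$, $\phi \in H$, and $x_{n} \in S^{2}$, both the uniform convergence $\normsplopt^{n}(\indicator{\splitsphere}) \to \eigfun$ and
\begin{equation*}
\normsplopt^{n-i}\bigl(v \cdot \normsplopt^{i}(\indicator{\splitsphere})\bigr) - \eigfun \int\! v \cdot \normsplopt^{i}(\indicator{\splitsphere}) \,\mathrm{d}\spleigmea \longrightarrow 0 \quad\text{as } n - i \to +\infty.
\end{equation*}
Using once more that $\normsplopt^{i}(\indicator{\splitsphere}) \to \eigfun$ uniformly and that $v$ is bounded, $\int\! v \cdot \normsplopt^{i}(\indicator{\splitsphere}) \,\mathrm{d}\spleigmea \to \int\! v\eigfun \,\mathrm{d}\spleigmea = \int\! v \,\mathrm{d}\equstate$ as $i \to +\infty$. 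Since $\eigfun$ is bounded and bounded away from $0$ by \eqref{eq:two sides bounds for eigenfunction}, combining these convergences gives $R_{n,i} \to \int\! v \,\mathrm{d}\equstate$ uniformly in all relevant parameters whenever $\min\{i, n-i\} \to +\infty$.

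Finally, the Cesàro average is handled by a standard boundary-term estimate. Given $\varepsilon > 0$, I would choose $M \in \n$ so large that $\abs[\big]{R_{n,i} - \int\! v \,\mathrm{d}\equstate} < \varepsilon$ for all $n, i$ with $\min\{i, n-i\} \geqslant M$. The remaining at most $2M$ summands are uniformly bounded, since by positivity of $\splopt$ and Lemma~\ref{lem:distortion lemma for normed split operator} one has $\abs[\big]{\normsplopt^{n-i}(v \cdot \normsplopt^{i}(\indicator{\splitsphere}))} \leqslant \normcontinuous{v}{S^{2}} \normsplopt^{n}(\indicator{\splitsphere}) \leqslant \moduconst \Csplratio$ while the denominator is bounded below by $\Csplratio^{-1}$. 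Thus the boundary contribution to the Cesàro average is $O(M/n) \to 0$, yielding \eqref{eq:converge of derivative for pressure of subsystem} with convergence uniform in $v \in \moduspace[\moduconst][\modufun][S^{2}]$ and $\phi \in H$. The main technical point will be verifying that the modulus controlling $v \cdot \normsplopt^{i}(\indicator{\splitsphere})$ and the rate of convergence in Theorem~\ref{thm:converge of uniform norm of normalized split ruelle operator} can both be taken independent of $i$, $\phi \in H$, and $v$; this is accomplished by tracking the explicit constants through Proposition~\ref{prop:modulus for function under split ruelle operator}, Lemma~\ref{lem:product and inverse of modulus of continuity}, and Lemma~\ref{lem:distortion lemma for normed split operator}.
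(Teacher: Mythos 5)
Your proposal is correct and follows essentially the same route as the paper: the same identity $\splopt^{n}(\widetilde{v\circ f^{j}}) = \splopt^{n-j}\bigl(\widetilde{v}\,\splopt^{j}(\indicator{\splitsphere})\bigr)$ to turn the left-hand side into the Ces\`aro average of the ratios $R_{n,j}$, the same appeal to Proposition~\ref{prop:modulus for function under split ruelle operator} and Lemma~\ref{lem:product and inverse of modulus of continuity} to place $\widetilde{v}\,\normsplopt^{j}(\indicator{\splitsphere})$ in a fixed equicontinuity class, and then Theorem~\ref{thm:converge of uniform norm of normalized split ruelle operator} plus the two-sided bounds \eqref{eq:two sides bounds for eigenfunction} to obtain the limit. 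Your handling of the boundary terms in the Ces\`aro sum (choosing $M$ and bounding the at most $2M$ non-convergent terms using positivity of $\normsplopt$ and Lemma~\ref{lem:distortion lemma for normed split operator}) is a mild reorganization of the paper's norm estimates but is logically equivalent.
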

\begin{proof}
    \def\ncolour{\colour_{n}}
    By Lemma~\ref{lem:iteration of split-partial ruelle operator} and Definition~\ref{def:partial Ruelle operator}, for each $n \in \n$, each $v \in \moduspace[\moduconst][\modufun][S^2]$, and each $\potential \in H$,
    \begin{align*}
        \frac{ \frac{1}{n} \sum\limits_{ X^n \in \ccFTile{n}{\ncolour}{} } \bigl(S_{n}^{F}v( x_{X^{n}} )\bigr) \myexp[\big]{ S^{F}_{n} \phi ( x_{X^{n}} ) } }{ \sum\limits_{ X^n \in \ccFTile{n}{\ncolour}{} } \myexp[\big]{ S^{F}_{n} \phi ( x_{X^{n}} ) } }
        &= \frac{ \frac{1}{n} \sum\limits_{j=0}^{n-1} \sum\limits_{ X^n \in \ccFTile{n}{\ncolour}{} } v(f^{j}( x_{X^{n}} )) \myexp[\big]{ S^{F}_{n} \phi ( x_{X^{n}} ) } }{ \bigl( \splopt^{n}\bigl(\indicator{\splitsphere}\bigr) \bigr)(x_{n}, \ncolour) } \\
        &= \frac{ \frac{1}{n} \sum\limits_{j=0}^{n-1} \Bigl( \splopt^{n}(\widetilde{v\circ f^{j}}) \Bigl)(x_{n}, \ncolour) }{ \bigl( \splopt^{n}\bigl(\indicator{\splitsphere}\bigr) \bigr)(x_{n}, \ncolour) },
    \end{align*}
    where, by abuse of notation, for each $u \in C(S^2)$ we denote by $\widetilde{u}$ the continuous function on $\splitsphere$ given by $\widetilde{u}(\widetilde{z}) \define u(z)$ for each $\widetilde{z} = (z, \colour) \in \splitsphere$. 
    Note that by Lemma~\ref{lem:iteration of split-partial ruelle operator} and Definition~\ref{def:partial Ruelle operator}, for each $j \in \n_0$, 
    \[
        \splopt^{j}(\widetilde{v \circ f^{j}}) = \widetilde{v} \, \splopt^{j}\bigl(\indicator{\splitsphere}\bigr).
    \]
    Hence, 
    \[
        \begin{split}
            \frac{ \frac{1}{n} \sum\limits_{j=0}^{n-1} \Bigl( \splopt^{n}(\widetilde{v\circ f^{j}}) \Bigl)(x_{n}, \ncolour) }{ \bigl( \splopt^{n}\bigl(\indicator{\splitsphere}\bigr) \bigr)(x_{n}, \ncolour) } 
            &= \frac{ \frac{1}{n} \sum\limits_{j=0}^{n-1} \bigl( \splopt^{n - j} \bigl( \widetilde{v}\, \splopt^{j}\bigl(\indicator{\splitsphere}\bigr) \bigr) \bigl)(x_{n}, \ncolour) }{ \bigl( \splopt^{n}\bigl(\indicator{\splitsphere}\bigr) \bigr)(x_{n}, \ncolour) }  \\
            &= \frac{ \frac{1}{n} \sum\limits_{j=0}^{n-1} \bigl( \normsplopt^{n - j} \bigl( \widetilde{v}\, \normsplopt^{j}\bigl(\indicator{\splitsphere}\bigr) \bigr) \bigl)(x_{n}, \ncolour) }{ \bigl( \normsplopt^{n}\bigl(\indicator{\splitsphere}\bigr) \bigr)(x_{n}, \ncolour) }.
        \end{split}
    \]
    By Proposition~\ref{prop:modulus for function under split ruelle operator}, $\bigl\{ \normsplopt^{n}\bigl( \indicator{\splitsphere} \bigl) \describe n \in \n_0 \bigr\} \subseteq \splmoduspace[\widehat{\moduconst}][\widehat{\modufun}]$, for some constant $\widehat{\moduconst} \geqslant 0$ and some abstract modulus of continuity $\widehat{\modufun}$, which are independent of the choice of $\potential \in H$. Thus by Lemma~\ref{lem:product and inverse of modulus of continuity},
    \begin{equation}    \label{eq:temp:modulus of product of function and split operator}
        \bigl\{ \widetilde{v} \, \normsplopt^{n}\bigl( \indicator{\splitsphere} \bigl) \describe n \in \n_0, \, v \in \moduspace[][][S^2] \bigr\} \subseteq \splmoduspace[\moduconst_1][\modufun_1],
    \end{equation}
    for some constant $\moduconst_1 \geqslant 0$ and some abstract modulus of continuity $\modufun_1$, which are independent of the choice of $\potential \in H$.

    By Theorem~\ref{thm:converge of uniform norm of normalized split ruelle operator} and Proposition~\ref{prop:uniqueness of eigenmeasure subsystem}, we have
    \begin{equation}    \label{eq:temp:converge indicator under operator}
        \normcontinuous[\big]{ \normsplopt^{k}\bigl( \indicator{\splitsphere} \bigl) - \eigfun }{\splitsphere} \converge 0,
    \end{equation}
    as $k \to +\infty$, uniformly in $\potential \in H$. Moreover, by \eqref{eq:temp:modulus of product of function and split operator}, the independence of $\moduconst_1$ and $\modufun_1$ on $\potential \in H$ in \eqref{eq:temp:modulus of product of function and split operator}, Theorem~\ref{thm:converge of uniform norm of normalized split ruelle operator}, and Proposition~\ref{prop:uniqueness of eigenmeasure subsystem}, we have 
    \begin{equation}    \label{eq:temp:converge product function under operator}
           \normcontinuous[\bigg]{ \normsplopt^{k}\Bigl( \widetilde{v} \, \normsplopt^{j} \bigl( \indicator{\splitsphere} \bigr) \Bigl) - \eigfun \int \! \widetilde{v} \, \normsplopt^{j} \bigl( \indicator{\splitsphere} \bigr) \,\mathrm{d}\spleigmea }{\splitsphere} \converge 0,
    \end{equation}   
    as $k \to +\infty$, uniformly in $j \in \n_0$, $\potential \in H$, and $v \in \moduspace[][][S^2]$.

    Fix a constant $K \geqslant 0$ such that for each $\potential \in H$, $\holdernorm{\potential}{S^2} \leqslant K$.
    By \eqref{eq:eigenfunction of normed split operator} in Theorem~\ref{thm:existence of f invariant Gibbs measure}, we have
    \begin{equation}    \label{eq:temp:bounds for norm of eigenfunction use in converge}
        \Csplratio^{-1} \leqslant \normcontinuous{\eigfun}{\splitsphere} \leqslant \Csplratio,
    \end{equation}
    where $\Csplratio \geqslant 1$ is the constant defined in \eqref{eq:const:Csplratio} in Lemma~\ref{lem:distortion lemma for subsystem}~\ref{item:lem:distortion lemma for subsystem:uniform bound} and depends only on $F$, $\mathcal{C}$, $d$, $\phi$, and $\holderexp$.
    By \eqref{eq:const:Csplratio} and \eqref{eq:const:C_1}, quantitatively,
    \[
        \Csplratio = (\deg{f})^{n_{F}} \myexp[\bigg]{ 2n_{F} \uniformnorm{\phi} + C_0 \frac{ \holderseminorm{\potential}{S^2} }{1 - \Lambda^{-\holderexp}} (\diam{d}{S^{2}} )^{\holderexp} },
    \]
    where $C_0 > 1$ is the constant depending only on $f$, $\mathcal{C}$, and $d$ from Lemma~\ref{lem:basic_distortion} and $n_{F} \in \n$ is the constant depending only on $F$ and $\mathcal{C}$ from Definition~\ref{def:primitivity of subsystem} since $F$ is primitive. 
    Define
    \[
        \Csplratio' \define (\deg{f})^{n_{F}} \myexp[\bigg]{ 2n_{F} K + \frac{ C_0 K }{1 - \Lambda^{-\holderexp}} (\diam{d}{S^{2}} )^{\holderexp} }.
    \]
    Then we have $\Csplratio \leqslant \Csplratio'$ for each $\potential \in \holderspacesphere$ with $\holdernorm{\potential}{S^{2}} \leq K$, and the constant $\Csplratio'$ only depends on $F$, $\mathcal{C}$, $d$, $K$, and $\holderexp$. 

    Thus by \eqref{eq:temp:modulus of product of function and split operator}, we get that for $j \in \n_0$, $v \in \moduspace[][][S^2]$, and $\potential \in H$, 
    \begin{equation}    \label{eq:temp:upper bound for norm of second term}
        \normcontinuous[\bigg]{ \eigfun \int \! \widetilde{v} \, \normsplopt^{j} \bigl( \indicator{\splitsphere} \bigr) \,\mathrm{d}\spleigmea }{\splitsphere}
        \leqslant \normcontinuous{\eigfun}{\splitsphere} \normcontinuous[\Big]{\widetilde{v} \, \normsplopt^{j} \bigl( \indicator{\splitsphere} \bigr)}{\splitsphere}
        \leqslant \moduconst_1 \Csplratio'.
    \end{equation}
    By \eqref{eq:modulus for function under normed split Ruelle operator} in Proposition~\ref{prop:modulus for function under split ruelle operator} and \eqref{eq:temp:modulus of product of function and split operator}, we get some constant $\moduconst_2 > 0$ such that for each $j, \, k \in \n_0$, each $v \in \moduspace[][][S^2]$, and each $\potential \in H$,
    \begin{equation}    \label{eq:temp:upper bound for norm of first term}
        \normcontinuous[\Big]{\normsplopt^{k} \Bigl( \widetilde{v} \, \normsplopt^{j} \bigl( \indicator{\splitsphere} \bigr) \Bigr) }{\splitsphere} < \moduconst_2.
    \end{equation}
    Hence we can conclude from \eqref{eq:temp:upper bound for norm of second term}, \eqref{eq:temp:upper bound for norm of first term}, and \eqref{eq:temp:converge product function under operator} that
    \[
        \lim_{n \to +\infty} \frac{1}{n} \normcontinuous[\bigg]{ \sum\limits_{j=0}^{n-1} \normsplopt^{n - j}\Bigl( \widetilde{v} \, \normsplopt^{j} \bigl( \indicator{\splitsphere} \bigr) \Bigl) - \sum\limits_{j=0}^{n-1} \eigfun \int \! \widetilde{v} \, \normsplopt^{j} \bigl( \indicator{\splitsphere} \bigr) \,\mathrm{d}\spleigmea  }{\splitsphere} = 0,
    \]
    uniformly in $v \in \moduspace[][][S^2]$ and $\potential \in H$. Thus by \eqref{eq:temp:converge indicator under operator} and \eqref{eq:temp:bounds for norm of eigenfunction use in converge}, we have\[
        \lim_{n \to +\infty}  \normcontinuous[\biggg]{ \frac{ \frac{1}{n} \sum\limits_{j=0}^{n-1} \normsplopt^{n - j}\Bigl( \widetilde{v} \, \normsplopt^{j} \bigl( \indicator{\splitsphere} \bigr) \Bigl)} { \normsplopt^{n}\bigl( \indicator{\splitsphere} \bigl) }  -  \frac{ \frac{1}{n} \sum\limits_{j=0}^{n-1} \eigfun \displaystyle\int \! \widetilde{v} \, \normsplopt^{j} \bigl( \indicator{\splitsphere} \bigr) \,\mathrm{d}\spleigmea }{\eigfun}  }{\splitsphere} = 0,
    \] 
    uniformly in $v \in \moduspace[][][S^2]$ and $\potential \in H$. Combining the above with \eqref{eq:temp:modulus of product of function and split operator}, \eqref{eq:temp:converge indicator under operator}, \eqref{eq:temp:bounds for norm of eigenfunction use in converge}, and the calculation at the beginning of the proof, we can conclude, therefore, that the left-hand side of \eqref{eq:converge of derivative for pressure of subsystem} is equal to 
    \[
        \begin{split}
            \lim_{n \to +\infty} \frac{1}{n} \sum\limits_{j = 0}^{n - 1} \int \! \widetilde{v} \, \normsplopt^{j} \bigl( \indicator{\splitsphere} \bigr) \,\mathrm{d}\spleigmea 
            &= \lim_{n \to +\infty} \frac{1}{n} \sum\limits_{j = 0}^{n - 1} \int \! \widetilde{v} \, \eigfun \,\mathrm{d}\spleigmea \\
            &= \int \! \widetilde{v} \,\mathrm{d}\splmea = \int \! v \,\mathrm{d}\equstate,
        \end{split}
    \]
    where $\equstate \in \probsphere$ is defined in Theorem~\ref{thm:existence of f invariant Gibbs measure}, and the convergence is uniform in $v \in \moduspace[][][S^2]$ and $\potential \in H$.
\end{proof}

\begin{theorem}    \label{thm:derivative of pressure subsystem}
    Let $f$, $\mathcal{C}$, $F$, $d$, $\potential$ satisfy the Assumptions in Section~\ref{sec:The Assumptions}. 
    We assume in addition that $f(\mathcal{C}) \subseteq \mathcal{C}$ and $F \in \subsystem$ is strongly primitive.
    Let $\potential, \, \gamma \in \holderspacesphere$ be real-valued \holder continuous function with an exponent $\holderexp \in (0, 1]$. 
    Then for each $t \in \real$, we have
    \[
        \frac{\mathrm{d}}{\mathrm{d}t} \pressure[\potential + t \gamma] = \int \! \gamma \,\mathrm{d}\equstate[\potential + t \gamma],
    \]
    where $\equstate[\potential + t \gamma] \in \probsphere$ is defined in Theorem~\ref{thm:existence of f invariant Gibbs measure}. 
\end{theorem}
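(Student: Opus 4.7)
The plan is to differentiate the limit characterization of pressure from \eqref{eq:equalities for characterizations of pressure} and exchange differentiation with the limit by means of Lemma~\ref{lem:converge of derivative for pressure of subsystem}. Fix a base point $\widetilde{y} = (y, \colour) \in \splitsphere$ and, for $t \in \real$ and $n \in \n$, set
\[
    Q_n(t) \define \frac{1}{n} \log \splopt[\phi + t\gamma]^{n}\bigl(\indicator{\splitsphere}\bigr)(\widetilde{y}) = \frac{1}{n} \log \sum_{X^n \in \ccFTile{n}{\colour}{}} \myexp[\big]{S_{n}^{F}(\phi + t\gamma)(x_{X^n})},
\]
where $x_{X^n} \define (F^n|_{X^n})^{-1}(y)$ and the second equality uses Lemma~\ref{lem:iteration of split-partial ruelle operator}. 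Since $\phi + t\gamma \in \holderspacesphere$ for every $t \in \real$, Theorem~\ref{thm:existence of f invariant Gibbs measure} applied to the potential $\phi + t\gamma$ yields $\lim_{n \to +\infty} Q_n(t) = \pressure[\phi + t\gamma]$ pointwise in $t$.

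Each $Q_n$ is smooth in $t$ as the logarithm of a finite sum of exponentials of affine functions of $t$, and direct differentiation gives
\[
    Q_n'(t) = \frac{\frac{1}{n}\sum\limits_{X^n \in \ccFTile{n}{\colour}{}} (S_{n}^{F}\gamma)(x_{X^n})\myexp[\big]{S_{n}^{F}(\phi + t\gamma)(x_{X^n})}}{\sum\limits_{X^n \in \ccFTile{n}{\colour}{}} \myexp[\big]{S_{n}^{F}(\phi + t\gamma)(x_{X^n})}},
\]
which is precisely the ratio whose limit is identified in Lemma~\ref{lem:converge of derivative for pressure of subsystem} (with $v$ there set to $\gamma$ and the potential $\phi$ there replaced by $\phi + t\gamma$). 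Observe that $\gamma$ lies in $\moduspace[\moduconst][\modufun][S^2]$ for $\moduconst \define \uniformnorm{\gamma}$ and the abstract modulus of continuity $\modufun(s) \define \holderseminorm{\gamma}{S^2} s^{\holderexp}$. For arbitrary $t_0 \in \real$ and $T > 0$, the set $H \define \{\phi + t\gamma : t \in [t_0 - T, t_0 + T]\}$ is bounded in $\holderspacesphere$, so the uniformity clause of Lemma~\ref{lem:converge of derivative for pressure of subsystem} yields
\[
    \lim_{n \to +\infty} Q_n'(t) = \int \! \gamma \,\mathrm{d}\equstate[\phi + t\gamma] \qquad \text{uniformly in } t \in [t_0 - T, t_0 + T].
\]

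To conclude, I invoke the standard real-analysis fact that if a sequence $\{Q_n\}$ of $C^1$ functions on an open interval converges pointwise to a function $Q$, and $\{Q_n'\}$ converges uniformly on compact subintervals to some function $g$, then $Q \in C^1$ with $Q' = g$. Applying this on $(t_0 - T, t_0 + T)$ at $t = t_0$, with $t_0 \in \real$ and $T > 0$ arbitrary, gives $\frac{\mathrm{d}}{\mathrm{d}t}\pressure[\phi + t\gamma] = \int \gamma \,\mathrm{d}\equstate[\phi + t\gamma]$, as desired. The main technical point is the uniform-in-$t$ convergence of $Q_n'$ on compact intervals, which is precisely what the uniformity clause of Lemma~\ref{lem:converge of derivative for pressure of subsystem} over bounded families $H \subseteq \holderspacesphere$ is engineered to deliver; the rest of the argument is a routine application of the fundamental theorem of calculus.
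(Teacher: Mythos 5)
Your proposal is correct and follows essentially the same route as the paper's proof: express the pressure via the limit characterization \eqref{eq:equalities for characterizations of pressure}, compute the derivative of the finite-$n$ approximations, and invoke Lemma~\ref{lem:converge of derivative for pressure of subsystem} with a bounded family $H$ to get uniform convergence of derivatives on compact intervals, then apply the standard calculus fact about exchanging limits and differentiation. The only cosmetic difference is that you center your intervals at an arbitrary $t_0$ while the paper uses $(-\ell,\ell)$ and lets $\ell \to +\infty$; this is immaterial.
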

\begin{proof}
    We will use the well-known fact from real analysis that if a sequence $\{g_{n}\}_{n \in \n}$ of real-valued differentiable functions defined on a finite interval in $\real$ converges pointwise to some function $g$ and the sequence of the corresponding derivatives $\bigl\{ \frac{\mathrm{d}g_{n}}{\mathrm{d}t} \bigr\}_{n \in \n}$ converges uniformly to some function $h$, then $g$ is differentiable and $\frac{dg}{\mathrm{d}t} = h$.

    By \eqref{eq:equalities for characterizations of pressure} in Theorem~\ref{thm:existence of f invariant Gibbs measure} and \eqref{eq:iteration of split-partial ruelle operator} in Lemma~\ref{lem:iteration of split-partial ruelle operator}, for each $\colour \in \colours$, each $x \in X^0_{\colour}$, and each $\psi \in \holderspacesphere$, we have
    \begin{equation}    \label{eq:temp:characterization of pressure for subsystem}
        \begin{split}
            \pressure[\psi] &= \lim_{n \to +\infty} \frac{1}{n} \log \mathopen{}\bigl( \splopt[\psi]^{n}\bigl(\indicator{\splitsphere}\bigr)(x, \colour) \bigr) 
            = \lim_{n \to +\infty} \frac{1}{n} \log \sum\limits_{ X^n \in \ccFTile{n}{\colour}{} } \myexp[\big]{ S^{F}_{n} \psi( x_{X^{n}} ) },
        \end{split}
    \end{equation}
    where $x_{X^{n}} \define (F^{n}|_{X^{n}})^{-1}(x)$ for each $X^{n} \in \ccFTile{n}{\colour}{}$.

    Fix $\colour \in \colours$, $x \in X^0_{\colour}$, and $\ell \in (0, +\infty)$. 
    For each $n \in \n$ and each $t \in \real$, define\[
        P_n(t) \define \frac{1}{n} \log \sum_{ X^n \in \ccFTile{n}{\colour}{} } \myexp[\big]{ S^{F}_{n}(\phi + t \gamma)(x_{X^{n}}) },
    \]
    where $x_{X^{n}} \define (F^{n}|_{X^{n}})^{-1}(x)$ for each $X^{n} \in \ccFTile{n}{\colour}{}$.
    Observe that there exists a bounded subset $H$ of $\holderspacesphere$ such that $\phi + t \gamma \in H$ for each $t \in (-\ell, \ell)$. Then by Lemma~\ref{lem:converge of derivative for pressure of subsystem},\[
        \frac{\mathrm{d} P_{n}}{\mathrm{d}t}(t) = \frac{ \frac{1}{n} \sum\limits_{ X^n \in \ccFTile{n}{\colour}{} } \bigl(S_n^{F}\gamma( x_{X^{n}} )\bigr) \myexp[\big]{ S^{F}_{n} (\phi + t \gamma)(x_{X^{n}}) } }{ \sum\limits_{ X^n \in \ccFTile{n}{\colour}{} } \myexp[\big]{ S^{F}_{n} (\phi + t \gamma)(x_{X^{n}}) } }
    \]
    converges to $\int \! \gamma \,\mathrm{d}\equstate[\potential + t \gamma]$ as $n \to +\infty$, uniformly in $t \in (-\ell, \ell)$.

    On the other hand, by \eqref{eq:temp:characterization of pressure for subsystem}, for each $t \in (-\ell, \ell)$, we have\[
        \lim_{n \to +\infty}  P_{n}(t) = \pressure[\phi + t \gamma].
    \]
    Hence $\pressure[\phi + t \gamma]$ is differentiable with respect to $t$ on $(-\ell, \ell)$, and \[
        \frac{\mathrm{d}}{\mathrm{d}t} \pressure[\phi + t \gamma] = \lim_{n \to +\infty} \frac{\mathrm{d} P_{n}}{\mathrm{d}t}(t) = \int \! \gamma \,\mathrm{d}\equstate[\potential + t \gamma].
    \]
    Since $\ell \in (0, +\infty)$ is arbitrary, the proof is complete.
\end{proof}

We record the following well-known fact for the convenience of the reader.
\begin{lemma}    \label{lem:holder space dense in continuous space}
    Let $(X, d)$ be a compact metric space. 
    Then for each $\holderexp \in (0, 1]$, $\holderspace$ is a dense subset of $C(X)$ with respect to the uniform norm. 
\end{lemma}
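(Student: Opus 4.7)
The plan is to apply the Stone--Weierstrass theorem to $\holderspace$ viewed as a subset of $C(X)$. Thus the main task is to verify that $\holderspace$ is a subalgebra of $C(X)$ that contains the constant functions and separates points of $X$.

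First I would check the algebra structure. Clearly $\holderspace$ is a real vector space, and the identity $\holderseminorm{uv}{X} \leqslant \normcontinuous{u}{X}\holderseminorm{v}{X} + \normcontinuous{v}{X}\holderseminorm{u}{X}$ together with the compactness of $X$ (which gives $\normcontinuous{u}{X} < +\infty$ for $u \in \holderspace$) shows that $\holderspace$ is closed under pointwise products. The constant functions obviously lie in $\holderspace$. For the separation of points, fix $x \in X$ and consider the function $g_{x}(y) \define d(x,y)^{\holderexp}$. Since for all $\juxtapose{y}{z} \in X$ the elementary inequality $\abs{a^{\holderexp} - b^{\holderexp}} \leqslant \abs{a - b}^{\holderexp}$ (valid for $a,b \geqslant 0$ and $\holderexp \in (0,1]$) combined with the reverse triangle inequality gives
\[
    \abs{g_{x}(y) - g_{x}(z)} \leqslant \abs{d(x,y) - d(x,z)}^{\holderexp} \leqslant d(y,z)^{\holderexp},
\]
we conclude $g_{x} \in \holderspace$ with $\holderseminorm{g_{x}}{X} \leqslant 1$. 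For distinct $\juxtapose{y}{z} \in X$ we have $g_{y}(y) = 0 \ne d(y,z)^{\holderexp} = g_{y}(z)$, so $\holderspace$ separates points.

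With these three properties verified, the Stone--Weierstrass theorem (for real-valued continuous functions on a compact Hausdorff space) immediately yields that $\holderspace$ is dense in $C(X)$ with respect to the uniform norm.

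I don't anticipate any real obstacle here; the only technical point is the elementary inequality $\abs{a^{\holderexp} - b^{\holderexp}} \leqslant \abs{a - b}^{\holderexp}$ used to show $g_{x} \in \holderspace$, which is a standard consequence of the subadditivity of $t \mapsto t^{\holderexp}$ on $[0,+\infty)$ when $\holderexp \in (0,1]$. As an alternative to Stone--Weierstrass, one could give an explicit construction via the Moreau--Yosida approximation: for $u \in C(X)$ set $u_{n}(x) \define \inf_{y \in X} ( u(y) + n\, d(x,y) )$, observe that each $u_{n}$ is $n$-Lipschitz (hence belongs to $\holderspace$ since $X$ is bounded), and verify using uniform continuity of $u$ that $u_{n} \to u$ uniformly.
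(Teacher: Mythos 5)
Your proof is correct but takes a genuinely different route from the paper. The paper disposes of the lemma in one line by citing the standard fact that Lipschitz functions are dense in $C(X)$ for a compact metric space $X$ (Heinonen, \emph{Lectures on Analysis on Metric Spaces}, Theorem~6.8), noting that Lipschitz functions form a subset of $\holderspace$. You instead verify directly that $\holderspace$ is a unital subalgebra of $C(X)$ separating points and then invoke Stone--Weierstrass. Both are sound; your route is self-contained (modulo Stone--Weierstrass) and makes the algebraic structure of $\holderspace$ explicit, at the cost of a slightly longer argument, whereas the paper's route is more economical and reduces to a single well-known reference. Your alternative suggestion via the inf-convolution $u_{n}(x) = \inf_{y}\bigl(u(y) + n\,d(x,y)\bigr)$ is in fact essentially the standard proof of the cited Lipschitz-density result, so it aligns most closely with the paper's intended argument. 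One small point worth making explicit in the Stone--Weierstrass route: closure of $\holderspace$ under products uses $\normcontinuous{u}{X}, \normcontinuous{v}{X} < +\infty$, which you correctly attribute to compactness of $X$; without boundedness this step would fail, so the hypothesis is genuinely used there and not only through Stone--Weierstrass itself.
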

\begin{proof}
    The lemma follows from the fact that the set of Lipschitz functions is dense in $C(X)$ with respect to the uniform norm (see for example, \cite[Theorem~6.8]{heinonen2001lectures}).
\end{proof}

Now we prove the uniqueness of the equilibrium states for subsystems. 

\begin{proof}[Proof of Theorem~\ref{thm:subsystem:uniqueness of equilibrium state}]
    The existence is from Theorem~\ref{thm:subsystem characterization of pressure and existence of equilibrium state}. 
    
    We now prove the uniqueness.
    
    Denote $\limitset \define \limitset(F, \mathcal{C})$ and $\limitmap \define F|_{\limitset}$. 
    Recall that for each $\varphi \in C(S^2)$, $\fpressure[\varphi|_{\limitset}]$ is the topological pressure of $\limitmap \colon \limitset \mapping \limitset$ with respect to the potential $\varphi|_{\limitset}$. 

    Since $\potential \in \holderspacesphere$ for some $\holderexp \in (0, 1]$, it follows from \eqref{eq:subsystem Variational Principle} in Theorem~\ref{thm:subsystem characterization of pressure and existence of equilibrium state} and Theorem~\ref{thm:derivative of pressure subsystem} that the function \[
        t \mapsto \fpressure[(\potential + t \gamma)|_{\limitset}]
    \] 
    is differentiable at $0$ for each $\gamma \in \holderspacesphere$.
    Write 
    \[
        W \define \bigl\{ \psi|_{\limitset} \in \holderspace[][][\limitset] \describe \psi \in \holderspacesphere \bigr\}.
    \] 
    By Lemma~\ref{lem:holder space dense in continuous space}, $W$ is a dense subset of $C(\limitset)$ with respect to the uniform norm.
    In particular, $W$ is a dense subset of $C(\limitset)$ in the weak topology.
    We note that the topological pressure function $\fpressure[\cdot] \colon C(\limitset) \mapping \real$ is convex and continuous (see for example, \cite[Theorem~3.6.1 and Theorem~3.6.2]{przytycki2010conformal}). 
    Thus by Theorem~\ref{thm:tengent} with $V = C(\limitset)$, $x = \phi$, $U = W$, and $Q = \fpressure[\cdot]$, we get $\card[\big]{ \tangent{C(\limitset)}{\phi}{\fpressure[\cdot]} } = 1$.

    On the other hand, if $\mu \in \mathcal{M}(\limitset, \limitmap)$ is an equilibrium state for $\limitmap$ and $\potential|_{\limitset}$, then by \eqref{eq:def:measure-theoretic pressure} and \eqref{eq:Variational Principle for pressure},\[
        h_{\mu}(\limitmap) + \int \! \potential \,\mathrm{d}\mu = \fpressure[\potential|_{\limitset}],
    \]  
    and for each $\gamma \in C(\limitset)$,
    \[
        h_{\mu}(\limitmap) + \int \! (\potential + \gamma) \,\mathrm{d}\mu \leqslant \fpressure[(\potential + \gamma)|_{\limitset}].
    \]
    Thus $\int \! \gamma \,\mathrm{d}\mu \leqslant \fpressure[(\potential + \gamma)|_{\limitset}] - \fpressure[\potential|_{\limitset}]$. 
    Then by \eqref{eq:def:tangent}, the continuous functional $\gamma \mapsto \int \! \gamma \,\mathrm{d}\mu$ on $C(\limitset)$ is in $\tangent{C(\limitset)}{\potential}{\fpressure[\cdot]}$. Since $\equstate$ defined in Theorem~\ref{thm:existence of f invariant Gibbs measure} is an equilibrium state for $\limitmap$ and $\potential|_{\limitset}$, and $\card[\big]{ \tangent{C(\limitset)}{\phi}{\fpressure[\cdot]} } = 1$, we get that each equilibrium state $\mu$ for $\limitmap$ and $\potential|_{\limitset}$ must satisfy $\int \! \gamma \,\mathrm{d}\mu = \int \! \gamma \,\mathrm{d}\equstate$ for $\gamma \in C(\limitset)$, i.e., $\mu = \equstate$.

    Finally, it follows from Theorem~\ref{thm:subsystem characterization of pressure and existence of equilibrium state} that the map $\limitmap$ is forward quasi-invariant and non-singular with respect to $\equstate$.
\end{proof}

\begin{rmk}
    Let $f$, $\mathcal{C}$, $F$, $d$, $\potential$ satisfy the Assumptions in Section~\ref{sec:The Assumptions}. 
    We assume in addition that $f(\mathcal{C}) \subseteq \mathcal{C}$ and $F \in \subsystem$ is strongly primitive.
    Since the entropy map $\mu \mapsto h_\mu(\limitmap)$ for $\limitmap \colon \limitset \mapping \limitset$ is \defn{affine} (see for example, \cite[Theorem~8.1]{walters1982introduction}), i.e., if $\juxtapose{\mu}{\nu} \in \invmea[\limitset][\limitmap]$ and $p \in [0, 1]$, then $h_{p\mu + (1 - p)\nu}(\limitmap) = p h_{\mu}(\limitmap) + (1 - p) h_{\nu}(\limitmap)$, so is the pressure map $\mu \mapsto P_{\mu}(\limitmap, \potential|_{\limitset})$ for $\limitmap$ and $\potential|_{\limitset}$. 
    Thus, the uniqueness of the equilibrium state $\equstate$ and the Variational Principle~\eqref{eq:Variational Principle for pressure} imply that $\equstate$ is an extreme point of the convex set $\invmea[\limitset][\limitmap]$. 
    It follows from the fact (see for example, \cite[Theorem~2.2.8]{przytycki2010conformal}) that the extreme points of $\invmea[\limitset][\limitmap]$ are exactly the ergodic measures in $\invmea[\limitset][\limitmap]$ that $\equstate$ is ergodic. 
    However, we are going to prove a much stronger ergodic property of $\equstate$ in Section~\ref{sec:Ergodic Properties}.
\end{rmk}

Theorem~\ref{thm:subsystem:uniqueness of equilibrium state} implies in particular that there exists a unique equilibrium state $\mu_{\potential}$ for each expanding Thurston map $f \colon S^2 \mapping S^2$ together with a real-valued \holder continuous potential $\potential$.
\begin{corollary}    \label{coro:existence and uniqueness of equilibrium state for expanding Thurston map}
    Let $f$, $d$, $\potential$ satisfy the Assumptions in Section~\ref{sec:The Assumptions}.
    Then there exists a unique equilibrium state $\mu_{\potential}$ for the map $f$ and the potential $\potential$.
\end{corollary}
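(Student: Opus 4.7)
The idea is to recover existence and uniqueness of the equilibrium state for $(f, \phi)$ from Theorem~\ref{thm:subsystem:uniqueness of equilibrium state} by realizing a suitable iterate $f^n$ as the \emph{total} subsystem of itself, and then to descend from $(f^n, S_n \phi)$ to $(f, \phi)$ by the classical correspondence of equilibrium states under Birkhoff summation. The catch is that the theorem requires an invariant Jordan curve, which in general only exists for some iterate.

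First I would invoke Lemma~\ref{lem:invariant_Jordan_curve} with $f$ and any Jordan curve $\widetilde{\mathcal{C}} \supseteq \post{f}$ to obtain an integer $n \in \n$ and an $f^n$-invariant Jordan curve $\mathcal{C}$ containing $\post{f^n} = \post{f}$. Since $f^n$ is again an expanding Thurston map and $d$ is a visual metric for $f^n$ with expansion factor $\Lambda^n > 1$, the Assumptions in Section~\ref{sec:The Assumptions} (with $f$ replaced by $f^n$) are satisfied by $f^n$, $\mathcal{C}$, and $d$, together with $f^n(\mathcal{C}) \subseteq \mathcal{C}$. I then define $F$ to be the subsystem of $f^n$ with respect to $\mathcal{C}$ obtained by choosing $\mathfrak{X} = \mathbf{X}^1(f^n, \mathcal{C})$, so that $\domF = S^2$, $F = f^n$, and $\limitset(F, \mathcal{C}) = S^2$.

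The first nontrivial task is to check strong primitivity. Fix any pair $(\colour, \colour') \in \{\black, \white\}^2$. Pick $x \in \inte(X^0_{\colour'})$ and an open ball $B$ around $x$ with $\overline{B} \subseteq \inte(X^0_{\colour'})$. By expansion (Definition~\ref{def:expanding_Thurston_maps}), for $k$ larger than some threshold $n_F = n_F(F, \mathcal{C}, d)$ every $k$-tile of $f^n$ meeting $B$ is contained in $\inte(X^0_{\colour'})$; since any neighborhood of any point meets $k$-tiles of both colors (adjacent $k$-tiles having opposite colors), we obtain for all such $k$ a $k$-tile of $F$ of color $\colour$ inside $\inte(X^0_{\colour'})$, verifying Definition~\ref{def:primitivity of subsystem}. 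Setting $\psi \define S_n^f \phi = \sum_{i=0}^{n-1} \phi \circ f^i$, a standard argument based on Lemma~\ref{lem:basic_distortion} and properties of visual metrics (see \cite[Chapter~8]{bonk2017expanding}) shows $\psi \in C^{0, \holderexp'}(S^2, d)$ for some $\holderexp' \in (0, \holderexp]$. Theorem~\ref{thm:subsystem:uniqueness of equilibrium state} then produces a unique equilibrium state $\mu$ for $F|_{\limitset(F, \mathcal{C})} = f^n$ and $\psi$.

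Finally I would descend from $f^n$ to $f$ by the classical identities $h_\nu(f^n) = n\, h_\nu(f)$ and $\int \! \psi \,\mathrm{d}\nu = n \int \! \phi \,\mathrm{d}\nu$ for every $\nu \in \mathcal{M}(S^2, f)$, together with $P(f^n, \psi) = n P(f, \phi)$; these imply that equilibrium states for $(f, \phi)$ are exactly the $f$-invariant equilibrium states for $(f^n, \psi)$. To see that $\mu$ itself is $f$-invariant, I note that $f_* \mu$ is $f^n$-invariant (since $f$ commutes with $f^n$), and that $\psi \circ f - \psi = \phi \circ f^n - \phi$ integrates to $0$ against $\mu$ by $f^n$-invariance, so $f_* \mu$ realizes the same measure-theoretic pressure as $\mu$ and is therefore another equilibrium state for $(f^n, \psi)$. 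Uniqueness in Theorem~\ref{thm:subsystem:uniqueness of equilibrium state} forces $f_* \mu = \mu$, so $\mu \in \mathcal{M}(S^2, f)$ and is the unique equilibrium state for $(f, \phi)$. The main obstacle is the strong primitivity verification in Step~2 (ensuring the threshold $n_F$ can be chosen uniformly in the color pair); the Hölder regularity of $\psi$ and the iterate reduction are routine.
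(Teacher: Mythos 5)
Your argument is essentially the same as the paper's: reduce to an iterate $f^n$ with an $f^n$-invariant Jordan curve via Lemma~\ref{lem:invariant_Jordan_curve}, realize $f^n$ as the full strongly primitive subsystem of itself with $\limitset = S^2$, apply Theorem~\ref{thm:subsystem:uniqueness of equilibrium state} to $(f^n, S_n^f\potential)$, and descend via the standard identities $h_\mu(f^n) = n h_\mu(f)$ and $P(f^n, S_n^f\potential) = n P(f, \potential)$. The one place you diverge is in establishing $f$-invariance: the paper constructs the averaged measure $\mu \define \frac{1}{n}\sum_{i=0}^{n-1} f^i_* \equstate[\Phi]$, which is automatically $f$-invariant and then verified to be an equilibrium state for $(f, \potential)$, whereas you instead show that $f_*\equstate[\Phi] = \equstate[\Phi]$ directly by uniqueness. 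Your route is slightly cleaner conceptually, but it silently uses that $h_{f_*\equstate[\Phi]}(f^n) = h_{\equstate[\Phi]}(f^n)$; this is true (because $f_*\equstate[\Phi]$ is a factor of $\equstate[\Phi]$ via $f$, while $\equstate[\Phi] = (f^{n-1})_*(f_*\equstate[\Phi])$ is a factor of $f_*\equstate[\Phi]$ via $f^{n-1}$, giving both inequalities) but deserves a sentence, since your phrase ``realizes the same measure-theoretic pressure'' requires the entropy and not just the $\int \psi\,\mathrm{d}\mu$ contribution to match. Also, a minor polish: the paper cites \cite[Lemma~5.10]{li2018equilibrium} for strong primitivity instead of re-deriving it, and $S_n^f\potential$ remains $\holderexp$-H\"older (not merely $\holderexp'$-H\"older for a smaller exponent) since $f$ is Lipschitz with respect to any visual metric.
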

\begin{proof}
    By Lemma~\ref{lem:invariant_Jordan_curve} we can find a sufficiently high iterate $F \define f^n$ of $f$ for some $n \in \n$ that has an $F$-invariant Jordan curve $\mathcal{C} \subseteq S^2$ with $\post{F} = \post{f} \subseteq \mathcal{C}$. 
    Then $F$ is also an expanding Thurston map by Remark~\ref{rem:Expansion_is_independent}.
    In particular, by \cite[Lemma~5.10]{li2018equilibrium} and Definition~\ref{def:primitivity of subsystem}, $F$ is a strongly primitive subsystem of $F$ with respect to $\mathcal{C}$ and $\limitset(F, \mathcal{C}) = S^2$.

    Denote $\Phi \define S_{n}^{f} \potential$. 
    By Theorem~\ref{thm:subsystem:uniqueness of equilibrium state} there exists a unique equilibrium state $\equstate[\Phi] \in \mathcal{M}(S^2, F)$ for the map $F$ and the potential $\Phi$. 
    Note that $\equstate[\Phi]$ is $F$-invariant. 
    Set 
    \[
        \mu \define \frac{1}{n} \sum_{i = 0}^{n - 1} f^{i}_{*} \equstate[\Phi] \in \mathcal{M}(S^2, f).
    \]
    Then we get $n h_{\mu}(f) = h_{\equstate[\Phi]}(F)$ (see for example, \cite[Lemma~5.2]{shi2024entropy}) and
    \[
        \int \! \potential \,\mathrm{d}\mu = \frac{1}{n} \int \sum_{i = 0}^{n - 1} \potential \,\mathrm{d}f_{*}^{i} \equstate[\Phi]
        = \frac{1}{n} \int \sum_{i = 0}^{n - 1} \potential \circ f^{i} \,\mathrm{d}\equstate[\Phi]
        = \frac{1}{n} \int \! \Phi \,\mathrm{d}\equstate[\Phi].
    \]
    Noting that $P(F, \Phi) = n P(f, \potential)$ (recall \eqref{eq:def:topological pressure}), we obtain
    \[
         h_{\mu}(f) + \int \! \phi \,\mathrm{d}\mu
        = \frac{1}{n} \biggl( h_{\equstate[\Phi]}(F) + \int \! \Phi \,\mathrm{d}\equstate[\Phi] \biggr)
        = \frac{1}{n} P(F, \Phi) = P(f, \potential).
    \]
    Thus $\mu$ is an equilibrium state for the map $f$ and the potential $\potential$.

    Now we prove the uniqueness. Suppose $\nu$ is an equilibrium state for the map $f$ and the potential $\potential$. By similar arguments as above one sees that $\nu$ is an equilibrium state for the map $F$ and the potential $\Phi$. Then it follows from the uniqueness part of Theorem~\ref{thm:subsystem:uniqueness of equilibrium state} that $\nu = \equstate[\Phi]$.
\end{proof} 

\section{Ergodic Properties}
\label{sec:Ergodic Properties}

In this section, we show in Theorem~\ref{thm:subsystem exact with respect to equilibrium state} that if $f$, $\mathcal{C}$, $F$, $d$, and $\potential$ satisfied the Assumptions in Section~\ref{sec:The Assumptions}, $f(\mathcal{C}) \subseteq \mathcal{C}$, and $F \in \subsystem$ is strongly primitive, then the measure-preserving transformation $F|_{\limitset}$ of the probability space $(\limitset, \equstate)$ is exact (see Definition~\ref{def:exact}), and as an immediate consequence, mixing (see Corollary~\ref{coro:subsystem mixing}). 
Another consequence of Theorem~\ref{thm:subsystem exact with respect to equilibrium state} is that $\equstate$ is non-atomic (see Corollary~\ref{coro:subsystem equilibrium state non-atomic}).

\smallskip

For each Borel measure $\mu$ on a compact metric space $(X, d)$, we denote by $\overline{\mu}$ the \emph{completion} of $\mu$, i.e., $\overline{\mu}$ is the unique measure defined on the smallest $\sigma$-algebra $\overline{\mathcal{B}}$ containing all Borel sets and all subsets of $\mu$-null sets, satisfying $\overline{\mu}(E) = \mu(E)$ for each Borel set $E \subseteq X$.

\begin{definition}    \label{def:exact}
    Let $T \colon X \mapping X$ be a measure-preserving transformation of a probability space $(X, \mu)$. 
    Then $T$ is called \emph{exact} if for every measurable set $E$ with $\mu(E) > 0$ and measurable images $T(E), \, T^2(E), \, \dots$, the following holds:
    \begin{equation*}
        \lim\limits_{n \to +\infty} \mu (T^n(E)) = 1.
    \end{equation*}
\end{definition}

\begin{remark}\label{rem:subsystem preserve Borel sets}
    Note that in Definition~\ref{def:exact}, we do not require $\mu$ to be a Borel measure. 
    In the case when $F \in \subsystem$ is a subsystem of some expanding Thurston map $f$ with respect to some Jordan curve $\mathcal{C} \subseteq S^2$ containing $\post{f}$ and $\mu$ is a Borel measure on $\limitset \define \limitset(F, \mathcal{C})$, the set $(F|_{\limitset})^{n}(E)$ is a Borel set for each $n \in \n$ and each Borel subset $E \subseteq \limitset$. 
    Indeed, a Borel set $E \subseteq \limitset$ can be covered by $n$-tiles in the cell decompositions of $S^2$ induced by $f$ and $\mathcal{C}$. 
    For each $n$-tile $X \in \Tile{n}$, the restriction $f^n|_{X}$ of $f^n$ to $X$ is a homeomorphism from the closed set $X$ onto $f^n(X)$ by \cite[Proposition~5.16~(i)]{bonk2017expanding}. 
    Thus the set $f^n(E)$ is Borel. 
    Recall from Subsection~\ref{sub:Subsystems of expanding Thurston maps} that $F|_{\limitset} = f|_{\limitset}$ and $F(\limitset) \subseteq \limitset$.
    It is then clear that the set $(F|_{\limitset})^{n}(E)$ is also Borel.
\end{remark}

We now prove that the measure-preserving transformation $\limitmap \define F|_{\limitset}$ of the probability space $(\limitset, \equstate)$ is exact. 
We follow the conventions discussed in Remarks~\ref{rem:disjoint union} and \ref{rem:probability measure in split setting}. 

\begin{theorem}   \label{thm:subsystem exact with respect to equilibrium state}
    Let $f$, $\mathcal{C}$, $F$, $d$, $\potential$ satisfy the Assumptions in Section~\ref{sec:The Assumptions}. 
    We assume in addition that $f(\mathcal{C}) \subseteq \mathcal{C}$ and $F \in \subsystem$ is strongly primitive.
    Denote $\limitmap \define F|_{\limitmap}$.
    Let $\equstate$ be the unique equilibrium state for $\limitmap$ and $\potential|_{\limitset}$, and $\completionequstate$ its completion.
    Then the measure-preserving transformation $\limitmap$ of the probability space $(\limitset, \equstate)$ (\resp $(\limitset, \completionequstate)$) is exact.
\end{theorem}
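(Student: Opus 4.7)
The plan is to prove exactness by combining a Lebesgue-type density point argument along the filtration by $n$-tiles of $F$ with a bounded-distortion consequence of the Gibbs property, strong primitivity, and the monotonicity $\equstate(F^{n-1}(E)) \leqslant \equstate(F^n(E))$, which follows from $F$-invariance of $\equstate$ together with the inclusion $F^{n-1}(E) \subseteq F^{-1}(F^n(E))$.

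The first step is to record a bounded-distortion property for $\equstate$. By Theorem~\ref{thm:existence of f invariant Gibbs measure}, $\equstate$ is a Gibbs measure with respect to $F$, $\mathcal{C}$, and $\phi$, with $P_{\equstate} = \pressure$. Combining the Gibbs inequality \eqref{eq:def:subsystem gibbs measure} with the distortion estimate \eqref{eq:distortion_lemma} of Lemma~\ref{lem:distortion_lemma}, I obtain a constant $D \geqslant 1$ such that for every $n \in \n_0$, every $X^n \in \Domain{n}$, and every Borel set $A \subseteq X^n$,
\[
    D^{-1} \, \frac{\equstate(A)}{\equstate(X^n)} \leqslant \frac{\equstate(F^n(A))}{\equstate(F^n(X^n))} \leqslant D \, \frac{\equstate(A)}{\equstate(X^n)},
\]
where $F^n|_{X^n}$ is a homeomorphism onto the $0$-tile $F^n(X^n)$ by Proposition~\ref{prop:subsystem:preliminary properties}~\ref{item:subsystem:properties:homeo}.

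Next, given a Borel set $E \subseteq \limitset$ with $\equstate(E) > 0$, I will apply a density-point argument. The $\sigma$-algebras generated by $\Domain{n}$ form an increasing filtration whose atoms have diameters tending to $0$ by expansion, and $\equstate\bigl(\bigcup_{j\geqslant 0} f^{-j}(\mathcal{C})\bigr) = 0$ by Theorem~\ref{thm:subsystem:eigenmeasure existence and basic properties}~\ref{item:thm:subsystem:eigenmeasure existence and basic properties:strongly irreducible edge zero measure}. Hence the martingale convergence theorem yields, for each prescribed $\epsilon > 0$, a density point $x \in E \setminus \bigcup_{j \geqslant 0} f^{-j}(\mathcal{C})$ and an integer $n \in \n$ such that the unique $n$-tile $X^n(x) \in \Domain{n}$ containing $x$ satisfies $\equstate(X^n(x) \setminus E) \leqslant \epsilon \, \equstate(X^n(x))$. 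Letting $c \in \colours$ denote the colour with $F^n(X^n(x)) = X^0_{c}$, the bounded-distortion bound then gives $\equstate\bigl( X^0_{c} \setminus F^n(E) \bigr) \leqslant D \epsilon \, \equstate(X^0_{c})$.

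Finally, strong primitivity propagates this density to both $0$-tiles. By Definition~\ref{def:primitivity of subsystem}, for each $c' \in \colours$ there exists an $n_F$-tile $Y_{c'} \in \ccFTile{n_F}{c'}{c}$ with $Y_{c'} \subseteq \inte{X^0_{c}}$ and $F^{n_F}(Y_{c'}) = X^0_{c'}$; the Gibbs property gives $\equstate(Y_{c'}) \geqslant \delta$ for some constant $\delta > 0$ independent of $n$ and of the chosen $X^n(x)$. Hence $\equstate(Y_{c'} \setminus F^n(E)) \leqslant D \delta^{-1} \epsilon \, \equstate(Y_{c'})$, and applying bounded distortion to $F^{n_F}|_{Y_{c'}}$ once more yields $\equstate\bigl( X^0_{c'} \setminus F^{n + n_F}(E) \bigr) \leqslant D^2 \delta^{-1} \epsilon \, \equstate(X^0_{c'})$. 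Summing over $c' \in \colours$ and using $\equstate(\mathcal{C}) = 0$, I obtain $\equstate(F^{n + n_F}(E)) \geqslant 1 - D^2 \delta^{-1} \epsilon$; since $\epsilon$ is arbitrary and $n \mapsto \equstate(F^n(E))$ is non-decreasing, this yields $\equstate(F^n(E)) \to 1$. The completion case reduces to the Borel case by approximating each $\completionequstate$-measurable set by a Borel subset from within and invoking the forward quasi-invariance of $\limitmap$ from Theorem~\ref{thm:subsystem characterization of pressure and existence of equilibrium state} to ensure that images of $\equstate$-null sets remain $\completionequstate$-null. The main obstacle is controlling the iterated distortion constants, which is overcome by choosing $\epsilon$ sufficiently small at the density-point step; a secondary, minor point is that the filtration by tiles is defined on $S^2$ rather than on $\limitset$, but this is harmless since $\equstate$ is concentrated on $\limitset$ and the atoms shrink to points.
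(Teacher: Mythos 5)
Your proposal is correct in outline and reaches the same conclusion by a route that overlaps with the paper's in its key ingredients (strong primitivity, the Gibbs/Jacobian estimate for the reference measure, and the tile structure of $F$) but differs in two genuine respects. First, to locate an $n$-tile on which $E$ has density close to $1$, you invoke martingale convergence along the filtration generated by $\set{\Domain{n}}_{n}$, whereas the paper uses inner/outer regularity of $\eigmea$ to squeeze a compact $K \subseteq A$ between an open set and then applies a pigeonhole over the family of $n$-tiles meeting $K$; both are standard and interchangeable here, though the martingale approach requires you to check that the tile filtration increases to the Borel $\sigma$-algebra on $\limitset$ modulo $\equstate$-null sets (which holds because tile diameters tend to $0$ and $\eigmea\bigl(\bigcup_{j} f^{-j}(\mathcal{C})\bigr) = 0$ by Theorem~\ref{thm:subsystem:eigenmeasure existence and basic properties}~\ref{item:thm:subsystem:eigenmeasure existence and basic properties:strongly irreducible edge zero measure}). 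Second, after pushing the high density of $E$ forward to the $0$-tile $X^0_c = F^n(X^n(x))$, the paper establishes the set identity $\limitset = F^{n+n_F}\bigl(Y^n \cap \limitset\bigr)$ and then applies the Jacobian once more to bound $\eigmea(\limitset \setminus \limitmap^{n+n_F}(K))$, while you propagate the density estimate to both $0$-tiles measure-theoretically through the tiles $Y_{c'}$ provided by strong primitivity; this is a valid alternative and arguably a bit cleaner because it avoids the explicit combinatorial verification of the covering identity. Finally, you also work directly with $\equstate$ rather than reducing to $\eigmea$, which is fine since the density $\eigfun$ is bounded away from $0$ and $\infty$.

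Two small points of rigor worth tightening. Your bounded-distortion inequality for arbitrary Borel $A \subseteq X^n$ cannot be derived from the Gibbs inequality \eqref{eq:def:subsystem gibbs measure} alone (that controls only measures of tiles); you should instead iterate the Jacobian formula of Theorem~\ref{thm:subsystem:eigenmeasure existence and basic properties}~\ref{item:thm:subsystem:eigenmeasure existence and basic properties:Jacobian} to write $\eigmea(F^n(A)) = \int_A e^{n\pressure - S_n^F\phi}\,\mathrm{d}\eigmea$ and then apply Lemma~\ref{lem:distortion_lemma}, after which the transfer to $\equstate$ via \eqref{eq:two sides bounds for eigenfunction} gives the claimed two-sided bound. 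Also, when invoking the martingale argument you should note that a density point can be taken outside $\bigcup_{j\geqslant 0} f^{-j}(\mathcal{C})$ and inside $\limitset$, so that the $n$-tile containing it is well-defined and lies in $\Domain{n}$ for every $n$; and the measurability of $\limitmap^{n}(E)$, which you implicitly use, is settled by Remark~\ref{rem:subsystem preserve Borel sets}. These are minor fixes and do not affect the validity of the argument.
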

\begin{proof}
    \def\set{\limitset}  \def\map{\limitmap}
    Recall from Theorems~\ref{thm:subsystem characterization of pressure and existence of equilibrium state} and~\ref{thm:existence of f invariant Gibbs measure} that $\equstate = \splmea = \eigfun \spleigmea$, where $\eigmea = \spleigmea$ is an eigenmeasure of $\dualsplopt$ from Theorem~\ref{thm:subsystem:eigenmeasure existence and basic properties} and $\eigfun$ is an eigenfunction of $\splopt$ from Theorem~\ref{thm:existence of f invariant Gibbs measure}.
    Then by \eqref{eq:two sides bounds for eigenfunction} in Theorem~\ref{thm:existence of f invariant Gibbs measure}, it suffices to prove that 
    \begin{equation*}
        \lim_{n \to +\infty} \eigmea(\set \setminus \map^{n}(A))  = 0
    \end{equation*}
    for each Borel set $A \subseteq \set$ with $\eigmea(A) > 0$.
    We follow the conventions discussed in Remark~\ref{rem:invariant measure equivalent} so that $\eigmea \in \probmea{\set} \subseteq \probsphere$.

    \def\tileapproximation{\mathbf{T}^{n}} \def\approximation{T^{n}}
    \def\bnftile{X^{n_{F}}_{\black}}  \def\wnftile{X^{n_{F}}_{\white}}
    Let $A \subseteq \set$ be an arbitrary Borel subset of $\set$ with $\eigmea(A) > 0$.
    Fix an arbitrary $\varepsilon > 0$.
    By the regularity of $\eigmea$ there exists a compact set $K \subseteq A$ and an open set $U \subseteq S^2$ with $K \subseteq A \subseteq U$ and $\eigmea(U \setminus K) < \varepsilon$.
    Since the diameters of tiles approach $0$ uniformly as their levels becomes larger, there exists $N \in \n$ such that for each integer $n \geqslant N$, every $n$-tile that meets $K$ is contained in the open neighborhood $U$ of $K$.
    For each $n \in \n$, we define
    \begin{equation*}
        \tileapproximation \define \{ X^n \in \Domain{n} \describe X^n \cap K \ne \emptyset \} \quad \text{ and } \quad \approximation \define \bigcup \tileapproximation.
    \end{equation*}
    Then for each integer $n \geqslant N$, we have $K \subseteq \approximation \subseteq U$ and $\eigmea(\approximation \setminus A) \leqslant \eigmea(U \setminus K) < \varepsilon$. 
    Thus, it follows from Theorem~\ref{thm:subsystem:eigenmeasure existence and basic properties}~\ref{item:thm:subsystem:eigenmeasure existence and basic properties:strongly irreducible edge zero measure} that $\sum_{X^{n} \in \tileapproximation} \eigmea(X^{n} \setminus K) < \varepsilon$.
    This implies
    \begin{equation}    \label{eq:temp:thm:subsystem exact with respect to equilibrium state:measure tile}
        \frac{ \sum_{X^{n} \in \tileapproximation} \eigmea(X^{n} \setminus K) }{ \sum_{X^{n} \in \tileapproximation} \eigmea(X^{n}) } < \frac{\varepsilon}{\eigmea(K)}.
    \end{equation}
    Hence for each integer $n \geqslant N$, there exists an $n$-tile $Y^{n} \in \tileapproximation$ such that \[
        \frac{ \eigmea(Y^{n} \setminus K) }{\eigmea(Y^{n})} < \frac{\varepsilon}{\eigmea(K)}.
    \]
    By Proposition~\ref{prop:subsystem:preliminary properties}~\ref{item:subsystem:properties:homeo}, the map $F^{n}$ is injective on $Y^{n}$.
    Then it follows from Theorem~\ref{thm:subsystem:eigenmeasure existence and basic properties}~\ref{item:thm:subsystem:eigenmeasure existence and basic properties:Jacobian}, Lemma~\ref{lem:distortion_lemma}, and \eqref{eq:temp:thm:subsystem exact with respect to equilibrium state:measure tile} that 
    \[
        \begin{split}
            \frac{ \eigmea( F^{n}(Y^{n}) \setminus F^{n}(K) ) }{ \eigmea(F^{n}(Y^{n})) } 
            &\leqslant \frac{ \eigmea( F^{n}( Y^{n} \setminus K ) ) }{ \eigmea(F^{n}(Y^{n})) } 
            = \frac{ \int_{ Y^{n} \setminus K } \myexp{-S_{n}\potential} \,\mathrm{d}\eigmea }{ \int_{ Y^{n} } \myexp{-S_{n}\potential} \,\mathrm{d}\eigmea }  \\
            &\leqslant C \frac{ \eigmea(Y^{n} \setminus K) }{\eigmea(Y^{n})} \leqslant \frac{C \varepsilon}{\eigmea(K)},
        \end{split}
    \]
    where $C \define \myexp[\big]{ \Cdistortion }$ and $C_{1} \geqslant 0$ is the constant defined in \eqref{eq:const:C_1} in Lemma~\ref{lem:distortion_lemma} which depends only on $f$, $\mathcal{C}$, $d$, $\phi$, and $\holderexp$.
    Let $n_{F} \in \n$ be the constant from Definition~\ref{def:primitivity of subsystem}, which depends only on $f$ and $\mathcal{C}$.
    Note that it follows from Proposition~\ref{prop:subsystem:preliminary properties}~\ref{item:subsystem:properties:homeo} that $F^{n}(Y^{n}) = X^{0}_{\colour}$ for some $\colour \in \colours$.
    Since $F$ is strongly primitive, by Lemma~\ref{lem:strongly primitive:tile in interior tile for high enough level}, there exist $\bnftile \in \ccFTile{n_{F}}{\black}{\colour}$ and $\wnftile \in \ccFTile{n_{F}}{\white}{\colour}$ such that $\bnftile \cup \wnftile \subseteq X^{0}_{\colour} = F^{n}(Y^{n})$.
    We claim that
    \begin{equation} \label{eq:temp:thm:subsystem exact with respect to equilibrium state:bound set by tile} 
        \set = F^{n + n_{F}}(Y^{n} \cap \set).
    \end{equation}
    
    Indeed, since $F(\set) \subseteq \set$ (recall Proposition~\ref{prop:subsystem:preliminary properties}~\ref{item:subsystem:properties:limitset forward invariant}), it suffices to show that $\set \subseteq F^{n + n_{F}}(Y^{n} \cap \set)$.
    For each $x \in \limitset$, by \eqref{eq:def:limitset}, there exists a sequence of tiles $\sequen[\big]{X^k}[k]$ such that $\{x\} = \bigcap_{k \in \n} X^{k}$ and $X^k \in \Domain{k}$ for each $k \in \n$.
    By Proposition~\ref{prop:cell decomposition: invariant Jordan curve}, we may assume without loss of generality that $X^k \subseteq X^0_{\black}$ for each $k \in \n$.
    Since $F$ is strongly primitive, by Lemma~\ref{lem:strongly primitive:tile in interior tile for high enough level}, there exists $X^{n + n_{F}}_{\black} \in \bFTile{n + n_{F}}$ such that $X^{n + n_{F}}_{\black} \subseteq Y^n$ and $F^{n + n_{F}}\bigl(X^{n + n_{F}}_{\black}\bigl) = X^0_{\black}$.
    Then it follows from Proposition~\ref{prop:subsystem:preliminary properties}~\ref{item:subsystem:properties:homeo} and Lemma~\ref{lem:cell mapping properties of Thurston map}~\ref{item:lem:cell mapping properties of Thurston map:i} that 
    $Y^{k + n + n_{F}} \define \bigl( F^{n + n_{F}}|_{X^{n + n_{F}}_{\black}} \bigr)^{-1} \bigl( X^{k} \bigr) \in \Domain{k + n + n_{F}}$ for each $k \in \n$. 
    Set $y \define \bigl( F^{n + n_{F}}|_{X^{n + n_{F}}_{\black}} \bigr)^{-1}(x)$.
    Note that $y \in Y^{k + n + n_{F}} \subseteq Y^{n}$ for each $k \in \n$.
    Thus by \eqref{eq:def:limitset} and Proposition~\ref{prop:subsystem:preliminary properties}~\ref{item:subsystem:properties:properties invariant Jordan curve:decreasing relation of domains}, we conclude that $y \in Y^n \cap \set$ and \eqref{eq:temp:thm:subsystem exact with respect to equilibrium state:bound set by tile} holds.

    By \eqref{eq:temp:thm:subsystem exact with respect to equilibrium state:bound set by tile}, we get
    \[
        \begin{split}
            \set \setminus \map^{n + n_{F}}(K) 
            &= F^{n + n_{F}}(Y^{n} \cap \set) \setminus F^{n + n_{F}}(K) \subseteq F^{n_{F}} ( F^{n}(Y^{n} \cap \set) \setminus F^{n}(K) )  \\
            &\subseteq F^{n_{F}} ( ( F^{n}(Y^{n}) \cap \set ) \setminus F^{n}(K) ) = F^{n_{F}} ( (F^{n}(Y^{n}) \setminus F^{n}(K)) \cap \set ).
        \end{split}
    \] 
    Hence, by Theorem~\ref{thm:subsystem:eigenmeasure existence and basic properties}~\ref{item:thm:subsystem:eigenmeasure existence and basic properties:Jacobian} and \ref{item:thm:subsystem:eigenmeasure existence and basic properties:Jacobian:Gibbs property}, for each integer $n \geqslant N$,
    \[
        \begin{split}
            \eigmea( \set \setminus \map^{n + n_{F}}(K) )
            &\leqslant \eigmea( F^{n_{F}} ( (F^{n}(Y^{n}) \setminus F^{n}(K)) \cap \set ) ) \\
            &\leqslant \int_{F^{n}(Y^{n}) \setminus F^{n}(K)} \myexp{ n_{F} P(F, \potential) - S_{n_{F}}\potential } \,\mathrm{d} \eigmea  \\
            &\leqslant \myexp{ n_{F}P(F, \potential) + n_{F}\uniformnorm{\potential} } C \varepsilon / \eigmea(K).
        \end{split}
    \]
    Since $\varepsilon > 0$ was arbitrary, we get $\lim_{n \to +\infty} \eigmea \parentheses[\big]{ \set \setminus \map^{n + n_{F}}(K) }  = 0$.
    This implies
    \[
        \lim_{n \to +\infty} \eigmea(\map^{n}(A)) \geqslant \lim_{n \to +\infty} \eigmea(\map^{n}(K)) = 1.
    \]
    Hence the measure-preserving transformation $\map$ of the probability space $(\set, \equstate)$ is exact.

    Next, we observe that since $\map$ is $\equstate$-measurable, and is a non-singular measure-preserving transformation of the probability space $(\set, \equstate)$ by Theorem~\ref{thm:subsystem:uniqueness of equilibrium state}, it follows that $\map$ is also $\completionequstate$-measurable, and is a measure-preserving transformation of the probability space $(\set, \completionequstate)$.

    To prove that the measure-preserving transformation $\set$ of the probability space $(\set, \completionequstate)$ is exact, we consider a $\completionequstate$-measurable set $B \subseteq \set$ with $\completionequstate(B) > 0$.
    Since $\completionequstate \in \probmea{\set}$ is the completion of the Borel probability measure $\equstate \in \probmea{\set}$, we can choose Borel subsets $A$ and $C$ of $\set$ such that $A \subseteq B \subseteq C \subseteq \set$ and $\completionequstate(B) = \completionequstate(A) = \completionequstate(C) = \equstate(A) = \equstate(C)$.
    For each $n \in \n$, we have $\map^{n}(A) \subseteq \map^{n}(B) \subseteq \map^{n}(C)$, and both $\map^{n}(A)$ and $\map^{n}(C)$ are Borel sets by Remark~\ref{rem:subsystem preserve Borel sets}.
    Since $\map$ is forward quasi-invariant with respect to $\equstate$ by Theorem~\ref{thm:subsystem:uniqueness of equilibrium state}, it follows that $\equstate(\map^{n}(A)) = \equstate(\map^{n}(C))$.
    Thus
    \[
        \equstate(\map^{n}(A)) = \completionequstate(\map^{n}(A)) = \completionequstate(\map^{n}(B)) = \completionequstate(\map^{n}(C)) = \equstate(\map^{n}(C)).
    \]
    Therefore, $\lim_{n \to +\infty} \completionequstate(\map^{n}(B)) = \lim_{n \to +\infty} \equstate(\map^{n}(A)) = 1$.
\end{proof}

The following corollary strengthens \cite[Theorem~6.16~(ii)]{shi2024thermodynamic} for strongly primitive subsystems.

\begin{corollary}    \label{coro:subsystem equilibrium state non-atomic}
    Let $f$, $\mathcal{C}$, $F$, $d$, $\potential$ satisfy the Assumptions in Section~\ref{sec:The Assumptions}. 
    We assume in addition that $f(\mathcal{C}) \subseteq \mathcal{C}$ and $F \in \subsystem$ is strongly primitive. 
    Let $\equstate$ be the unique equilibrium state for $\limitmap$ and $\potential|_{\limitset}$, and $\eigmea$ be as in Proposition~\ref{prop:uniqueness of eigenmeasure subsystem}.
    Then both $\equstate$ and $\eigmea$ as well as their corresponding completions are non-atomic.
\end{corollary}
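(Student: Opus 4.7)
I will argue by contradiction, assuming $\equstate(\{y\}) = a > 0$ for some $y \in \limitset$. Since $\equstate$ is $\limitmap$-invariant, we have $\equstate(\{F^{n+1}(y)\}) = \equstate(\limitmap^{-1}(\{F^{n+1}(y)\})) \geqslant \equstate(\{F^{n}(y)\})$, and induction yields $\equstate(\{F^{n}(y)\}) \geqslant a$ for every $n \in \n_{0}$. As distinct singletons are pairwise disjoint and $\equstate(\limitset) = 1$, the forward orbit of $y$ contains at most $\lfloor 1/a \rfloor$ distinct elements, hence is eventually periodic. Consequently there is a finite set $B \subseteq \limitset$ with $F(B) = B$ (so that $\limitmap^{n}(B) = B$ for every $n \in \n_{0}$) and $\equstate(B) \geqslant a > 0$.

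I then invoke Theorem~\ref{thm:subsystem exact with respect to equilibrium state}: exactness of $\limitmap$ for $\equstate$ gives $\equstate(B) = \lim_{n \to +\infty} \equstate(\limitmap^{n}(B)) = 1$, so $\equstate$ is supported on the finite set $B$. To contradict this, I will produce an open set $U \subseteq S^{2}$ with $U \cap \limitset \ne \emptyset$ and $U \cap B = \emptyset$, since Theorem~\ref{thm:existence of f invariant Gibbs measure} will then force $\equstate(U) > 0$, contradicting $\supp{\equstate} \subseteq B$. Such a $U$ exists as soon as $\limitset \setminus B \ne \emptyset$, and strong primitivity gives much more: iterating Lemma~\ref{lem:strongly primitive:tile in interior tile for high enough level} assigns to every $\sigma \in \colours^{\n}$ a nested family $\{X^{k n_{F}}_{\sigma}\}_{k \in \n}$ of tiles of $F$ with $X^{(k+1) n_{F}}_{\sigma} \subseteq \inte{X^{k n_{F}}_{\sigma}}$ and the $k$-th tile of color $\sigma(k)$. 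Since the maximal diameter of $n$-tiles tends to $0$ (Definition~\ref{def:expanding_Thurston_maps}), each $\sigma$ determines a unique point $x_{\sigma} \in \limitset$, and two distinct sequences are eventually separated by the disjoint interiors of the two $0$-tiles; hence $\sigma \mapsto x_{\sigma}$ is injective and $\limitset$ is uncountable. Any small open ball around a point of $\limitset \setminus B$ then furnishes the desired $U$.

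The conclusion for the eigenmeasure $\eigmea$ follows immediately from the identity $\equstate = \eigfun \eigmea$ in Theorem~\ref{thm:existence of f invariant Gibbs measure} together with the two-sided bound $\Csplratio^{-1} \leqslant \eigfun \leqslant \Csplratio$ from \eqref{eq:two sides bounds for eigenfunction}: the two measures share the same null sets, hence the same atoms. For the completions $\overline{\equstate}$ and $\overline{\eigmea}$, non-atomicity is automatic because every singleton in $S^{2}$ is Borel and the completion agrees with the underlying measure on Borel sets.

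The conceptual heart of the argument is the passage from ``$\equstate$ has an atom'' to ``$\equstate$ is carried by a finite invariant set,'' which is where exactness (Theorem~\ref{thm:subsystem exact with respect to equilibrium state}) does essentially all of the work; strong primitivity enters only afterwards to certify that $\limitset$ is too large to be exhausted by finitely many points, via the support statement in Theorem~\ref{thm:existence of f invariant Gibbs measure}. I do not foresee any serious technical obstacle beyond assembling these two inputs.
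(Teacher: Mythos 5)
Your proposal is correct, and the overall architecture—exactness forces the measure to concentrate on a small forward-invariant set, and strong primitivity together with the positivity of $\equstate$ on a suitable non-degenerate piece of $\limitset$ yields the contradiction—is the same as the paper's. The differences are mostly in bookkeeping, and it is worth recording them because the paper's route is noticeably shorter at two points. First, the paper avoids the ``eventually periodic orbit, hence finite invariant set $B$'' intermediate step: it observes that $\equstate(\{y\}) \leqslant \max\{\equstate(\{x\}),\, 1-\equstate(\{x\})\}$ for every $y$, so exactness applied to $E=\{x\}$ immediately forces $\equstate(\{x\})=1$, i.e.\ $\equstate=\delta_x$ and $\map(x)=x$. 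You obtain the same conclusion with more work (invariance first, then exactness), which is valid but less economical. Second, to finish, the paper does not need to show $\limitset$ is uncountable: it invokes Lemma~\ref{lem:strongly primitive:tile in interior tile for high enough level} only to produce a single tile $X^n\in\Domain{n}$ with $x\notin X^n$, so that $\equstate(X^n)=0$, contradicting the Gibbs property of $\equstate$ directly. Your coding argument proving uncountability of $\limitset$ is correct (the key observation that $F^{kn_F}(x_\sigma)\in\inte{X^0_{\sigma(k)}}$ gives injectivity), but it is a heavier tool than the task requires; combining it with the full-support statement in Theorem~\ref{thm:existence of f invariant Gibbs measure} (which is itself a consequence of the Gibbs property) reaches the same contradiction. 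Finally, both proofs transfer non-atomicity between $\equstate$ and $\eigmea$ via the two-sided bound on $\eigfun$, and handle the completions in the same way.
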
 
Recall that a measure $\mu$ on a topological space $X$ is called \defn{non-atomic} if $\mu(\{x\}) = 0$ for each $x \in X$.
\begin{proof}
    \def\set{\limitset}  \def\map{\limitmap}
    Recall from Theorems~\ref{thm:subsystem characterization of pressure and existence of equilibrium state} and \ref{thm:existence of f invariant Gibbs measure} that $\equstate = \splmea = \eigfun \spleigmea$, where $\eigmea = \spleigmea$ is an eigenmeasure of $\dualsplopt$ and $\eigfun$ is an eigenfunction of $\splopt$.
    Then by \eqref{eq:two sides bounds for eigenfunction} in Theorem~\ref{thm:existence of f invariant Gibbs measure}, it suffices to prove that $\equstate$ is non-atomic.

    Suppose that there exists a point $x \in \set$ with $\equstate(\{x\}) > 0$, then for each $y \in \set$, we have
    \[
        \equstate(\{y\}) \leqslant \max \{ \equstate(\{x\}), 1 - \equstate(\{x\}) \}.
    \]
    Since the transformation $\map$ of $(\set, \equstate)$ is exact by Theorem~\ref{thm:subsystem exact with respect to equilibrium state}, it follows that $\equstate(\{x\}) = 1$ and $\map(x) = x$.
    By Lemma~\ref{lem:strongly primitive:tile in interior tile for high enough level}, there exist $n \in \n$ and $X^{n} \in \Domain{n}$ such that $x \notin X^{n}$.
    This implies $\equstate(X^{n}) = 0$, which contradicts with the fact that $\equstate$ is a Gibbs measure for $F$, $\mathcal{C}$, and $\potential$ (see Theorem~\ref{thm:existence of f invariant Gibbs measure} and Definition~\ref{def:subsystem gibbs measure}).

    The fact that the completions are non-atomic now follows immediately.
\end{proof}

\begin{remark}\label{rem:Lebesgue space exactness implies mixing}
    Let $f$, $\mathcal{C}$, $F$, $d$, $\potential$ satisfy the Assumptions in Section~\ref{sec:The Assumptions}. 
    We assume in addition that $f(\mathcal{C}) \subseteq \mathcal{C}$ and $F \in \subsystem$ is strongly primitive.
    Let $\equstate$ be the unique equilibrium state for $\limitmap$ and $\potential|_{\limitset}$ from Theorem~\ref{thm:subsystem:uniqueness of equilibrium state}, and $\completionequstate$ its completion. 
    Then by Theorem~2.7 in \cite{rohlin1949fundamental}, the complete separable metric space $(\limitset, d)$ equipped the complete non-atomic measure $\completionequstate$ is a Lebesgue space in the sense of V.~Rokhlin. 
    We omit V.~Rokhlin's definition of a \emph{Lebesgue space} here and refer the reader to \cite[Section~2]{rohlin1949fundamental}, since the only results we will use about Lebesgue spaces are V.~Rokhlin's definition of exactness of a measure-preserving transformation on a Lebesgue space and its implication to the mixing properties. 
    More precisely, in \cite{rohlin1964exact}, V.~Rokhlin gave a definition of exactness for a measure-preserving transformation on a Lebesgue space equipped with a complete non-atomic measure, and showed \cite[Section~2.2]{rohlin1964exact} that in such a context, it is equivalent to our definition of exactness in Definition~\ref{def:exact}. 
    Moreover, he proved \cite[Section~2.6]{rohlin1964exact} that if a measure-preserving transformation on a Lebesgue space equipped with a complete non-atomic measure is exact, then it is \emph{mixing} (he actually proved that it is \emph{mixing of all degrees}, which we will not discuss here).
\end{remark}

It is well-known and easy to see that if $g$ is mixing (recall \eqref{eq:def:mixing}), then it is ergodic (see for example, \cite[Theorem~1.17]{walters1982introduction}).

\begin{corollary}    \label{coro:subsystem mixing}
    Let $f$, $\mathcal{C}$, $F$, $d$, $\potential$ satisfy the Assumptions in Section~\ref{sec:The Assumptions}. 
    We assume in addition that $f(\mathcal{C}) \subseteq \mathcal{C}$ and $F \in \subsystem$ is strongly primitive.
    Let $\equstate$ be the unique equilibrium state for $\limitmap$ and $\potential|_{\limitset}$, and $\completionequstate$ its completion. 
    Then the measure-preserving transformation $\limitmap$ of the probability space $(\limitset, \equstate)$ (\resp $(\limitset, \completionequstate)$) is mixing and ergodic.
\end{corollary}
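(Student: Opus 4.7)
The plan is essentially immediate given the machinery already developed: combine the exactness established in Theorem~\ref{thm:subsystem exact with respect to equilibrium state} with V.~Rokhlin's classical theory of Lebesgue spaces, as summarized in Remark~\ref{rem:Lebesgue space exactness implies mixing}. First I would record that by Corollary~\ref{coro:subsystem equilibrium state non-atomic} the completion $\completionequstate$ is a complete non-atomic Borel probability measure on the compact (in particular, complete separable) metric space $(\limitset, d)$. Applying \cite[Theorem~2.7]{rohlin1949fundamental}, this identifies $(\limitset, \completionequstate)$ as a Lebesgue space in the sense of V.~Rokhlin. Theorem~\ref{thm:subsystem exact with respect to equilibrium state} then supplies exactness of $\limitmap$ in the sense of Definition~\ref{def:exact}, which on such spaces coincides with Rokhlin's notion of exactness by \cite[Section~2.2]{rohlin1964exact}. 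Invoking the result \cite[Section~2.6]{rohlin1964exact} that on a Lebesgue space equipped with a complete non-atomic measure, exactness implies mixing, yields the mixing property of $\limitmap$ on $(\limitset, \completionequstate)$.

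To transfer the conclusion to $(\limitset, \equstate)$, I would note that every pair of Borel subsets $A, B \subseteq \limitset$ is automatically $\completionequstate$-measurable, $\limitmap^{-n}(A) \cap B$ is a Borel set for each $n \in \n$, and the two measures agree on the Borel $\sigma$-algebra; hence \eqref{eq:def:mixing} for $\completionequstate$ specializes directly to \eqref{eq:def:mixing} for $\equstate$. Finally, since mixing implies ergodicity (see, e.g., \cite[Theorem~1.17]{walters1982introduction}), both probability spaces $(\limitset, \equstate)$ and $(\limitset, \completionequstate)$ are mixing and ergodic under $\limitmap$. There is no genuine obstacle here; the whole corollary is a short bookkeeping exercise that assembles Corollary~\ref{coro:subsystem equilibrium state non-atomic}, Theorem~\ref{thm:subsystem exact with respect to equilibrium state}, and the Rokhlin--Walters facts cited in Remark~\ref{rem:Lebesgue space exactness implies mixing}.
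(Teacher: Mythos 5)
Your proposal is correct and follows essentially the same route as the paper: the paper's proof simply cites Remark~\ref{rem:Lebesgue space exactness implies mixing} (which already packages the Rokhlin facts you unfold) to get mixing on $(\limitset, \completionequstate)$, then uses the same observation that Borel sets are $\completionequstate$-measurable and $\equstate$ and $\completionequstate$ agree on them to transfer mixing to $(\limitset, \equstate)$. You spell out the non-atomicity and Lebesgue-space verification slightly more explicitly, but the logical content is identical.
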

\begin{proof}
    By Remark~\ref{rem:Lebesgue space exactness implies mixing}, the measure-preserving transformation $\limitmap$ of $(\limitset, \completionequstate)$ is mixing and thus ergodic. 
    Since any $\equstate$-measurable sets $\juxtapose{A}{B} \subseteq S^2$ are also $\completionequstate$-measurable, the measure-preserving transformation $\limitmap$ of $(\limitset, \equstate)$ is also mixing and ergodic.
\end{proof}

\begin{definition}    \label{def:topological transitive and mixing}
  Let $T \colon X \mapping X$ be a continuous map on a topological space $X$.
  We say $T \colon X \mapping X$ is \emph{topologically transitive} if for any non-empty open subsets $\juxtapose{U}{V}$ of $X$, there exists $n \in \n_0$ such that $T^{n}(U) \cap V \ne \emptyset$.
  We say $T \colon X \mapping X$ is \emph{topologically mixing} if for any non-empty open subsets $\juxtapose{U}{V}$ of $X$, there exists $N \in \n_0$ such that $T^{n}(U) \cap V \ne \emptyset$ for any integer $n \geqslant N$.
\end{definition}

The following proposition is not used in this paper but should be of independent interest.

\begin{proposition}    \label{prop:irreducible and primitive subsystem is topological transitive and mixing}
  Let $f$, $\mathcal{C}$, $F$ satisfy the Assumptions in Section~\ref{sec:The Assumptions}.
  We assume in addition that $f(\mathcal{C}) \subseteq \mathcal{C}$.
  Then the following statements hold:
  \begin{enumerate}[label=\rm{(\roman*)}]
      \smallskip
      \item    \label{item:prop:irreducible and primitive subsystem is topological transitive and mixing:irreducibility implies transitivity} 
          If $F$ is irreducible, then $F|_{\limitset} \colon \limitset \mapping \limitset$ is topological transitive and has a dense forward orbit.
      \smallskip
        
      \item    \label{item:prop:irreducible and primitive subsystem is topological transitive and mixing:primitivity implies mixing} 
          If $F$ is primitive, then $F|_{\limitset} \colon \limitset \mapping \limitset$ is topological mixing.
  \end{enumerate}
\end{proposition}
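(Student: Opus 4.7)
The approach is similar for both parts. Given non-empty open sets $U, V \subseteq \limitset$, we will extract $F$-tiles inside each and use the hypothesis to pull $V$ back to a subtile of the tile inside $U$, thereby producing a point in $U$ whose $F$-iterate lands in $V$.

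We first observe that for any non-empty open $U \subseteq \limitset$, there exist $n \in \n$ and $X^n \in \Domain{n}$ with $X^n \cap \limitset \subseteq U$. Indeed, pick $x \in U$ and an open $U' \subseteq S^2$ with $U' \cap \limitset = U$; since $x \in \limitset$ there exist $X^n \in \Domain{n}$ with $x \in X^n$ for each $n$, and because $\diam{d}{X^n} \to 0$ by the expansion of $f$ we have $X^n \subseteq U'$ for $n$ large enough. In the same way we select $Y^m \in \Domain{m}$ with $Y^m \cap \limitset \subseteq V$, and we write $F^n(X^n) = X^0_{\colour_1}$, $F^m(Y^m) = X^0_{\colour_2}$; both $X^0_{\colour_1}$ and $X^0_{\colour_2}$ lie in $\Domain{0}$ since irreducibility (which follows from either hypothesis) implies $F(\domF) = S^2$.

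For part~\ref{item:prop:irreducible and primitive subsystem is topological transitive and mixing:irreducibility implies transitivity}, apply Lemma~\ref{lem:strongly irreducible:tile in interior tile for high enough level} to $X^0_{\colour_1}$ and the color $\colour_2$ to obtain $k \leqslant n_F$ and a $k$-tile $W$ of $F$ of color $\colour_2$ with $W \subseteq X^0_{\colour_1}$. Since $F^k|_W$ is a homeomorphism onto $X^0_{\colour_2}$ by Proposition~\ref{prop:subsystem:preliminary properties}~\ref{item:subsystem:properties:homeo} and $Y^m \subseteq X^0_{\colour_2}$, Lemma~\ref{lem:cell mapping properties of Thurston map}~\ref{item:lem:cell mapping properties of Thurston map:ii} produces a $(k+m)$-cell $W' \subseteq W$ with $F^k(W') = Y^m$; the equality $F^{k+m}(W') = X^0_{\colour_2} \subseteq F(\domF)$ then forces $W' \in \Domain{k+m}$. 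Pulling back once more through the homeomorphism $F^n|_{X^n}$ yields $V' \define (F^n|_{X^n})^{-1}(W') \in \Domain{n+k+m}$ with $V' \subseteq X^n$ and $F^{n+k}(V') = Y^m$. To produce $z \in V' \cap \limitset$, we iterate Lemma~\ref{lem:strongly irreducible:tile in interior tile for high enough level} to build a nested sequence $V' \supseteq V'_1 \supseteq V'_2 \supseteq \cdots$ with $V'_\ell \in \Domain{j_\ell}$ and $j_\ell \to +\infty$; by compactness of $S^2$, $\bigcap_\ell V'_\ell$ is non-empty, and any point of it lies in $\limitset$ by \eqref{eq:def:limitset}. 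Such a $z$ then lies in $X^n \cap \limitset \subseteq U$ and satisfies $F^{n+k}(z) \in Y^m \cap \limitset \subseteq V$ (using $F(\limitset) \subseteq \limitset$), proving topological transitivity. The existence of a point with dense forward orbit follows from a standard Baire category argument: $\limitset$ is a compact metric space, hence Baire, and $\bigcap_{i \in \n} \bigcup_{n \geqslant 0} (F|_{\limitset})^{-n}(U_i)$ is a dense $G_\delta$ for any countable basis $\{U_i\}_{i \in \n}$ of $\limitset$, every member of which has dense forward orbit.

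For part~\ref{item:prop:irreducible and primitive subsystem is topological transitive and mixing:primitivity implies mixing}, the argument is identical except that Lemma~\ref{lem:strongly primitive:tile in interior tile for high enough level} yields, for \emph{every} integer $k \geqslant n_F$, a $k$-tile $W$ of $F$ of color $\colour_2$ with $W \subseteq X^0_{\colour_1}$. Consequently $(F|_{\limitset})^{n+k}(U) \cap V \ne \emptyset$ for every $k \geqslant n_F$, so $(F|_{\limitset})^{N}(U) \cap V \ne \emptyset$ for every $N \geqslant n + n_F$, yielding topological mixing. The main point requiring care in both parts is the verification that $V' \cap \limitset \ne \emptyset$ and that the successive preimages remain in $\Domain{\cdot}$; both are handled by the nested-tile compactness argument together with the bookkeeping identity $F^{n+k+m}(V') = X^0_{\colour_2} \subseteq F(\domF)$.
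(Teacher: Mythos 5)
Your argument is correct, and for the transitivity and mixing parts it follows essentially the same route as the paper: locate a tile of $F$ inside $U$, use the irreducibility (resp.\ primitivity) lemma to find a same-level tile of the right color inside the appropriate $0$-tile, and pull the target back through the cellular homeomorphisms. The only cosmetic difference there is that you pull back an entire tile $Y^m$ sitting inside $V$ and then re-build a nested shrinking sequence inside $V'$ to land a point of $\limitset$, whereas the paper pulls back a single point $y \in V \cap \limitset$ together with a nested sequence of tiles already shrinking to $y$ (so the pulled-back nest automatically pins down a point of $\limitset$); both work, and yours costs one extra application of Lemma~\ref{lem:strongly irreducible:tile in interior tile for high enough level}.

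The genuinely different step is your treatment of the dense forward orbit: you invoke the standard Baire category argument ($\limitset$ is a compact, hence complete and second-countable, metric space; $\bigcup_{n\geqslant 0}(F|_{\limitset})^{-n}(U_i)$ is open and, by the transitivity you just established, dense; the countable intersection over a basis is a residual set of points with dense forward orbit). The paper instead gives an explicit construction: it enumerates $\bigcup_{n}\Domain{n}$ and, using Lemma~\ref{lem:strongly irreducible:tile in interior tile for high enough level} repeatedly, builds by hand a nested sequence of tiles whose intersection is a single point visiting every member of the enumeration. The Baire route is slicker and produces a residual set of such points rather than one, while the paper's construction is self-contained and does not appeal to any external topological theorem. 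Both are valid; the extra strength of your version (a dense $G_\delta$ of transitive points) is a small bonus. One small housekeeping point worth spelling out in your write-up: when you say the identity $F^{k+m}(W') = X^0_{\colour_2}\subseteq F(\domF)$ ``forces $W'\in\Domain{k+m}$,'' you should also note that $W'\subseteq\dom{F^{k+m}}$ (which holds because $W'\subseteq W$ with $W\in\Domain{k}$ and $F^k(W')\subseteq Y^m$ with $Y^m\in\Domain{m}$), since membership in $\Domain{k+m}$ requires both that $W'$ lies in the domain of $F^{k+m}$ and that $F^{k+m}(W')\subseteq F(\domF)$.
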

\begin{proof}
    \def\set{\limitset}
    \ref{item:prop:irreducible and primitive subsystem is topological transitive and mixing:irreducibility implies transitivity}
    Assume that $F$ is irreducible.
    Note that $F(\limitset) = \limitset$ by Proposition~\ref{prop:subsystem:preliminary properties}~\ref{item:subsystem:properties:sursubsystem properties:property of domain and limitset}.

    We first show that $F|_{\limitset} \colon \limitset \mapping \limitset$ is topological transitive.
    Consider arbitrary non-empty open subsets $U \cap \limitset$ and $V \cap \limitset$ of $\limitset$, where $U$ and $V$ are open subsets of $S^2$.
    Since $U \cap \limitset \ne \emptyset$ and $U$ is open, by \eqref{eq:def:limitset}, Definition~\ref{def:expanding_Thurston_maps}, and Remark~\ref{rem:Expansion_is_independent}, there exist $N \in \n$ and $X^N \in \Domain{N}$ such that $X^N \subseteq U$.

    Fix arbitrary $y \in V \cap \set$.
    Then by \eqref{eq:def:limitset}, there exists a sequence $\sequen[\big]{Y^k}[k]$ of tiles such that $\{ y \} = \bigcap_{k \in \n} Y^{k}$ and $Y^k \in \Domain{k}$ for each $k \in \n$.
    By Proposition~\ref{prop:cell decomposition: invariant Jordan curve}, we may assume without loss of generality that $Y^k \subseteq X^0_{\black}$ for each $k \in \n$.    
    Since $F$ is irreducible, by Lemma~\ref{lem:strongly irreducible:tile in interior tile for high enough level}, there exist $n_0 \in \n$ and $X^{N + n_{0}}_{\black} \in \bFTile{N + n_{0}}$ such that $X^{N + n_{0}}_{\black} \subseteq X^{N}$.
    Then it follows from Lemma~\ref{lem:cell mapping properties of Thurston map}~\ref{item:lem:cell mapping properties of Thurston map:i} and Proposition~\ref{prop:subsystem:preliminary properties}~\ref{item:subsystem:properties:homeo} that $X^{k + N + n_{0}} \define \bigl( F^{N + n_{0}}|_{X^{N + n_{0}}_{\black}} \bigr)^{-1} \bigl( Y^{k} \bigr) \in \Domain{k + N + n_{0}}$ for each $k \in \n$. 
    Set $x \define \bigl( F^{N + n_{0}}|_{X^{N + n_{0}}_{\black}} \bigr)^{-1}(y)$.
    Note that for each $k \in \n$ we have $x \in X^{k + N + n_{0}} \subseteq X^{N}$ since $y \in Y^k$.
    Thus by \eqref{eq:def:limitset} and Proposition~\ref{prop:subsystem:preliminary properties}~\ref{item:subsystem:properties:properties invariant Jordan curve:decreasing relation of domains}, we conclude that $x \in X^N \cap \set$.
    This implies $y = F^{N + n_{0}}(x) \in F^{N + n_{0}}\bigl(X^N \cap \set\bigr) \subseteq F^{N + n_{0}}(U \cap \set)$.
    Since $y \in V \cap \set$, it follows immediately from Definition~\ref{def:topological transitive and mixing} that $F|_{\set} \colon \set \mapping \set$ is topologically transitive.

    We now prove that there exists $x \in \set$ such that $\sequen{F^{n}(x)}$ is dense in $\set$.
    By \eqref{eq:def:limitset}, Definition~\ref{def:expanding_Thurston_maps}, and Remark~\ref{rem:Expansion_is_independent}, it suffices to show that there exists $x \in \set$ such that for each $Y \in \bigcup_{n \in \n} \Domain{n}$, there exists $k \in \n$ satisfying $F^{k}(x) \in Y$.

    \def\mytile#1{X^{n_{#1}}_{\colour_{#1}}} \def\tileset#1{\ccFTile{n_{#1}}{\colour_{#1}}{}}
    Since for each $n \in \n$ the set $\Domain{n}$ is finite, we can write the countable set $\bigcup_{n \in \n} \Domain{n}$ as $\sequen{Y_{n}}$.
    By Proposition~\ref{prop:cell decomposition: invariant Jordan curve}, for each $n \in \n$ there exists $\colour_{n} \in \colours$ such that $Y_{n} \subseteq X^{0}_{\colour_{n}}$.
    Since $F$ is irreducible, by Lemma~\ref{lem:strongly irreducible:tile in interior tile for high enough level}, there exist $n_1 \in \n$ and $\mytile{1} \in \tileset{1}$ such that $\mytile{1} \subseteq Z_0 \define Y_1$.
    Then it follows from Lemma~\ref{lem:cell mapping properties of Thurston map}~\ref{item:lem:cell mapping properties of Thurston map:i} and Proposition~\ref{prop:subsystem:preliminary properties}~\ref{item:subsystem:properties:homeo} that $Z_{1} \define \bigl( F^{n_{1}}|_{\mytile{1}} \bigr)^{-1} \bigl( Y_{1} \bigr) \in \Domain{k_{1}}$ for some $k_{1} \in \n$.
    Note that $k_1 > n_1$ and $Z_1 \subseteq \mytile{1}$.
    Similarly, by Lemma~\ref{lem:strongly irreducible:tile in interior tile for high enough level}, there exist $n_2 \in \n$ and $\mytile{2} \in \tileset{2}$ such that $\mytile{2} \subseteq Z_1$ and $n_{2} > k_1$.
    In particular, we have $F^{n_{1}}(\mytile{2}) \subseteq F^{n_{1}}(Z_1) = Y_{1}$.
    It follows from Lemma~\ref{lem:cell mapping properties of Thurston map}~\ref{item:lem:cell mapping properties of Thurston map:i} and Proposition~\ref{prop:subsystem:preliminary properties}~\ref{item:subsystem:properties:homeo} that $Z_{2} \define \bigl( F^{n_{2}}|_{\mytile{2}} \bigr)^{-1} \bigl( Y_{2} \bigr) \in \Domain{k_{2}}$ for some $k_{2} \in \n$.
    Note that $k_2 > n_2$ and $Z_2 \subseteq \mytile{2}$.
    Then we can inductively construct a strictly increasing sequence $\sequen{n_{j}}[j]$ of integers and a sequence $\sequen[\big]{\mytile{j}}[j]$ of tiles such that $\mytile{j} \in \tileset{j}$, $\mytile{j + 1} \subseteq \mytile{j}$, and $F^{n_{j}}\bigl(\mytile{j + 1}\bigr) \subseteq Y_{j}$ for each $j \in \n$.
    Since $\lim_{j \to +\infty} n_{j}  = +\infty$, it follows from Definition~\ref{def:expanding_Thurston_maps} and Remark~\ref{rem:Expansion_is_independent} that the set $\bigcap_{j \in \n} \mytile{j}$ is a singleton set.
    We write $\{ x \} = \bigcap_{j \in \n} \mytile{j}$.
    Then by \eqref{eq:def:limitset} and Proposition~\ref{prop:subsystem:preliminary properties}~\ref{item:subsystem:properties:properties invariant Jordan curve:decreasing relation of domains}, we have $x \in \limitset$.
    This finishes the proof since $F^{n_{j}}(x) \in F^{n_{j}} \bigl(\mytile{j + 1}\bigr) \subseteq Y_j$ for each $j \in \n$.

    \smallskip

    \ref{item:prop:irreducible and primitive subsystem is topological transitive and mixing:primitivity implies mixing} 
    Let $F$ be primitive and $n_{F}$ be the constant from Definition~\ref{def:primitivity of subsystem}, which depends only on $F$ and $\mathcal{C}$.
    Note that $F(\limitset) = \limitset$ by Proposition~\ref{prop:subsystem:preliminary properties}~\ref{item:subsystem:properties:sursubsystem properties:property of domain and limitset}.
    Consider arbitrary non-empty open subsets $U \cap \limitset$ and $V \cap \limitset$ of $\limitset$, where $U$ and $V$ are open subsets of $S^2$.
    Since $U \cap \limitset \ne \emptyset$ and $U$ is open, by \eqref{eq:def:limitset}, Definition~\ref{def:expanding_Thurston_maps}, and Remark~\ref{rem:Expansion_is_independent}, there exist $N \in \n$ and $X^N \in \Domain{N}$ such that $X^N \subseteq U$.
    We claim that $\limitset \subseteq F^{N + n_{F}}\bigl(X^N \cap \limitset\bigr)$.

    Indeed, for each $y \in \limitset$, by \eqref{eq:def:limitset}, there exists a sequence $\sequen[\big]{Y^k}[k]$ of tiles such that $\{ y \} = \bigcap_{k \in \n} Y^{k}$ and $Y^k \in \Domain{k}$ for each $k \in \n$.
    By Proposition~\ref{prop:cell decomposition: invariant Jordan curve}, we may assume without loss of generality that $Y^k \subseteq X^0_{\black}$ for each $k \in \n$.
    Since $F$ is primitive, by Lemma~\ref{lem:strongly primitive:tile in interior tile for high enough level}, there exists $X^{N + n_{F}}_{\black} \in \bFTile{N + n_{F}}$ such that $X^{N + n_{F}}_{\black} \subseteq X^{N}$.
    Then it follows from Lemma~\ref{lem:cell mapping properties of Thurston map}~\ref{item:lem:cell mapping properties of Thurston map:i} and Proposition~\ref{prop:subsystem:preliminary properties}~\ref{item:subsystem:properties:homeo} that $X^{k + N + n_{F}} \define \bigl( F^{N + n_{F}}|_{X^{N + n_{F}}_{\black}} \bigr)^{-1} \bigl( Y^{k} \bigr) \in \Domain{k + N + n_{F}}$ for each $k \in \n$. 
    Set $x \define \bigl( F^{N + n_{F}}|_{X^{N + n_{F}}_{\black}} \bigr)^{-1}(y)$.
    Note that for each $k \in \n$ we have $x \in X^{k + N + n_{F}} \subseteq X^{N}$ since $y \in Y^k$.
    Thus by \eqref{eq:def:limitset} and Proposition~\ref{prop:subsystem:preliminary properties}~\ref{item:subsystem:properties:properties invariant Jordan curve:decreasing relation of domains}, we conclude that $x \in X^N \cap \set$.
    This implies $\limitset \subseteq F^{N + n_{F}}(X^N \cap \limitset)$ since $y \in \limitset$ is arbitrary and $y = F^{N + n_{F}}(x)$.

    Hence, for each integer $n \geqslant N + n_{F}$, we have $(F|_{\limitset})^{n}(U \cap \limitset) = F^{n}(U \cap \limitset) \supseteq F^{n}\bigl(X^N \cap \limitset \bigr) \supseteq F^{n - N - n_{F}}(\limitset) = \limitset \supseteq V \cap \limitset \ne \emptyset$.
    This implies that $F|_{\limitset} \colon \limitset \mapping \limitset$ is topologically mixing by Definition~\ref{def:topological transitive and mixing}.
\end{proof} 

\section{Equidistribution}
\label{sec:Equidistribution}

In this section, we establish equidistribution results for preimages for subsystems of expanding Thurston maps.

\begin{theorem}    \label{thm:split version subsystem equidistribution for preimages} \def\ncolour{\colour_{n}}
    Let $f$, $\mathcal{C}$, $F$, $d$, and $\potential$ satisfied the Assumptions in Section~\ref{sec:The Assumptions}.
    We assume in addition that $f(\mathcal{C}) \subseteq \mathcal{C}$ and $F \in \subsystem$ is strongly primitive.
    Let $\equstate$ be the unique equilibrium state for $F|_{\limitset}$ and $\potential|_{\limitset}$, and let $\eigmea$ be as in Proposition~\ref{prop:uniqueness of eigenmeasure subsystem}.
    Fix arbitrary sequence $\sequen{x_{n}}$ of points in $S^2$ and sequence $\sequen{\ncolour}$ of colors in $\colours$ that satisfies $x_{n} \in X^0_{\ncolour}$ for each $n \in \n$.
    For each $n \in \n$, we define the Borel probability measures
    \begin{align*}
        \nu_{n} &\define \frac{1}{Z_{n}(\potential)} \sum_{ y \in F^{-n}(x_{n}) } \ccndegF{\ncolour}{}{n}{y} \myexp[\big]{ S_{n}^{F} \potential(y) } \delta_{y}, \\
        \widehat{\nu}_{n} &\define \frac{1}{Z_{n}(\potential)} \sum_{ y \in F^{-n}(x_{n}) } \ccndegF{\ncolour}{}{n}{y} \myexp[\big]{ S_{n}^{F} \potential(y) } \frac{1}{n} \sum_{i = 0}^{n - 1} \delta_{F^{i}(y)},
    \end{align*}
    where $Z_{n}(\potential) \define \sum_{ y \in F^{-n}(x_{n}) } \ccndegF{\ncolour}{}{n}{y} \myexp[\big]{S_{n}^{F}\potential(y)}$.
    Then we have
    \begin{align}
        \label{eq:prop:split version subsystem equidistribution for preimages:eigenmeasure}
        \nu_{n} &\weakconverge \eigmea \quad \text{ as } n \to +\infty, \\
        \label{eq:prop:split version subsystem equidistribution for preimages:equilibrium state}
        \widehat{\nu}_{n} &\weakconverge \equstate \quad \text{ as } n \to +\infty.
    \end{align}
\end{theorem}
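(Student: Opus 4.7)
[Proof proposal for Theorem~\ref{thm:split version subsystem equidistribution for preimages}]
The plan is to express both families of measures as ratios whose numerators and denominators are iterates of the split Ruelle operator applied to continuous functions on $\splitsphere$, and then invoke the uniform convergence results from Subsection~\ref{sub:Uniform convergence}. For each $v \in C(S^2)$, let $\widetilde{v} = \splfunv \in C(\splitsphere)$ denote its natural lift, defined by $v_{\colour} \define v|_{X^0_{\colour}}$ for $\colour \in \colours$; recall from Remark~\ref{rem:disjoint union} that we identify $C(\splitsphere) = \splfunspace$. Since every $v \in C(S^2)$ is uniformly continuous on the compact space $S^2$, we may associate to $v$ an abstract modulus of continuity $\modufun$ via \eqref{eq:abstract modulus of continuity for continuous function}, so that $\widetilde{v} \in \splmoduspace$ with $\moduconst \define \uniformnorm{v}$.

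For the convergence \eqref{eq:prop:split version subsystem equidistribution for preimages:eigenmeasure}, I would first observe that, by Definitions~\ref{def:partial Ruelle operator} and \ref{def:split ruelle operator} together with Lemma~\ref{lem:iteration of split-partial ruelle operator}, for each $n \in \n$,
\begin{equation*}
    \sum_{ y \in F^{-n}(x_{n}) } \ccndegF{\ncolour}{}{n}{y} v(y) \myexp[\big]{ S_{n}^{F} \potential(y) } = \bigl( \splopt^{n}(\widetilde{v}) \bigr)(x_{n}, \ncolour),
\end{equation*}
and in particular $Z_{n}(\potential) = \bigl( \splopt^{n}\bigl(\indicator{\splitsphere}\bigr) \bigr)(x_{n}, \ncolour)$. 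Since $\splopt = \myexp{\pressure} \normsplopt$ by \eqref{eq:def:normed potential}, the factor $\myexp{n \pressure}$ cancels in the quotient, yielding
\begin{equation*}
    \int \! v \,\mathrm{d}\nu_{n} = \frac{ \bigl( \normsplopt^{n}(\widetilde{v}) \bigr)(x_{n}, \ncolour) }{ \bigl( \normsplopt^{n}\bigl(\indicator{\splitsphere}\bigr) \bigr)(x_{n}, \ncolour) }.
\end{equation*}
Applying Theorem~\ref{thm:converge of uniform norm of normalized split ruelle operator} to both $\widetilde{v}$ and $\indicator{\splitsphere}$, and invoking the uniqueness statement from Proposition~\ref{prop:uniqueness of eigenmeasure subsystem}, the numerator converges uniformly on $\splitsphere$ to $\eigfun \int \! \widetilde{v} \,\mathrm{d}\spleigmea$ and the denominator converges uniformly to $\eigfun$. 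Since $\eigfun$ is bounded below away from $0$ by \eqref{eq:two sides bounds for eigenfunction}, the ratio evaluated at the moving base points $(x_{n}, \ncolour)$ converges to $\int \! \widetilde{v} \,\mathrm{d}\spleigmea = \int \! v \,\mathrm{d}\eigmea$ (using the conventions of Remarks~\ref{rem:disjoint union} and \ref{rem:probability measure in split setting}). As $v \in C(S^{2})$ was arbitrary, this establishes \eqref{eq:prop:split version subsystem equidistribution for preimages:eigenmeasure}.

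For the convergence \eqref{eq:prop:split version subsystem equidistribution for preimages:equilibrium state}, writing $x_{X^{n}} \define (F^{n}|_{X^{n}})^{-1}(x_{n})$, a direct rearrangement gives
\begin{equation*}
    \int \! v \,\mathrm{d}\widehat{\nu}_{n} = \frac{ \frac{1}{n} \sum\limits_{X^{n} \in \ccFTile{n}{\ncolour}{}} \bigl(S_{n}^{F} v( x_{X^{n}} )\bigr) \myexp[\big]{ S_{n}^{F}\phi( x_{X^{n}} ) } }{ \sum\limits_{X^{n} \in \ccFTile{n}{\ncolour}{}} \myexp[\big]{ S_{n}^{F}\phi( x_{X^{n}} ) } }.
\end{equation*}
This is precisely the quantity whose limit is computed in Lemma~\ref{lem:converge of derivative for pressure of subsystem} (with the singleton bounded set $H \define \{ \potential \}$), which yields convergence to $\int \! v \,\mathrm{d}\equstate$ as $n \to +\infty$, uniformly in $v \in \moduspace[\moduconst][\modufun][S^{2}]$. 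Again, since $v \in C(S^{2})$ was arbitrary, this establishes \eqref{eq:prop:split version subsystem equidistribution for preimages:equilibrium state}.

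The only subtlety in this plan is that the base points $x_{n}$ and colors $\ncolour$ vary with $n$; this is precisely why we crucially need the uniform convergence (in the sup norm on $\splitsphere$) provided by Theorem~\ref{thm:converge of uniform norm of normalized split ruelle operator} and Lemma~\ref{lem:converge of derivative for pressure of subsystem}, rather than mere pointwise convergence. Beyond this, the argument is largely an assembly of results already established in the previous section, so I do not anticipate any further obstacles.
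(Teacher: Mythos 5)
Your proposal is correct and follows essentially the same route as the paper's proof: lift the test function to $C(\splitsphere)$, express $\int v\,\mathrm{d}\nu_n$ as a ratio of iterates of $\splopt$ (equivalently $\normsplopt$), apply Theorem~\ref{thm:converge of uniform norm of normalized split ruelle operator} together with \eqref{eq:two sides bounds for eigenfunction} for the first convergence, and identify $\int v\,\mathrm{d}\widehat{\nu}_n$ with the ratio in Lemma~\ref{lem:converge of derivative for pressure of subsystem} for the second. Your closing remark about why the uniform (sup-norm) convergence is indispensable given the moving base points $(x_n, \colour_n)$ is a correct and worthwhile observation that the paper leaves implicit.
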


We follow the conventions discussed in Remarks~\ref{rem:disjoint union} and \ref{rem:probability measure in split setting} in this subsection.
Recall that $\splitsphere$ is the split sphere defined in Definition~\ref{def:split sphere}.
\begin{proof}
    \def\ncolour{\colour_{n}}
    Fix an arbitrary $u \in C(S^{2})$.
    We denote by $\widetilde{u}$ the continuous function on $\splitsphere$ given by $\widetilde{u}(\widetilde{z}) \define u(z)$ for each $\widetilde{z} = (z, \colour) \in \splitsphere$. 

    We prove \eqref{eq:prop:split version subsystem equidistribution for preimages:eigenmeasure} by showing that $\lim_{n \to +\infty} \functional{\nu_{n}}{u} = \functional{\eigmea}{u}$.
    Indeed, by Lemma~\ref{lem:iteration of split-partial ruelle operator}, Definition~\ref{def:partial Ruelle operator}, and \eqref{eq:def:normed potential}, we have 
    \[
        \functional{\nu_{n}}{u} = \frac{ \splopt^{n}(\widetilde{u})(x_{n}, \ncolour) }{ \splopt^{n}\bigl(\indicator{\splitsphere}\bigr)(x_{n}, \ncolour) } 
        = \frac{ \normsplopt^{n}(\widetilde{u})(x_{n}, \ncolour) }{ \normsplopt^{n}\bigl(\indicator{\splitsphere}\bigr)(x_{n}, \ncolour) }.
    \]
    Note that by \eqref{eq:converge of uniform norm of normed split ruelle operator} in Theorem~\ref{thm:converge of uniform norm of normalized split ruelle operator},
    \[
        \normcontinuous[\big]{\normsplopt^{n}(\indicator{\splitsphere}) - \eigfun}{\splitsphere} \converge 0    \quad \text{ and } \quad
        \normcontinuous[\bigg]{\normsplopt^{n}(\widetilde{u}) - \eigfun \int \! \widetilde{u} \,\mathrm{d}\spleigmea }{\splitsphere} \converge 0 
    \]
    as $n \to +\infty$, where $\eigfun \in C(\splitsphere)$ is an eigenfunction of $\splopt$ from Theorem~\ref{thm:existence of f invariant Gibbs measure}, and $\spleigmea \in \probmea{\splitsphere}$ is an eigenmeasure of $\dualsplopt$ from Theorem~\ref{thm:subsystem:eigenmeasure existence and basic properties}.
    Then it follows from \eqref{eq:two sides bounds for eigenfunction} in Theorem~\ref{thm:existence of f invariant Gibbs measure} that
    \[
        \lim_{n \to +\infty} \frac{ \normsplopt^{n}(\widetilde{u})(x_{n}, \ncolour) }{ \normsplopt^{n}\bigl(\indicator{\splitsphere}\bigr)(x_{n}, \ncolour) } 
        = \int \! \widetilde{u} \,\mathrm{d}\spleigmea = \int \! u \,\mathrm{d} \eigmea.
    \]
    Hence, \eqref{eq:prop:split version subsystem equidistribution for preimages:eigenmeasure} holds.

    Finally, \eqref{eq:prop:split version subsystem equidistribution for preimages:equilibrium state} follows directly from Lemma~\ref{lem:converge of derivative for pressure of subsystem}.
\end{proof}

\section{Large deviation principles for subsystems}
\label{sec:Large deviation principles for subsystems}

In this section, we establish level-$2$ large deviation principles for iterated preimages for subsystems of expanding Thurston maps without periodic critical point.



\subsection{Level-2 large deviation principles}%
\label{sub:Level-2 large deviation principles}
We first review some basic concepts and results from large deviation theory.
We refer the reader to \cite{dembo2009large,ellis2012entropy,rassoul2015course} for more details. 

Let $\{ \xi_{n} \}_{n \in \n}$ be a sequence of Borel probability measures on a topological space $\mathcal{X}$.
We say that $\{ \xi_{n} \}_{n \in \n}$ satisfies a \emph{large deviation principle} if there exists a lower semi-continuous function $I \colon \mathcal{X} \mapping [0, +\infty]$ such that 
\begin{equation}    \label{eq:level-2 LDP:lower bound}
    \liminf_{n \to +\infty} \frac{1}{n} \log \xi_{n}(\mathcal{G}) \geqslant - \inf_{\mathcal{G}} I  \quad \text{for all open } \mathcal{G} \subseteq \mathcal{X},
\end{equation}
and
\begin{equation}    \label{eq:level-2 LDP:upper bound}
    \limsup_{n \to +\infty} \frac{1}{n} \log \xi_{n}(\mathcal{K}) \leqslant - \inf_{\mathcal{K}} I  \quad \text{for all closed } \mathcal{K} \subseteq \mathcal{X},
\end{equation}
where $\log 0 = -\infty$ and $\inf \emptyset = +\infty$ by convention.
Such a function $I$ is then unique when $\mathcal{X}$ is metrizable; it is called the \emph{rate function}.

\begin{definition}    \label{def:entropy map}
	Let $T \colon X \mapping X$ be a continuous map on a compact metric space $X$.
	The \emph{entropy map} of $T$ is the map $\mu \mapsto h_{\mu}(T)$ which is defined on $\mathcal{M}(X, T)$ and has values in $[0, +\infty]$.
	Here $\mathcal{M}(X, T)$ is equipped with the weak$^*$ topology.
	We say that the entropy map of $T$ is \emph{upper semi-continuous} if $\limsup_{n \to +\infty} h_{\mu_{n}}(T) \leqslant h_{\mu}(T)$ holds for every sequence $\sequen{\mu_{n}}$ of Borel probability measures on $X$ which converges to $\mu \in \invmea[X][T]$ in the weak$^*$ topology.
\end{definition}

We recall the following theorem due to Y.~Kifer \cite[Theorem~4.3]{kifer1990large}, reformulated by H.~Comman and J.~Rivera-Letelier \cite[Theorem~C]{comman2011large}.

\begin{theorem}[Y.~Kifer \cite{kifer1990large}; H.~Comman \& J.~Rivera-Letelier \cite{comman2011large}]        \label{thm:large deviation principles Kifer Comman Juan}
    Let $X$ be a compact metrizable topological space, and let $g \colon X \mapping X$ be a continuous map. 
    Fix $\varphi \in C(X)$, and let $H$ be a dense vector subspace of $C(X)$ with respect to the uniform norm. 
    Let $\ratefun[\phi] \colon \probmea{X} \mapping [0, +\infty]$ be the function defined by
    \begin{equation*}
        \ratefun[\varphi](\mu) \define
        \begin{cases}
            P(g, \varphi) - h_{\mu}(g) - \int\! \varphi \,\mathrm{d}\mu  & \mbox{if } \mu \in \mathcal{M}(X, g); \\
            +\infty & \mbox{if } \mu \in \probmea{X} \setminus \mathcal{M}(X, g).
        \end{cases}
    \end{equation*}
    We assume the following conditions are satisfied:
    \begin{enumerate}
    \smallskip
    \item     \label{item:thm:large deviation principles Kifer Comman Juan:upper semi-continuity of entropy map}
        The measure-theoretic entropy $h_\mu(g)$ of $g$, as a function of $\mu$ defined on $\invmea[X][g]$ (equipped with the weak$^*$ topology), is finite and upper semi-continuous.

    \smallskip
    \item     \label{item:thm:large deviation principles Kifer Comman Juan:uniqueness of equilibrium state for dense potential}
        For each $\psi \in H$, there exists a unique equilibrium state for the map $g$ and the potential $\varphi + \psi$.
    \end{enumerate}

    Then every sequence $\sequen{\xi_n}$ of Borel probability measures on $\probmea{X}$ that satisfies the property that for each $\psi \in H$,
    \begin{equation}   \label{eq:thm:large deviation principles Kifer Comman Juan:requirement for distribution}
        \lim\limits_{n \to +\infty} \frac{1}{n} \log \int_{\probmea{X}} \! \myexp[\bigg]{n \int \! \psi \,\mathrm{d}\mu} \,\mathrm{d}\xi_n(\mu) = P(g, \varphi + \psi) - P(g, \varphi)
    \end{equation}
    satisfies a large deviation principle with rate function $\ratefun[\varphi]$, and it converges in the weak$^*$ topology to the Dirac measure supported on the unique equilibrium state for the map $g$ and the potential $\varphi$. 
    Furthermore, for each convex open subset $\mathcal{G}$ of $\probmea{X}$ containing some invariant measure, we have
    \begin{equation*}
        \lim\limits_{n \to +\infty}  \frac{1}{n} \log \xi_n(\mathcal{G}) 
        = \lim\limits_{n \to +\infty}  \frac{1}{n} \log \xi_n(\overline{\mathcal{G}}) 
        = -\inf\limits_{\mathcal{G}} \ratefun[\varphi] 
        = -\inf\limits_{\overline{\mathcal{G}}} \ratefun[\varphi].
    \end{equation*}
\end{theorem}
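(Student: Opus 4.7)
The plan is to invoke an abstract Gärtner--Ellis-type argument, recognizing that the hypothesis \eqref{eq:thm:large deviation principles Kifer Comman Juan:requirement for distribution} identifies the logarithmic moment generating function of $\sequen{\xi_n}$ on the dense subspace $H$ with a shift of the pressure function. By the Variational Principle \eqref{eq:Variational Principle for pressure} applied to $\varphi+\psi$, the assumed limit rewrites, for each $\psi \in H$, as
\[
    \lim_{n\to+\infty}\frac{1}{n}\log\int_{\probmea{X}}\myexp[\bigg]{n\int\!\psi\,\mathrm{d}\mu}\,\mathrm{d}\xi_n(\mu) = \sup_{\mu\in\probmea{X}}\bigl(\langle\mu,\psi\rangle - \ratefun[\varphi](\mu)\bigr),
\]
exhibiting $\ratefun[\varphi]$ as the convex conjugate on $H$ of the limiting moment generating function; the whole proof is organized around inverting this Legendre duality.

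First I would check that $\ratefun[\varphi]$ is a bona fide rate function: lower semi-continuity on $\invmea[X][g]$ is precisely condition \ref{item:thm:large deviation principles Kifer Comman Juan:upper semi-continuity of entropy map}, and since $\invmea[X][g]$ is weak$^*$-closed in $\probmea{X}$ (on whose complement $\ratefun[\varphi]\equiv+\infty$), lower semi-continuity extends to all of $\probmea{X}$; compactness of $\probmea{X}$ then yields compactness of the sublevel sets. Convexity of $\ratefun[\varphi]$ follows from affineness of the entropy map together with linearity of $\mu\mapsto\int\!\varphi\,\mathrm{d}\mu$. For the upper bound on a closed set $\mathcal{K}\subseteq\probmea{X}$, I would apply an exponential Chebyshev inequality to $\xi_n$ tilted by $\myexp{n\langle\cdot,\psi\rangle}$ with $\psi\in H$, combined with a separation step: for each $\mu_0\notin\mathcal{K}$, the Hahn--Banach theorem and the density of $H$ in $C(X)$ produce $\psi\in H$ strictly separating $\mu_0$ from $\mathcal{K}$; optimizing over such $\psi$ and using the duality identity yields $\limsup_{n\to+\infty}\frac{1}{n}\log\xi_n(\mathcal{K}) \leqslant -\inf_{\mathcal{K}}\ratefun[\varphi]$.

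The lower bound is the subtle half, and here condition \ref{item:thm:large deviation principles Kifer Comman Juan:uniqueness of equilibrium state for dense potential} is indispensable. Given an open set $\mathcal{G}\subseteq\probmea{X}$ and $\mu\in\mathcal{G}\cap\invmea[X][g]$ with $\ratefun[\varphi](\mu)<+\infty$, the strategy is to tilt $\xi_n$ by $\myexp{n\langle\cdot,\psi\rangle}$ for a carefully chosen $\psi\in H$, so that the tilted probability measures concentrate at the unique equilibrium state $\mu_{\varphi+\psi}$. By Theorem~\ref{thm:tengent}, uniqueness of $\mu_{\varphi+\psi}$ is equivalent to Gâteaux-differentiability of $P(g,\cdot\,)$ at $\varphi+\psi$ along $H$, so that $\psi\mapsto\mu_{\varphi+\psi}$ realizes the derivative of the pressure function; combined with upper semi-continuity of the entropy map, this allows one to show that the family $\{\mu_{\varphi+\psi}\describe\psi\in H\}$ approximates every measure with finite $\ratefun[\varphi]$-value in both the weak$^*$ topology and in $\ratefun[\varphi]$-value. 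A change-of-variables computation using \eqref{eq:thm:large deviation principles Kifer Comman Juan:requirement for distribution} and the Variational Principle for $\varphi+\psi$ then delivers $\liminf_{n\to+\infty}\frac{1}{n}\log\xi_n(\mathcal{G}) \geqslant -\ratefun[\varphi](\mu_{\varphi+\psi})$, and letting $\psi$ vary yields the desired bound.

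The main obstacle will be this density/approximation step: given an arbitrary $\mu\in\invmea[X][g]$ with $\ratefun[\varphi](\mu)<+\infty$ and a weak$^*$-neighborhood $\mathcal{G}$ of $\mu$, producing $\psi\in H$ such that $\mu_{\varphi+\psi}\in\mathcal{G}$ with $\ratefun[\varphi](\mu_{\varphi+\psi})$ arbitrarily close to $\ratefun[\varphi](\mu)$. This is precisely where upper semi-continuity of entropy and the uniqueness/differentiability hypothesis couple together, and the analogous step in Kifer's original proof is the most technical. Finally, for the clause concerning convex open $\mathcal{G}$ containing an invariant measure, convexity of $\ratefun[\varphi]$ implies that any minimizer on $\overline{\mathcal{G}}$ can be approached along a line segment from an interior invariant measure without increasing $\ratefun[\varphi]$, giving $\inf_{\mathcal{G}}\ratefun[\varphi] = \inf_{\overline{\mathcal{G}}}\ratefun[\varphi]$; combining this with the large deviation bounds \eqref{eq:level-2 LDP:lower bound} and \eqref{eq:level-2 LDP:upper bound} yields the equality of the limits on $\mathcal{G}$ and on $\overline{\mathcal{G}}$.
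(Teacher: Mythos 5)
The paper does not prove this theorem; it records it as a black-box citation (Kifer \cite[Theorem~4.3]{kifer1990large}, reformulated by Comman and Rivera-Letelier \cite[Theorem~C]{comman2011large}), so there is no ``paper's own proof'' to compare against. Your task was therefore to reconstruct an external reference, not a result of this paper, and your proposal should be judged as a sketch of that citation.

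Your outline correctly identifies the global structure: hypothesis \eqref{eq:thm:large deviation principles Kifer Comman Juan:requirement for distribution} pins down the limiting cumulant generating function on $H$, and the Variational Principle for $\varphi+\psi$ recasts the right-hand side as $\sup_{\mu}(\langle\mu,\psi\rangle - \ratefun[\varphi](\mu))$, so the whole argument is a Legendre-duality inversion in the spirit of G\"artner--Ellis; the convexity and lower semi-continuity observations about $\ratefun[\varphi]$ are right, as is the line-segment argument for $\inf_{\mathcal{G}}\ratefun[\varphi]=\inf_{\overline{\mathcal{G}}}\ratefun[\varphi]$ using convexity of both $\invmea[X][g]$ and $\ratefun[\varphi]$. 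Two places, however, are substantive gaps rather than omitted routine details. First, the upper-bound step as written is garbled: you propose separating a point $\mu_0\notin\mathcal{K}$ from $\mathcal{K}$, but the upper bound needs the opposite manoeuvre---for each $\mu_0\in\mathcal{K}$ one produces $\psi\in H$ and a weak$^*$-neighborhood $U\ni\mu_0$ so that the exponential Chebyshev estimate on $U$ is controlled by $\langle\mu_0,\psi\rangle-\Lambda(\psi)$, which approximates $\ratefun[\varphi](\mu_0)$; one then covers the compact $\mathcal{K}$ by finitely many such neighborhoods. The density of $H$ in $C(X)$ enters to push the finite-dimensional Legendre transform over $H$ up to the full conjugate over $C(X)$, not to separate $\mu_0$ from $\mathcal{K}$. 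Second, the lower-bound approximation step---producing, for an arbitrary $\mu\in\invmea[X][g]$ with finite rate and any neighborhood $\mathcal{G}$ of $\mu$, a $\psi\in H$ with $\mu_{\varphi+\psi}\in\mathcal{G}$ and $\ratefun[\varphi](\mu_{\varphi+\psi})$ close to $\ratefun[\varphi](\mu)$---is exactly where Kifer's proof does nontrivial work coupling G\^ateaux differentiability of the pressure (Theorem~\ref{thm:tengent}) with upper semi-continuity of entropy. You flag this as ``the main obstacle'' but do not resolve it; as it stands the proposal names the crux without supplying it, so the lower bound remains open in your reconstruction and one must still consult \cite{kifer1990large} or \cite{comman2011large} for that argument.
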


Recall that $P(g, \varphi)$ is the topological pressure of the map $g$ with respect to the potential $\varphi$.



\subsection{Characterizations of pressures}\label{sub:Characterizations of pressures} 

Let $f$, $\mathcal{C}$, $F$, $d$, $\potential$ satisfy the Assumptions in Section~\ref{sec:The Assumptions}.
Suppose that $f(\mathcal{C}) \subseteq \mathcal{C}$ and $F \in \subsystem$ is strongly irreducible.
In this subsection, we characterize the pressure function $\pressure$ in terms of Birkhoff averages (Proposition~\ref{prop:subsystem Birkhoff averages pressure characterization}) and iterated preimages (Proposition~\ref{prop:subsystem preimage pressure:recall}).

\begin{proposition}    \label{prop:subsystem Birkhoff averages pressure characterization}
    Let $f$, $\mathcal{C}$, $F$, $d$, $\phi$, $\holderexp$ satisfy the Assumptions in Section~\ref{sec:The Assumptions}. 
    We assume in addition that $f(\mathcal{C}) \subseteq \mathcal{C}$ and $F \in \subsystem$ is strongly irreducible. 
    Denote $\limitset \define \limitset(F, \mathcal{C})$.
    Then for each $\psi \in \holderspacesphere$, we have
    \begin{equation}    \label{eq:prop:subsystem Birkhoff averages pressure characterization}
        \pressure[\potential + \psi] - \pressure = \lim_{n \to +\infty} \frac{1}{n} \log \int \! \myexp[\big]{S_{n}^{F}\psi} \,\mathrm{d} \equstate,
    \end{equation}
    where $\pressure$ and $\pressure[\potential + \psi]$ are defined in \eqref{eq:pressure of subsystem}.     
\end{proposition}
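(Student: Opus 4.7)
The plan is to derive \eqref{eq:prop:subsystem Birkhoff averages pressure characterization} by combining the Gibbs property of $\equstate$ from Theorem~\ref{thm:existence of f invariant Gibbs measure} with the uniform Bowen-type distortion estimate (Lemma~\ref{lem:distortion_lemma}) applied to both $\phi$ and $\psi$, thereby reducing $\frac{1}{n}\log\int\myexp{S_n^F\psi}\,\mathrm{d}\equstate$ to the quantity $\frac{1}{n}\log Z_n(F,\phi+\psi)-\pressure$, whose limit is $\pressure[\phi+\psi]-\pressure$.

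First I would set up the comparison $\lim\limits_{n\to+\infty}\frac{1}{n}\log Z_n(F,\phi+\psi)=\pressure[\phi+\psi]$. Since $\phi+\psi\in\holderspacesphere$, Theorem~\ref{thm:existence of f invariant Gibbs measure} applied to $\phi+\psi$ gives $\pressure[\phi+\psi]=\lim_{n\to+\infty}\frac{1}{n}\log\splopt[\phi+\psi]^n(\indicator{\splitsphere})(\widetilde{y})$ for any $\widetilde{y}\in\splitsphere$. Expanding $\splopt[\phi+\psi]^n(\indicator{\splitsphere})(\widetilde y)$ as a sum over $n$-tiles of a given color via Lemmas~\ref{lem:iteration of split-partial ruelle operator} and \ref{lem:well-defined partial Ruelle operator}, and using Lemma~\ref{lem:distortion_lemma} for $\phi+\psi$ to compare pointwise values of $S_n^F(\phi+\psi)$ with $\sup_{X^n}S_n^F(\phi+\psi)$ up to a factor uniform in $n$, I would obtain that $\splopt[\phi+\psi]^n(\indicator{\splitsphere})(\widetilde{y}_{\black})+\splopt[\phi+\psi]^n(\indicator{\splitsphere})(\widetilde{y}_{\white})$ is comparable to $Z_n(F,\phi+\psi)$ with constants independent of $n$. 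Hence the $\liminf$ in the definition of $\pressure[\phi+\psi]$ is in fact a limit, equal to $\lim_{n\to+\infty}\frac{1}{n}\log Z_n(F,\phi+\psi)$.

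Next I would decompose the integral on the right of \eqref{eq:prop:subsystem Birkhoff averages pressure characterization} over $n$-tiles. Since $\equstate=\eigfun\spleigmea$ with $\eigfun$ bounded (Theorem~\ref{thm:existence of f invariant Gibbs measure}), Theorem~\ref{thm:subsystem:eigenmeasure existence and basic properties}~\ref{item:thm:subsystem:eigenmeasure existence and basic properties:strongly irreducible edge zero measure} yields $\equstate\bigl(\bigcup_{j\in\n_0}f^{-j}(\mathcal{C})\bigr)=0$, so the boundaries of distinct $n$-tiles carry no $\equstate$-mass and $\int\myexp{S_n^F\psi}\,\mathrm{d}\equstate=\sum_{X^n\in\Domain{n}}\int_{X^n}\myexp{S_n^F\psi}\,\mathrm{d}\equstate$. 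Fixing an arbitrary $x_{X^n}\in X^n$ for each tile, Lemma~\ref{lem:distortion_lemma} applied to $\psi$ gives $\myexp{S_n^F\psi(x)}$ comparable to $\myexp{S_n^F\psi(x_{X^n})}$ for $x\in X^n$, while the Gibbs property from Theorem~\ref{thm:existence of f invariant Gibbs measure} (with $P_\equstate=\pressure$) gives $\equstate(X^n)$ comparable to $\myexp{S_n^F\phi(x_{X^n})-n\pressure}$; both comparisons are with multiplicative constants independent of $n$ and of the choice of tile. Multiplying and summing yields
\begin{equation*}
    \int\myexp{S_n^F\psi}\,\mathrm{d}\equstate \;\asymp\; \myexp{-n\pressure}\sum_{X^n\in\Domain{n}}\myexp{S_n^F(\phi+\psi)(x_{X^n})} \;\asymp\; \myexp{-n\pressure}Z_n(F,\phi+\psi),
\end{equation*}
where in the last step I again use Lemma~\ref{lem:distortion_lemma} (this time for $\phi+\psi$) to pass from pointwise values to the suprema appearing in $Z_n(F,\phi+\psi)$. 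Taking $\frac{1}{n}\log$ and letting $n\to+\infty$ then gives \eqref{eq:prop:subsystem Birkhoff averages pressure characterization} from the first paragraph.

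The main point requiring care will be verifying that all of these multiplicative constants truly are uniform in $n$, so that the $\asymp$ relations survive $\frac{1}{n}\log$ in the limit. The Gibbs constant $C_\equstate$ is uniform by Definition~\ref{def:subsystem gibbs measure}, and the distortion constant $C_1$ for $\phi$ (respectively for $\psi$ and for $\phi+\psi$) is uniform by Lemma~\ref{lem:distortion_lemma}, so this does not create any substantive difficulty. The only other delicate point is the measure-zero boundary of tiles, which is handled by Theorem~\ref{thm:subsystem:eigenmeasure existence and basic properties}~\ref{item:thm:subsystem:eigenmeasure existence and basic properties:strongly irreducible edge zero measure} as noted, exactly where the assumption that $F$ be strongly irreducible is used.
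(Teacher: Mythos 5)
Your proof is correct, but it follows a genuinely different route from the paper's. The paper's argument exploits the eigenmeasure (conformality) identity $\dualsplopt\spleigmea = e^{\pressure}\spleigmea$ directly: writing $\eigmea = e^{-n\pressure}(\dualsplopt)^n\eigmea$ and using the adjoint relation converts $\int e^{S_n^F\psi}\,\mathrm{d}\eigmea$ into $e^{-n\pressure}\int\splopt[\phi+\psi]^n\indicator{\splitsphere}\,\mathrm{d}\spleigmea$ in one line (since twisting the integrand by $e^{S_n^F\psi}$ is the same as shifting the potential from $\phi$ to $\phi+\psi$), after which \eqref{eq:equalities for characterizations of pressure} yields the conclusion almost immediately; the boundedness of $\eigfun$ is used only at the outset to replace $\equstate$ by $\eigmea$. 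You instead work from the Gibbs property (a consequence of that eigenmeasure identity, recorded in Theorem~\ref{thm:subsystem:eigenmeasure existence and basic properties}~\ref{item:thm:subsystem:eigenmeasure existence and basic properties:Jacobian:Gibbs property}) together with the Bowen distortion estimate of Lemma~\ref{lem:distortion_lemma}, comparing the integral against $\equstate$ tile by tile to the partition function $Z_n(F,\phi+\psi)$. This is more hands-on: it requires the extra step of relating $Z_n(F,\phi+\psi)$ to the transfer-operator characterization of pressure (a comparison which the paper's route gets for free by staying entirely inside the operator formalism), but it makes the uniformity of constants and the role of the strong irreducibility (via the zero measure of $\bigcup_j f^{-j}(\mathcal{C})$) completely explicit — a step the paper glosses over with ``a direct consequence''. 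Both approaches are sound; the paper's is more conceptual and compact, yours is more explicit and self-contained at the level of estimates.
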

\begin{proof}
    Recall from Theorems~\ref{thm:subsystem characterization of pressure and existence of equilibrium state} and \ref{thm:existence of f invariant Gibbs measure} that $\equstate = \splmea = \eigfun \spleigmea$, where $\eigmea = \spleigmea$ is an eigenmeasure of $\dualsplopt$ from Theorem~\ref{thm:subsystem:eigenmeasure existence and basic properties} and $\eigfun$ is an eigenfunction of $\splopt$ from Theorem~\ref{thm:existence of f invariant Gibbs measure}.
    Recall from Definition~\ref{def:split sphere} and Remark~\ref{rem:disjoint union} that $\splitsphere = X^0_{\black} \sqcup X^0_{\white}$ is the disjoint union of $X^0_{\black}$ and $X^0_{\white}$.
    Note that by Theorem~\ref{thm:subsystem:eigenmeasure existence and basic properties}~\ref{item:thm:subsystem:eigenmeasure existence and basic properties:Jacobian:Gibbs property}, we have $\dualsplopt \spleigmea = e^{\pressure} \spleigmea$.
    Since $\inf_{\splitsphere} \eigfun > 0$ and $\sup_{\splitsphere} \eigfun < +\infty$, it is enough to prove the limit with $\equstate$ replaced by $\eigmea$.

    For each $n \in \n$ we have\[
        \begin{split}
            \int_{S^{2}} \! \myexp[\big]{S_{n}^{F}\psi} \,\mathrm{d} \eigmea 
            &= \int_{\splitsphere} \parentheses[\big]{ e^{S_{n}^{F}\psi}|_{\blacktile}, e^{S_{n}^{F}\psi}|_{\whitetile} }  \,\mathrm{d} \spleigmea \\
            &= \int_{\splitsphere} \parentheses[\big]{ e^{S_{n}^{F}\psi}|_{\blacktile}, e^{S_{n}^{F}\psi}|_{\whitetile} }  \,\mathrm{d} \parentheses[\big]{ e^{-n \pressure} (\dualsplopt)^{n} \spleigmea }  \\
            &= e^{-n \pressure} \int_{\splitsphere} \splopt^{n} \parentheses[\big]{ e^{S_{n}^{F}\psi}|_{\blacktile}, e^{S_{n}^{F}\psi}|_{\whitetile} } \,\mathrm{d} \spleigmea.
        \end{split}
    \]
    Using $\splopt^{n} \parentheses[\big]{ e^{S_{n}^{F}\psi}|_{\blacktile}, e^{S_{n}^{F}\psi}|_{\whitetile} } = \splopt[\potential + \psi]^{n} \indicator{\splitsphere}$, the assertion of the proposition is then a direct consequence of \eqref{eq:equalities for characterizations of pressure} in Theorem~\ref{thm:existence of f invariant Gibbs measure}.
\end{proof}

The following proposition characterizes topological pressures in terms of iterated preimages.

\begin{proposition}    \label{prop:subsystem preimage pressure:recall}
    Let $f$, $\mathcal{C}$, $F$, $d$, $\phi$, $\holderexp$ satisfy the Assumptions in Section~\ref{sec:The Assumptions}. 
    We assume in addition that $f(\mathcal{C}) \subseteq \mathcal{C}$ and $F \in \subsystem$ is strongly irreducible. 
    Denote $\limitset \define \limitset(F, \mathcal{C})$.
    Then for each $y_0 \in \limitset \setminus \mathcal{C}$, we have
    \begin{equation}    \label{eq:prop:subsystem preimage pressure:recall}
        \pressure = P(F|_{\limitset}, \potential|_{\limitset}) = \lim_{n \to +\infty} \frac{1}{n} \log \sum_{x \in (F|_{\limitset})^{-n}(y_{0})} \myexp[\big]{ S_n^{F}\phi(x) },
    \end{equation}
    where $\pressure$ is defined in \eqref{eq:pressure of subsystem} and $P(F|_{\limitset}, \potential|_{\limitset})$ is defined in \eqref{eq:def:topological pressure}. 
\end{proposition}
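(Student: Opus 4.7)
The first equality $\pressure = P(F|_\limitset, \phi|_\limitset)$ is precisely part of \eqref{eq:subsystem Variational Principle} in Theorem~\ref{thm:subsystem characterization of pressure and existence of equilibrium state}, so the work lies in the second equality. The plan is to identify the preimage sum with the value of $\splopt^{n}\bigl(\indicator{\splitsphere}\bigr)$ at the distinguished base point $\widetilde{y}_0 \define (y_0, \colour_0) \in \splitsphere$, where $\colour_0 \in \colours$ is the unique color with $y_0 \in \inte{X^0_{\colour_0}}$ (which exists because $y_0 \notin \mathcal{C}$ while $X^0_{\black} \cap X^0_{\white} = \mathcal{C}$ and $X^0_{\black} \cup X^0_{\white} = S^{2}$), and then invoke the characterization \eqref{eq:equalities for characterizations of pressure} of Theorem~\ref{thm:existence of f invariant Gibbs measure}.

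Concretely, Lemma~\ref{lem:iteration of split-partial ruelle operator} together with Definition~\ref{def:partial Ruelle operator} and Proposition~\ref{prop:cell decomposition: invariant Jordan curve} give
\[
    \splopt^{n}\bigl(\indicator{\splitsphere}\bigr)(\widetilde{y}_0) = \sum_{\colour \in \colours} \sum_{X^n \in \ccFTile{n}{\colour_0}{\colour}} \myexp[\big]{ S_n^F \phi\bigl( (F^n|_{X^n})^{-1}(y_0) \bigr) },
\]
so it suffices to establish that $X^n \mapsto x_{X^n} \define (F^n|_{X^n})^{-1}(y_0)$ is a bijection from $\mathcal{X}_n \define \bigcup_{\colour \in \colours} \ccFTile{n}{\colour_0}{\colour}$ onto $(F|_\limitset)^{-n}(y_0)$. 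Injectivity and the fact that every $x \in (F|_\limitset)^{-n}(y_0) \subseteq f^{-n}(y_0)$ is of the form $x_{X^n}$ for exactly one $X^n \in \mathcal{X}_n$ will both follow from Lemma~\ref{lem:cell mapping properties of Thurston map}~\ref{item:lem:cell mapping properties of Thurston map:ii} applied to $f$ at $y_0 \in \inte{X^0_{\colour_0}}$, combined with $\limitset \subseteq \bigcup \Domain{n}$ and Proposition~\ref{prop:subsystem:preliminary properties}~\ref{item:subsystem:properties:homeo}, which promote the ambient $n$-tile of $f$ containing $x$ to an element of $\mathcal{X}_n$.

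The main obstacle is to verify $x_{X^n} \in \limitset$ for every $X^n \in \mathcal{X}_n$. Since $y_0 \in \limitset$, \eqref{eq:def:limitset} furnishes a nested sequence $\{Y^k\}_{k \in \n}$ with $y_0 \in Y^k \in \Domain{k}$, and Proposition~\ref{prop:cell decomposition: invariant Jordan curve} together with $y_0 \in \inte{X^0_{\colour_0}}$ forces $Y^k \subseteq X^0_{\colour_0} = F^{n}(X^n)$ for every $k$. Setting $W^{n+k} \define (F^{n}|_{X^n})^{-1}(Y^k)$, Lemma~\ref{lem:cell mapping properties of Thurston map}~\ref{item:lem:cell mapping properties of Thurston map:i} identifies $W^{n+k}$ as an $(n+k)$-tile of $f$, while the inclusions $W^{n+k} \subseteq X^n \in \Domain{n}$ and $F^{n}(W^{n+k}) = Y^k \in \Domain{k}$ guarantee that $F^{n+k} = F^{k} \circ F^{n}$ is well-defined on $W^{n+k}$, placing $W^{n+k} \in \Domain{n+k}$. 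The nested sequence $\{W^{n+k}\}_{k \in \n}$ containing $x_{X^n}$ then witnesses $x_{X^n} \in \limitset$ via \eqref{eq:def:limitset}. Substituting back and applying \eqref{eq:equalities for characterizations of pressure} of Theorem~\ref{thm:existence of f invariant Gibbs measure} at $\widetilde{y} = \widetilde{y}_0$ completes the proof.
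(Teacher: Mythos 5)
Your proof is correct. The paper itself offers no argument here---it simply cites Proposition~6.21 and Theorem~6.30 of \cite{shi2024thermodynamic}---so a direct comparison is not possible, but your route (rewrite the preimage sum as $\splopt^{n}\bigl(\indicator{\splitsphere}\bigr)(\widetilde{y}_0)$, then apply the characterization \eqref{eq:equalities for characterizations of pressure}) is the natural one and is almost certainly the same mechanism used in the cited reference.

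Two small points worth emphasizing so the argument reads cleanly. First, the bijection $X^n \mapsto x_{X^n}$ between $\mathcal{X}_n$ and $(F|_{\limitset})^{-n}(y_0)$ is exactly where the hypothesis $y_0 \notin \mathcal{C}$ is used: injectivity relies on the uniqueness clause in Lemma~\ref{lem:cell mapping properties of Thurston map}~\ref{item:lem:cell mapping properties of Thurston map:ii}, which requires $f^{n}(p) = y_0 \in \inte{X^0_{\colour_0}}$; without this the same preimage point could be counted once per tile containing it, and the sum over $(F|_{\limitset})^{-n}(y_0)$ (with each preimage weighted by $1$) would no longer match $\splopt^{n}\bigl(\indicator{\splitsphere}\bigr)(\widetilde{y}_0)$, which in the second form of \eqref{eq:definition of partial Ruelle operators} is a sum over tiles. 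Second, when extracting a \emph{nested} sequence $\{Y^k\}_{k\in\n}$ witnessing $y_0 \in \limitset$, one should note that the unique $k$-tile containing a $(k+1)$-tile of $F$ (via Proposition~\ref{prop:cell decomposition: invariant Jordan curve}) is itself a $k$-tile of $F$ by Proposition~\ref{prop:subsystem:preliminary properties}~\ref{item:subsystem:properties:properties invariant Jordan curve:decreasing relation of domains}, so the descent is legitimate; your step $W^{n+k} \in \Domain{n+k}$ then follows cleanly from $X^n \subseteq F^{-n}(F(\domF))$ and $Y^k \subseteq F^{-k}(F(\domF))$ as you describe.
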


Proposition~\ref{prop:subsystem preimage pressure:recall} follows immediately from \cite[Proposition~6.21 and Theorem~6.30]{shi2024thermodynamic}. 
Note that $\limitset \setminus \mathcal{C} \ne \emptyset$ by \cite[Proposition~5.20~(ii)]{shi2024thermodynamic}.

\subsection{Proof of large deviation principles}%
\label{sub:Proof of large deviation principles}

In this subsection, we establish Theorem~\ref{thm:level-2 larger deviation principles for subsystem} by applying Theorem~\ref{thm:large deviation principles Kifer Comman Juan}.

By the following two lemmas, we can show that conditions~\ref{item:thm:large deviation principles Kifer Comman Juan:upper semi-continuity of entropy map} and \ref{item:thm:large deviation principles Kifer Comman Juan:uniqueness of equilibrium state for dense potential} in Theorem~\ref{thm:large deviation principles Kifer Comman Juan} are satisfied in our context.

\begin{lemma} \label{lem:upper semi-continuity for submap}
    Let $T \colon X \mapping X$ be a continuous map on a compact metric space $X$.
    Suppose that the entropy map of $T$ is upper semi-continuous.
    Let $Y$ be a compact subset of $X$ with $T(Y) \subseteq Y$.
    Then the entropy map of $T|_{Y}$ is upper semi-continuous.
\end{lemma}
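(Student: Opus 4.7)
The plan is to reduce the upper semi-continuity for $T|_{Y}$ to that for $T$ by viewing invariant measures on $Y$ as invariant measures on $X$ supported on $Y$. More precisely, let $\iota \colon Y \hookrightarrow X$ denote the inclusion and define $\iota_{*} \colon \mathcal{M}(Y) \mapping \mathcal{M}(X)$ by push-forward. Since $T(Y) \subseteq Y$, we have $\iota_{*}(\mathcal{M}(Y, T|_{Y})) \subseteq \mathcal{M}(X, T)$, and $\iota_{*}(\nu)$ is supported on the compact set $Y$ for each $\nu \in \mathcal{M}(Y, T|_{Y})$.

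First I would verify that $\iota_{*}$ is continuous with respect to the weak$^{*}$ topologies on both sides. This is immediate: for each $u \in C(X)$, the restriction $u|_{Y}$ lies in $C(Y)$, and $\int u \,\mathrm{d}(\iota_{*}\nu) = \int u|_{Y} \,\mathrm{d}\nu$, so weak$^{*}$ convergence in $\mathcal{M}(Y)$ implies weak$^{*}$ convergence of the push-forwards in $\mathcal{M}(X)$. Next, I would invoke the standard fact that measure-theoretic entropy is preserved under this identification: for each $\nu \in \mathcal{M}(Y, T|_{Y})$,
\begin{equation*}
    h_{\nu}(T|_{Y}) = h_{\iota_{*}\nu}(T).
\end{equation*}
This follows from the definition of measure-theoretic entropy via finite measurable partitions, since any finite Borel partition of $X$ restricts (modulo a $(\iota_{*}\nu)$-null set) to a finite Borel partition of $Y$ and vice versa, with matching entropies under iteration by $T$ and $T|_{Y}$ respectively.

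With these two ingredients in place, upper semi-continuity for $T|_{Y}$ is now routine. Indeed, consider a sequence $\{\nu_{n}\}_{n \in \n}$ in $\mathcal{M}(Y, T|_{Y})$ converging in the weak$^{*}$ topology to some $\nu \in \mathcal{M}(Y, T|_{Y})$. By the continuity of $\iota_{*}$, the push-forwards $\iota_{*}\nu_{n}$ converge to $\iota_{*}\nu$ in the weak$^{*}$ topology on $\mathcal{M}(X)$, and each $\iota_{*}\nu_{n}$, $\iota_{*}\nu$ lies in $\mathcal{M}(X, T)$. Then the assumed upper semi-continuity of the entropy map of $T$ gives
\begin{equation*}
    \limsup_{n \to +\infty} h_{\nu_{n}}(T|_{Y}) = \limsup_{n \to +\infty} h_{\iota_{*}\nu_{n}}(T) \leqslant h_{\iota_{*}\nu}(T) = h_{\nu}(T|_{Y}),
\end{equation*}
as desired.

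The main (minor) technical point is the equality $h_{\nu}(T|_{Y}) = h_{\iota_{*}\nu}(T)$, which although folklore deserves a sentence of justification. Given a finite Borel partition $\xi = \{A_{1}, \dots, A_{k}\}$ of $X$, the collection $\xi_{Y} \define \{A_{i} \cap Y \describe 1 \leqslant i \leqslant k\}$ is a finite Borel partition of $Y$ with $H_{\nu}(\xi_{Y}) = H_{\iota_{*}\nu}(\xi)$, and since $T(Y) \subseteq Y$ the refinements $\bigvee_{j=0}^{n-1} (T|_{Y})^{-j}(\xi_{Y})$ and $\bigvee_{j=0}^{n-1} T^{-j}(\xi)$ correspond under the same restriction, yielding $h_{\nu}(T|_{Y}, \xi_{Y}) = h_{\iota_{*}\nu}(T, \xi)$. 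Conversely, any finite Borel partition of $Y$ extends to one of $X$ by adjoining $X \setminus Y$, and this added atom has $\iota_{*}\nu$-measure zero. Taking suprema over partitions gives the claimed equality.
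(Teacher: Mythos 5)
Your proof is correct and takes essentially the same approach as the paper's (one-sentence) proof: identify $\mathcal{M}(Y, T|_{Y})$ with a subset of $\mathcal{M}(X, T)$, note that the entropy of a measure agrees whether computed for $T|_{Y}$ or for $T$, and then inherit upper semi-continuity. You simply spell out the push-forward identification, its weak$^{*}$ continuity, and the entropy equality in more detail than the paper does.
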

\begin{proof}
    Since $\invmea[Y][T|_{Y}] \subseteq \invmea[X][T]$ and $h_{\mu}(T|_{Y}) = h_{\mu}(T)$ for each $\mu \in \invmea[Y][T|_{Y}]$, the statement follows.
\end{proof}

\begin{lemma} \label{lem:restriction of holder potential to subset}
    Let $(X, d)$ be a metric space and $Y$ be a subset of $X$.
    Then for each $\holderexp \in (0, 1]$, we have\[
        \holderspace[][][Y] = \bigl\{ \psi|_{Y} \describe \psi \in \holderspace \bigr\}.
    \]
\end{lemma}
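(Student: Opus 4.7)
The plan is to prove the two inclusions separately; only the second requires work.

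The inclusion $\bigl\{ \psi|_{Y} \describe \psi \in \holderspace \bigr\} \subseteq \holderspace[][][Y]$ is immediate: if $\psi \in \holderspace$, then $\psi|_{Y}$ is bounded with $\normcontinuous{\psi|_{Y}}{Y} \leqslant \normcontinuous{\psi}{X}$ and $\holderseminorm{\psi|_{Y}}{Y} \leqslant \holderseminorm{\psi}{X}$, so $\psi|_{Y} \in \holderspace[][][Y]$.

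For the reverse inclusion, given $u \in \holderspace[][][Y]$, the plan is to apply the classical McShane-type extension. Set $L \define \holderseminorm{u}{Y}$ and $M \define \normcontinuous{u}{Y}$; assume $Y \neq \emptyset$, since the case $Y = \emptyset$ is trivial. Define
\[
    \widetilde{\psi}(x) \define \inf_{y \in Y} \bigl\{ u(y) + L \, d(x, y)^{\holderexp} \bigr\}, \qquad x \in X.
\]
Because $\holderexp \in (0, 1]$, the function $t \mapsto t^{\holderexp}$ is subadditive on $[0, +\infty)$, so the ``snowflake'' $d^{\holderexp}$ satisfies the triangle inequality. For any $\juxtapose{x}{x'} \in X$ and $y \in Y$, this yields
\[
    u(y) + L \, d(x, y)^{\holderexp} \leqslant u(y) + L \, d(x', y)^{\holderexp} + L \, d(x, x')^{\holderexp}.
\]
Taking the infimum over $y \in Y$ and exchanging the roles of $x$ and $x'$ gives $\abs{\widetilde{\psi}(x) - \widetilde{\psi}(x')} \leqslant L \, d(x, x')^{\holderexp}$. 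Moreover, for each $y_{0} \in Y$ the Hölder estimate $u(y_{0}) \leqslant u(y) + L \, d(y_{0}, y)^{\holderexp}$ for $y \in Y$ forces $\widetilde{\psi}(y_{0}) = u(y_{0})$, so $\widetilde{\psi}|_{Y} = u$.

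The only remaining issue is that $\widetilde{\psi}$ may fail to be bounded when $X$ is unbounded, so I would truncate at the level of $M$:
\[
    \psi(x) \define \max\bigl\{-M, \, \min\{M, \, \widetilde{\psi}(x)\}\bigr\}, \qquad x \in X.
\]
Since $\abs{u(y)} \leqslant M$ for each $y \in Y$ and $\widetilde{\psi}|_{Y} = u$, the truncation does not alter $\psi$ on $Y$, so $\psi|_{Y} = u$. Because truncation by constants is $1$-Lipschitz, $\psi$ retains $\holderseminorm{\psi}{X} \leqslant L$, and by construction $\normcontinuous{\psi}{X} \leqslant M$. Hence $\psi \in \holderspace$ with $\psi|_{Y} = u$, completing the proof. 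I anticipate no serious obstacle; the main point is simply recalling the McShane-type infimum formula and noting that truncation is needed to guarantee membership in $\holderspace$ as defined in Section~\ref{sec:Notation}.
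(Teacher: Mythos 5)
Your proof is correct and takes essentially the same route as the paper: the paper simply cites the McShane-type extension theorem from Heinonen (\cite[Theorem~6.2 and p.~44]{heinonen2001lectures}), whereas you spell out that argument explicitly—the snowflaked McShane infimum formula $\widetilde{\psi}(x) = \inf_{y \in Y}\{u(y) + L\,d(x,y)^{\holderexp}\}$ followed by truncation to ensure boundedness. Both rely on the same underlying extension mechanism, so there is no substantive difference in approach.
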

\begin{proof}
    For each $\psi \in \holderspace$, it follows immediately from the definition of \holder continuity that $\psi|_{Y} \in \holderspace[][][Y]$.
    The converse direction also holds since every function in $\holderspace[][][Y]$ can extend to a function in $\holderspace$ (see for example, \cite[Theorem~6.2 and p.~44]{heinonen2001lectures}).
\end{proof}

Now we are ready to prove the level-2 large deviation principles.

\begin{proof}[Proof of Theorem~\ref{thm:level-2 larger deviation principles for subsystem}]
    First note that by Remark~\ref{rem:chordal metric visual metric qs equiv}, if $f \colon X \mapping X$ is a postcritically-finite rational map with no periodic critical points on the Riemann sphere $X = \ccx$, then the classes of \holder continuous functions on $\ccx$ equipped with the chordal metric and on $S^2 = \ccx$ equipped with any visual metric for $f$ are the same.
    Thus we only need to prove for the case where $f \colon S^2 \mapping S^2$ is an expanding Thurston map with no periodic critical points on a topological 2-sphere $S^2$ equipped with a visual metric $d$ for $f$.

    Let $\potential \in \holderspacesphere$ for some $\holderexp \in (0, 1]$.

    We apply Theorem~\ref{thm:large deviation principles Kifer Comman Juan} with $X = \limitset$, $g = F|_{\limitset}$, $\varphi = \phi|_{\limitset}$, and $H = \holderspace[][][\limitset]$.
    Note that $P(F, \phi) = P(F|_{\limitset}, \phi|_{\limitset})$ by Proposition~\ref{prop:subsystem preimage pressure:recall}, and $\holderspace[][][\limitset]$ is dense in $C(\limitset)$ with respect to the uniform norm by Lemma~\ref{lem:holder space dense in continuous space}.
    By \cite[Lemma~6.4]{shi2024thermodynamic} and \eqref{eq:subsystem Variational Principle} in Theorem~\ref{thm:subsystem characterization of pressure and existence of equilibrium state}, the measure-theoretic entropy $h_{\mu}(F|_{\limitset})$ is finite for each $\mu \in \invmea[\limitset][F|_{\limitset}]$.
    Since $f$ has no periodic critical points, it follows from \cite[Theorem~1.1]{shi2024entropy} that the entropy of $f$ is upper semi-continuous.
    Then by Lemma~\ref{lem:upper semi-continuity for submap}, the entropy map of $F|_{\limitset} = f|_{\limitset}$ is upper semi-continuous.
    Thus, condition~\ref{item:thm:large deviation principles Kifer Comman Juan:upper semi-continuity of entropy map} in Theorem~\ref{thm:large deviation principles Kifer Comman Juan} is satisfied.
    Condition~\ref{item:thm:large deviation principles Kifer Comman Juan:uniqueness of equilibrium state for dense potential} in Theorem~\ref{thm:large deviation principles Kifer Comman Juan} follows from Theorem~\ref{thm:subsystem:uniqueness of equilibrium state} and Lemma~\ref{lem:restriction of holder potential to subset}.

    It now suffices to verify \eqref{eq:thm:large deviation principles Kifer Comman Juan:requirement for distribution} for each of the sequences $\sequen{\birkhoffmeasure}$ and $\sequen{\Omega_{n}(x_{n})}$ of Borel probability measures on $\probmea{\limitset}$.

    Fix an arbitrary $\psi \in \holderspace[][][\limitset]$. 
    By Lemma~\ref{lem:restriction of holder potential to subset}, there exists $\widetilde{\psi} \in \holderspacesphere$ such that $\widetilde{\psi}|_{\limitset} = \psi$.
    
    For the sequence $\sequen{\birkhoffmeasure}$, by \eqref{eq:prop:subsystem Birkhoff averages pressure characterization} in Proposition~\ref{prop:subsystem Birkhoff averages pressure characterization} and \eqref{eq:subsystem Variational Principle} in Theorem~\ref{thm:subsystem characterization of pressure and existence of equilibrium state}, we have
    \[
        \begin{split}
            \lim_{n \to +\infty} \frac{1}{n} \log \int_{\probmea{\limitset}} \! \myexp[\bigg]{n \int \! \psi \,\mathrm{d}\mu} \,\mathrm{d} \birkhoffmeasure(\mu) 
            &= \lim_{n \to +\infty} \frac{1}{n} \log \int_{\limitset} \! \myexp{ S_n^{F}\psi } \,\mathrm{d} \equstate \\
            &= P(F|_{\limitset}, \phi|_{\limitset} + \psi) - P(F|_{\limitset}, \phi|_{\limitset}).
        \end{split}
    \]
    Similarly, for the sequence $\sequen{\Omega_{n}(x_{n})}$, by \eqref{eq:prop:subsystem preimage pressure:recall} in Proposition~\ref{prop:subsystem preimage pressure:recall}, we have
    \[
        \begin{split}
            &\lim_{n \to +\infty} \frac{1}{n} \log \int_{\probmea{\limitset}} \! \myexp[\bigg]{n \int \! \psi \,\mathrm{d}\mu} \,\mathrm{d} \Omega_{n}(x_{n})(\mu) \\
            &\qquad = \lim_{n \to +\infty} \frac{1}{n} \log \sum_{y \in (F|_{\limitset})^{-n}(x_{n})} \frac{ \myexp{ S_{n}^{F}\potential(y) } }{\sum_{y' \in (F|_{\limitset})^{-n}(x_{n})} \myexp{ S_{n}^{F}\potential(y') } } e^{ \sum_{i = 1}^{n - 1} \psi\parentheses{F^{i}(y)} } \\
            &\qquad = \lim_{n \to +\infty} \frac{1}{n} \parentheses[\bigg]{ \log \sum_{y \in (F|_{\limitset})^{-n}(x_{n})} e^{ S_{n}^{F}(\phi + \widetilde{\psi})(y) } - \log \sum_{y' \in (F|_{\limitset})^{-n}(x_{n})} e^{ S_{n}^{F}\phi(y') } }   \\
            &\qquad = P(F|_{\limitset}, \phi|_{\limitset} + \psi) - P(F|_{\limitset}, \phi|_{\limitset}).
        \end{split}
    \]
    Therefore, all the assertions of Theorem~\ref{thm:level-2 larger deviation principles for subsystem} follow from Theorem~\ref{thm:large deviation principles Kifer Comman Juan}.
\end{proof}

We finally prove Corollary~\ref{coro:measure-theoretic pressure infimum on local basis:subsystem}, which gives a characterization of measure-theoretic pressure.

\begin{proof}[Proof of Corollary~\ref{coro:measure-theoretic pressure infimum on local basis:subsystem}]
    \def\localbasis{G_{\mu}}
    Fix $\mu \in \invmea[\limitset][F|_{\limitset}]$ and a convex local basis $\localbasis$ at $\mu$.
    We show that \eqref{eq:coro:measure-theoretic pressure infimum on local basis:subsystem} in Corollary~\ref{coro:measure-theoretic pressure infimum on local basis:subsystem} holds.
    By \eqref{eq:def:rate function:subsystem} and the upper semi-continuity of $h_{\mu}(F|_{\limitset})$ (\cite[Theorem~1.1]{shi2024entropy} and Lemma~\ref{lem:upper semi-continuity for submap}), we get
    \[
        - \ratefun(\mu) = \inf_{\mathcal{G} \in \localbasis} \sup_{\mathcal{G}} (- \ratefun) 
        = \inf_{\mathcal{G} \in \localbasis} \parentheses[\big]{- \inf_{\mathcal{G}} \ratefun}.
    \]
    Then it follows from \eqref{eq:def:rate function:subsystem} and \eqref{eq:equalities for rate function:subsystem} in Theorem~\ref{thm:level-2 larger deviation principles for subsystem} that 
    \[
        \begin{split}
            -\pressure + h_{\mu}(F|_{\limitset}) + \int \! \potential \,\mathrm{d}\mu 
            &= - \ratefun(\mu) = \inf_{\mathcal{G} \in \localbasis} \parentheses[\big]{- \inf_{\mathcal{G}} \ratefun} \\
            &= \inf_{\mathcal{G} \in \localbasis} \set[\bigg]{ \lim_{n \to +\infty} \frac{1}{n} \log \equstate (\set{x \in \limitset \describe \deltameasure[n]{x} \in \mathcal{G}}) } \\
            &= \inf_{\mathcal{G} \in \localbasis} \set[\bigg]{ \lim_{n \to +\infty} \frac{1}{n} \log \sum_{ y \in (F|_{\limitset})^{-n}(x_{n}), \deltameasure[n]{y} \in \mathcal{G} } \frac{ \myexp[\big]{S_{n}^{F}\potential(y)}}{Z_{n}(\potential)} },
        \end{split}
    \]
    where we write $Z_{n}(\potential) \define \sum_{ y \in (F|_{\limitset})^{-n}(x_{n}) } \myexp[\big]{ S_{n}^{F}\potential(y) }$.
    Note that by Propositions~\ref{prop:subsystem preimage pressure:recall} we have $\pressure = \lim_{n \to +\infty} \frac{1}{n} \log Z_{n}(\potential)$.
    Thus \eqref{eq:coro:measure-theoretic pressure infimum on local basis:subsystem} holds.
\end{proof}

\printbibliography

\end{document}